\numberwithin{equation}{section}
\global\long\def\Rayleigh{\operatorname{Rayleigh}}%
\newcommand{\N}{\mathbb{N}}
\newcommand{\R}{\mathbb{R}}
\newcommand{\Q}{\mathbb{Q}}
\newcommand{\Z}{\mathbb{Z}}
\newcommand{\F}{\mathcal{F}}
\newcommand{\C}{\mathcal{C}}
\newcommand{\D}{\mathcal{D}}
\newcommand{\Nor}{\mathcal N}
\newcommand{\Ll}{\mathcal L}
\newcommand{\h}{\mathfrak h}
\newcommand{\wt}{\widetilde}
\newcommand{\deq}{\overset{\mathrm{d}}{=}}
\newcommand{\ve}{\varepsilon}
\newcommand{\eps}{\varepsilon}
\newcommand{\f}{\frac}
\newcommand{\mbf}{\mathbf}
\newcommand{\argmin}{\operatorname{argmin}}
\newcommand{\wh}{\widehat}
\newcommand{\Pp}{\mathbb P}
\newcommand{\Rup}{\R_{\uparrow}^4}
\newcommand{\Ff}{\mathcal F}
\newcommand{\W}{W}
\newcommand{\CFP}{\C_{\mathrm{FP}}}
\global\long\def\Law{\operatorname{Law}}
\global\long\def\dif{\mathrm{d}}
\global\long\def\Uniform{\operatorname{Uniform}}%
\global\long\def\st{\mathrel{:}}
\newcommand{\ind}{\mathbf 1}
\DeclareMathOperator*{\argmax}{arg\,max}
\global\long\def\BM{\operatorname{BM}}%
\global\long\def\BES{\operatorname{BES}}%
\global\long\def\Arcsine{\operatorname{Arcsine}}%
\global\long\def\Bm{\operatorname{Bm}}%
\newcommand{\be}{\begin{equation}}
\newcommand{\ee}{\end{equation}}
\newcommand{\e}{e}
\newcommand{\sig}{{\scaleobj{0.8}{\boxempty}}} 
\newcommand{\sigg}{{\scaleobj{0.9}{\boxempty}}}
\newtheorem{theorem}{Theorem}[section]
\newtheorem{proposition}[theorem]{Proposition}
\newtheorem{corollary}[theorem]{Corollary}
\newtheorem{lemma}[theorem]{Lemma}
\theoremstyle{definition}
\newtheorem{definition}[theorem]{Definition}
\theoremstyle{remark}
\definecolor{darkgreen}{rgb}{0.0,0.5,0.0}
\definecolor{indigo}{rgb}{0.3,0,0.5}
\begin{document}
\title{Invariant measures and shocks in the KPZ fixed point}
\author{Alexander Dunlap}
\address{Department of Mathematics, Duke University, Durham, NC 27708, USA.}
\email{alexander.dunlap@duke.edu}
\urladdr{https://sites.math.duke.edu/~ajd91/}

\author{Evan Sorensen}
\address{Department of Mathematics,  Columbia University, New York, NY 10027, USA}
\email{evan.sorensen@columbia.edu}
\urladdr{https://sites.google.com/view/evan-sorensen}

\begin{abstract}
    We construct a family of invariant measures from the perspective of a shock in the KPZ fixed point. These measures are parameterized by a positive number $\theta > 0$, and are supported on functions $f$ satisfying $\lim_{|x| \to \infty} \frac{f(x)}{|x|} = 2\theta$. Each can be described as the sum of a Brownian motion and an independent Bessel-$3$ process with drift. We show that these measures appear as the $L \to \infty$ limit of the (conjectural) stationary measures for the conjectural open KPZ fixed point on $[0,L]$, after recentering by an appropriately defined shock location. Furthermore, we show that, with respect to the standard, deterministic recentering at $x = 0$, all extremal invariant measures for the KPZ fixed point are Brownian motions with drift. To do this, we first show that any extremal invariant measures must be supported on functions having fixed asymptotic slopes at $\pm \infty$. Using a one-force-one-solution principle from the work of Busani, Sepp\"al\"ainen, and  the second author, this rules out all other invariant measures except those having left slope $-2\theta$ and right slope $+2\theta$ for some $\theta > 0$. To handle this case, we derive the limiting fluctuations of the shock for a special choice of initial condition. Additionally, we derive the limiting fluctuations of the shock for the case of the invariant measure from the perspective of a shock, and for the case of initial data $f(x) = 2\theta|x|$.
\end{abstract}
\maketitle
\tableofcontents

\section{Introduction}
The object of study in the present paper is the long-time behavior of the KPZ fixed point. We give a detailed study of the invariant measures and shock fluctuations of the KPZ fixed point via the variational description from the directed landscape. Our main results can be placed into the following three main categories:
\begin{enumerate}
    \item Invariant measures from the perspective of a shock (Theorems~\ref{thm:KPZFP_invmeas_shock} and~\ref{thm:open_converge}).
    \item Classification of invariant measures for the (recentered) KPZ fixed point (Theorem~\ref{thm:KPZFP_characterize})
    \item Limiting fluctuations of shocks/competition interfaces for the KPZ fixed point (Theorem~\ref{thm:shock_fluctuations}). 
\end{enumerate}
Before stating our main results, let us first review some background and history on the KPZ fixed point and directed landscape.

\subsection{Background on the KPZ fixed point and directed landscape}

The KPZ universality class is a large class of stochastic growth models, including random matrices, particle systems, vertex models, last-passage percolation, directed polymers in random environments, stochastic PDEs, among others. Each of these models exhibits a notion of a growing interface, which is governed by universal growth rates and limiting statistics, depending only on the initial data, but not on the precise dynamics. The first major breakthrough in this area came in the work of Baik, Deift, and Johansson \cite{baik-deif-joha-99} and, shortly after, the work of \cite{Johansson-2000}, which identified the scaling exponents and limiting Tracy-Widom fluctuations for Poisson and geometric last-passage percolation. We call these models exactly solvable models because they exhibit combinatorial and representational-theoretic structures that allow for asymptotic analysis, and other models have been seen numerically to follow the same growth rates and limiting statistics. This universal phenomenon has also been discovered experimentally in several physical contexts, including the growth of liquid crystals \cite{Takeuchi-10,Takeuchi2011GrowingIU,Takeuchi-Sano-12}, colonies of living cells \cite{Mitsugu-98,Wakita-Itoh-Matsuyama-97,Huergo-10}, 
the slow burning of paper \cite{Miettinen-05,Myllys2001KineticRI,Maunuksela-97}, geological processes \cite{Halpin-Zhang-95,Yunker-13}, the interaction of molecules on smooth surfaces \cite{WANG-03,WANG-05}, and quantum entanglement \cite{Nahum-17}.

Going further, the work of Prahofer and Spohn \cite{Prahofer-Spohn-02} introduced the Airy process, which captures the spatial fluctuations of the limiting growth processes in the KPZ universality class. This led to a great deal of further work around the Airy process and related Airy line ensemble (see, for example, \cite{CorwinHammond,Dauvergne-Sarkar-Virag-2020,Dauvergne-Zhang-2021,Dauvergne-Virag-21,Aggarwal-Huang-2023}). This sparked a conjecture \cite{Corwin-Quastel-Remenik-15} that there should be a limiting object, which they called the space-time Airy sheet, that describes both the space and time fluctuations of models in the KPZ universality class. The corresponding height functions would then be described as a variational problem across the space-time Airy sheet, just as one solves an inviscid Hamilton-Jacobi equation. 

On this level, the first major breakthrough came in the construction of the KPZ fixed point, first introduced in \cite{KPZfixed} as the scaling limit of the height functions of totally asymmetric simple exclusion process. Afterwards came the construction of the conjectured space-time Airy sheet \cite{Directed_Landscape} as the scaling limit of Brownian last-passage percolation. The authors of \cite{Directed_Landscape} renamed this object the directed landscape. Then, in \cite{reflected_KPZfixed}, it was shown that a scaled system of reflected Brownian motions started from a large class of initial data converges to the KPZ fixed point with the appropriate limiting initial data. As these reflected Brownian motions can be written as a variational problem in an environment for Brownian last-passage percolation, this description was taken to the limit to show that the KPZ fixed point can be described as a variational problem in the directed landscape. Since then, the KPZ fixed point and directed landscape have been shown to be the scaling limits of several exactly solvable last-passage percolation models \cite{Dauvergne-Virag-21}, particle systems and vertex models \cite{Aggarwal-Corwin-Hegde-2024a,Aggarwal-Corwin-Hegde-24b}, the KPZ equation \cite{Wu-23}, Brownian web distance (under a shear scaling) \cite{Veto-Virag-2023}, and directed polymers \cite{Zhang-2025}. More recently, it was shown in \cite{2Dauvergne-Zhang-2024} that the directed landscape gives the unique coupling of solutions to the KPZ fixed point under natural assumptions for models in the KPZ class. This allowed them to upgrade known convergences of some models to the KPZ fixed point to convergence to the directed landscape.

\subsection{Preliminaries and notation}

Let $\BM^{d}$ denote the law of a two-sided $d$-dimensional Brownian
motion such that if $B\sim\BM^{d}$ then $B(0)=0$. We will abbreviate
$\BM=\BM^{1}$. If $B\sim\BM^{d}$, then we define 
\[
\BM^{d}(\theta,\sigma)=\Law(x\mapsto \sigma B(x)+\theta x\mathrm{e}_{1}),
\]
where $\mathrm{e}_{1}=(1,0,\ldots,0)$ is the first unit vector, and
let 
\[
\BES^{d}(\theta)=\Law(x\mapsto|B^d(x)+\theta x\mathrm{e}_{1}|).
\]
For $\sigma = 1$, we use the shorthand notation $\BM^d(\theta) := \BM^d(\theta,1)$.
If $Z\sim\mathrm{BES}^{3}(\theta)$, then $(Z(x))_{x\ge0}$ has the
distribution of a Bessel-$3$ process with drift $\theta$ as considered
in \cite{Rogers-Pitman-81}. In particular, $(Z(x))_{x\ge0}$ is a
Markov process with generator
\[
\frac{1}{2}\frac{\dif^{2}}{\dif x^{2}}+\theta\coth(\theta x)\frac{\dif}{\dif x}
\]
(see \cite[(13)]{Rogers-Pitman-81}) or, equivalently, satisfies the
SDE
\[
\dif Z(x)=\theta\coth(\theta Z(x))\dif x+\dif W(x),
\]
where $W$ is a standard Brownian motion.

We define the KPZ fixed point at time $t > 0$ and spatial location $x$, started from initial condition $f$, as 
\be \label{eq:FKPFP_def}
h(x,t\mid f) := \sup_{y \in \R}[f(y) + \Ll(y,0;x,t)],
\ee
where $\Ll$ is the directed landscape (see Section~\ref{sec:DLKPZ_intro} below). We recall the definition of the competition interface from an initial condition $f$ in \cite[Section 4]{Rahman-Virag-21}. For $p \in \R$ and $(x,t) \in \R \times (0,\infty)$, define the function
\be \label{eq:dp_def}
d_p(x,t; f) = \sup_{y \ge p}[f(y) + \Ll(y,0;x,t)] - \sup_{y \le p}[f(y) + \Ll(y,0;x,t)].
\ee
By (\cite[Proposition~4.1]{Rahman-Virag-21}), the function $x \mapsto d_p(x,t;f)$ is non-decreasing for each $p \in \R$ and $t > 0$. We then define 
\be \label{eq:Ippm}
\begin{aligned}
I_p^-(t,f) &:= \inf\{x \in \R: d_p(x,t;f) \ge 0\} \\
I_p^+(t,f) &:= \sup\{x \in \R: d_p(x,t;f) \le 0\}.
\end{aligned}
\ee

\subsection{Invariant measure from the perspective of a shock}
We now present our first main theorem, which gives a family of invariant measures from the perspective of a shock. Later, in Theorem~\ref{thm:shock_fluctuations}\ref{itm:stat_shock}, we describe the limiting fluctuations of this shock.
\begin{theorem} \label{thm:KPZFP_invmeas_shock}
    Let $B\sim \BM$ and $Z \sim \BES^3(2\theta)$ be independent and mutually independent of the directed landscape $\Ll$. Define the random function $f(x) = B(x) + Z(x)$, 
and let
    \[
    b_t^- = I_0^-(t,f),\quad\text{and}\quad b_t^+ = I_0^+(t,f).
    \]
    Then, for each fixed $t > 0$, $b_t^- = b_t^+$ with probability $1$, and
    \[
    \Law\Bigl(h(b_t^+ +\cdot,t\mid f) - h(b_t^+,t\mid f)\Bigr)  = \Law(f). 
    \]
\end{theorem}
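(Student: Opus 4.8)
The plan is to realize $f = B + Z$ as a limit of initial conditions for which we already understand the shock dynamics, and to transport the known invariance through that limit. Concretely, I would first recall the one-force-one-solution structure from the work of Busani--Sepp\"al\"ainen--Sorensen: the stationary horizon / the two-sided process with asymptotic slopes $\pm 2\theta$ is the attractor for the KPZ fixed point evolution among initial data with those slopes. The function $B + Z$ with $Z \sim \BES^3(2\theta)$ is precisely the decomposition (Pitman-type) of such a stationary object conditioned to have its ``shock'' (the boundary between the region where the supremum in \eqref{eq:FKPFP_def} is attained to the left versus the right of $0$) located at $x = 0$. So the first step is to identify $f$ with the Palm-type measure obtained by conditioning the slope-$\pm 2\theta$ invariant measure on $\{I_0^-(0,\cdot) = I_0^+(0,\cdot) = 0\}$, or equivalently to directly verify that $B+Z$ is a size-biased/rerooted version of a two-sided random walk-like bridge at its minimum; the Rogers--Pitman description of $\BES^3(\theta)$ quoted in the preliminaries is exactly the tool that makes this decomposition exact.

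The second step is the coalescence/shock-location identity $b_t^- = b_t^+$ a.s. This should follow from the fact that, for the stationary initial data with distinct asymptotic slopes, the argmax in $h(x,t \mid f) = \sup_y [f(y) + \Ll(y,0;x,t)]$ is a.s.\ unique for each fixed $x$ (a standard consequence of the independence of increments of $f$ off any fixed point together with the regularity of $\Ll$), and the monotonicity of $x \mapsto d_0(x,t;f)$ from \cite[Proposition~4.1]{Rahman-Virag-21}: the set $\{d_0(\cdot,t;f) = 0\}$ is then a single point rather than an interval. I would isolate this as a short lemma, since the same fact will be reused when computing shock fluctuations in Theorem~\ref{thm:shock_fluctuations}.

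The third and main step is the invariance identity itself. Here I would run the KPZ fixed point from the slope-$\pm 2\theta$ invariant measure $\mu_\theta$ (not conditioned), use that $\Law(h(\cdot,t\mid g) - h(0,t\mid g)) = \mu_\theta$ for $g \sim \mu_\theta$, and then condition on the shock location. The point is that the map $g \mapsto h(\cdot, t \mid g)$ is covariant under the shift that moves the shock to the origin: recentering at $I_0^+(t,\cdot)$ on the output side corresponds, in distribution, to having started from the input conditioned to have its shock at the origin, by stationarity of the whole evolution and the fact that the shock location performs (jointly with the profile) a stationary process. Making this rigorous requires care: one must check that conditioning on $\{b_t^+ = 0\}$ (a measure-zero event) is well-defined and compatible with the spatial-shift covariance, which I would handle via a Palm-calculus / disintegration argument, approximating by conditioning on $\{b_t^+ \in [0,\eps]\}$ and using the ergodic/mixing properties of the shock process under spatial shifts.

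The hard part will be precisely this last point: justifying the conditioning on the zero-probability event $\{b_t^+ = 0\}$ and showing that the resulting disintegration of the output law is again $\Law(B+Z)$ rather than some other reweighting. The subtlety is that the shock location $b_t^+$ and the profile $h(b_t^+ + \cdot, t) - h(b_t^+, t)$ are not independent, so a naive Palm argument could introduce a Radon--Nikodym factor; one needs the specific structure (the $\BES^3$ being exactly the $h$-transform of Brownian motion conditioned to stay positive) to see that the reweighting is trivial. An alternative route that sidesteps the measure-zero conditioning is to take $L \to \infty$ in Theorem~\ref{thm:open_converge}, deducing the invariance of $\Law(B+Z)$ from the (conjectural-at-finite-$L$ but limit-rigorous) invariance of the Barraquand--Corwin--Yang measures recentered at their shock; if that theorem's proof is independent, this gives a cleaner derivation, and I would present it as the primary argument with the Palm computation as corroboration.
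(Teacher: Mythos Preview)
Your high-level instincts are right: $B+Z$ is indeed the stationary horizon pair ``seen from its shock,'' and the Rogers--Pitman decomposition is the tool that makes this precise. But neither of the two routes you propose for the invariance step actually closes.

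The Palm/disintegration route is left as a gap you yourself flag, and the paper does \emph{not} resolve it by making the conditioning rigorous. Instead the paper avoids measure-zero conditioning entirely via a time-averaging trick at the level of the \emph{pair} $(f_-,f_+)$. Concretely: start from the jointly invariant measure $\nu_\theta$ on pairs (Proposition~\ref{prop:nuthetainvariant}), add a uniform height shift $(\zeta_L,0)$ with $\zeta_L\sim\mathrm{Unif}[0,L]$ to the first component, and recenter at the shock. Lemma~\ref{lem:convergetonuhat} shows this converges to $\widehat\nu_\theta$ as $L\to\infty$ (this is where the Rogers--Pitman identity enters). The invariance of $\widehat\nu_\theta$ then follows from a short chain of equalities (Theorem~\ref{thm:stat_from_shock_general}): the key step is that $\zeta_L$ and $\zeta_L + (h_-(0,t)-h_+(0,t))$ are asymptotically indistinguishable in total variation, which lets you commute the shock-recentering past the KPZ evolution using only the ordinary invariance of $\nu_\theta$. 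No disintegration, no $h$-transform bookkeeping.

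Your alternative route through Theorem~\ref{thm:open_converge} is circular. That theorem only establishes convergence of \emph{measures}; the open KPZ fixed point dynamics has not been constructed, so there is no finite-$L$ invariance statement to pass to the limit. In the paper the logical direction is the opposite: the full-line shock-invariance is proved first and independently, and Theorem~\ref{thm:open_converge} then identifies the limit of the recentered BCY measures as that already-known invariant object.

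Two smaller points. First, working with the pair $(f_-,f_+)\in\mathcal X(\theta)$ rather than $f=f_-\vee f_+$ is not cosmetic: the shift idempotence (Lemma~\ref{lem:shift_idempotent}) and the max-commutation $h(\cdot\mid f_-\vee f_+)=h(\cdot\mid f_-)\vee h(\cdot\mid f_+)$ (Lemma~\ref{lem:KPZFP_max}) are what let you move between the pair and the single function. Second, the paper does not prove $b_t^-=b_t^+$ by an argmax-uniqueness argument; it is deduced \emph{from} the invariance of $\widehat\nu_\theta$, since the support of $\widehat\nu_\theta$ consists of pairs touching at exactly one point (Lemma~\ref{nutheta_fact}), and the distributional equality $\pi_{b_t^-}\underline h\overset{d}{=}\pi_{b_t^+}\underline h\overset{d}{=}\underline f$ forces the gap to vanish.
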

Theorem~\ref{thm:KPZFP_invmeas_shock} is in fact a special case of a stronger invariance statement, where we build the function $f$ as the maximum of two functions $f_-,f_+$, where the tuple $(f_-,f_+)$ is invariant under recentering by $b_t^{\pm}$. See Theorem~\ref{thm:stat_from_shock_general} (Theorem~\ref{thm:KPZFP_invmeas_shock} is proved immediately afterwards). The proof of invariance comes by starting from a tuple of functions $(f_-,f_+)$ which is not invariant from the perspective of a shock, but is jointly invariant for the KPZ fixed point, in the sense that their joint distribution is preserved (after a global height shift). After a time-averaging, the coupled height functions are shown to converge to a limiting object in Lemma~\ref{lem:convergetonuhat}, whose maximum is distributed as the measure in Theorem~\ref{thm:KPZFP_invmeas_shock}. This strategy of starting from a jointly invariant initial condition is similar to the authors' previous joint work \cite{Dunlap-Sorensen-24} for the KPZ equation with space-time white noise and to the previous work of the first author and Ryzhik \cite{Dunlap-Ryzhik-2020} for the Burgers equation with smooth forcing. However, in each of those previous cases, the limiting object is absolutely continuous with respect to the initial conditions, and the proof that the time-average solution converges to a limit comes by soft ergodicity properties. By contrast, in the present work, the limiting object is singular with respect to the law of the initial condition, and the proof of convergence comes by an explicit comparison using properties of the Bessel process. See the discussion above Lemma~\ref{lem:convergetonuhat}.

Invariant measures from the perspective of a shock have also been studied in the context of the totally asymmetric simple exclusion process (TASEP). In this context, one considers the system with a second-class particle and looks at the configuration of particles centered at the random location of the second-class particle. Such invariant measures are constructed in \cite{Ferrari-Fontes-Kohayakawa-1994}. To do this, they sample a sequence of independent finite configurations of $0$s and $1$s according to a measure given in \cite[Equation (2.7)]{Ferrari-Fontes-Kohayakawa-1994}. They place second-class particles in between these configurations. Then, they select one second-class particle, set all second-class particles to the right to be first-class particles, and all second-class particles to the left to be holes. According to \cite[Theorem~1]{Ferrari-Fontes-Kohayakawa-1994}, this measure is invariant from the perspective of the single remaining second-class particle. It was later shown in \cite[Proposition~2.2]{Martin-Sly-Zhang-2025} that this process started from this initial condition and recentered by the location of the second-class particle is ergodic in time. As the KPZ fixed point is the scaling limit of TASEP \cite{KPZfixed}, one expects to be able to scale this measure and obtain the invariant measures found in Theorem~\ref{thm:KPZFP_invmeas_shock}.

Theorem~\ref{thm:KPZFP_invmeas_shock} is new, but the presence of the Brownian motion and Bessel process has been hinted in previous works. In \cite[Theorem~1.1]{Dauvergne-Sarkar-Virag-2020}, the authors considered an approximate point along the almost sure geodesic between the points $(0,0)$ and $(0,1)$. Letting $\ve \searrow 0$, they choose a time $t_\ve$ such that $t_\ve,1-t_\ve \gg \ve^3$, and consider the maximizer $X_\ve$ of the function $x \mapsto \Ll(0,0;x,t_\ve) + \Ll(x,t_\ve + \ve^3;0,1)$, which, if the $\ve^3$ term were removed, would be exactly the location of the geodesic from $(0,0)$ to $(0,1)$ at time $t_\eps$. 
In \cite[Theorem 1.1]{Dauvergne-Sarkar-Virag-2020}, the authors showed that the tuple
\[
z \mapsto \Bigl(\f{\Ll(0,0;X_\ve + \ve^2z,t_\ve)-\Ll(0,0;X_\ve,t_\ve)}{\ve}, \f{\Ll(X_\ve + \ve^2 z, t_\ve + \ve^3;0,1) - \Ll(X_\ve,t_\ve + \ve^3;0,1)}{\ve}\Bigr),
\]
as a function in $\C(\R,\R^2)$, converges in distribution, as $\ve \searrow 0$, to $(B-Z,-B - Z)$, where $B\sim \BM$ and $Z \sim \BES^3(0)$ are independent. In \cite{Dauvergne-2021}, Dauvergne considered the KPZ fixed point started from an initial condition $f$ that is equal to $-\infty$ everywhere, except for at two points $p_1 < p_2$. Then, for each $t > 0$, the location of the competition interface is a cutoff point $A$ between points $x$ such that $z \mapsto f(z) +\Ll(z,0;x,t)$ is maximized at $z = p_1$ to the left of $A$ and maximized at $z = p_2$ to the right of $A$. Then, \cite[Theorem 1.7]{Dauvergne-2021} states that the recentered process $x \mapsto h(A + x,t \mid f) - h(A,t)$ is locally absolutely continuous with respect to $B + Z$, where $B \sim \BM$, and $Z \sim \BES^3(0)$. We expect that, when started from a large class of initial conditions, the KPZ fixed point recentered at its competition interface location converges to the invariant measures in Theorem \ref{thm:KPZFP_invmeas_shock}, where $\theta$ is determined by the asymptotics of the initial condition.  Note that the initial condition in Theorem \ref{thm:KPZFP_invmeas_shock} satisfies
\[
\lim_{|x| \to \infty}\f{f(x)}{|x|} = 2\theta.
\]
If instead, one assumes that
\[
\lim_{x \to \pm \infty}\f{f(x)}{|x|} = 2\theta_\pm,
\]
with $\theta_+ > \theta_-$, then our result can be used to construct invariant measures from the perspective of the shock satisfying these conditions by using the shear invariance/skew stationarity of the directed landscape (from \cite{Directed_Landscape}, recorded here as Lemma \ref{lm:landscape_symm}\ref{itm:skew_stat}). This is analogous to the construction described in \cite[Remark~1.5]{Dunlap-Sorensen-24}.

\subsection{Convergence of invariant measures from the perspective of a shock for the open KPZ fixed point }
The works \cite{Corwin-Knizel-24,Barraquand-Le_Doussal-2022,Bryc-Wang-Wesolowski-2023} gave an explicit predicted formula for the stationary measure of the open KPZ equation with given boundary conditions $u,v$, which correspond, respectively, to the slopes of the solution at $x = 0$, and the negative of the slope at $x = L$. These formulas have now been proved for the full range of boundary conditions \cite{Barraquand-Corwin-Yang-2024,Dunlap-Gu-Rosati-25}. The authors of \cite{Barraquand-Le_Doussal-2022} also noted that this measure converges under diffusive scaling to a limiting measure, which should be the stationary measure for the conjectural open KPZ fixed point with corresponding boundary conditions (although the KPZ fixed point in the open setting has not yet actually been constructed). The limiting measure has also been realized as the limit of the stationary measure for open ASEP with appropriate boundary conditions \cite{Bryc-Wang-Wesolowski-2023,Bryc-Zatitskii-2024,Wang-Yang-2025}. In the following, we define a notion of a shock for this measure. In the case $u = v < 0$, the location of the shock stays bounded away from the boundaries, and the recentered function converges in distribution to the invariant measure from the perspective of a shock given in Theorem~\ref{thm:KPZFP_invmeas_shock}.

In \cite[Section 1.5]{Barraquand-Corwin-Yang-2024}, this measure is described as the marginal law of $W_1$, where the tuple $(W_1,W_2)$ is distributed according the measure $\mu^{u,v;L}$ on $\C[0,L]$, defined by the Radon-Nikodym derivative
\[
\f{d\mu^{u,v;L}}{d(\BM(-v) \otimes \BM(v))}(W_1,W_2) = \f{1}{C_{u,v,L}} \exp\Bigl\{(u+v)\min_{x \in [0,L]}[W_1(x) - W_2(x)]  \Bigr\},
\]
where $C_{u,v,L}$ is a normalizing constant. Here, $(W_1,W_2) \sim \BM(-v) \otimes \BM(v)$ means that $W_1,W_2$ are independent, with $W_1 \sim \BM(-v)$ and $W_2 \sim \BM(v)$. Consider the change of coordinates 
\[
B = \f{W_1 + W_2}{\sqrt 2},\quad\text{and}\quad Z = \f{W_1 - W_2}{\sqrt 2}.
\]
Then, we can alternatively represent the law of $W_1$ as the law of $\f{B + Z}{\sqrt 2}$ on $[0,L]$, where $B$ and $Z$ are independent, $B \sim \BM$, and $Z \sim \nu^{u,v;L}$, where 
\[
\f{d\nu^{u,v;L}}{d \BM(-2^{1/2} v)}(Z) = \f{1}{C_{u,v,L}} \exp\Bigl\{2^{1/2}(u+v) \min_{x \in [0,L]} Z(x)\Bigr\}. 
\]
for a different normalizing constant $C_{u,v,L}$.
Using Girsanov's theorem, we may rewrite this measure in terms of driftless Brownian motion as follows (changing the constant $C_{u,v,L}$ from line to line): 
\[
\f{d\nu^{u,v;L}}{d \BM}(Z) = \f{1}{C_{u,v,L}} \exp\Bigl\{2^{1/2} (u+v) \min_{x \in [0,L]} Z(x) - 2^{1/2} v Z(L)\bigr\}
\]
When $u = v$, we simply write $\nu^{u;L}$ so that
\[
\frac{\dif\nu^{u;L}}{\dif\BM}(Z)=\frac{1}{C_{u,L}}\exp\left\{ 2^{3/2}u\min_{x\in[0,L]}Z(x)-2^{1/2}uZ(L)\right\}.
\]
We have the following theorem, proved in Section~\ref{sec:open_to_full_conv},  which shows that, for $u = v$, the open KPZ fixed point stationary measure, after recentering by an appropriately defined shock, converges to the measure given in Theorem~\ref{thm:KPZFP_invmeas_shock}.

\begin{theorem} \label{thm:open_converge}
Given $\theta > 0$, let $B \sim \BM$ and $Z \sim \nu^{-\theta;L}$ be independent.
If we set $
A\coloneqq\argmin_{x\in[0,L]}Z(x)$, then 
we have
\begin{equation}
\lim_{L\to\infty}\Law(A/L)=\Uniform([0,1]).\label{eq:AAsymptoticallyuniform}
\end{equation}
Moreover, if we set
\be \label{eq:fA}
f_A(x) := \f{B(A + 2x) +Z(A + 2x) - B(A) - Z(A)}{\sqrt 2}, \quad\text{for }x \in \Bigl[-\f{A}{2},L - \f{A}{2}\Bigr],
\ee
and extend $f_A$ to a continuous function $\R \to \R$ by setting $f_A$ to be constant on $(-\infty,-\f{A}{2}]$ and on $[L - \f{A}{2},\infty)$, 
  then 
$f_A$ converges in distribution as $L\to\infty$, with respect to the topology of uniform convergence on compact sets, to $\wt B + \wt Z$, where $\wt B \sim \BM$ and $\wt Z \sim \BES^3(2\theta)$ are independent. 
\end{theorem}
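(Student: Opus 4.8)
\textbf{Step 1: Asymptotic uniformity of the argmin.}
The plan is to first establish~\eqref{eq:AAsymptoticallyuniform}. Writing the Radon--Nikodym derivative of $\nu^{-\theta;L}$ against $\BM$, the minimum of $Z$ over $[0,L]$ is weighted by the exponential tilt $\exp\{-2^{3/2}\theta\min_{[0,L]}Z - 2^{1/2}\theta Z(L)\}$ (using $u=v=-\theta$). I would first understand the unweighted law: under $\BM(-2^{1/2}\cdot(-\theta))=\BM(2^{1/2}\theta)$ --- i.e.\ before the $\min$ tilt --- a Brownian motion with \emph{positive} drift $2^{1/2}\theta$ on $[0,L]$ has its running minimum at a location that, after rescaling by $L$, concentrates near $0$; but the $\min$-tilt favors configurations whose global minimum is very negative, which fights the drift. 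The heuristic is that the tilted process, restricted to $[0,A]$ and to $[A,L]$ respectively, should look like two independent Bessel-type processes emanating from the minimum, and by a symmetry/entropy computation the location $A/L$ should be asymptotically uniform. Concretely, I would use the decomposition of Brownian motion at its minimum (a Williams-type path decomposition): conditionally on $Z(A)=m$ and $A$, the two halves $(Z(A-s))_{s\in[0,A]}$ and $(Z(A+s))_{s\in[0,L-A]}$ are independent three-dimensional Bessel-type bridges/processes. Computing the density of $(A,m)$ under the tilted measure and integrating out $m$ should produce a density for $A/L$ that converges to the uniform density; the drift terms and the boundary term $Z(L)$ contribute lower-order corrections on the scale $L$. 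This is the step I expect to be the \textbf{main obstacle}: carrying out the path decomposition under the combined drift-plus-min tilt and extracting the exact limiting density requires care with the normalizing constant $C_{u,L}$ and with the competition between the $O(L)$ drift contribution and the $O(\sqrt L)$ fluctuation scale.

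\textbf{Step 2: Local limit of the recentered process.}
Given Step 1, I would condition on the event $\{A\in[\delta L,(1-\delta)L]\}$, which by~\eqref{eq:AAsymptoticallyuniform} has probability close to $1-2\delta$, so that both $[-A/2,0]$ and $[0,L-A/2]$ contain any fixed compact interval once $L$ is large. On this event, by the Williams-type decomposition of $Z\sim\nu^{-\theta;L}$ at its argmin $A$: conditionally on $A$ and on $Z(A)$, the process $s\mapsto Z(A+s)-Z(A)$ on $[0,L-A]$ is, up to the drift reweighting, a Bessel-$3$ process started from $0$, and independently $s\mapsto Z(A-s)-Z(A)$ on $[0,A]$ is another such process. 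The drift $-2^{1/2}\theta$ of the ambient Brownian motion becomes, after centering at the minimum, a Bessel-$3$ with drift: more precisely I expect $s\mapsto Z(A+s)-Z(A)$ to converge to $\BES^3(c\theta)$ for the appropriate constant $c$ (and similarly, with the opposite-sign drift absorbed, on the left), where the constant $c$ is pinned down by matching the SDE $\dif Z = \theta\coth(\theta Z)\dif x + \dif W$ quoted in the preliminaries after rescaling. Since $B\sim\BM$ is independent of $Z$ and a deterministic spatial shift of a two-sided Brownian motion followed by recentering at a point is again a two-sided Brownian motion, the $B$-part of $f_A$ in~\eqref{eq:fA} is, for each fixed realization of $A$, exactly distributed as $\BM$ restricted to the relevant interval (after the $1/\sqrt2$ and the $A+2x$ reparametrization, which rescales time by $2$, matching the $\sqrt2$). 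Combining, $f_A(x)=\tfrac{1}{\sqrt2}\bigl((B-\text{part})+(Z-\text{part})\bigr)$ converges in law, uniformly on compacts, to $\tfrac{1}{\sqrt2}(\tilde B_{\cdot} + \text{two-sided Bessel with drift})$; one then checks the constants so that this equals $\tilde B + \tilde Z$ with $\tilde Z\sim\BES^3(2\theta)$.

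\textbf{Step 3: Assembling the two sides and removing the conditioning.}
The final point is that the limiting object must be a \emph{two-sided} process: $\tilde Z$ on $[0,\infty)$ is $\BES^3(2\theta)$ and, by the symmetry of the construction (the min tilt and the decomposition at $A$ treat left and right symmetrically, modulo the drift sign which is handled by the reflection $x\mapsto -x$ acting on $\BM(\pm2^{1/2}\theta)$), the reflection $x\mapsto\tilde Z(-x)$ on $[0,\infty)$ is an independent copy of $\BES^3(2\theta)$ --- this is exactly the two-sided Bessel-$3$ with drift appearing in Theorem~\ref{thm:KPZFP_invmeas_shock}. For independence of $\tilde B$ and $\tilde Z$ in the limit: $B$ and $Z$ are independent at finite $L$, and the recentering location $A$ is measurable with respect to $Z$ alone, so conditionally on $A$ the $B$-part remains an independent Brownian motion; independence is preserved under the weak limit. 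Finally, to pass from the conditioned statement to the unconditional one, let $\delta\to0$: the contribution of the event $\{A\notin[\delta L,(1-\delta)L]\}$ to any bounded continuous test functional of $f_A$ restricted to a fixed compact set is $O(\delta)$ uniformly in $L$ by Step 1, so the unconditional weak limit agrees with the conditional one. A standard tightness argument (the increments of $f_A$ on compacts are controlled uniformly in $L$ by those of a Brownian motion plus a Bessel process, both of which have locally Hölder-$(1/2-)$ sample paths with uniform modulus bounds) upgrades convergence of finite-dimensional distributions to convergence in the topology of uniform convergence on compact sets.
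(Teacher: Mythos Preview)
Your strategy---decompose $Z$ at its argmin and analyze the two halves separately---matches the paper's, but the execution differs in a way that matters. You invoke a Williams-type decomposition (condition on both $A$ and $Z(A)$, claim the halves are Bessel-$3$ pieces). On a \emph{finite} interval this is not correct as stated: the clean tool is Denisov's decomposition, which says that under plain $\BM$ on $[0,L]$, conditional on $A$ alone, the processes $(Z(A\pm s)-Z(A))_{0\le s\le A,\,L-A}$ are independent Brownian \emph{meanders} of lengths $A$ and $L-A$. Bessel processes only appear after sending the meander length to infinity; conflating the two is the main imprecision in your plan.

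This distinction is what makes the paper's proof run cleanly. For \eqref{eq:AAsymptoticallyuniform}, the Radon--Nikodym density of $\nu^{-\theta;L}$ against $\BM$ depends on $Z$ only through $Z(0)-Z(A)$ and $Z(L)-Z(A)$, so given $A$ it factors into two independent exponential moments of meander \emph{endpoints}. These endpoints have an explicit Rayleigh law; combined with the arcsine density for $A$ under $\BM$, one obtains a closed-form expression for the density of $A/L$ under $\nu^{-\theta;L}$, and the uniform limit follows by dominated convergence. Your plan to compute the joint density of $(A,m)$ and integrate out $m$ is more circuitous and, as you flagged, would indeed be the hard step. For the second part, the same tilt turns each meander into a Brownian meander with drift $2^{1/2}\theta$ (a one-line Girsanov), and the paper then proves as a separate short lemma that $\Bm(\theta;\ell)\to\BES^3(\theta)$ weakly on compacts as $\ell\to\infty$; since $\min\{A,L-A\}\to\infty$ in probability by the first part, this delivers the Bessel limit for the $Z$-half. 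Your treatment of the independent $B$-half and of the removal of the $\delta$-conditioning is fine, and no separate tightness argument is needed once the meander-to-Bessel convergence is established in the right topology.
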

Theorem~\ref{thm:open_converge} is proved in Section~\ref{sec:open_to_full_conv}.
   We interpret the random variable $A$ in Theorem~\ref{thm:open_converge} as the location of a shock, which we note is not measurable with respect to the initial condition. Instead, the shock is measurable with respect to $(B,Z)$, and then the invariant initial condition is constructed as $B + Z$. Once the shock location is identified, the key to the proof follows by the arcsine law of the maximizer of Brownian motion and Denisov's path decomposition (recorded here as Proposition \ref{prop:BM_min}).  The latter states that, given the maximizer of Brownian motion on a compact interval, the Brownian motion to the left and right of the maximizer are independent Brownian meanders.  While these follow by classical facts about Brownian motion, the key innovation in the proof of Theorem \ref{thm:open_converge} is to identify the appropriate notion of a shock, noting that one cannot do this directly from the initial condition itself. 
   
   The convergence of the rescaled shock to the uniform distribution can be compared with \cite[Theorem~1.6]{Wang-Wesolowski-Yang-2024}, where it was shown that rescaled and recentered height functions for stationary open ASEP converge, as the number of sites converges to $\infty$, to a continuous function with two linear segments, whose transition point is given by a uniform random variable. The notion of a shock in that setting, however, was not directly defined.

\subsection{Characterization of the invariant measures for the KPZ fixed point}
Our next main theorem gives a characterization of all invariant measures for the KPZ fixed point. It has been known since \cite{KPZfixed} that for each $\theta \in \R$, $\BM(2\theta,\sqrt 2)$ is invariant for the recentered KPZ fixed point. ``Recentering'' here means 
recentering by the value at $x = 0$. That is, we say that $\mu$ is an invariant measure for the recentered KPZ fixed point evolution if, when $f \sim \mu$, for all $t > 0$, we have 
\[
\Law\bigl(h(\cdot,t \mid f)- h(0,t \mid f)\bigr) = \mu.
\]
Without the recentering, there are no invariant measures because the KPZ fixed point grows in time. In \cite{Pimentel-21a,Pimentel-21b}, it was shown that $\BM(0,\sqrt 2)$ is the unique invariant measure under a certain scaling condition. This was generalized in \cite{Busa-Sepp-Sore-22a}, where it was shown that, for each $\theta \in \R$, $\BM(2\theta,\sqrt 2)$ is the unique invariant measure supported on functions $f$ satisfying the following condition:
\begin{equation} \label{eqn:drift_assumptions}
    \begin{aligned}
    &\text{if } \theta = 0, \quad &\liminf_{x \to -\infty} \f{f(x)}{x} \in [0,+\infty]  \qquad &\text{and}\quad &\limsup_{x \to +\infty} \f{f(x)}{x} \in [-\infty,0], \\
    &\text{if } \theta > 0,\quad  &\liminf_{x \to -\infty} \f{f(x)}{x} \in (-2\theta,+\infty]\qquad&\text{and}\quad &\lim_{x \to +\infty} \f{f(x)}{x} = 2\theta, \\
    &\text{and if } \theta < 0,\quad &\lim_{x \to -\infty} \f{f(x)}{x} = 2 \theta\qquad&\text{and}\quad &\limsup_{x \to +\infty} \f{f(x)}{x} \in [-\infty, -2\theta).
    \end{aligned}
\end{equation}
These conditions form natural (although not complete) basins of attraction for solutions that are invariant under spatial and temporal translations. Similar results for another zero-temperature model in the KPZ universality class, this one without any integrability structure, were shown in \cite{Bakhtin-Cator-Konstantin-2014}; see also \cite{EKMS-2000,Bakhtin-2013,Bakhtin-16,Bakhtin-Li-19} for similar results in this direction from a qualitative perspective. The work \cite{Janj-Rass-Sepp-22} gives similar basins of attraction in the KPZ equation. 
In the following theorem, we characterize all extremal invariant measures for the KPZ fixed point, without any assumptions on the asymptotic slope. In the present paper, we work on the state space
\be \label{eq:CFP}
\begin{aligned}
\mathcal{C}_{\mathrm{FP}}\coloneqq\Bigl\{ f:\C(\R):  \sup_{x \in \R}[f(x) - ax^2] < \infty \text{ for all }a > 0\Bigr\},
\end{aligned}
\ee
and we describe its topology (and therefore also its Borel $\sigma$-algebra) in Section \ref{sec:state_space}.
\begin{theorem} \label{thm:KPZFP_characterize}
    A measure $\mu$ on $\mathcal{C}_{\mathrm{FP}}$ (defined in \eqref{eq:CFP} below) is an extremal (i.e.,\ time-ergodic) invariant measure for the recentered KPZ fixed point if and only if $\mu = \BM(2\theta,\sqrt 2)$ for some $\theta \in \R$.
\end{theorem}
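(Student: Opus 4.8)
The plan is to prove the ``only if'' direction, since the ``if'' direction (that each $\BM(2\theta,\sqrt 2)$ is extremal invariant) follows from known invariance together with the ergodicity of Brownian motion with drift under the KPZ fixed point evolution. So suppose $\mu$ is an extremal invariant measure for the recentered KPZ fixed point. The first and main step is to show that $\mu$ must be supported on functions $f$ with deterministic asymptotic slopes, i.e.\ there exist $\theta_-,\theta_+$ with $\lim_{x\to-\infty}f(x)/x = 2\theta_-$ and $\lim_{x\to+\infty}f(x)/x = 2\theta_+$ almost surely. I would do this by exploiting the variational formula \eqref{eq:FKPFP_def}: under the KPZ fixed point evolution for time $t$, the solution near $\pm\infty$ is governed by the large-$|y|$ behavior of $f(y) + \Ll(y,0;x,t)$, and since the directed landscape has sublinear growth in its spatial arguments (parabolic decay), the asymptotic slope of $h(\cdot,t\mid f)$ at $\pm\infty$ equals that of $f$. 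Thus the pair $(\liminf_{x\to-\infty}f(x)/x, \limsup_{x\to+\infty}f(x)/x)$ — or more precisely the slopes, once shown to exist — is an invariant function of the evolution, hence constant by extremality (time-ergodicity). A complementary argument, using that the KPZ fixed point from a Brownian-with-drift initial condition converges in slope and that the drift is preserved, or a direct $0$--$1$ law via ergodicity, upgrades $\liminf$/$\limsup$ to genuine limits. This pins down $\mu$ to be supported on functions with a fixed left slope $2\theta_-$ and fixed right slope $2\theta_+$.

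The second step splits into cases according to $(\theta_-,\theta_+)$. If $\theta_- \le \theta_+$ (the ``rarefaction'' or flat regime), then after the shear/skew-stationarity of the directed landscape (Lemma~\ref{lm:landscape_symm}\ref{itm:skew_stat}) one may reduce to a slope configuration covered by \eqref{eqn:drift_assumptions}, and the characterization of \cite{Busa-Sepp-Sore-22a} forces $\mu = \BM(2\theta,\sqrt 2)$ with $2\theta = \theta_- = \theta_+$. Here one must be careful that the conditions \eqref{eqn:drift_assumptions} are stated as one-sided limit/$\liminf$ conditions, so the fixed-slope property established in Step 1 places $\mu$ squarely inside the relevant basin; the one-force-one-solution principle cited there does the rest. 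The remaining case is $\theta_- < \theta_+$ strictly with, after shearing, $\theta_- = -\theta$ and $\theta_+ = +\theta$ for some $\theta > 0$: this is the genuine shock regime not covered by the earlier literature.

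The third step handles the shock case $\theta_- = -2\theta < 2\theta = \theta_+$ and is where the real work lies. The idea is to compare the evolution from $f\sim\mu$ with the evolution from the explicit initial data $g(x) = 2\theta|x|$, using the coupling through a single directed landscape. One shows that the recentered-at-$0$ KPZ fixed point started from $g$ develops a shock whose fluctuations are computed in Theorem~\ref{thm:shock_fluctuations}; in particular the shock location, when measured from $x=0$, is not tight — it fluctuates on a nontrivial scale and, crucially, wanders off to $\pm\infty$ or has a nondegenerate limiting law that is incompatible with $0$ being a fixed recentering point for a stationary process. More precisely, an invariant measure recentered deterministically at $x=0$ cannot ``see'' a shock at a fluctuating location: if $\mu$ were supported on functions with a genuine interior shock, the shock location would itself evolve, and stationarity of the recentered-at-$0$ process would force the shock to remain pinned near $0$ in law, contradicting the derived shock fluctuations (which show the shock escapes any bounded window). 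This rules out $\mu$ with $\theta_- < \theta_+$, completing the classification.

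I expect Step 3 to be the main obstacle: making rigorous the statement ``a process stationary under recentering at $0$ cannot carry a delocalizing shock'' requires carefully relating the invariant measure $\mu$ to the shock-location functionals $I_0^\pm(t,\cdot)$ of \eqref{eq:Ippm}, controlling that $d_0(\cdot,t;f)$ for $f\sim\mu$ genuinely changes sign (so that a shock exists), and then importing the quantitative shock-fluctuation estimates of Theorem~\ref{thm:shock_fluctuations} for the comparison initial data. Step 1 is conceptually routine given parabolic bounds on $\Ll$, but one must take care that the convergence of slopes holds almost surely and jointly, so that extremality can be applied to a genuinely measurable invariant quantity.
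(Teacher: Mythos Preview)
Your three-step outline matches the paper's architecture, but Step~1 has a genuine gap that you underestimate when you call it ``conceptually routine given parabolic bounds on $\Ll$.'' You propose to show that the $\liminf$ and $\limsup$ of $f(x)/x$ as $x\to\pm\infty$ are invariant under the evolution, hence deterministic by extremality, and then upgrade to honest limits by ``a complementary argument'' or ``a direct $0$--$1$ law via ergodicity.'' The problem is that slope preservation (Lemma~\ref{lem:slope_conserved}) is only proved when the limits already exist; there is no result saying the $\liminf$ and $\limsup$ are separately conserved by the KPZ fixed point, and the maximizing nature of the variational formula makes this far from obvious. Nor is any zero--one law available that forces $\liminf=\limsup$. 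The paper explicitly remarks that the analogous step for the KPZ equation in \cite{Janj-Rass-Sepp-22} relied on a quenched martingale argument with no zero-temperature analogue, so a soft ergodicity argument of the kind you sketch is not expected to work here.

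The paper's actual route through Step~1 (Proposition~\ref{prop:inv_meas_have_slope}) is substantially different and is where the new ideas lie: construct an eternal solution $b$ associated to $\mu$ by sending the initial time to $-\infty$ (Lemma~\ref{lem:eternal_soln_existence}); from $b$ build the family of semi-infinite geodesics $g_{(x,0)}^{b,\mathrm R}$ (Proposition~\ref{prop:geodesics_from_b}); prove via a stationarity-in-time argument (Lemma~\ref{lem:only_3_slopes}) that these geodesics can take at most three distinct directions as $x$ ranges over $\R$; and finally use coalescence with Busemann geodesics (Propositions~\ref{prop:Busani_N3G} and~\ref{prop:geod_ordering}) to identify the extreme geodesic directions with the asymptotic slopes of $b(\cdot,0)$. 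This also delivers the ordering $\theta_-\le\theta_+$ automatically, which your case split in Step~2 muddles (you first call $\theta_-\le\theta_+$ the handled case and then name $\theta_-<\theta_+$ as the remaining one). For Step~3 your intuition is correct, but the paper's mechanism (Proposition~\ref{prop:no_V_shaped}) is more specific than comparing with $2\theta|x|$: one writes $f_{\mathsf V}=V[f_-,f_+]$ via a measurable section $\mathbf A$ (Lemma~\ref{lem:VAinverses}), uses invariance of $\mu$ to show $(h_+-h_-)(0,U_T)$ is tight along a time average, applies Krylov--Bogoliubov to produce a two-component invariant measure with $\pi_0$-marginal $\nu_\theta$, and then contradicts Proposition~\ref{prop:hpm_dif}.
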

The proof of Theorem \ref{thm:KPZFP_characterize} has three main ingredients. The first is to show that all extremal invariant measures are supported on functions having fixed asymptotic slopes at $-\infty$ and $+\infty$. This is done in Proposition \ref{prop:inv_meas_have_slope}. 
Given an invariant measure, we construct an eternal (i.e., bi-infinite in time) solution to the KPZ fixed point by taking subsequential limits as the initial time goes to $-\infty$. Given an eternal solution, it was recently shown in \cite{Bhattacharjee-Busani-Sorensen-25} that one can construct a family of semi-infinite geodesics from it. The key new innovation comes in Lemma~\ref{lem:only_3_slopes}, where we use stationarity to show that there can be no more than $3$ distinct directions of semi-infinite geodesics starting from a single time horizon. By monotonicity of these geodesic directions, when we look far to the right, all geodesics built from the eternal solution will have the same direction. Then, in Proposition \ref{prop:inv_meas_have_slope}, we use coalescence properties from \cite{Busa-Sepp-Sore-22a,Busani-2023} to show that that the extreme directions of the bi-infinite geodesics correspond to the asymptotic slopes of the solution.  An analogous result that extremal invariant measures are supported on functions with slopes was proved in \cite{Janj-Rass-Sepp-22} for the KPZ equation, which also uses eternal solutions, but the techniques in zero-temperature are quite different. In particular, the proof in \cite{Janj-Rass-Sepp-22} uses a martingale argument in the quenched environment. In the zero-temperature setting, there is no additional layer of randomness, and there is no analogous martingale one can construct. 

The next ingredient is the uniqueness under the slope conditions \eqref{eqn:drift_assumptions}, which was shown previously in \cite{Busa-Sepp-Sore-22a} (see Propositions~\ref{prop:invariance_of_SH} and~\ref{prop:uniform_upbd} below). Given this input, to complete the classification of extremal invariant measures, it remains to rule out the case of an invariant measure supported on functions $f$ satisfying $\lim_{|x| \to \infty} \f{f(x)}{|x|} = 2\theta$ for some $\theta > 0$. We call such functions \textit{$V$-shaped functions}. In Section~\ref{sec:no_V-shape}, we rule out the existence of invariant measures supported on $V$-functions in several steps. On a high level, the idea is to show that, when started with a $V$-shaped initial condition, there is a random shock that fluctuates on order greater than $O(1)$ as $t \to \infty$. The study of shocks for three choices of initial conditions is discussed in the following section. 

It is a natural problem to classify all extremal (time-ergodic) stationary measures for a Markov process. In the context of interacting particle systems, Spitzer \cite{Spitzer-1970} initiated this work for the simple exclusion process on $\Z$. The works of Spitzer
and Liggett proved that i.i.d.~Bernoulli measures are the
only extremal stationary measures 
in the case when the transition rates are symmetric in space \cite{Liggett-1973,Liggett1974a,Spitzer1974},
and in the case when the Markov chain is positive recurrent and reversible
\cite{Liggett1974b}. In a more complicated setting that does not enjoy these symmetries, Liggett \cite{Liggett1976} showed that in the context of the asymmetric simple exclusion process (ASEP), the extremal stationary measures consist of the family of i.i.d.\ Bernoulli measures and a family of blocking measures. This is a general classification that also works in the case of TASEP, noting that the blocking measures depend on the asymmetry parameter. ASEP scales to the KPZ equation \cite{Bertini-Giacomin-1997} as the asymmetry parameter tends to $1$ at the appropriate rate compared to the scaling of space and time, and also scales to the KPZ fixed point as the asymmetry parameter is fixed and space and time are scaled \cite{Aggarwal-Corwin-Hegde-2024a,Aggarwal-Corwin-Hegde-24b}. The set of blocking measures does not survive these scalings, so it is natural to expect that Brownian motions with drift are the only extremal stationary measures in both of these settings. However, these scaling limits can be used to show that Brownian motions with drift are invariant, but cannot tell us that these are the \textit{only }extremal stationary measures. The methods used for the classification in particle systems also rely heavily on the discrete nature of the system, so they cannot be transferred to the continuum setting.  In our previous work \cite{Dunlap-Sorensen-24}, we completed this classification for the KPZ equation. This built on the previous aforementioned work \cite{Janj-Rass-Sepp-22}, which showed that all extremal stationary measures are either Brownian motions with drift or are supported on $V$-shaped initial conditions. In the present work, we do not have the same starting point, so, as mentioned above, we need to prove that the extremal stationary measures are supported on functions having fixed asymptotic slopes, and the methods for doing so in this zero-temperature continuum setting are different.

\subsection{Fluctuations of the shocks}
Our last main theorem concerns fluctuations of the shock location for three choices of initial data. We define shocks in terms of the location of competition interfaces. An equivalent definition of shocks under a technical assumption (satisfied by each of the cases below) is described in Proposition~\ref{prop:interfaces_are_shocks}.

\begin{theorem} \label{thm:shock_fluctuations}
We have the following distributional convergences as $t \to \infty$.
\begin{enumerate}[label=\rm(\roman{*}), ref=\rm(\roman{*})]  \itemsep=3pt
\item \rm{(}Invariant measure from the shock\rm{)}\label{itm:stat_shock} If  $f$ is distributed as in Theorem~\ref{thm:KPZFP_invmeas_shock}, and we define 
\be
    b_t^- = I_0^-(t,f),\quad\text{and}\quad b_t^+ = I_0^+(t,f),\label{eq:btsimple}
\ee then 
\[
\f{b_t^+}{\sqrt t} \Longrightarrow \Nor\bigl(0,(4\theta)^{-1}\bigr). 
\]
(Recall that $b_t^- = b_t^+$ with probability $1$ by Theorem~\ref{thm:KPZFP_invmeas_shock}.)
\item \rm{(}Deterministic initial data\rm{)} \label{itm:flat_shock} If $f(x) = 2\theta |x|$ and $b_t^+$ and $b_t^-$ are defined as in \eqref{eq:btsimple}, then
\[
\f{b_t^+ - b_t^-}{t^{1/3}} \Longrightarrow 0,\quad\text{and}\quad\f{b_t^+}{t^{1/3}} \Longrightarrow \f{X_1 - X_2}{2^{8/3}\theta},
\]
where $X_1$ and $X_2$ are independent Tracy-Widom GOE random variables. 
\item \rm{(}Jointly invariant initial data\rm{)} \label{itm:joint_stat_shock} If $f(x)  = f_-(x) \vee f_+(x)$, where $(f_-,f_+) \sim \nu_\theta$ (see Definition~\ref{nutheta_def}), and we define
\[
b_0^- = \inf\{x \in \R: f_-(x) = f_+(x)\},\quad b_0^+ = \sup\{x \in \R: f_-(x) = f_+(x)\},
\]
and for $t > 0$, define
\[
b_t^{\pm} = I_{b_0^{\pm}}^{\pm}(t,f),
\]
then we obtain the same convergence as in Item~\ref{itm:stat_shock} and also have 
\[
\f{b_t^+ - b_t^-}{\sqrt t} \Longrightarrow 0.
\]
\end{enumerate}
\end{theorem}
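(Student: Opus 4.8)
The three items share a common engine: for a competition interface location $b_t^{\pm}$ built from $I_p^{\pm}(t,f)$, the event $\{b_t^+ \le x\}$ is essentially the event that the argmax in $h(x,t\mid f) = \sup_y[f(y)+\Ll(y,0;x,t)]$ lies to one side of the splitting point $p$, which in turn (by monotonicity of $d_p(\cdot,t;f)$ from \cite[Prop.~4.1]{Rahman-Virag-21}) can be rewritten as a comparison between two restricted suprema $\sup_{y\ge p}$ and $\sup_{y\le p}$ of $f(y)+\Ll(y,0;x,t)$. So the plan is: for each choice of $f$, reduce the fluctuation question to the asymptotic behavior of the difference of two such restricted last-passage values, rescale appropriately, and identify the limit using known one-point and process-level limit theorems for the directed landscape (and, for item~(i), the explicit Brownian-plus-Bessel structure together with the invariance in Theorem~\ref{thm:KPZFP_invmeas_shock}, and, for item~(iii), the joint invariance of $\nu_\theta$). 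The coalescence of $b_t^-$ and $b_t^+$ at a single point for fixed $t$ (already asserted in Thm.~\ref{thm:KPZFP_invmeas_shock}) gives the degenerate limits $\tfrac{b_t^+-b_t^-}{t^{1/3}}\Rightarrow 0$ in (ii) and $\tfrac{b_t^+-b_t^-}{\sqrt t}\Rightarrow 0$ in (iii) once one shows the gap does not grow — this should follow from a deterministic monotonicity/sandwich bound showing $b_t^+ - b_t^-$ is controlled by the modulus of continuity of the relevant landscape increments near the coalescence point.

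For item~(ii), with $f(x) = 2\theta|x|$, the key observation is the exact variational identity: by the metric composition and parabolic symmetry of $\Ll$ (Lemma~\ref{lm:landscape_symm}), $\sup_{y\ge 0}[2\theta y + \Ll(y,0;x,t)]$ and $\sup_{y\le 0}[-2\theta y+\Ll(y,0;x,t)]$ are, after the $t^{1/3}$ rescaling and after absorbing the deterministic parabola, governed by two \emph{independent} copies of the Airy$_2$-type process minus a parabola (independence coming from the fact that, in the $t\to\infty$ limit, the two restricted problems localize their maximizers at macroscopically separated, hence decorrelated, locations — this is where one invokes the known decoupling of the directed landscape on disjoint space intervals). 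The shock location $b_t^+$ is the crossing point of these two rescaled profiles; linearizing near the crossing and using that each profile has a one-point Tracy--Widom GOE marginal (the GOE appearing because each is a maximum of a two-sided process, i.e.\ a point-to-half-line passage value), one gets $b_t^+/t^{1/3} \Rightarrow (X_1-X_2)/(2^{8/3}\theta)$; the constant $2^{8/3}\theta$ is bookkeeping from the slope $2\theta$ of the two linear arms and the $2^{1/3}$-type scaling constants of $\Ll$. The main obstacle here is making the independence of $X_1,X_2$ rigorous: one must show the maximizers of the two half-line problems are, with high probability, separated by a distance $\gg t^{2/3}$, so that the relevant landscape values depend on disjoint regions and the known independence-on-disjoint-intervals property applies.

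For item~(i), one exploits that $f = B + Z$ with $Z\sim\BES^3(2\theta)$ already sits at the shock (Thm.~\ref{thm:KPZFP_invmeas_shock}), so $h(b_t^+ + \cdot, t\mid f) - h(b_t^+,t\mid f) \deq f$; the nontrivial content is the \emph{scale} of $b_t^+$. The plan is to write $b_t^+$ as the argmax-comparison point and compare with the analogous quantity for the jointly-invariant data $\nu_\theta$ of item~(iii), for which one can run a more hands-on computation: by the joint invariance, the pair of restricted maxima evolves stationarily, and the shock increment over a time interval is a martingale-like (or at least stationary-increment) process whose quadratic variation one can compute explicitly, yielding the Gaussian limit $\Nor(0,(4\theta)^{-1})$ via a martingale CLT — the variance constant $(4\theta)^{-1}$ coming from the $\BES^3(2\theta)$ drift controlling how sharply the minimum of $Z$ is pinned. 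One then transfers this to item~(i) by showing the two initial conditions lead to shocks that differ by $o(\sqrt t)$, using a coupling through the directed landscape and the fact that both recentered solutions converge to the same invariant measure (Lemma~\ref{lem:convergetonuhat}). The principal difficulty across (i) and (iii) is establishing the stationary-increment / martingale structure of $t\mapsto b_t^{\pm}$ with a computable variance: unlike the equilibrium ASEP second-class particle, here one does not have an a priori martingale, so one must build it from the joint invariance of $\nu_\theta$ and control the error terms coming from the fact that $b_t^-=b_t^+$ only almost surely for each fixed $t$ but the exceptional times form a nontrivial set.
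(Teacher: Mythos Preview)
Your proposal has the right large-scale instincts but misses the paper's actual mechanism and contains some genuine gaps.

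\textbf{The missing engine.} The paper does not work with the restricted suprema $\sup_{y\ge p}$ versus $\sup_{y\le p}$ directly. Instead it writes $f=f_-\vee f_+$ with $f_\pm$ having asymptotic slope $\pm 2\theta$ (in case~(ii), $f_\pm(x)=\pm 2\theta x$; in cases~(i) and~(iii) the pair comes from $\widehat\nu_\theta$ or $\nu_\theta$), sets $h_\pm(\cdot,t)=h(\cdot,t\mid f_\pm)$, and observes via Proposition~\ref{prop:interfaces_are_shocks} that $b_t^\pm$ are exactly the zero set of $\mathcal J(x,t)\coloneqq h_+(x,t)-h_-(x,t)$. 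A single deterministic lemma (Lemma~\ref{lem:h_to_b}) then says: if $\mathcal J(\cdot,t)$ is nondecreasing with asymptotic slope $4\theta$, if $t^{-\alpha}\mathcal J(0,t)\Rightarrow Y$, and if $t^{-\alpha}\sup_x[|\mathcal J(x,t)-\mathcal J(0,t)-4\theta x|-\eps|x|]\to 0$ in probability, then $t^{-\alpha}b_t^\pm\Rightarrow -Y/(4\theta)$ and $t^{-\alpha}(b_t^+-b_t^-)\to 0$. So all three items reduce to computing the fluctuations of the single scalar $h_+(0,t)-h_-(0,t)$, plus a uniform linear-approximation bound. This linearize-around-zero trick is the heart of the proof and is absent from your plan.

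\textbf{The martingale idea is not how (i) and (iii) are handled.} There is no martingale CLT in the paper. For~(iii) (Proposition~\ref{prop:hpm_dif}) one uses joint invariance of $\nu_\theta$ to write $h_+(-\theta t,t)-h_+(0,t)-(h_-(\theta t,t)-h_-(0,t))\deq f_+(-\theta t)-f_-(\theta t)$, which is an explicit Brownian quantity giving the Gaussian; the remaining term $h_+(-\theta t,t)-h_-(\theta t,t)$ is $o(\sqrt t)$ by skew stationarity and $t^{1/3}$-scaling. For~(i) (Proposition~\ref{prop:tilt_h=-_dif}) the paper does \emph{not} compare with~(iii); it computes directly, using that under $\widehat\nu_\theta$ the processes $f_\pm(\cdot)\mp 2\theta(\cdot)$ are explicit (Brownian plus Bessel minus drift), so $f_2(\theta t)-f_1(-\theta t)$ is a sum of two independent terms each of which satisfies a CLT by elementary Bessel estimates (Lemma~\ref{lem:Bess_lin_bd}). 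Your proposed coupling-transfer from~(iii) to~(i) would need to control the difference of two shock locations under two singular initial laws, which is harder than the direct route.

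\textbf{Item~(ii): right idea, wrong source for GOE.} Your decoupling intuition is correct, and the paper implements it by coupling three directed landscapes $\mathcal L,\mathcal L_-,\mathcal L_+$ through Poisson last-passage percolation (Proposition~\ref{prop:Poisson_to_DL}), with $\mathcal L_\pm$ independent; on a high-probability event the relevant geodesics stay in disjoint half-planes so $h_\pm(\pm t^{1/2},t)$ can be computed in $\mathcal L_\pm$. However, the GOE marginals arise because after a skew shift (Lemma~\ref{lem:flat_rescale}) each $h_\pm(0,t)$ is distributed as $t^{1/3}h_0(0,1)+\theta^2 t$ with $h_0$ the KPZ fixed point from \emph{flat} initial data, whose one-point law is $2^{-2/3}$ times Tracy--Widom GOE via the Airy$_1$ process. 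It is point-to-line, not point-to-half-line as you wrote.
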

Theorem~\ref{thm:shock_fluctuations} is proved in Section~\ref{sec:shocks}. An analogous theorem for the KPZ equation was proved in our previous work \cite[Theorem~1.9]{Dunlap-Sorensen-24}. Similarly as in that work, the key is to represent $f$ as a function of $f_-$ and $f_+$ for appropriate choices of initial conditions (in this case, the maximum, which differs from the positive temperature case). We then prove fluctuation results for the difference $h_+(0,t) - h_-(0,t)$, where $h_{\pm}(x,t) = h(x,t \mid f_{\pm})$; see Propositions~\ref{prop:hpm_dif},~\ref{prop:tilt_h=-_dif}, and~\ref{prop:flat_h_dif}. The shocks can then also be defined as the leftmost and rightmost locations $x$ where $h_-(x,t) = h_+(x,t)$; see Proposition~\ref{prop:interfaces_are_shocks}.  The solution $x \mapsto h_\pm(x,t)$ has slope $\pm 2\theta$, so we make the approximation
\[
h_+(x,t) -h_+(t,0 )  - (h_-(x,t) - h_-(0,t)) \approx 4\theta x.
\]
Then, we have 
\[
b_t^- \approx b_t^+ \approx \f{h_-(0,t) - h_+(0,t)}{4\theta}.
\]
Lemma~\ref{lem:h_to_b} makes this precise, allowing us to translate our results on the fluctuations of $h_+(0,t) - h_-(0,t)$ to the fluctuations of the shock. For Item~\ref{itm:joint_stat_shock}, this comes via an explicit calculation,  the fact that $\nu_\theta$ is a jointly invariant measure for the KPZ fixed point \cite{Busa-Sepp-Sore-22a}, and rescaling properties of the directed landscape from \cite{Directed_Landscape}. This is quite similar to the analogous result in \cite{Dunlap-Sorensen-24}. The analogue to Item~\ref{itm:stat_shock} in the context of the KPZ equation came because the two relevant measures were mutually absolutely continuous, so the computation boiled down to controlling the Radon-Nikodym derivative. In the current setting of the KPZ fixed point, this absolute continuity does not hold, so we instead make explicit estimate using properties of the Bessel process. To get Item~\ref{itm:flat_shock}, the Tracy--Widom GOE distribution appears after shear invariance since the KPZ fixed point with flat initial data has this one-point distribution. However, to get the independence of the two random variables, we employ a decoupling technique for the directed landscape via an approximation from Poisson last-passage percolation, using the convergence proved in \cite{Dauvergne-Virag-21}. This bears similarity to the approach taken in the proof of \cite[Proposition~2.6]{Dauvergne-2024}.

Results of this flavor have been shown previously for particle systems. Ferrari and Fontes \cite{Ferrari-Fontes-1994,Ferrari-Fontes-1994b} showed that, when started from a configuration in ASEP with a single second-class particle, and independent i.i.d.\ Bernoulli measures of different rates to the left and right of the second-class particle, the temporal process of second-class particle converges under diffusive scaling to a Brownian motion. Ferrari, Ghosal, and Nejjar \cite{Ferrari-Ghosal-Nejjar-2019} later showed that, in TASEP, when started from a deterministic initial condition with equally-spaced particles to the left and right of the second-class particle, the particle fluctuates on the order $t^{1/3}$ and similarly converges to the difference of two independent Tracy-Widom GOE random variables.

\subsection{Organization of the paper}
Section~\ref{sec:prelim} gives some preliminaries on the functions spaces for the KPZ fixed point, the directed landscape, and the stationary horizon. The following three sections contain the proofs of the main results. Theorems~\ref{thm:KPZFP_invmeas_shock} and~\ref{thm:open_converge} are proved in Section~\ref{sec:inv_meas_shock_proofs}, Theorem~\ref{thm:KPZFP_characterize} is proved in Section~\ref{sec:characterize}, and Theorem~\ref{thm:shock_fluctuations} is proved in Section~\ref{sec:shocks}. Appendix~\ref{sec:DLKPZFP} records some inputs on the directed landscape and KPZ fixed point, Appendix~\ref{sec:SIG} records inputs regarding infinite geodesics in the directed landscape, Appendix~\ref{appx:state_space} proves some auxiliary lemmas regarding our choice of state space for the KPZ fixed point, and Appendix~\ref{sec:Bessel} records some technical facts about the Bessel process and Brownian meander.

\subsection{Acknowledgments and funding}
We wish to thank Duncan Dauvergne and Dominik Schmid for pointers to connections to this work in related literature. We also thank the anonymous referee for the incredibly detailed comments which greatly improved this paper.  A.D. was partially supported by the National Science Foundation under grant no.\ DMS-2346915. E.S. was partially supported by the Fernholz Foundation and by Ivan Corwin's Simons Investigator Grant \#929852.

\section{Preliminaries} \label{sec:prelim}

\subsection{Notation and conventions}
We use $\C(A,B)$ to denote the space of continuous functions $A \to B$, where $A$ and $B$ are topological spaces. We use $\C(A)$ to denote $\C(A,\R)$ and $\C_b(A)$ to denote the set of bounded continuous functions $A \to \R$. We use $\deq$ to denote equality in distribution. We use $a \vee b$ to denote $\max(a,b)$ and $a \wedge b$ to denote $\min(a,b)$.

\subsection{State space and topology} \label{sec:state_space}
The state space for solutions to the KPZ fixed point in the present paper is the space $\CFP$ defined in \eqref{eq:CFP}. This space differs from state space used in the paper \cite{KPZfixed} in two ways. First, it is assumed in \cite{KPZfixed} that the functions in the state space are bounded  from above by the absolute value of an affine function. This is a strictly weaker assumption than that used here.  Proposition~\ref{prop:h_pres_CFP} in Appendix~\ref{appx:state_space} proves that the space $\CFP$ is preserved by the KPZ fixed point dynamics. Furthermore, the growth bounds on the directed landscape (recorded here as Lemma~\ref{lem:Landscape_global_bound}) imply that if $\sup_{x \in \R}[f(x) - ax^2] = \infty$ for some $a > 0$, then $h(\cdot,t \mid f)$ explodes for some finite  $t > 0$. Ultimately, we show in Theorem~\ref{thm:KPZFP_characterize} that Brownian motions with drift are the only invariant measures, and these live in this smaller space of functions bounded by an affine function. However, using this larger space allows us to rule out other possibilities that could potentially live in this larger space of non-explosive initial conditions. Second, the state space in \cite{KPZfixed} consists of upper semi-continuous functions $f:\R \to \R \cup \{-\infty\}$ such that $f(x) > -\infty$ for some $x \in \R$. However, started from such initial data, the solution becomes continuous immediately. This was shown for functions bounded from above by the absolute value of an affine function in \cite[Theorem~4.13]{KPZfixed},  but can be shown to hold more generally for functions satisfying the condition $\sup_{x \in \R}[f(x) - ax^2]< \infty$ for all $a > 0$ (for example, by using the bounds on the growth of the directed landscape from \cite[Corollary 10.7]{Directed_Landscape}, recorded here as Lemma~\ref{lem:Landscape_global_bound}, and adapting the proof of Proposition~\ref{prop:h_pres_CFP} in the present paper). Hence, any invariant measure must be supported on the space of continuous functions, and we do not lose generality by focusing on continuous functions.  We prove several properties of the space $\CFP$ in Appendix~\ref{appx:state_space}. 
 
We define a topology on $\mathcal C_{\mathrm{FP}}$ as follows.
We say that $f_n \to f$ in $\mathcal{C}_{\mathrm{FP}}$ if the following two conditions hold:
\begin{enumerate} 
    \item $f_n$ converges to $f$, uniformly on compact sets.
    \item For each $a > 0$, 
\[
\lim_{n \to \infty}\sup_{x \in \R}[f_n(x)  - ax^2] = \sup_{x \in \R}[f(x)  - ax^2].
\]
\end{enumerate}
 
We show in Lemma~\ref{lem:Polish} that this is a Polish topology. Since a continuous function $\R \to \R$ is determined by its values on the rationals, one can readily see that the Borel $\sigma$-algebra induced by this metric is the $\sigma$-algebra generated by the coordinate maps $x \mapsto f(x)$.

We now define several important subspaces of $\CFP$. First, set 
\[
\mathcal{C}_{\mathrm{FP};0}=\left\{ f\in\mathcal{C}_{\mathrm{FP}}\st f(0)=0\right\} .
\]
By the choice of recentering, any invariant measure on $\CFP$ for the recentered KPZ fixed point is necessarily supported on the set $\mathcal C_{\mathrm{FP};0}$.
Furthermore, for $\theta > 0$, define the space of $V$-shaped initial conditions
\be \label{V_space}
\mathcal V(\theta) = \Bigl\{f \in \mathcal C_{\mathrm{FP}}: \lim_{|x| \to \infty} \f{f(x)}{|x|} = 2\theta\Bigr\}.
\ee
Additionally, set 
\be \label{Y_space}
\mathcal Y(\theta) = \Bigl\{(f_-,f_+) \in \mathcal C_{\mathrm{FP}}^2: \lim_{|x| \to \infty} \f{f_{\pm}(x)}{x} = \pm 2\theta\Bigr\},
\ee
and 
\be \label{X_space}
\mathcal X(\theta) = \{(f_-,f_+) \in \mathcal Y(\theta): f_+ - f_- \text{ is nondecreasing}\}.
\ee
Lastly, we set 
\be \label{zero_spaces}
\mathcal V_0(\theta) = \mathcal V(\theta) \cap\mathcal C_{\mathrm{FP};0},\quad  \mathcal Y_0(\theta) = \mathcal Y(\theta) \cap\mathcal C_{\mathrm{FP};0}^2,\quad\text{and}\quad \mathcal X_0(\theta) = \mathcal X(\theta) \cap\mathcal C_{\mathrm{FP};0}^2.
\ee
The sets $\mathcal{C}_{\mathrm{FP};0}$, $\mathcal V(\theta)$, and $\mathcal V_0(\theta)$ are each Borel-measurable subsets of the space $\mathcal{C}_{\mathrm{FP}}$, while $\mathcal Y(\theta)$, $\mathcal Y_0(\theta)$, $\mathcal X(\theta)$, and $\mathcal X_0(\theta)$ areeach Borel-measurable subsets of the space $\mathcal C_{\mathrm{FP}}^2$. We equip these subspaces with their respective subspace topologies. 

Define the map $V:\mathcal Y(\theta) \to \mathcal V(\theta)$ by
\be \label{eq:V_map}
V[f_-,f_+](x) = f_-(x) \vee f_+(x).
\ee

\subsection{Directed landscape and KPZ fixed point} \label{sec:DLKPZ_intro} We now recall the directed landscape and the KPZ fixed point and briefly review their relevant properties. 
\subsubsection{Directed landscape}
Let $\Rup = \{(y,s;x,t) \in \R^4: s < t\}$. First introduced in \cite{Directed_Landscape}, the directed landscape is a random continuous function $\Ll:\Rup \to \R$. It satisfies the following \textit{metric composition property}: for $s < r < t$ and $y,x \in \R$,
\be \label{L_comp}
\Ll(y,s;x,t) = \sup_{z \in \R}\{\Ll(y,s;z,r) + \Ll(z,r;x,t)\}.
\ee
This implies the \textit{reverse triangle inequality}:
for $s < r < t$ and $x,y,z \in \R$,
\be \label{triangle}
\Ll(y,s;x,t) \ge \Ll(y,s;z,r) + \Ll(z,r;x,t).
\ee
Further properties of the directed landscape are discussed in Appendix~\ref{sec:DLKPZFP}. One can define a notion of geodesics for the directed landscape as follows.  We consider a continuous function $\gamma:[s,t] \to \R$ as a directed path in the plane. For such a path, define its $\Ll$-length as
\[
\Ll(\gamma) = \adjustlimits\inf_{k \in \N} \inf_{s = t_0 < t_1 < \cdots < t_k = t} \sum_{i = 1}^k \Ll\bigl(\gamma(t_{i - 1}),t_{i - 1};\gamma(t_i),t_i\bigr).
\]
Note that the $\Ll$-length of a path $\gamma$ can be $-\infty$. A path $\gamma$ is called a \textit{geodesic} from $(y,s)$ to $(x,t)$ if its $\Ll$ length is maximal among all paths $\gamma:[s,t] \to \R$ with $\gamma(s) = y$ and $\gamma(t) = x$. Equivalently, 
\[
\Ll(\gamma) = \sum_{i = 1}^k \Ll\bigl(\gamma(t_{i - 1}),t_{i - 1};\gamma(t_i),t_i\bigr)
\]
for all $k \in \N$ and partitions $s = t_0 < t_1 < \cdots < t_k = t$. In particular, if $s < r <t$, a point $z \in \R$ satisfies $\gamma(r) = z$ for some geodesic from $(y,s)$ to $(x,t)$ if and only if equality holds in \eqref{triangle}. The fact that geodesics exist is stated in the following lemma.
\begin{lemma} \cite[Theorems~12.1 and~13.1]{Directed_Landscape} \label{lem:geodesics}
With probability $1$, simultaneously for all $(y,s;x,t) \in \Rup$, there exist leftmost and rightmost geodesics between $(y,s)$ and $(x,t)$. For fixed $(y,s)$ and $(x,t)$, there exists a unique geodesic with probability $1$.
\end{lemma}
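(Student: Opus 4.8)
Since the statement is quoted verbatim from \cite[Theorems~12.1 and~13.1]{Directed_Landscape}, the proof in the paper is a citation; the following is how one would reconstruct it from the properties of $\Ll$ already recorded. Three things must be established: (a) for a fixed pair of endpoints a geodesic exists; (b) a.s., simultaneously for all $(y,s;x,t)\in\Rup$, geodesics exist and among them there are pointwise smallest and largest ones; (c) for a fixed pair of endpoints the geodesic is a.s.\ unique. I would obtain (a) by a compactness argument exploiting the quadratic decay of $\Ll$; upgrade it to (b) by approximating arbitrary endpoints by rational ones, for which a.s.\ geodesics exist simultaneously, and passing to uniform-on-compacts limits; and obtain (c) by splitting a putative pair of distinct geodesics at a rational intermediate time and using independence of $\Ll$ over disjoint time intervals together with absolute continuity of the relevant one-dimensional marginals.

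\textbf{Existence and the leftmost/rightmost geodesics.} Fix $(y,s;x,t)\in\Rup$. By continuity of $\Ll$ and the reverse triangle inequality \eqref{triangle}, the $\Ll$-length of a path $\gamma:[s,t]\to\R$ with $\gamma(s)=y$, $\gamma(t)=x$ is the infimum over partitions of the associated sum, hence an upper semicontinuous functional of $\gamma$ for uniform convergence. The global upper bound on $\Ll$ (Lemma~\ref{lem:Landscape_global_bound}), which on compact time ranges has the schematic form $\Ll(y',s';x',t')\le C-\f{(x'-y')^2}{2(t'-s')}$, shows that a path which either leaves a large spatial box or moves too quickly over a short time increment incurs an $\Ll$-length far below $\Ll(y,s;x,t)$; hence any sequence of paths whose $\Ll$-lengths approach the supremum is eventually contained in a fixed compact set and equicontinuous on compact subsets of $(s,t)$. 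Arzel\`a--Ascoli together with upper semicontinuity of the length functional then produces a maximizer, i.e.\ a geodesic. The set of geodesics between the two points is a closed subset of a compact subset of $\C([s,t])$ and is a sublattice (a crossing-and-swap argument shows the pointwise $\wedge$ and $\vee$ of two geodesics are again geodesics); therefore the pointwise infimum and supremum of all geodesics exist, are continuous, and are themselves geodesics, giving the leftmost and rightmost ones.

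\textbf{Simultaneity over all endpoints.} Let $D\subset\Rup$ be the countable set of quadruples with rational coordinates. Intersecting countably many full-measure events, a.s.\ leftmost and rightmost geodesics exist for every element of $D$. Fix an arbitrary $(y,s;x,t)\in\Rup$ and choose $D\ni(y_n,s_n;x_n,t_n)\to(y,s;x,t)$ with $s_n<t_n$. By the same decay bound, now uniform over the convergent endpoints, the leftmost geodesics $\gamma_n$ eventually lie in a fixed compact set and are equicontinuous on compacts of $(s,t)$; extract a uniform-on-compacts limit $\gamma$, extended continuously to $[s,t]$ with $\gamma(s)=y$, $\gamma(t)=x$. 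Continuity of $\Ll$ forces the partition sums of $\gamma_n$ to converge to those of $\gamma$ and forces $\Ll(y_n,s_n;x_n,t_n)\to\Ll(y,s;x,t)$, so $\gamma$ has maximal $\Ll$-length and is a geodesic; monotonicity of $\gamma_n$ in the endpoints picks out a canonical leftmost limit, and symmetrically for the rightmost. Thus a.s., simultaneously for all $(y,s;x,t)$, leftmost and rightmost geodesics exist.

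\textbf{Uniqueness for fixed endpoints, and the main difficulty.} Fix $(y,s;x,t)$; the geodesic is unique iff $\gamma^L=\gamma^R$. If not, then since both are continuous and agree at the endpoints, $\gamma^L(r)<\gamma^R(r)$ for some $r\in(s,t)$, hence, the event being monotone and the rationals dense, for some fixed rational $r\in(s,t)$ with positive probability. By the characterization of intermediate geodesic points via equality in \eqref{triangle}, at such $r$ the function $z\mapsto\Ll(y,s;z,r)+\Ll(z,r;x,t)$ has two distinct maximizers. But $\Ll$ restricted to $[s,r]$ and to $[r,t]$ are independent, so $F(z):=\Ll(y,s;z,r)$ and $G(z):=\Ll(z,r;x,t)$ are independent continuous processes, each tending to $-\infty$ quadratically; using the property of the landscape that $F(z_1)-F(z_2)$ has no atoms for every $z_1\ne z_2$, a conditioning-and-Fubini argument shows that $F+G$ has an a.s.\ unique maximizer, a contradiction. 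The genuinely nontrivial ingredient is this last one: establishing absolute continuity of one-point differences of $\Ll$ (which in \cite{Directed_Landscape} is read off from Brownian last-passage percolation and Airy sheet identities) and handling the measurability and Fubini bookkeeping needed to pass from ``two maximizers at a single rational time'' to ``unique geodesic.'' The compactness parts are routine given Lemma~\ref{lem:Landscape_global_bound}. As all of this is carried out in \cite{Directed_Landscape}, in the paper we simply cite it.
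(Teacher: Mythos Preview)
Your proposal is correct: the paper does not prove this lemma but simply cites \cite[Theorems~12.1 and~13.1]{Directed_Landscape}, exactly as you anticipated in your opening sentence. The reconstruction you sketch---compactness for existence, lattice structure for leftmost/rightmost, rational approximation for simultaneity, and the no-atoms argument for uniqueness---accurately reflects the strategy of the cited reference, so there is nothing to add.
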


 \begin{definition} \label{def:sigma_alg}
     For $r\in\mathbb{R}$,
we let $\mathcal{F}_{\le r}$ and $\mathcal{F}_{\ge r}$ be the independent $\sigma$-algebras (see \cite[Definition~10.1]{Directed_Landscape})
generated by $\{\mathcal{L}(y,s;x,t)\st s<t\le r\}$ and $\{\mathcal{L}(y,s;x,t)\st r\le s<t\}$,
respectively.
 \end{definition}

\subsubsection{KPZ fixed point}
Generalizing the definition \eqref{eq:FKPFP_def} to arbitrary starting times, the KPZ fixed point evolution with initial data $f$ at time $s$
is given by
\[
h(x,t \mid s,f)=\max_{y\in\mathbb{R}}\left[f(y)+\mathcal{L}(y,s;x,t)\right].
\]
If $\underline{f}=(f_{1},\ldots,f_{N})\in\mathcal{C}_{\mathrm{FP}}^{N}$,
we define the coupled evolution via $\Ll$ as
\[
\underline{h}(x,t \mid s,\underline{f})=\left(h(x,t \mid s,f_{i})\right)_{i=1}^{N}.
\]
Given $F\in\mathcal{C}(\mathcal{C}_{\mathrm{FP}}^{N})$ and $\underline{f}\in\mathcal{C}_{\mathrm{FP}}^{N}$,
we define the semigroup corresponding to the KPZ fixed point evolution as
\[
P_{t}F(\underline{f})\coloneqq\mathbb{E}\left[F(\underline{h}(x,t \mid s,\underline{f}))\right].
\]
 The
adjoint is defined as follows: for a measure $\nu$ on $\mathcal{C}_{\mathrm{FP}}^{N}$,
\[
P_{t}^{*}\nu\coloneqq\Law\left(\underline{h}(x,t \mid s,\underline{f})\right),\qquad\text{where }\underline{f}\sim\nu\text{ (independent of }\mathcal{L}).
\]
We now define a few projection maps. First we define
\begin{equation}
\pi_{x_{0}}[\underline{f}](x)=\underline{f}(x_{0}+x)-\underline{f}(x_{0}).\label{eq:pix0}
\end{equation}
For $\underline{f}=(f_{-},f_{+}) \in \mathcal X(\theta)$, we define
\begin{equation}
\mathfrak{b}_{-}[\underline{f}]\coloneqq\min\{x\in\mathbb{R}\st f_{-}(x)=f_{+}(x)\}\qquad\text{and}\qquad\mathfrak{b}_{+}[\underline{f}]\coloneqq\max\{x\in\mathbb{R}\st f_{-}(x)=f_{+}(x)\}.\label{eq:bdef}
\end{equation}
Then, we define
\begin{equation}
\pi_{\mathrm{Sh};\pm}[\underline{f}](x)\coloneqq\pi_{\mathfrak{b}_{\pm}[\underline{f}]}[\underline{f}](x)\overset{\eqref{eq:pix0}}{=}\underline{f}(\mathfrak{b}_{\pm}[\underline{f}]+x)-\underline{f}(\mathfrak{b}_{\pm}[\underline{f}]).\label{eq:piShdef}
\end{equation}
From the definition of the space $\mathcal X(\theta)$, it is immediate that the quantities $\mathfrak{b}_{-}[\underline{f}]$ and $\mathfrak{b}_{+}[\underline{f}]$ are well-defined and finite. 

In the following result, we connect these projection mappings to the definitions of competition interfaces. Recall the definitions of the left and right competition interfaces in \eqref{eq:Ippm}.  
\begin{proposition} \label{prop:interfaces_are_shocks}
Let $\theta > 0$, and $\underline f = (f_-,f_+) \in \mathcal X(\theta)$. Set $f_{\mathsf V}(x) = V[f_-,f_+](x) = f_-(x) \vee f_+(x)$. Then, for all $t > 0$, 
\[
\mathfrak b_\pm[h(\cdot,t \mid \underline f)] = I^\pm_{\mathfrak b_\pm[\underline f]}(t,f_{\mathsf V}).
\]
\end{proposition}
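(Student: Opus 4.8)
The plan is to unwind both sides in terms of the directed landscape and show they describe the same cutoff point. Fix $t>0$ and write $h_\pm(x)=h(x,t\mid f_\pm)=\sup_y[f_\pm(y)+\Ll(y,0;x,t)]$, and $h_{\mathsf V}(x)=h(x,t\mid f_{\mathsf V})$. Since $f_{\mathsf V}=f_-\vee f_+$, for every $x$ we have
\[
h_{\mathsf V}(x)=\sup_{y\in\R}\bigl[(f_-(y)\vee f_+(y))+\Ll(y,0;x,t)\bigr]=h_-(x)\vee h_+(x),
\]
because $\sup_y[\max(a(y),b(y))+c(y)]=\max(\sup_y[a(y)+c(y)],\sup_y[b(y)+c(y)])$ pointwise in the $\Ll(y,0;x,t)$ term. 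Thus $h(\cdot,t\mid\underline f)=(h_-,h_+)$ has $V[h_-,h_+]=h_{\mathsf V}$, and the left side $\mathfrak b_\pm[h(\cdot,t\mid\underline f)]$ is by definition \eqref{eq:bdef} the leftmost/rightmost $x$ with $h_-(x)=h_+(x)$, equivalently the leftmost/rightmost $x$ where neither $h_-(x)>h_+(x)$ nor $h_+(x)>h_-(x)$ holds strictly.

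Next I would relate the set $\{h_-=h_+\}$ to the competition interface. Recall from \eqref{eq:dp_def} with $p=\mathfrak b_\pm[\underline f]=:p_\pm$ that
\[
d_{p}(x,t;f_{\mathsf V})=\sup_{y\ge p}[f_{\mathsf V}(y)+\Ll(y,0;x,t)]-\sup_{y\le p}[f_{\mathsf V}(y)+\Ll(y,0;x,t)].
\]
The key structural input is monotonicity: since $(f_-,f_+)\in\mathcal X(\theta)$, the difference $f_+-f_-$ is nondecreasing, so $f_{\mathsf V}=f_+$ on $[\mathfrak b_-[\underline f],\infty)$ and $f_{\mathsf V}=f_-$ on $(-\infty,\mathfrak b_+[\underline f]]$, with $f_-=f_+$ on the interval $[\mathfrak b_-[\underline f],\mathfrak b_+[\underline f]]$. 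Using $p=p_-=\mathfrak b_-[\underline f]$, one has $\sup_{y\ge p}[f_{\mathsf V}(y)+\Ll]=\sup_{y\ge p}[f_+(y)+\Ll]$; combined with the fact that the maximizer of $y\mapsto f_+(y)+\Ll(y,0;x,t)$ can, by an argument exchanging it with $p_-$ using that $f_{\mathsf V}\ge f_+$ everywhere and $f_{\mathsf V}=f_+$ to the right of $p_-$, be taken $\ge p_-$ whenever $h_+(x)\ge h_-(x)$ (and symmetrically at $p_+$), this identifies
\[
d_{p_-}(x,t;f_{\mathsf V})=h_+(x)-h_-(x)\quad\text{(after the relevant exchange)},
\]
up to care at ties. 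Then $I^-_{p_-}(t,f_{\mathsf V})=\inf\{x:d_{p_-}(x,t;f_{\mathsf V})\ge 0\}=\inf\{x:h_+(x)\ge h_-(x)\}$, and by monotonicity of $x\mapsto d_{p_-}(\cdot)$ (Proposition~4.1 of \cite{Rahman-Virag-21}) together with $h_-(x)=h_+(x)$ somewhere (both initial conditions agree on the interval $[\mathfrak b_-[\underline f],\mathfrak b_+[\underline f]]$, which forces equality at some $x$ since $h_\pm$ are continuous and cross), this infimum equals the leftmost point where $h_-=h_+$, i.e.\ $\mathfrak b_-[h(\cdot,t\mid\underline f)]$. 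The symmetric argument with $p_+=\mathfrak b_+[\underline f]$ gives $I^+_{p_+}(t,f_{\mathsf V})=\mathfrak b_+[h(\cdot,t\mid\underline f)]$.

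The main obstacle I anticipate is the careful handling of the maximizer-exchange step and of ties, i.e.\ rigorously justifying that the argmax in $h_+(x)=\sup_y[f_+(y)+\Ll(y,0;x,t)]$ may be taken on $[p_-,\infty)$ precisely on the region where $h_+\ge h_-$, and that $d_{p_-}(x,t;f_{\mathsf V})$ has the same sign as $h_+(x)-h_-(x)$ on the closure of that region. This requires using that $f_{\mathsf V}=f_+$ on $[p_-,\infty)$, that $f_{\mathsf V}=f_-\le f_+$ where $f_{\mathsf V}=f_-$ is relevant is actually $f_{\mathsf V}=f_-\ge f_+$ to the left... so one must track which of $f_\pm$ dominates on each side of $p_\pm$ and feed this into the variational formula, together with the nondecreasing property to control where maximizers sit. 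A clean way to organize this is: first show $h_-(x)\le h_+(x)$ for $x\ge \mathfrak b_-[h(\cdot,t\mid\underline f)]$ and $h_-(x)\ge h_+(x)$ for $x\le \mathfrak b_+[h(\cdot,t\mid\underline f)]$ using that $h_+-h_-$ is itself monotone (which follows since $h(\cdot,t\mid\underline f)\in\mathcal X(\theta)$ — invariance of $\mathcal X(\theta)$ under the coupled evolution, cf.\ the surrounding results), then identify the zero set of $d_{p_\pm}$ directly. Once monotonicity of $h_+-h_-$ is in hand, everything else is bookkeeping with the definitions \eqref{eq:Ippm}, \eqref{eq:bdef} and the continuity of $h_\pm$.
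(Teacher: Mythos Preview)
Your approach is essentially the same as the paper's: both use that $(h_-,h_+)(\cdot,t)\in\mathcal X(\theta)$ so that $h_+-h_-$ is nondecreasing, write $f_{\mathsf V}=f_-$ to the left of $b_0^+$ and $f_{\mathsf V}=f_+$ to the right of $b_0^-$, then compare the sign of $d_{b_0^-}(x,t;f_{\mathsf V})$ with the sign of $h_+(x)-h_-(x)$ by carefully locating the maximizers. One small correction: you will not actually get the identity $d_{p_-}(x,t;f_{\mathsf V})=h_+(x)-h_-(x)$; the paper only proves (and only needs) the two one-sided inequalities $d_{b_0^-}(b_t^-,t;f_{\mathsf V})\ge h_+(b_t^-)-h_-(b_t^-)=0$ and, for $x<b_t^-$, $d_{b_0^-}(x,t;f_{\mathsf V})\le h_+(x)-h_-(x)<0$, which already pin down $I^-_{b_0^-}$ as $b_t^-$.
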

\begin{proof}
  We just prove the ``$-$'' statement, since the ``$+$" second follows by a symmetric proof. For shorthand notation, let $h_{\pm} = h(\cdot,t \mid f_{\pm})$ and $b_t^{\pm} = \mathfrak b_\pm[h(\cdot,t \mid \underline f)]$ for $t > 0$. By Lemmas~\ref{lem:slope_conserved} and~\ref{KPZFP_attr}, $(h_-,h_+)(\cdot,t) \in \mathcal X(\theta)$ for all $t > 0$. Then,
\be \label{eq:ht_mont}
h_{\mathsf V}(x,t) = \begin{cases} h_-(x,t) &x \le b_t^+ \\
h_+(x,t) &x \ge b_t^-.
\end{cases}
\ee
Since $b_t^- \le b_t^+$, we note that, for $x \in [b_t^-,b_t^+]$, we have $h_{\mathsf V}(x,t) = h_-(x,t) = h_+(x,t)$. We also note that \eqref{eq:ht_mont} is valid for $t = 0$ when we define $h_{\pm}(x,0) = f_{\pm }(x)$, $h_{\mathsf V}(x,0) = f_{\mathsf V}(x)$, and $b_0^\pm = \mathfrak b_{\pm}[h(\cdot,0 \mid \underline f)] = \mathfrak b_{\pm}[\underline f]$. For concreteness, we shall write this as 
\be \label{eq:f_mont}
f_{\mathsf V}(x) = \begin{cases} f_-(x) &x \le b_0^+ \\
f_+(x) &x \ge b_0^-.
\end{cases}
\ee
Since $h_-(b_t^-,t) = h_+(b_t^-,t)$, we have, 
\be \label{eq:fsup123}
\begin{aligned}
\sup_{y \in \R}[f_+(y) + \Ll(y,0;b_t^-,t)] &= \sup_{y \in \R}[f_-(y) + \Ll(y,0;b_t^-,t)] \\
&\ge \sup_{y \le b_0^-}[f_-(y) + \Ll(y,0;b_t^-,t)] \ge \sup_{y \le b_0^-}[f_+(y) + \Ll(y,0;b_t^-,t)],
\end{aligned}
\ee
where the last inequality follows because $f_+ - f_-$ is nondecreasing, and $b_0^-$ is the minimal value of $x$ such that $f_+(x) - f_-(x) = 0$. Indeed, since $f_-(y) > f_+(y)$ for $y < b_0^-$, the last inequality in \eqref{eq:fsup123} is an equality only when the maximum of $f_+(y) + \Ll(y,0;b_t^-,t)$ is attained at $y = b_0^-$. Therefore, we either have
\[
\sup_{y \in \R}[f_+(y) + \Ll(y,0;b_t^-,t)]  > \sup_{y \le b_0^-}[f_+(y) + \Ll(y,0;b_t^-,t)],
\]
or 
\[
\sup_{y \in \R}[f_+(y) + \Ll(y,0;b_t^-,t)]  =  \sup_{y \le b_0^-}[f_+(y) + \Ll(y,0;b_t^-,t)] = f_+(b_0^-) + \Ll(y,0;b_t^-,t).
\]
In either case, we have
\be \label{eq1}
h_+(b_t^-,t) = \sup_{y \in \R}[f_+(y) + \Ll(y,0;b_t^-,t)] = \sup_{y \ge b_0^-}[f_+(y) + \Ll(y,0;b_t^-,t)].
\ee
Then, 
\be \label{ge0}
\begin{aligned}
0 &= h_+(b_t^-,t) - h_-(b_t^-,t) \\ &= \sup_{y \ge b_0^-}[f_+(y) + \Ll(y,0;b_t^-,t)] - \sup_{y \in \R}[f_-(y) + \Ll(y,0;b_t^-,t)]  \\
&\le \sup_{y \ge b_0^-}[f_+(y) + \Ll(y,0;b_t^-,t)] - \sup_{y \le b_0^-}[f_-(y) + \Ll(y,0;b_t^-,t)] \\
&= \sup_{y \ge b_0^-}[f_{\mathsf V}(y) + \Ll(y,0;b_t^-,t)] - \sup_{y \le b_0^-}[f_{\mathsf V}(y) + \Ll(y,0;b_t^-,t)] = d_{b_0^-}(b_t^-,t;f_{\mathsf V}).
\end{aligned}
\ee
In the second equality, we used \eqref{eq1}, and in the penultimate equality, we used \eqref{eq:f_mont}. The last equality is the definition of $d_p$ from \eqref{eq:dp_def}. 

Now, by definition of $b_t^-$ and since $h_+(\cdot,t) - h_-(\cdot,t)$ is nondecreasing (because $(h_-,h_+) \in \mathcal X(\theta)$), for $x < b_t^-$, we have $h_+(x,t) < h_-(x,t)$, which, by \eqref{eq:f_mont}, implies
\begin{align*}
\sup_{y \ge b_0^-} [f_-(y) + \Ll(y,0;x,t)] &\le \sup_{y \ge b_0^-} [f_+(y) + \Ll(y,0;x,t)]  \\
&\le \sup_{y \in \R} [f_+(y) + \Ll(y,0;x,t)] < \sup_{y \in \R}[f_-(y) + \Ll(y,0;x,t)],
\end{align*}
and therefore,
\be \label{eq3}
\sup_{y \in \R}[f_-(y) + \Ll(y,0;x,t)] = \sup_{y \le b_0^-} [f_-(y) + \Ll(y,0;x,t)].
\ee
Then, for $x < b_t^-$,
\begin{align*}
    d_{b_0^-}(x,t;f_{\mathsf V}) &= \sup_{y \ge b_0^-}[f_{\mathsf V}(y) + \Ll(y,0;x,t)] - \sup_{y \le b_0^-}[f_{\mathsf V}(y) + \Ll(y,0;x,t)]  \\
    &=\sup_{y \ge b_0^-}[f_{+}(y) + \Ll(y,0;x,t)] - \sup_{y \le b_0^-}[f_{-}(y) + \Ll(y,0;x,t)] \\
    &\le \sup_{y \in \R}[f_{+}(y) + \Ll(y,0;x,t)] - \sup_{y \in \R}[f_{-}(y) + \Ll(y,0;x,t)] = h_+(x,t) - h_-(x,t) < 0,
\end{align*}
where in the second equality, we used \eqref{eq:f_mont}, and in the inequality, we used \eqref{eq3}. Combining this with \eqref{ge0}, we see that 
\[
b_t^- = \inf\{x \in \R: d_{b_0^-}(x,t;f_{\mathsf V}) \ge 0\} = I_{b_0^-}^-(t,f_{\mathsf V}). \qedhere
\]
\end{proof}
The next lemma follows immediately from the definition of $\pi_{\mathrm{Sh};\pm}$ (recall \eqref{eq:piShdef}).
\begin{lemma} \label{lem:pi_constant}
    For $\theta > 0$ and $\underline f = (f_-,f_+) \in \mathcal X(\theta)$, we have  $\pi_{\mathrm{Sh};\pm}[\underline{g}]=\pi_{\mathrm{Sh};\pm}[\underline{g}+(c,c)]$
for any scalar constant $c\in\mathbb{R}$.
\end{lemma}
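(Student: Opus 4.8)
The statement to prove is Lemma~\ref{lem:pi_constant}: for $\underline f = (f_-,f_+) \in \mathcal X(\theta)$ and any scalar $c \in \mathbb R$, we have $\pi_{\mathrm{Sh};\pm}[\underline g] = \pi_{\mathrm{Sh};\pm}[\underline g + (c,c)]$.

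The plan is to unwind the definitions. The key observation is that adding the same constant $c$ to both components $f_-$ and $f_+$ does not change the set $\{x \in \mathbb R : f_-(x) = f_+(x)\}$, since $f_-(x) + c = f_+(x) + c$ if and only if $f_-(x) = f_+(x)$. Consequently, by \eqref{eq:bdef}, we have $\mathfrak b_\pm[\underline g + (c,c)] = \mathfrak b_\pm[\underline g]$; call this common value $b_\pm$. Then, using \eqref{eq:piShdef} and \eqref{eq:pix0}, I would compute directly: $\pi_{\mathrm{Sh};\pm}[\underline g + (c,c)](x) = (\underline g + (c,c))(b_\pm + x) - (\underline g + (c,c))(b_\pm) = \underline g(b_\pm + x) + (c,c) - \underline g(b_\pm) - (c,c) = \underline g(b_\pm + x) - \underline g(b_\pm) = \pi_{\mathrm{Sh};\pm}[\underline g](x)$, where I also need to note that $\underline g + (c,c) \in \mathcal X(\theta)$ whenever $\underline g \in \mathcal X(\theta)$, since adding constants preserves the asymptotic slope conditions defining $\mathcal Y(\theta)$ and preserves the monotonicity of $f_+ - f_-$ defining $\mathcal X(\theta)$, so that the left-hand side is well-defined.

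There is essentially no obstacle here; the only mild subtlety worth a sentence is confirming that $\underline g + (c,c)$ indeed lands in the domain $\mathcal X(\theta)$ of the map $\pi_{\mathrm{Sh};\pm}$, which is immediate from \eqref{Y_space} and \eqref{X_space}. The lemma is a bookkeeping consequence of the fact that the recentering in $\pi_{\mathrm{Sh};\pm}$ subtracts off the value at $b_\pm$, which cancels the added constant, combined with the fact that $b_\pm$ itself is determined by the \emph{difference} $f_+ - f_-$ and hence is insensitive to a common shift.
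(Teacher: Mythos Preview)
Your proposal is correct and matches the paper's approach exactly: the paper simply states that the lemma ``follows immediately from the definition of $\pi_{\mathrm{Sh};\pm}$'' without writing out the details, and your argument is precisely the direct unwinding of those definitions. The only content is indeed that $\mathfrak{b}_\pm$ depends only on $f_+ - f_-$ and that the recentering cancels the common constant, which you have identified.
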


We next prove the following lemma, which gives a shift-invariance result for the shift maps.
\begin{lemma} \label{lem:shift_idempotent}
    Let $\theta > 0$, and let $\underline f = (f_-,f_+)$ be random initial data, independent of $\F_{\ge 0}$,  such that $\underline f \in \mathcal X(\theta)$ almost surely. Then, for $t > 0$,
    \[
    \Law\Bigl(\pi_{\mathrm{Sh};\pm}[ \underline h(\cdot,t \mid \underline f)]\Bigr)  = \Law\Bigl(\pi_{\mathrm{Sh};\pm}\bigl[\underline h(\cdot,t \mid \pi_{\mathrm{Sh};\pm}[\underline f]) \bigr] \Bigr),
    \]
    where the $\pm$ is used to indicate two separate distributional equalities. 
    \end{lemma}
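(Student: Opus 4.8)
The plan is to show that the shift operation $\pi_{\mathrm{Sh};\pm}$ essentially commutes with the KPZ fixed point evolution, using translation invariance of the directed landscape together with the fact that $\mathfrak{b}_\pm$ tracks the ``corner'' of the pair $(h_-,h_+)$ in a way that is consistent before and after shifting. I will argue the ``$-$'' case; the ``$+$'' case is symmetric. Write $\underline g = (g_-,g_+) = \pi_{\mathrm{Sh};-}[\underline f] = \pi_{\mathfrak b_-[\underline f]}[\underline f]$, so that $g_\pm(x) = f_\pm(\mathfrak b_-[\underline f] + x) - f_-(\mathfrak b_-[\underline f])$. Since $\mathfrak b_-[\underline f]$ is measurable with respect to $\underline f$ and hence independent of $\F_{\ge 0}$, and since the directed landscape $\Ll$ is translation invariant in the sense that $(y,s;x,t)\mapsto \Ll(y+w,s;x+w,t)$ has the same law as $\Ll$ (Lemma~\ref{lm:landscape_symm}), I would like to say that shifting the initial data by the random (but independent) amount $w := \mathfrak b_-[\underline f]$ and then running the evolution produces the same law, up to a compensating deterministic spatial shift by $w$ and a global height shift, as running the evolution from $\underline f$ and then shifting the output by $w$.

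\textbf{Key steps.} First I would record the exact covariance identity: for fixed $w \in \R$ and fixed initial data $\underline f$, one has $h(x,t \mid \pi_w[\underline f]) = h(w + x, t \mid \underline f\,) - \underline f(w) + (\text{contribution of }\Ll\text{ shifted by }w)$; more precisely, writing $\Ll^{(w)}(y,s;x,t) := \Ll(y+w,s;x+w,t)$, the change of variables $y \mapsto y - w$ in the variational formula \eqref{eq:FKPFP_def} gives
\[
h^{\Ll^{(w)}}(x,t \mid \pi_w[\underline f]) = h^{\Ll}(w+x,t \mid \underline f) - \underline f(w),
\]
where the superscript indicates which landscape is used. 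Applying this coordinatewise, $\underline h^{\Ll^{(w)}}(\cdot,t \mid \pi_w[\underline f]) = \pi_w[\underline h^\Ll(\cdot, t \mid \underline f)] + (\underline f(w) - \underline f(w)) = \pi_w[\underline h^\Ll(\cdot,t\mid \underline f)]$ after the global height recentering built into $\pi_w$ — here I use that $\pi_w$ of a pair subtracts the \emph{first-coordinate} value at $w$, which is exactly $\underline f(w)$ appearing above, so the additive constants cancel in both coordinates simultaneously. Second, I would apply the shift operator $\pi_{\mathrm{Sh};-}$ to both sides. On the left, $\pi_{\mathrm{Sh};-}[\pi_w[\underline h^\Ll(\cdot,t\mid \underline f)]] = \pi_{\mathrm{Sh};-}[\underline h^\Ll(\cdot,t\mid\underline f)]$, because $\pi_{\mathrm{Sh};-}$ only depends on the location of the first coincidence point of the two coordinates and on differences of values, both of which are unaffected by a rigid translation-and-recentering $\pi_w$ (this is the content of Lemma~\ref{lem:pi_constant} combined with the elementary identity $\mathfrak b_-[\pi_w[\underline g]] = \mathfrak b_-[\underline g] - w$). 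On the right, $\pi_{\mathrm{Sh};-}[\underline h^{\Ll^{(w)}}(\cdot,t \mid \pi_w[\underline f])]$ with $\pi_w[\underline f] = \pi_{\mathrm{Sh};-}[\underline f]$ (taking $w = \mathfrak b_-[\underline f]$) is exactly the object on the right-hand side of the claimed identity, but built from $\Ll^{(w)}$ rather than $\Ll$. Third, I take expectations / pass to laws: conditionally on $\underline f$, the shift amount $w$ is a constant, $\Ll^{(w)} \deq \Ll$, and $\Ll$ is independent of $\underline f$; hence the conditional law of $\pi_{\mathrm{Sh};-}[\underline h^{\Ll^{(w)}}(\cdot,t\mid \pi_{\mathrm{Sh};-}[\underline f])]$ given $\underline f$ equals the conditional law of $\pi_{\mathrm{Sh};-}[\underline h^{\Ll}(\cdot,t\mid \pi_{\mathrm{Sh};-}[\underline f])]$ given $\underline f$. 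Integrating out $\underline f$ yields the equality of the two sides of the lemma.

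\textbf{Main obstacle.} The step I expect to require the most care is the bookkeeping of the additive height constants and the verification that $\pi_{\mathrm{Sh};\pm}$ really is insensitive to the rigid shift $\pi_w$ — in particular, that the \emph{random} shift $w = \mathfrak b_\pm[\underline f]$ may be treated as a constant after conditioning on $\underline f$, which hinges on the independence assumption $\underline f \perp \F_{\ge 0}$ stated in the lemma, and on the fact that $h(\cdot,t\mid \underline f)$ is built using only $\Ll$ restricted to times in $(0,t) \subset [0,\infty)$, hence is $\F_{\ge 0}$-measurable given $\underline f$. One must also check the measurability/finiteness points: that $h(\cdot,t \mid g_\pm) \in \CFP$ (Proposition~\ref{prop:h_pres_CFP}) and that the coincidence sets defining $\mathfrak b_\pm$ of the evolved pair are nonempty and have well-defined min/max — but this is guaranteed because $(h_-,h_+)(\cdot,t) \in \mathcal X(\theta)$ by Lemmas~\ref{lem:slope_conserved} and~\ref{KPZFP_attr}, exactly as in the proof of Proposition~\ref{prop:interfaces_are_shocks}. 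Once these points are nailed down, the argument is a clean ``change of variables in the variational formula plus translation invariance of $\Ll$'' computation with no hard estimates.
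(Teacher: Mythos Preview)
Your strategy matches the paper's: change variables in the variational formula, invoke spatial stationarity of $\Ll$ via the independence $\underline f\perp\F_{\ge 0}$, and finish by noting that $\pi_{\mathrm{Sh};\pm}$ is unchanged by rigid spatial shifts and by adding a constant $(c,c)$ (Lemma~\ref{lem:pi_constant}). The one slip is your intermediate claim that $\underline h^{\Ll^{(w)}}(\cdot,t\mid\pi_w[\underline f])=\pi_w[\underline h^\Ll(\cdot,t\mid\underline f)]$: applying $\pi_w$ to the \emph{evolved} pair subtracts $\underline h^\Ll(w,t\mid\underline f)$, not $\underline f(w)$, so the two sides differ by the constant vector $\underline h^\Ll(w,t\mid\underline f)-\underline f(w)$, whose two components are in general unequal even at $w=\mathfrak b_-[\underline f]$, and Lemma~\ref{lem:pi_constant} does not absorb such a vector. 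The fix---which is exactly what the paper does---is not to force $\pi_w$ onto the evolved pair at all: stop at your (correct) identity
\[
\underline h^{\Ll^{(w)}}(x,t\mid\pi_w[\underline f])=\underline h^\Ll(x+w,t\mid\underline f)-\underline f(w),
\]
observe that at $w=\mathfrak b_\pm[\underline f]$ the subtracted constant is of the form $(c,c)$ since $f_-(w)=f_+(w)$, and then apply $\pi_{\mathrm{Sh};\pm}$ using Lemma~\ref{lem:pi_constant} together with the elementary fact that $\mathfrak b_\pm$ of a spatially translated pair translates by the same amount. Everything else in your outline (the conditioning on $\underline f$, the use of $\Ll^{(w)}\deq\Ll$, and the well-posedness checks via Lemmas~\ref{lem:slope_conserved} and~\ref{KPZFP_attr}) is correct and mirrors the paper.
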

\begin{proof}
     Let $b_0^{\pm} = \mathfrak b_\pm[\underline f]$ and $b_t^\pm = \mathfrak b_\pm[\underline h(\cdot,t \mid \underline f)]$. Then
     \be \label{eq:hshift_deq}
    \begin{aligned}
         &\underline h(\cdot,t\mid \pi_{\mathrm{Sh};\pm}[\underline f]) \\
         &\qquad= \Bigl(\sup_{y \in \R}[f_-(y + b_0^\pm) - f_-(b_0^\pm) + \Ll(y,0;\cdot,t)], \sup_{y \in \R}[f_+(y + b_0^\pm) - f_+(b_0^\pm) + \Ll(y,0;\cdot,t)]  \Bigr) \\
         &\qquad= \Bigl(\sup_{y \in \R}[f_-(y) + \Ll(y - b_0^\pm,0;\cdot,t)], \sup_{y \in \R}[f_+(y) + \Ll(y - b_0^\pm,0;\cdot,t)]  \Bigr) - (f_-(b_0^\pm),f_+(b_0^\pm)) \\
         &\qquad\deq \Bigl(\sup_{y \in \R}[f_-(y) + \Ll(y,0;\cdot + b_0^\pm,t)], \sup_{y \in \R}[f_+(y) + \Ll(y,0;\cdot+ b_0^\pm,t)]  \Bigr) - (f_-(b_0^\pm),f_+(b_0^\pm)) \\
         &\qquad= \underline h(\cdot + b_0^{\pm}, t \mid \underline f)- (f_-(b_0^\pm),f_+(b_0^\pm)),
    \end{aligned}
    \ee
    where the distributional equality follows from the independence of $f$ and $\Ll$, and the spatial stationarity in Lemma \ref{lm:landscape_symm}.  
    Recalling the definition of $\mathfrak b_{\pm}$ \eqref{eq:bdef}, we see that
    \[
     \mathfrak b_{\pm}[ \underline h(\cdot + b_0^{\pm}, t \mid \underline f)] = \mathfrak b_{\pm}[ \underline h(\cdot, t \mid \underline f)] - b_0^{\pm} = b_t^{\pm} - b_0^{\pm}, 
    \]
    which implies that 
    \[
\pi_{\mathrm{Sh};\pm}\bigl[\underline h(\cdot + b_0^{\pm},t\mid \underline f)\bigr] = \underline h(\cdot + b_t^{\pm} - b_0^{\pm} +  b_0^{\pm},t\mid \underline f)-\underline h(b_t^{\pm} - b_0^{\pm} +  b_0^{\pm},t\mid \underline f) = \pi_{\mathrm{Sh};\pm}[ \underline h(\cdot,t \mid \underline f)].
    \]
    Then, since $f_-(b_0^{\pm}) = f_+(b_0^{\pm})$ by definition of $b_0^\pm$, the statement of the proposition now follows from Lemma \ref{lem:pi_constant} and the distributional equality in \eqref{eq:hshift_deq}.
\end{proof}

We now record the following observation. This was stated as an equality in distribution in \cite{KPZfixed}, and follows immediately from the variational description from the directed landscape.
\begin{lemma} \label{lem:KPZFP_max}
    Let $f_1,f_2 \in \mathcal C_{\mathrm{FP}}$. Then, for all $t > 0$ and $x \in \R$,
    \[
    h(x,t \mid f_1 \vee f_2) = h(x,t \mid f_1) \vee h(x,t \mid f_2).
    \]
\end{lemma}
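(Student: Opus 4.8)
The statement to prove is Lemma~\ref{lem:KPZFP_max}: for $f_1, f_2 \in \mathcal C_{\mathrm{FP}}$, and all $t > 0$, $x \in \R$,
\[
h(x,t \mid f_1 \vee f_2) = h(x,t \mid f_1) \vee h(x,t \mid f_2).
\]

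\textbf{Approach.} The plan is to unfold the variational definition \eqref{eq:FKPFP_def} of the KPZ fixed point and exploit the elementary fact that taking a supremum over $y \in \R$ commutes with taking a pointwise maximum of the integrand, since the maximum can be realized separately on two overlapping index sets whose union is all of $\R$. This is a completely ``soft'' manipulation; there is no analytic subtlety, which matches the remark in the excerpt that the result follows immediately from the variational description.

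\textbf{Key steps.} First, I would write out, for fixed $t > 0$ and $x \in \R$,
\[
h(x,t \mid f_1 \vee f_2) = \sup_{y \in \R}\bigl[(f_1 \vee f_2)(y) + \Ll(y,0;x,t)\bigr] = \sup_{y \in \R}\Bigl[\bigl(f_1(y) + \Ll(y,0;x,t)\bigr) \vee \bigl(f_2(y) + \Ll(y,0;x,t)\bigr)\Bigr],
\]
using that $(f_1 \vee f_2)(y) + c = (f_1(y) + c) \vee (f_2(y) + c)$ for any constant $c$ (here $c = \Ll(y,0;x,t)$). Second, I would invoke the general identity that for any two functions $g_1, g_2 : \R \to \R \cup \{-\infty\}$,
\[
\sup_{y \in \R}\bigl[g_1(y) \vee g_2(y)\bigr] = \Bigl(\sup_{y \in \R} g_1(y)\Bigr) \vee \Bigl(\sup_{y \in \R} g_2(y)\Bigr);
\]
one direction is immediate since $g_1(y) \vee g_2(y) \ge g_i(y)$ for each $i$ gives $\sup_y [g_1 \vee g_2] \ge \sup_y g_i$, hence $\ge$ the max of the two; the reverse holds because for each $y$, $g_1(y) \vee g_2(y)$ is one of $g_1(y)$ or $g_2(y)$, hence bounded above by $(\sup_y g_1) \vee (\sup_y g_2)$. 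Applying this with $g_i(y) = f_i(y) + \Ll(y,0;x,t)$ yields exactly $h(x,t\mid f_1) \vee h(x,t \mid f_2)$. Finally, I should note that all suprema in question are finite: since $f_1, f_2 \in \mathcal C_{\mathrm{FP}}$ and $f_1 \vee f_2 \in \mathcal C_{\mathrm{FP}}$ as well (the class $\mathcal C_{\mathrm{FP}}$ is closed under pairwise maxima, directly from its definition \eqref{eq:CFP}, since $\sup_x[(f_1\vee f_2)(x) - ax^2] \le \max_i \sup_x [f_i(x) - ax^2] < \infty$), the KPZ fixed point $h(\cdot, t\mid \cdot)$ is well-defined and finite on each of these inputs; this justifies manipulating the suprema as real numbers rather than extended reals.

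\textbf{Main obstacle.} There is essentially no obstacle here: the only thing to be careful about is that the individual suprema are genuinely finite (so that the $\vee$ on the right-hand side is a maximum of two real numbers and the rearrangement of suprema is legitimate), and that $f_1 \vee f_2$ indeed lies in $\mathcal C_{\mathrm{FP}}$ so that the left-hand side is defined in the first place. Both points are immediate from the definition of $\mathcal C_{\mathrm{FP}}$ and the finiteness statement for $h$ on that class (see Proposition~\ref{prop:h_pres_CFP}). Once these are noted, the proof is a two-line computation.
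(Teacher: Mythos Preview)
Your proposal is correct and matches the paper's approach exactly: the paper states that the lemma ``follows immediately from the variational description from the directed landscape'' and gives no further proof, which is precisely the two-line supremum manipulation you wrote out.
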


The following proposition, which is analogous to \cite[Proposition~2.1]{Dunlap-Sorensen-24} in the positive temperature case, shows that appropriately recentered versions of the KPZ dynamics are Markovian and have the Feller property.
\begin{proposition} \label{prop:Markov_feller}
Let $N\in\mathbb{N}$ and let $g\colon\mathcal{C}_{\mathrm{FP}}^{N}\to\mathbb{R}^{N}$
be a continuous linear map such that $g[x\mapsto g[\underline{f}]]= g[\underline{f}]$
for all $\underline{f}\in\mathcal{C}_{\mathrm{FP}}^{N}$ \rm{(}Here, $x\mapsto g[\underline{f}]$
denotes the constant function with value $g[\underline{f}]$\rm{)}. Define
$\pi\colon\mathcal{C}_{\mathrm{FP}}^{N}\to\mathcal{C}_{\mathrm{FP}}^{N}$
by $\pi[\underline{f}](x)=\underline{f}(x)-g[\underline{f}]$.
\begin{enumerate} [label=\rm(\roman{*}), ref=\rm(\roman{*})]  \itemsep=3pt
\item For any $s\in\mathbb{R}$ and any $\underline{f}\in\mathcal{C}_{\mathrm{FP}}^{N}$
independent of $\mathcal{F}_{\ge s}$, the process $(\pi[h(\cdot,t\mid s,\underline{f})])_{t\ge s}$
is a Markov process with state space $\mathcal{C}_{\mathrm{FP}}^{N}$.
\item For $F\in\mathcal{C}_{\mathrm{b}}(\mathcal{C}_{\mathrm{FP}}^{N})$,
$t\ge0$, and $\underline{f}\in\mathcal{C}_{\mathrm{FP}}^{N}$, let
$P_{t}^{\pi}F(\underline{f})\coloneqq\mathbb{E}[F(\pi[\underline{h}(\cdot,t\mid0,\underline{f})])]$.
Then the Markov semigroup $(P_{t}^{\pi})_{t\ge0}$ has the Feller
property.
\end{enumerate}
\end{proposition}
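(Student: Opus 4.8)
\textbf{Proof proposal for Proposition~\ref{prop:Markov_feller}.}

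The plan is to deduce both parts from the corresponding (essentially folklore) statements about the un-recentered KPZ fixed point evolution together with the algebraic properties of $g$, in a way parallel to \cite[Proposition~2.1]{Dunlap-Sorensen-24}. The essential observation is that the map $\pi$ satisfies $\pi\circ\pi=\pi$ (idempotence), which follows from $g[x\mapsto g[\underline f]]=g[\underline f]$ and linearity: $g[\pi[\underline f]]=g[\underline f]-g[x\mapsto g[\underline f]]=0$, so $\pi[\pi[\underline f]]=\pi[\underline f]-g[\pi[\underline f]]=\pi[\underline f]$. Moreover, because $g$ is linear and kills constants in the above sense, the un-recentered evolution $h(\cdot,t\mid s,\underline f)$ composed with $\pi$ depends on $\underline f$ only through $\pi[\underline f]$: indeed $h(\cdot,t\mid s,\underline f+c)=h(\cdot,t\mid s,\underline f)+c$ for any constant vector $c$ (from the variational formula \eqref{eq:FKPFP_def}), and $g$ linear with $g[x\mapsto g[\underline f]]=g[\underline f]$ forces $\pi[h(\cdot,t\mid s,\underline f+c)]=\pi[h(\cdot,t\mid s,\underline f)]$; taking $c=-g[\underline f]$ gives $\pi[h(\cdot,t\mid s,\underline f)]=\pi[h(\cdot,t\mid s,\pi[\underline f])]$.

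For part~(i), I would argue as follows. Fix $s$ and $\underline f$ independent of $\mathcal F_{\ge s}$. For $0\le t_1<t_2$, use the metric composition property \eqref{L_comp} of the directed landscape to write $h(\cdot,t_2\mid s,\underline f)=h(\cdot,t_2\mid s+t_1,\,h(\cdot,s+t_1\mid s,\underline f))$ (the semigroup/Markov property of the un-recentered evolution, which is immediate from \eqref{L_comp} applied coordinatewise). Then, by the displayed identity $\pi[h(\cdot,t\mid s',\underline g)]=\pi[h(\cdot,t\mid s',\pi[\underline g])]$ applied with $s'=s+t_1$ and $\underline g=h(\cdot,s+t_1\mid s,\underline f)$, we get
\[
\pi[h(\cdot,t_2\mid s,\underline f)]=\pi\bigl[h(\cdot,t_2\mid s+t_1,\,\pi[h(\cdot,s+t_1\mid s,\underline f)])\bigr].
\]
Since $\mathcal F_{\ge s+t_1}$ is independent of $\mathcal F_{[s,s+t_1]}$ (and of $\underline f$), and $\pi[h(\cdot,s+t_1\mid s,\underline f)]$ is measurable with respect to the latter, the conditional law of $\pi[h(\cdot,t_2\mid s,\underline f)]$ given $\{\pi[h(\cdot,u\mid s,\underline f)]:u\le t_1\}$ depends only on $\pi[h(\cdot,t_1\mid s,\underline f)]$, which is the Markov property with state space $\mathcal C_{\mathrm{FP}}^N$. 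One technical point to verify is that $\pi$ maps $\mathcal C_{\mathrm{FP}}^N$ into itself and is Borel measurable; this is where continuity of $g$ enters, since subtracting a continuous (indeed constant-in-$x$) vector preserves the quadratic growth bound defining $\mathcal C_{\mathrm{FP}}$.

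For part~(ii), the Feller property, I would use the fact that $\underline f\mapsto h(\cdot,t\mid 0,\underline f)$ is continuous from $\mathcal C_{\mathrm{FP}}^N$ to $\mathcal C_{\mathrm{FP}}^N$ in probability (or almost surely along the realization of $\mathcal L$), which follows from continuity of the directed landscape together with the stability of the argmax in \eqref{eq:FKPFP_def} under the $\mathcal C_{\mathrm{FP}}$ topology — this is exactly the content that makes the KPZ fixed point a well-defined continuous map on this state space, and can be extracted from the growth bounds (Lemma~\ref{lem:Landscape_global_bound}) as in Appendix~\ref{appx:state_space}. Composing with the continuous map $\pi$ and the bounded continuous $F$, dominated convergence gives that $\underline f\mapsto P_t^\pi F(\underline f)=\mathbb E[F(\pi[\underline h(\cdot,t\mid 0,\underline f)])]$ is bounded and continuous, i.e.\ the Feller property. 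I expect the main obstacle to be the continuity of the un-recentered evolution $\underline f\mapsto \underline h(\cdot,t\mid 0,\underline f)$ in the $\mathcal C_{\mathrm{FP}}$ topology: one must rule out ``escape of the argmax to infinity'' uniformly, which requires the parabolic decay of $\mathcal L(y,0;x,t)$ in $y$ (Lemma~\ref{lem:Landscape_global_bound}) to dominate the at-most-quadratic growth of $f_n$, together with the second condition in the definition of convergence in $\mathcal C_{\mathrm{FP}}$ (uniform control of $\sup_x[f_n(x)-ax^2]$) to get tightness of the maximizers; this is precisely the kind of estimate carried out in Appendix~\ref{appx:state_space}, so I would cite Proposition~\ref{prop:h_pres_CFP} and the lemmas there rather than reprove it.
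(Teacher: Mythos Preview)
Your approach is correct and essentially the same as the paper's: for part~(i) you both use that additive constants commute with the variational formula so that $\pi[\underline h(\cdot,t)]$ depends on the data only through $\pi[\underline h(\cdot,s)]$ and $\mathcal L$ on $[s,t]$, and then invoke independent increments; for part~(ii) you both reduce to the Feller property of the un-recentered evolution composed with the continuity of $g$. The only minor correction is that the precise result you want to cite from Appendix~\ref{appx:state_space} for continuity of $\underline f\mapsto \underline h(\cdot,t\mid 0,\underline f)$ is Proposition~\ref{prop:Feller}, not Proposition~\ref{prop:h_pres_CFP} (which only gives continuity in $(x,t)$).
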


\begin{proof}
Without loss of generality, we can assume in the proof of the first
part that $s=0$. Let $\underline{h}(x,t)=\underline{h}(x,t \mid0,\underline{f})$.
For $0\le s<t$ and any $x\in\mathbb{R}$, we have
\begin{align*}
\underline{h}(x,t) & =\max_{y\in\mathbb{R}}\left\{ \underline{h}(y,s)+\mathcal{L}(y,s;x,t)\right\} \\
 & =\max_{y\in\mathbb{R}}\left\{ \pi[\underline{h}(\cdot,s)](y)+g[\underline{h}(\cdot,s)]+\mathcal{L}(y,s;x,t)\right\} \\
 & =\max_{y\in\mathbb{R}}\left\{ \pi[\underline{h}(\cdot,s)](y)+\mathcal{L}(y,s;x,t)\right\} +g[\underline{h}(\cdot,s)].
\end{align*}
Therefore, we have
\begin{align*}
\pi[\underline{h}(\cdot,t)] & =\pi\left[x\mapsto\max_{y\in\mathbb{R}}\left\{ \pi[\underline{h}(\cdot,s)](y)+\mathcal{L}(y,s;x,t)\right\} +g[\underline{h}(\cdot,s)]\right]\\
 & =\pi\left[x\mapsto\max_{y\in\mathbb{R}}\left\{ \pi[\underline{h}(\cdot,s)](y)+\mathcal{L}(y,s;x,t)\right\} \right].
\end{align*}
Hence we see that $\pi[\underline{h}(\cdot,t)]$ depends only on $\pi[\underline{h}(\cdot,s)]$
and $\mathcal{L}(\cdot,s;\cdot,t)$, which implies that $(\pi[\underline{h}(\cdot,t)])_{t\ge0}$
is indeed a Markov process by the independent temporal increments property of $\Ll$ (Definition \ref{def:sigma_alg}).

The fact that the semigroup $(P_{t}^{\pi})_{t\ge0}$ has the Feller
property is a consequence of the Feller property for $P_{t}$ proved in Proposition~\ref{prop:Feller}.
and the assumed continuity of $g$.
\end{proof}

\subsection{Stationary horizon}
Here, we recall the definition of the stationary horizon from \cite{Busani-2021,Seppalainen-Sorensen-21b}. Define the map $\Phi$ by
\be \label{Phialt}
\Phi(f,g)(y) = g(y) + \sup_{-\infty <x \le y }\{f(y) - f(x) - [g(y) - g(x)]\} - \sup_{-\infty < x \le 0}\{g(x) - f(x)\}.
\ee
This map extends to maps $\Phi^k:\mathcal C(\R)^k \to \mathcal C(\R)^k$ as follows: \begin{enumerate}
    \item $\Phi^1(f_1)(x) = f_1(x)$. 
    \item $\Phi^2(f_1,f_2)(x) = [f_1(x),\Phi(f_1,f_2)(x)]$.
    \item For $k\ge 3$, \[\Phi^k(f_1,\ldots,f_k)(x) = [f_1(x),\Phi(f_1,[\Phi^{k - 1}(f_2,\ldots,f_k)]_1)(x),\ldots,\Phi(f_1,[\Phi^{k -1}(f_2,\ldots,f_k)]_{k - 1})(x)].\]
\end{enumerate}

\begin{definition} \label{def:SH}
The stationary horizon $\{G_\theta\}_{\theta \in \R}$ is a process with state space $\mathcal C(\R)$ and with paths in the Skorokhod space $D(\R,\mathcal C(\R))$ of right-continuous functions $\R \to \mathcal C(\R)$ with left limits. The space $\mathcal C(\R)$ has the Polish topology of uniform convergence on compact sets. The law of the stationary horizon is characterized as follows: for real numbers $\theta_1 < \cdots < \theta_k$, the $k$-tuple  $(G_{\theta_1},\ldots,G_{\theta_k})$ of continuous function  has the same law as $\Phi^k(f_1,\ldots,f_k)$, where $f_1,\ldots,f_k$ are independent, and the $i$th component has law $\BM(2\theta_i,\sqrt 2)$. In particular, $G_\theta \sim \BM(2\theta,\sqrt 2)$ for each $\theta \in \R$ \cite{brownian_queues}.    
\end{definition}

\begin{definition} \label{nutheta_def}
    For $\theta > 0$, we define $\nu_\theta := \Law(G_{-\theta},G_{\theta})$, where $G$ is the stationary horizon.  
\end{definition}

We now cite a result from \cite{Busa-Sepp-Sore-22a}:
\begin{proposition} \cite[Theorem~2.1, Theorem~5.1(viii)]{Busa-Sepp-Sore-22a} \label{prop:invariance_of_SH}
Let $(\Omega,\F,\Pp)$ be a  probability space on which the stationary horizon $G=\{G_\theta\}_{\theta \in \R}$ and directed landscape $\Ll$ are defined, and such that  the processes $\{\Ll(y,0;x,t):x,y \in \R, t > 0\}$ and $G$ are independent. For each $\theta \in \R$, let $G_\theta$ evolve under the KPZ fixed point in the same environment $\Ll$, i.e., for each $\theta \in \R$,
\[
h(x,t \mid G_\theta) = \sup_{y \in \R}\{G_\theta(y) + \Ll(y,0;x,t)\},\qquad\text{for all } x\in\R \text{ and } t > 0.
\]
Then the following statements hold:
  \paragraph{(Invariance)}
For each $t > 0$,   the equality in distribution  $\bigl(h(\cdot,t\mid G_\theta) - h(\cdot,t\mid G_\theta)\bigr)_{\theta \in \R}$ $\deq G$  holds between  random elements of $D(\R,\mathcal C(\R))$.

\paragraph{(Attractiveness)}
There exists a process 
\[
\Bigl(W^{\theta}(y,s;x,t): (y,s;x,t) \in \R^4\Bigr)
\] 
measurable with respect to $\Ll$,
such that, for each $t \in \R$, the law of $(W^\theta(0,t;\cdot,t))_{\theta \in \R}$ is the law of the stationary horizon, and moreover that the following holds. 
Let $k \in \N$ and $\theta_1 < \cdots < \theta_k$ in $\R$. Let $(f_1,\ldots,f_k)$ be a $k$-tuple of upper-semicontinuous functions $\R \to \R \cup\{-\infty\}$ that are not equal to $-\infty$ everywhere, coupled with $\Ll$ {\rm arbitrarily},  and that almost surely satisfy \eqref{eqn:drift_assumptions} for $(f, \theta) = (f_i, \theta_i)$  for each  $i\in\{1,\dotsc,k\}$.  Then, with probability $1$, for every $C > 0$ and $t \in \R$, there exists $S < t$ such that, for all $s < S, x \in [-C,C]$, and $i \in \{1,\ldots,k\}$,
\[
h(x,t \mid s, f_i) - h(0,t \mid s, f_i) = W^{\theta_i}(0,t;x,t).
\]
\end{proposition}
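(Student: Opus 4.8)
The plan is to follow the strategy of \cite{Busa-Sepp-Sore-22a}: prove both statements first for Brownian last-passage percolation (BLPP), whose $1{:}2{:}3$ scaling limit is the pair consisting of the directed landscape and the KPZ fixed point \cite{Directed_Landscape,reflected_KPZfixed,Dauvergne-Virag-21}, and then transfer them to the continuum. In BLPP the stationary analogue of $\{G_\theta\}$ is the \emph{Busemann process} $\{B^\xi\}$, obtained from the a.s.\ limiting passage-time increments along geodesics to infinity in direction $\xi$; restricted to a horizontal line this is a family of Brownian motions with drifts, coupled across $\xi$ exactly as in Definition~\ref{def:SH} (this coupling is the queueing/Burke representation for $M/M/1$ queues in series). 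The ``recovery'' (additivity) identity for Busemann functions --- the Busemann value at a vertex equals the Busemann value at any point on its $\xi$-directed geodesic plus the passage time between them --- is precisely what makes $\{B^\xi\}$ invariant under one step of the LPP dynamics, and taking the scaling limit, using that the finite-dimensional laws in Definition~\ref{def:SH} are determined by $\Phi^k$ and are continuous under the relevant weak limits, yields invariance of $\{G_\theta\}$ under the KPZ fixed point.

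For attractiveness I would take $W^\theta$ to be the Busemann process associated to $\Ll$ itself --- its existence, monotonicity in $\theta$, and the coalescence of semi-infinite $\Ll$-geodesics being available from \cite{Directed_Landscape,Rahman-Virag-21,Bhattacharjee-Busani-Sorensen-25,Busani-2023} (equivalently, $W^\theta$ is the scaling limit of $B^\xi$). The identity $h(x,t\mid s,f_i)-h(0,t\mid s,f_i)=W^{\theta_i}(0,t;x,t)$ for $s$ sufficiently negative is a one-force-one-solution statement: the drift hypotheses in \eqref{eqn:drift_assumptions} prevent the maximizer $y$ in $\max_y[f_i(y)+\Ll(y,s;x,t)]$ from escaping in the ``wrong'' direction, which --- by monotonicity of $h$ in the initial data and comparison of $f_i$ from above and below with Brownian-with-drift data of nearby slopes --- sandwiches the evolution from $(x,t)$ and from $(0,t)$ between stationary solutions of slope $2\theta_i$; for $s$ very negative the backward optimizing geodesics from $(x,t)$ and $(0,t)$ must then merge with the semi-infinite $\theta_i$-geodesics out of those points and hence, by coalescence, with each other, after which the increment of $h$ between $0$ and $x$ equals $W^{\theta_i}(0,t;x,t)$ and no longer depends on $s$. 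Running this for the finitely many ordered drifts $\theta_1<\cdots<\theta_k$ simultaneously uses the monotone ordering of $W^{\theta}$ in $\theta$, so that finitely many coalescence events localize the threshold uniformly over $x\in[-C,C]$.

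The main obstacle is the construction and identification of the $\Ll$-measurable process $(W^\theta(y,s;x,t))$ whose time-$t$ slice has the stationary-horizon law for every $t$, together with the \emph{uniform} (in $x\in[-C,C]$ and joint in $\theta_1,\dots,\theta_k$) coalescence estimate that produces a single deterministic $S<t$; this is where the real work sits, requiring either the full Busemann-function theory for the directed landscape or a careful interchange of the $s\to-\infty$ limit with the BLPP-to-landscape scaling limit. The companion delicate point is the queueing intertwining behind invariance --- that the KPZ fixed point applied jointly to a $k$-tuple of independent Brownian motions with drifts $2\theta_i$ reproduces $\Phi^k$ of independent such motions --- which in \cite{Busa-Sepp-Sore-22a} is obtained through the discrete-to-continuum passage rather than by a direct continuum computation.
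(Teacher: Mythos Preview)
The paper does not prove this proposition at all: it is quoted verbatim as an input from \cite[Theorem~2.1, Theorem~5.1(viii)]{Busa-Sepp-Sore-22a}, with no argument supplied. So there is nothing to compare your proposal against in this paper. Your sketch is a reasonable high-level summary of the strategy actually carried out in \cite{Busa-Sepp-Sore-22a}, but for the purposes of the present paper the result is a black box and no proof is expected.
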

The process $W^\theta$ in Proposition~\ref{prop:invariance_of_SH} is known as the \textit{Busemann process} and is discussed more in Proposition~\ref{prop:Buse_basic_properties}. To match the notation, the process $W^\theta$ is the process $W^{\theta +}$ from Proposition~\ref{prop:Buse_basic_properties}. 

 In the following, we upgrade the convergence in  Proposition~\ref{prop:invariance_of_SH} to convergence with respect to the topology on $\C_{\mathrm{FP}}$.  We prove Proposition~\ref{prop:uniform_upbd} in Section~\ref{sec:converge_to_SH}.
 \begin{proposition} \label{prop:uniform_upbd}
 Let $\theta \in \R$, and assume $f$ is an upper semi-continuous function $\R \to \R \cup \{-\infty\}$ that is not equal to $-\infty$ everywhere and satisfies \eqref{eqn:drift_assumptions}. Then, for each $t \in \R$, with probability $1$,  there exist random $A,B,S > 0$ (depending on $t$ and $f$) such that, for all $x \in \R$ and all $s < S$,
 \be \label{eq:linear_upbd}
 h(x,t \mid s, f) - h(0,t \mid s,f) \le A + B|x|
 \ee
In particular, combined with Proposition~\ref{prop:invariance_of_SH}, for $f \in \CFP$ satisfying \eqref{eqn:drift_assumptions},  as $s \to -\infty$, the process
\[
x \mapsto h(x,t \mid s, f) - h(0,t \mid s,f)
\]
converges, with respect to the topology on $\CFP$, to $x \mapsto W^\theta(0,t;x,t)$. Consequently,
for each $\theta_1 < \cdots < \theta_k$,  the law of $\bigl(G_{\theta_1}, \dotsc, G_{\theta_k})$ is the unique invariant measure of the KPZ fixed point on the space $\C_{\mathrm{FP}}^k$,  such that for each  $i\in\{1,\dotsc,k\}$  the condition~\eqref{eqn:drift_assumptions} holds for $(f, \theta) = (G_{\theta_i}, \theta_i)$ almost surely.
 \end{proposition}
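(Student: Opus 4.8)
The plan is to prove \eqref{eq:linear_upbd} by comparison with suitable Brownian initial data whose long-time limit is already understood via Proposition~\ref{prop:invariance_of_SH}. Fix $t\in\R$. The condition \eqref{eqn:drift_assumptions} on $f$ always allows us to sandwich $f$ between two initial conditions of the form $g_\pm = \BM(2\theta_\pm,\sqrt 2)$ (up to additive constants): choose $\theta_- < \theta < \theta_+$ when $\theta>0$ and analogous choices in the other cases, so that $f \le g_+$ on $[0,\infty)$ and $f \le g_-$ on $(-\infty,0]$ eventually, and $f \ge$ something on compacts. Actually the cleaner route is one-sided: since $h(\cdot,t\mid s,f)$ is nondecreasing in $f$, and since for $x$ large the value $h(x,t\mid s,f)$ sees only $f(y)$ for $y$ in a window around $x$ of size $O(|x-\cdot|)$ by the global landscape bound (Lemma~\ref{lem:Landscape_global_bound}), one bounds $h(x,t\mid s,f)$ above by $h(x,t\mid s,\bar g)$ for a single Brownian-with-drift profile $\bar g$ dominating $f$ near $\pm\infty$ with strictly steeper drifts $\theta_\pm$. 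Then I would invoke Proposition~\ref{prop:invariance_of_SH} applied to $\bar g$ (which satisfies \eqref{eqn:drift_assumptions} with its own $\bar\theta$) to get, for $s < S$, exact equality $h(x,t\mid s,\bar g) - h(0,t\mid s,\bar g) = W^{\bar\theta}(0,t;x,t)$ on each compact, and the Busemann process $W^{\bar\theta}(0,t;\cdot,t)\sim\BM(2\bar\theta,\sqrt2)$ is a.s.\ bounded by $A + B|x|$ globally (Brownian motion with drift has at most linear growth). The subtlety is that the equality from Proposition~\ref{prop:invariance_of_SH} holds only on compacts with an $x$-dependent threshold $S$, so the global linear bound must come from comparing $h(x,t\mid s,f)$ to the \emph{Busemann function itself} rather than to a finite-time solution; this is handled by the attractiveness statement combined with a uniform-in-$x$ tail estimate on $h(x,t\mid s,\bar g) - h(0,t\mid s,\bar g)$ using the parabolic growth bound on $\Ll$.

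More concretely, the key steps in order: \textbf{(1)} Reduce to the case $\theta>0$ by the reflection symmetry $x\mapsto -x$ of the landscape; handle $\theta=0$ separately but identically. \textbf{(2)} Pick drifts $\theta_- < \theta < \theta_+$ (for $\theta>0$ use $\theta_- = \theta/2$, say, and any $\theta_+>\theta$) and constants $c_\pm$ so that, almost surely, $f(y) \le \theta_+ y \cdot\sqrt2 + (\text{Brownian part}) + c_+$ for $y$ large and the analogous lower-slope bound for $y\to-\infty$; define $\bar g$ to be a single random element of $\CFP$ satisfying \eqref{eqn:drift_assumptions} with $\bar g \ge f + (\text{const})$ pointwise. (One can just take $\bar g = g_+ \vee g_-$ with $g_\pm$ appropriate drifted Brownian motions coupled independently of $\F_{\ge s}$, or even deterministic $V$-shaped majorants; the precise construction is routine once one writes out \eqref{eqn:drift_assumptions}.) \textbf{(3)} Monotonicity gives $h(x,t\mid s,f) \le h(x,t\mid s,\bar g) + (\text{const})$. \textbf{(4)} Apply Proposition~\ref{prop:invariance_of_SH} to $\bar g$: for each compact $[-C,C]$ there is $S<t$ with $h(x,t\mid s,\bar g) - h(0,t\mid s,\bar g) = W^{\bar\theta}(0,t;x,t)$ for $s<S$. \textbf{(5)} Upgrade to a global-in-$x$ bound: using the metric composition property \eqref{L_comp} and the global landscape bound (Lemma~\ref{lem:Landscape_global_bound}), show that $\sup_{x}[h(x,t\mid s,\bar g) - h(0,t\mid s,\bar g) - (A+B|x|)] \le 0$ for $s$ sufficiently negative, where $A,B$ depend only on $t$, $\bar\theta$, and the a.s.-finite Brownian increments; combine with step (4) on a large enough compact to absorb the non-asymptotic part. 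This yields \eqref{eq:linear_upbd} for $f$. \textbf{(6)} Given \eqref{eq:linear_upbd}, the convergence in $\CFP$ follows: uniform-on-compacts convergence to $x\mapsto W^\theta(0,t;x,t)$ is exactly Proposition~\ref{prop:invariance_of_SH}, and the linear upper bound \eqref{eq:linear_upbd} plus the matching lower bound (from monotonicity below, comparing to a steeper-drift Brownian profile bounding $f$ from below near $\pm\infty$, or simply from the fact that the limit is itself in $\CFP$) gives the required convergence of $\sup_x[h(x,t\mid s,f)-h(0,t\mid s,f)-ax^2]$ for every $a>0$ by dominated convergence together with the parabolic decay of $ax^2$ against the linear bound. \textbf{(7)} The uniqueness assertion for $\bigl(G_{\theta_1},\dots,G_{\theta_k}\bigr)$ then follows: Proposition~\ref{prop:invariance_of_SH} gives invariance; for uniqueness, any invariant $\mu$ on $\CFP^k$ supported on tuples satisfying \eqref{eqn:drift_assumptions} must, by the just-proved attractiveness in the $\CFP$ topology, be a fixed point of $P_t^*$ and simultaneously the $s\to-\infty$ limit of the evolution from any such tuple, which the attractiveness statement identifies with $(W^{\theta_1}(0,t;\cdot,t),\dots,W^{\theta_k}(0,t;\cdot,t))\sim(G_{\theta_1},\dots,G_{\theta_k})$; hence $\mu$ is forced.

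The main obstacle is \textbf{step (5)}: Proposition~\ref{prop:invariance_of_SH} only provides convergence on compact sets with a threshold $S$ depending on the compact, so one genuinely needs an independent, uniform-in-$x$ upper bound on the recentered finite-$s$ solutions to get a statement in the $\CFP$ topology rather than merely uniform-on-compacts. This is where the parabolic growth bound on the directed landscape (Lemma~\ref{lem:Landscape_global_bound}) must do the work: it controls $\Ll(y,s;x,t)$ by $-(x-y)^2/(t-s) + (\text{lower-order})$ uniformly, which — against the at-most-linear growth of $\bar g$ — forces the maximizer $y$ in the definition of $h(x,t\mid s,\bar g)$ to lie within $O(|x|)$ of $x$ and yields $h(x,t\mid s,\bar g) - h(0,t\mid s,\bar g) \le A + B|x|$ with $A,B$ not depending on $s$ (only on $t$ and on a.s.-finite random constants coming from the Brownian part of $\bar g$ and the landscape's lower-order fluctuation term). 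Making the constants uniform in $s<S$ rather than letting them blow up as $s\to-\infty$ is the delicate point; the $1/(t-s)$ prefactor on the quadratic term actually helps here, since the confinement of the maximizer only gets stronger relative to the linear growth as $s\to-\infty$. Everything else is soft: monotonicity of the dynamics in the initial condition, the reflection and translation symmetries of $\Ll$ (Lemma~\ref{lm:landscape_symm}), and standard facts about Brownian motion with drift.
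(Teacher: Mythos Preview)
Your comparison strategy has a genuine gap at the heart of it: monotonicity of the dynamics in the initial condition does \emph{not} transfer bounds on the recentered solution. If $f\le\bar g$ pointwise then indeed $h(x,t\mid s,f)\le h(x,t\mid s,\bar g)$, but to bound $h(x,t\mid s,f)-h(0,t\mid s,f)$ you also need a \emph{lower} bound on $h(0,t\mid s,f)$, and for that you would need $f\ge\underline g$ for some nice $\underline g$. The hypothesis \eqref{eqn:drift_assumptions} gives no global lower bound on $f$ (which is only upper semi-continuous and may equal $-\infty$ on large sets). Even combining an upper comparison $\bar g$ and a lower comparison $\underline g$, you would end up controlling $h(x,t\mid s,\bar g)-h(0,t\mid s,\underline g)$, which involves two different initial conditions and is not touched by Proposition~\ref{prop:invariance_of_SH}. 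You correctly flag step~(5) as the crux, but your proposed fix---proving the linear bound directly for $\bar g$ via landscape estimates---is exactly what the paper does for $f$ itself; the detour through $\bar g$ buys nothing and still leaves the recentering issue unresolved.

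The paper's argument is direct and avoids comparison entirely. It upper-bounds $h(x,0\mid s,f)$ and lower-bounds $h(0,0\mid s,f)$ separately, using only the asymptotic linear growth of $f$ together with two landscape estimates: a crude global bound (Lemma~\ref{lem:crude_DL_global}) and a sharper increment bound $\Ll(z,s;x,0)-\Ll(z,s;0,0)\le 2\theta x+\varepsilon|x|+C|x|^{1/2+\delta}$ valid when $z/|s|$ is near $\theta$ (Lemma~\ref{lem:Ldif1}). The proof splits into two regimes. When $|x|>\tfrac{\varepsilon}{2}|s|$, the crude bounds give $h(x,0\mid s,f)-h(0,0\mid s,f)\le A'+B'|x|+D'|s|$, and the case assumption absorbs $|s|$ into $|x|$. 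When $|x|\le\tfrac{\varepsilon}{2}|s|$, one first shows---by direct comparison of the variational formula on and off an interval $I_s$ around $\theta|s|$---that the maximizer defining $h(x,0\mid s,f)$ lies in $I_s$; then the increment lemma applies and gives the linear bound immediately. The lower bound on $h(0,0\mid s,f)$ uses only that $f(y)\ge(2\theta-\tfrac{\varepsilon}{2})y$ for large positive $y$, which is exactly the asymptotic available from \eqref{eqn:drift_assumptions} and is enough because the maximizer for $h(0,0\mid s,f)$ lives near $\theta|s|$ by Lemma~\ref{lem:unq}. Once \eqref{eq:linear_upbd} is in hand, the $\CFP$ convergence and the uniqueness of the invariant measure follow exactly as you describe in steps~(6)--(7).
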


\section{Invariant measure from the perspective of a shock in the full-line and open settings} \label{sec:inv_meas_shock_proofs}

As in \cite{Dunlap-Ryzhik-2020,Dunlap-Sorensen-24}, the building
blocks of shock profiles are jointly invariant measures for the KPZ
dynamics with different asymptotic slopes. 
Let
\[
\mathcal{M}f(x)=\sup_{y\in(-\infty,x]}f(y),
\]
and define
\begin{equation}
\Phi^{\diamond}[\overline{W},\widetilde{W}](x)\coloneqq\left(\overline{W}(x)-\widetilde{W}(x),\overline{W}(x)+2\mathcal{M}\widetilde{W}(x)-2\mathcal{M}\widetilde{W}(0)-\widetilde{W}(x)\right).\label{eq:Phidiamonddef}
\end{equation}

\begin{proposition}
\label{prop:nuthetainvariant}Let $(\overline{W},\widetilde{W})\sim\BM(0)\otimes\BM(2\theta)$
and $\underline{f}=(f_{-},f_{+})=\Phi^{\diamond}[\overline{W},\widetilde{W}]$.
Then, $
\nu_{\theta}= \Law(\underline{f})\label{eq:nuthetadef-1}$,
and for each (deterministic) $t\ge0$, we have $\pi_{0}[\underline{h}(\cdot,t\mid0,\underline{f})]\sim\nu_{\theta}$
as well.
\end{proposition}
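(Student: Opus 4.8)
The plan is to prove the law identity $\nu_\theta=\Law(\underline f)$ by an explicit linear change of variables on the driving Brownian motions, and then to deduce the invariance directly from the invariance property of the stationary horizon in Proposition~\ref{prop:invariance_of_SH}. For the first part I would set $f_1\coloneqq\overline W-\widetilde W$ and $f_2\coloneqq\overline W+\widetilde W$. Since $\overline W\sim\BM(0)$ and $\widetilde W\sim\BM(2\theta)$ are independent two-sided processes with equal variance, $(f_1,f_2)$ is a two-sided Gaussian process whose coordinates have uncorrelated increments (the cross-covariance of $\dif f_1$ and $\dif f_2$ vanishes), hence are independent, with $f_1\sim\BM(-2\theta,\sqrt2)$ and $f_2\sim\BM(2\theta,\sqrt2)$. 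Since $\theta>0$, Definitions~\ref{def:SH} and~\ref{nutheta_def} then give $\Phi^2(f_1,f_2)=\bigl(f_1,\Phi(f_1,f_2)\bigr)\sim(G_{-\theta},G_\theta)=\nu_\theta$.

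Next I would check that $\Phi^{\diamond}[\overline W,\widetilde W]$ coincides with $\Phi^2(f_1,f_2)$ by substituting $f=f_1$, $g=f_2$ into \eqref{Phialt}. Using $f_1-f_2=-2\widetilde W$, the first supremum there equals $\sup_{x\le y}2[\widetilde W(x)-\widetilde W(y)]=2\mathcal M\widetilde W(y)-2\widetilde W(y)$; using $f_2-f_1=2\widetilde W$, the subtracted supremum equals $2\mathcal M\widetilde W(0)$; and $f_2(y)=\overline W(y)+\widetilde W(y)$. Combining these gives $\Phi(f_1,f_2)(y)=\overline W(y)-\widetilde W(y)+2\mathcal M\widetilde W(y)-2\mathcal M\widetilde W(0)$, which is exactly the second coordinate of $\Phi^{\diamond}[\overline W,\widetilde W]$ in \eqref{eq:Phidiamonddef}; the first coordinate of $\Phi^{\diamond}[\overline W,\widetilde W]$ is $\overline W-\widetilde W=f_1$. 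Hence $\Phi^{\diamond}[\overline W,\widetilde W]=\Phi^2(f_1,f_2)$, so $\nu_\theta=\Law(\underline f)$, and in particular $\underline f\in\mathcal X(\theta)$ almost surely.

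For the invariance statement, the case $t=0$ is immediate since $\underline f(0)=(0,0)$, so fix $t>0$. Because $\underline h(\cdot,t\mid0,\underline f)$ is a measurable function of $(\underline f,\Ll)$ with $\underline f$ independent of $\Ll$, the law of $\pi_0[\underline h(\cdot,t\mid0,\underline f)]$ depends only on $\Law(\underline f)=\nu_\theta$; I would therefore compute it on the probability space of Proposition~\ref{prop:invariance_of_SH}, taking the initial condition to be the pair of coordinates $(G_{-\theta},G_\theta)$ of a stationary horizon $G$ run in the landscape $\Ll$, with $G$ independent of $\Ll$. With this realization, $\pi_0[\underline h(\cdot,t\mid0,\underline f)]$ is precisely $\bigl(h(\cdot,t\mid G_{\theta'})-h(0,t\mid G_{\theta'})\bigr)_{\theta'\in\{-\theta,\theta\}}$, so the Invariance assertion of Proposition~\ref{prop:invariance_of_SH}, restricted to these two parameter values, gives $\pi_0[\underline h(\cdot,t\mid0,\underline f)]\deq(G_{-\theta},G_\theta)\sim\nu_\theta$.

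The argument is essentially a computation and I do not anticipate a real obstacle; the one step that warrants care is the reduction at the start of the invariance argument — namely, that since the initial condition is independent of the landscape driving the evolution, one is free to replace $\underline f$ by any other $\nu_\theta$-distributed initial condition, in particular by the stationary-horizon realization to which Proposition~\ref{prop:invariance_of_SH} applies verbatim.
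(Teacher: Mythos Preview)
Your proposal is correct and follows essentially the same approach as the paper: both proofs make the linear change of variables $W_1=\overline W-\widetilde W$, $W_2=\overline W+\widetilde W$, verify that these are independent with laws $\BM(-2\theta,\sqrt2)$ and $\BM(2\theta,\sqrt2)$, check directly that $\Phi^{\diamond}[\overline W,\widetilde W]=\Phi^2(W_1,W_2)$, and then invoke the invariance statement of Proposition~\ref{prop:invariance_of_SH}. Your version spells out the invariance step in slightly more detail, but the argument is the same.
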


\begin{proof}
This is a slight reformulation of Proposition~\ref{prop:invariance_of_SH}:
the measure we consider is (a particular case of) the same one considered
there, but we are constructing it in a slightly different way, amounting
to a change of coordinates in the parametrization of $\Phi^{\diamond}$.
We recall, for paths $W_{1},W_{2}\colon\mathbb{R}\to\mathbb{R}$ such
that $W_{1}(0)=W_{2}(0)=0$, the map $\Phi$ from \eqref{Phialt}, which can be equivalently expressed as
\[
\Phi[W_{1},W_{2}](x)=\left(W_{1}(x),W_{1}(x)+\mathcal{M}[W_{2}-W_{1}](x)-\mathcal{M}[W_{2}-W_{1}](0)\right).
\] 
We can define
\[
W_{1}=\overline{W}-\widetilde{W},\qquad W_{2}=\overline{W}+\widetilde{W},\qquad\text{so}\qquad\overline{W}=\frac{W_{1}+W_{2}}{2},\qquad\widetilde{W}=\frac{W_{2}-W_{1}}{2}.
\]
Moreover, $W_{1} \sim\BM(-2\theta,\sqrt 2)$ and $W_{2} \sim \BM(2\theta,\sqrt 2)$ are independent. It
is straightforward to check that
\[
\Phi[W_{1},W_{2}]=\Phi^{\diamond}[\overline{W},\widetilde{W}]=\underline{f}.
\]
Then, the conclusion of the proposition follows from the invariance statement of
Proposition~\ref{prop:invariance_of_SH}.
\end{proof}

\subsection{The shock frame}

We need to understand how the shock frame interacts with $\Phi^{\diamond}$.
From the definitions \eqref{eq:Phidiamonddef} and \eqref{eq:bdef},
we see, for $\zeta\ge0$, that

\begin{align}
\mathfrak{b}_{-}[\Phi^{\diamond}[\overline{W},\widetilde{W}]+(\zeta,0)] & =\min\left\{ x\in\mathbb{R}\st\mathcal{M}\widetilde{W}(x)=\mathcal{M}\widetilde{W}(0)+\zeta/2\right\} \label{eq:b-Phidiamond-withM}\\
 & =\min\left\{ x\in\mathbb{R}\st\widetilde{W}(x)=\mathcal{M}\widetilde{W}(0)+\zeta/2\right\} \label{eq:b-Phidiamond}
\end{align}
 and also
\begin{align}
\mathfrak{b}_{+}[\Phi^{\diamond}[\overline{W},\widetilde{W}]+(\zeta,0)] & =\max\left\{ x\in\mathbb{R}\st\mathcal{M}\widetilde{W}(x)=\mathcal{M}\widetilde{W}(0)+\zeta/2\right\} .\label{eq:b+Phidiamond}
\end{align}

\subsection{The shock-frame invariant measure}

Let $\overline{Z}\sim\BM$. We also construct a process $\widetilde{Z}$,
independent of $\overline{Z}$, which is informally given by a Brownian
motion with drift $2\theta$ and $\widetilde{Z}(0)=0$ \emph{conditioned}
such that $\widetilde{Z}(x)\le0$ for all $x\le0$. This is not a
rigorous description, of course, because the conditioning is singular,
but we interpret it in the usual way via the Bessel process. To be
precise, let $\widetilde{Z}_{+}\sim\BM(2\theta)$, and let $\widetilde{Z}_{-}\sim\BES^{3}(2\theta)$
be independent of each other and also of $\overline{Z}$. Then we
define
\begin{equation}
\widetilde{Z}(x)=\begin{cases}
-\widetilde{Z}_{-}(-x), & x\le0;\\
\widetilde{Z}_{+}(x), & x\ge0.
\end{cases}\label{eq:Ztildedef}
\end{equation}
Now we define
\begin{equation}
\widehat{\nu}_{\theta}\coloneqq\Law(\Phi^{\diamond}[\overline{Z},\widetilde{Z}]).\label{eq:nuthetahatdef}
\end{equation}
We note that the form of $\Phi^{\diamond}$ simplifies somewhat in
this case because $\sup\limits_{y\le0}\widetilde{Z}(y)=0$: for these
choices of $\overline{Z},\widetilde{Z}$, the definition~\eqref{eq:Phidiamonddef}
reduces to
\begin{equation}
\Phi^{\diamond}[\overline{Z},\widetilde{Z}]=\left(\overline{Z}-\widetilde{Z},\overline{Z}+2\mathcal{M}\widetilde{Z}-\widetilde{Z}\right),\qquad\text{almost surely.}\label{eq:Phidiamondfornuhat}
\end{equation}
Define
\begin{equation}
\mathcal{M}_{0}f(x)=\sup_{y\in[0,x]}f(y)\qquad\text{for }x\ge0.\label{eq:M0def}
\end{equation}
We note that the process $\left(2\mathcal{M}\widetilde{Z}(x)-\widetilde{Z}(x)\right)_{x\ge0}=\left(2\mathcal{M}_{0}\widetilde{Z}(x)-\widetilde{Z}(x)\right)_{x\ge0}$
appearing on the right side of \eqref{eq:Phidiamondfornuhat}
is distributed like a $\BES^{3}(2\theta)$ process on the right half-line
by Proposition~\ref{prop:2M-X}\ref{enu:2M-X-fwd}. On the other
hand, the process $\left(2\mathcal{M}\widetilde{Z}(x)-\widetilde{Z}(x)\right)_{x\le0}$
is distributed like a $\BM(2\theta)$ process on the left half-line
by Proposition~\ref{prop:2M-X}\ref{enu:2M-x-backward}.
\begin{lemma} \label{nutheta_fact}
    For $\theta > 0$, let $(f_-,f_+) \sim \widehat \nu_\theta$. Then, with probability $1$, $(f_-,f_+) \in \mathcal X(\theta)$, and $x = 0$ is the unique value of $x$ such that $f_-(x) = f_+(x)$. Furthermore, the function $f_{\mathsf V}(x) := f_-(x) \vee f_+(x)$ has the law of $B + 
    Z$, where $B \sim \BM$ and $Z \sim \BES^3(2\theta)$ are independent. 
\end{lemma}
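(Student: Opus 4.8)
The plan is to unpack the definition of $\widehat\nu_\theta$ in \eqref{eq:nuthetahatdef} via the simplified form \eqref{eq:Phidiamondfornuhat} and verify each assertion directly from the structural properties of $\overline Z$ and $\widetilde Z$. First I would establish that $(f_-,f_+)\in\mathcal X(\theta)$ almost surely. By \eqref{eq:Phidiamondfornuhat}, $f_- = \overline Z - \widetilde Z$ and $f_+ = \overline Z + 2\mathcal M\widetilde Z - \widetilde Z$, so $f_+ - f_- = 2\mathcal M\widetilde Z$, which is manifestly nondecreasing; this gives the $\mathcal X$-monotonicity requirement once we know $(f_-,f_+)\in\mathcal Y(\theta)$. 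For the asymptotic slopes: near $+\infty$, $\widetilde Z = \widetilde Z_+\sim\BM(2\theta)$ has slope $2\theta$ and $\mathcal M\widetilde Z$ also has slope $2\theta$ (since $\widetilde Z_+$ is eventually increasing at linear rate $2\theta$, its running max tracks it), so $f_-$ has slope $-2\theta$ and $f_+$ has slope $2\theta$ at $+\infty$; near $-\infty$, $\widetilde Z(x) = -\widetilde Z_-(-x)$ with $\widetilde Z_-\sim\BES^3(2\theta)$ having slope $2\theta$ at $+\infty$, so $\widetilde Z$ has slope $2\theta$ at $-\infty$, while $\mathcal M\widetilde Z$ is bounded on $(-\infty,0]$ (indeed identically $0$ there, since $\widetilde Z\le 0$ on $(-\infty,0]$ as $\widetilde Z_-\ge 0$), giving $f_-$ slope $-2\theta$ and $f_+$ slope $2\theta$ at $-\infty$ as well. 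I would also check the $\mathcal C_{\mathrm{FP}}$ membership, which is immediate since Brownian motion and Bessel-$3$ with drift grow sublinearly-plus-linearly, hence are dominated by any $ax^2$.

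Second, I would pin down the coincidence set $\{x: f_-(x) = f_+(x)\} = \{x: \mathcal M\widetilde Z(x) = 0\}$. Since $\widetilde Z_-\sim\BES^3(2\theta)$ satisfies $\widetilde Z_-(x) > 0$ for all $x > 0$ almost surely (a Bessel-$3$ process started at $0$ is strictly positive on $(0,\infty)$), we have $\widetilde Z(x) = -\widetilde Z_-(-x) < 0$ for all $x < 0$, so $\mathcal M\widetilde Z(x) = \sup_{y\le x}\widetilde Z(y) < 0$ for $x < 0$ as well (the supremum over a set of strictly negative values that is bounded away from $0$ near each point — more carefully, $\sup_{y\le x}\widetilde Z(y) = -\inf_{y\ge -x}\widetilde Z_-(y) < 0$ using that $\inf$ over $[-x,\infty)$ of a drift-$2\theta$ Bessel process is strictly positive). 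At $x = 0$, $\mathcal M\widetilde Z(0) = \sup_{y\le 0}\widetilde Z(y) = 0$. For $x > 0$, $\mathcal M\widetilde Z(x)\ge \widetilde Z(0) = 0$, and since $\widetilde Z_+\sim\BM(2\theta)$ immediately takes positive values (a Brownian motion with drift exceeds its starting point at arbitrarily small times with probability one in the sense that $\sup_{y\in(0,\varepsilon]}\widetilde Z_+(y) > 0$ a.s.), we get $\mathcal M\widetilde Z(x) > 0$ for all $x > 0$. Hence $\mathcal M\widetilde Z(x) = 0 \iff x = 0$, which is exactly the claim (and also confirms $\mathfrak b_-[\underline f] = \mathfrak b_+[\underline f] = 0$).

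Third, I would identify the law of $f_{\mathsf V} = f_-\vee f_+$. Since $f_+ - f_- = 2\mathcal M\widetilde Z\ge 0$ everywhere, we have $f_{\mathsf V} = f_+ = \overline Z + 2\mathcal M\widetilde Z - \widetilde Z = \overline Z + (2\mathcal M\widetilde Z(x) - \widetilde Z(x))$. By the discussion preceding the lemma, invoking Proposition~\ref{prop:2M-X}\ref{enu:2M-X-fwd} and \ref{enu:2M-x-backward}, the process $x\mapsto 2\mathcal M\widetilde Z(x) - \widetilde Z(x)$ is distributed as a $\BES^3(2\theta)$ process on $[0,\infty)$ and as a $\BM(2\theta)$ process on $(-\infty,0]$; but in fact I need slightly more than these two one-sided marginals, namely that the whole two-sided process $2\mathcal M\widetilde Z - \widetilde Z$ has the law of the two-sided object $\widetilde Z$ built from \eqref{eq:Ztildedef} — equivalently that the two halves are independent and have the stated laws. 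I would get this from the independence structure already present: the left half of $2\mathcal M\widetilde Z - \widetilde Z$ depends only on $\widetilde Z|_{(-\infty,0]} = -\widetilde Z_-(-\cdot)$ (since $\mathcal M\widetilde Z = 0$ there), and the right half $2\mathcal M_0\widetilde Z - \widetilde Z$ on $[0,\infty)$ depends only on $\widetilde Z|_{[0,\infty)} = \widetilde Z_+$ (using $\mathcal M\widetilde Z(x) = \mathcal M_0\widetilde Z(x)$ for $x\ge 0$, noted in the text), and $\widetilde Z_-,\widetilde Z_+$ are independent by construction. Combined with the fact that $-\widetilde Z_-(-\cdot)$ itself has the same law as $-(\BES^3(2\theta))(-\cdot)$, Proposition~\ref{prop:2M-X}\ref{enu:2M-X-fwd} applied forward shows $2\mathcal M_0\widetilde Z - \widetilde Z\sim\BES^3(2\theta)$ on $[0,\infty)$, and part \ref{enu:2M-x-backward} gives $\BM(2\theta)$ on $(-\infty,0]$ — exactly yielding that $2\mathcal M\widetilde Z - \widetilde Z\deq Z$ for $Z\sim\BES^3(2\theta)$ two-sided in the appropriate sense, while remaining independent of $\overline Z\sim\BM$. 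Therefore $f_{\mathsf V}\deq B + Z$ with $B\sim\BM$, $Z\sim\BES^3(2\theta)$ independent, as claimed.

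The main obstacle I anticipate is the third step, specifically making sure the \emph{joint} two-sided law of $2\mathcal M\widetilde Z - \widetilde Z$ (not just its one-sided marginals) matches a genuine two-sided $\BES^3(2\theta)$ in whatever sense the lemma intends; the text's phrasing "$Z\sim\BES(2\theta)$" is a bit informal, and one must be careful that the Pitman-type transformation $2\mathcal M - \mathrm{id}$ applied to the concatenated process $\widetilde Z$ cleanly decouples across $x = 0$ (it does, because $\mathcal M\widetilde Z(0) = 0$ and the transformation localizes). The slope computations at $\pm\infty$ and the strict-positivity arguments for the coincidence set are routine given the standard Bessel-$3$ and Brownian facts and Proposition~\ref{prop:2M-X}, so I would keep those brief and concentrate the write-up on the decoupling and law-identification in step three.
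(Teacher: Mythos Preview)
Your proposal has a genuine gap in step three, stemming from an internal inconsistency. In step two you correctly argue that $\mathcal{M}\widetilde{Z}(x) < 0$ for $x < 0$ (since $\mathcal{M}\widetilde{Z}(x) = -\inf_{z \ge -x}\widetilde{Z}_-(z)$ and the infimum of the Bessel-$3$ process over $[-x,\infty)$ is strictly positive). But in step three you assert $f_+ - f_- = 2\mathcal{M}\widetilde{Z} \ge 0$ everywhere and conclude $f_{\mathsf{V}} = f_+$ on all of $\R$. This contradicts your own step two: in fact $f_- > f_+$ on $(-\infty,0)$, so $f_{\mathsf{V}} = f_-$ there. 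Consequently your claim that $2\mathcal{M}\widetilde{Z} - \widetilde{Z} \deq Z$ for a two-sided $Z \sim \BES^3(2\theta)$ is wrong on the left half-line: you yourself note (correctly) that $2\mathcal{M}\widetilde{Z} - \widetilde{Z}$ is distributed as $\BM(2\theta)$ on $(-\infty,0]$, not as a Bessel process, so $f_+$ restricted to the left is \emph{not} the object you need. (A related slip appears already in step one, where you claim $\mathcal{M}\widetilde{Z}$ is identically $0$ on $(-\infty,0]$; this too is inconsistent with step two, and the slope of $f_+$ at $-\infty$ you derive from it is only correct by accident.)

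The fix is simple and matches the paper's argument. On $[0,\infty)$ you have $f_{\mathsf{V}} = f_+ = \overline{Z} + (2\mathcal{M}_0\widetilde{Z}_+ - \widetilde{Z}_+)$, and Pitman's theorem (Proposition~\ref{prop:2M-X}\ref{enu:2M-X-fwd}) supplies the Bessel piece there. On $(-\infty,0]$ you have instead $f_{\mathsf{V}} = f_- = \overline{Z} - \widetilde{Z} = \overline{Z} + \widetilde{Z}_-(-\cdot)$, which is \emph{directly} $\overline{Z}$ plus a reflected $\BES^3(2\theta)$, with no Pitman transformation required. The two Bessel pieces are independent (being measurable functions of $\widetilde{Z}_+$ and $\widetilde{Z}_-$ respectively) and independent of $\overline{Z}$, and together they form a two-sided $\BES^3(2\theta)$ as defined in the paper.
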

\begin{proof}
    By definition of the law $\widehat \nu_\theta$ given above, we may write 
    \[
    (f_-,f_+) = \left(\overline{Z}-\widetilde{Z},\overline{Z}+2\mathcal{M}\widetilde{Z}-\widetilde{Z}\right),
    \]
    where $\overline Z \sim \BM$, independent of the process $\wt Z$, which is described above in \eqref{eq:Ztildedef}. Since $\bigl(\wt Z(-x)\bigr)_{x \ge 0}$ is the negative of a $\BES^3(2\theta)$ process on the right half-line, and $\bigl(\wt Z(x)\bigr)_{x \ge 0}$ is a Brownian motion with drift, we have $\mathcal M \wt Z(x) = 0$ if and only if $x = 0$. Hence, $x = 0$ is the unique value of $x$ where $f_-(x) = f_+(x)$, as desired. Furthermore, the process $x \mapsto \mathcal M \wt Z(x)$ is nondecreasing, so $\mathcal M \wt Z(x) = \mathcal M_0 \wt Z(x)$ for $x \ge 0$, and $f_{\mathsf V} = B + Z$, where 
\[
B(x) = \overline Z(x)\quad \text{and}\quad  Z(x) = \begin{cases}
    -\wt Z(x), &x \le 0 \\
    2\mathcal{M}_{0}\widetilde{Z}(x)-\widetilde{Z}(x), &x \ge 0.
    \end{cases}
    \]
 Because $\overline Z$ and $\wt Z$ are independent, we see that $B$ and $Z$ are independent. It remains to show that $Z \sim \BES^3(2\theta)$. By definition of $\wt Z$, the processes $(\wt Z(x))_{x \le 0}$ and $(\wt Z(x))_{x \ge  0}$ are independent, so $(-\wt Z(x))_{x \le 0}$ is independent of $ \bigl(2\mathcal{M}_{0}\widetilde{Z}(x)-\widetilde{Z}(x)\bigr)_{x \ge 0}$. The process $(-\wt Z(x))_{x \le 0}$ is distributed as a $\BES^3(2\theta)$ process on the left-hand line by definition, and as noted above the statement of the lemma, the process $\bigl(2\mathcal{M}_{0}\widetilde{Z}(x)-\widetilde{Z}(x)\bigr)_{x \ge 0}$
is distributed as a $\BES^{3}(2\theta)$ process on the right half-line. Hence, $Z\sim \BES^3(2\theta)$, as desired.  
\end{proof}

\begin{lemma}
\label{lem:bplusminussame}Let $\underline{f}\sim\nu_{\theta}$. With
probability $1$, for all but countably many $\zeta\in\R$, we have
$\mathfrak{b}_{-}[\underline{f}+(\zeta,0)]=\mathfrak{b}_{+}[\underline{f}+(\zeta,0)]$.
\end{lemma}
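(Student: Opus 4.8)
The plan is to fix the representation $\underline{f}=\Phi^{\diamond}[\overline{W},\widetilde{W}]$ with $(\overline{W},\widetilde{W})\sim\BM(0)\otimes\BM(2\theta)$ furnished by Proposition~\ref{prop:nuthetainvariant}, and to reduce the statement to an elementary fact about the continuous nondecreasing function $g:=\mathcal{M}\widetilde{W}$. First I would observe that the computation producing \eqref{eq:b-Phidiamond-withM} and \eqref{eq:b+Phidiamond} uses only the identity $f_{+}-f_{-}=2\mathcal{M}\widetilde{W}-2\mathcal{M}\widetilde{W}(0)$ together with the definition~\eqref{eq:bdef}, so it is valid for every $\zeta\in\R$, not just $\zeta\ge0$: writing $c_{\zeta}:=g(0)+\zeta/2$ and $E_{\zeta}:=\{x\in\R:g(x)=c_{\zeta}\}$, one has $\underline{f}+(\zeta,0)\in\mathcal{X}(\theta)$ almost surely, and $\mathfrak{b}_{-}[\underline{f}+(\zeta,0)]=\min E_{\zeta}$ while $\mathfrak{b}_{+}[\underline{f}+(\zeta,0)]=\max E_{\zeta}$ (both well-defined once $E_{\zeta}$ is known to be a nonempty compact interval, which we verify below). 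Thus the lemma is equivalent to the assertion that, almost surely, $E_{\zeta}$ is a singleton for all but countably many $\zeta$.

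Next I would record the structure of $g$. It is continuous and nondecreasing by construction. Since $\widetilde{W}\sim\BM(2\theta)$ with $\theta>0$, the law of large numbers for Brownian motion gives $\widetilde{W}(x)/x\to2\theta$ almost surely as $|x|\to\infty$, from which a one-line estimate yields $g(x)\to+\infty$ as $x\to+\infty$ and $g(x)\to-\infty$ as $x\to-\infty$ (for the latter, for $x$ sufficiently negative $g(x)=\sup_{y\le x}\widetilde{W}(y)\le\sup_{y\le x}\theta y=\theta x$). Hence, for every $c\in\R$, the level set $g^{-1}(\{c\})$ is a nonempty compact interval $[\alpha(c),\beta(c)]$: it is nonempty by the intermediate value theorem, an interval because $g$ is nondecreasing, closed because $g$ is continuous, and bounded because $g$ has infinite limits at $\pm\infty$. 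In this notation $\mathfrak{b}_{-}[\underline{f}+(\zeta,0)]=\alpha(c_{\zeta})$ and $\mathfrak{b}_{+}[\underline{f}+(\zeta,0)]=\beta(c_{\zeta})$.

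Finally I would use the standard observation that a nondecreasing function has only countably many plateaus. Let $D:=\{c\in\R:\alpha(c)<\beta(c)\}$. For distinct $c,c'\in D$ the open intervals $(\alpha(c),\beta(c))$ and $(\alpha(c'),\beta(c'))$ are disjoint, since a common point $x$ would satisfy both $g(x)=c$ and $g(x)=c'$; choosing a rational in each such interval exhibits $D$ as countable. Because $\zeta\mapsto c_{\zeta}$ is an affine bijection of $\R$, the set $\{\zeta\in\R:c_{\zeta}\in D\}$ is countable, and for every $\zeta$ outside it we have $\alpha(c_{\zeta})=\beta(c_{\zeta})$, i.e.\ $\mathfrak{b}_{-}[\underline{f}+(\zeta,0)]=\mathfrak{b}_{+}[\underline{f}+(\zeta,0)]$. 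Everything here is deterministic once we are on the almost sure event $\{g(x)\to\pm\infty\text{ as }x\to\pm\infty\}$, which is the only probabilistic input. I do not expect a real obstacle; the only point needing a little care is pinning down the asymptotic behavior of $g$ (equivalently, boundedness of its level sets), and this is precisely where the hypothesis $\theta>0$ enters.
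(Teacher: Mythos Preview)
Your proof is correct and follows essentially the same approach as the paper: both reduce to the observation that the level sets of the nondecreasing function $\mathcal{M}\widetilde{W}$ are pairwise disjoint intervals, hence only countably many can be nondegenerate. Your version is slightly more thorough in that you explicitly justify that the formulas \eqref{eq:b-Phidiamond-withM} and \eqref{eq:b+Phidiamond} extend to all $\zeta\in\R$ (the paper states them only for $\zeta\ge0$ but then uses them ``across all $\zeta$'') and you verify the boundedness of the level sets via the asymptotics of $g$, but the underlying idea is identical.
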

\begin{proof}
By \eqref{eq:nuthetadef-1}, we write $\underline{f}=\Phi^{\diamond}[\overline{W},\widetilde{W}]$
with $(\overline{W},\widetilde{W})\sim\BM(0)\otimes\BM(2\theta)$.
By \eqref{eq:b-Phidiamond-withM} and \eqref{eq:b+Phidiamond}
along with the fact that $\mathcal{M}\widetilde{W}$ is nondecreasing,
we see that the intervals $I_{\zeta}\coloneqq\left(\mathfrak{b}_{-}[\underline{f}+(\zeta,0)],\mathfrak{b}_{+}[\underline{f}+(\zeta,0)]\right)$
must be pairwise disjoint across all $\zeta$. Hence $I_{\zeta}$
can be nonempty for at most countably many $\zeta$ with probability
$1$, and this completes the proof.
\end{proof}
The following lemma is an analogue of \cite[Lemma~2.14]{Dunlap-Sorensen-24},
which in turn was based on an argument used in the proof of \cite[Proposition~4.4]{Dunlap-Ryzhik-2020}.
That lemma was proved using ergodicity properties. In the present
setting, the measure $\widehat{\nu}_{\theta}$ is not absolutely continuous
with respect to $\nu_{\theta}$. To see this, Lemma \ref{nutheta_fact} states that the measure $\widehat{\nu_\theta}$ is supported on tuples of functions $(f_-,f_+)$ such that $f_-(x) = f_+(x)$ if and only if $x = 0$, while from the description of the measure $\nu_\theta$ in Definition \ref{nutheta_def}, we can see that $\nu_\theta$ is supported on tuples of functions $(f_-,f_+)$ such that $f_-(x) = f_+(x)$ in a nondegenerate interval around the origin. Because of this singularity, we must use a different strategy to show convergence, based
on a Brownian path decomposition results of Rogers and Pitman \cite{Rogers-Pitman-81}, recalled in the present paper
as Proposition~\ref{prop:besselrev}.
\begin{lemma}
\label{lem:convergetonuhat}Suppose that $\underline{f}=(f_{-},f_{+})\sim\nu_{\theta}$.
For $L\in(0,\infty)$, let $\zeta_{L}\sim\Uniform([0,L])$ be independent
of everything else. Then we have
\begin{equation}
\Law\left(\pi_{\mathrm{Sh};\pm}[\underline{f}+(\zeta_{L},0)]\right)\xrightarrow[L\to\infty]{}\widehat{\nu}_{\theta}\label{eq:pishonbigintervalconvinlawtonuhat}
\end{equation}
weakly, with respect to the topology on $\CFP$.
\end{lemma}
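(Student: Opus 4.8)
The plan is to reduce the convergence to an explicit computation about the one-dimensional process $\widetilde W \sim \BM(2\theta)$, using the shock-frame formulas \eqref{eq:b-Phidiamond} and \eqref{eq:b+Phidiamond}. Write $\underline f = \Phi^\diamond[\overline W, \widetilde W]$ with $(\overline W,\widetilde W)\sim\BM(0)\otimes\BM(2\theta)$ as in Proposition~\ref{prop:nuthetainvariant}, and let $b_\zeta^\pm = \mathfrak b_\pm[\underline f + (\zeta,0)]$. By Lemma~\ref{lem:bplusminussame}, for Lebesgue-a.e.\ $\zeta$ (hence a.s.\ for $\zeta = \zeta_L$ with $\zeta_L$ independent and absolutely continuous), $b^-_\zeta = b^+_\zeta =: b_\zeta$, and by \eqref{eq:b-Phidiamond} this is the first (equivalently the unique) time $\widetilde W$ hits the level $\mathcal M\widetilde W(0) + \zeta/2$. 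So I would first rewrite $\pi_{\mathrm{Sh};\pm}[\underline f + (\zeta_L,0)]$ purely in terms of $\overline Z := \overline W(b_{\zeta_L} + \cdot) - \overline W(b_{\zeta_L})$ and $\widetilde Z := \widetilde W(b_{\zeta_L} + \cdot) - \widetilde W(b_{\zeta_L})$; using that $\zeta_L/2$ plays the role of $\mathcal M\widetilde W(b_{\zeta_L}) - \mathcal M\widetilde W(0)$ and that $\Phi^\diamond$ is covariant under spatial shift composed with the constant-shift absorption in \eqref{eq:Phidiamonddef}, one checks that $\pi_{\mathrm{Sh};\pm}[\underline f+(\zeta_L,0)] = \Phi^\diamond[\overline Z,\widetilde Z]$ with the simplified form \eqref{eq:Phidiamondfornuhat}, because after recentering at the hitting time we have $\sup_{y\le 0}\widetilde Z(y) = 0$. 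Thus the lemma is equivalent to the statement that $(\overline Z, \widetilde Z)$ converges in distribution (uniformly on compacts) as $L\to\infty$ to $(\overline Z_\infty, \widetilde Z_\infty)$ with $\overline Z_\infty\sim\BM$, and $\widetilde Z_\infty$ given by \eqref{eq:Ztildedef}, independently.

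The core analytic input is the behavior of $\widetilde W\sim\BM(2\theta)$ around a high first-passage time. Since $\zeta_L\sim\Uniform[0,L]$ and $\widetilde W$ has positive drift $2\theta$, the level $\mathcal M\widetilde W(0) + \zeta_L/2$ is of order $L$, so $b_{\zeta_L}\to\infty$. To the right of $b_{\zeta_L}$, by the strong Markov property $\widetilde W(b_{\zeta_L}+\cdot) - \widetilde W(b_{\zeta_L})$ is exactly a fresh $\BM(2\theta)$ started at $0$, independent of the past; that gives the $\widetilde Z_+\sim\BM(2\theta)$ piece for free, and similarly the fresh $\overline W$ increments to the right give $\overline Z\sim\BM$. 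To the left of $b_{\zeta_L}$, I would invoke the time-reversal/path-decomposition of Rogers and Pitman, recalled as Proposition~\ref{prop:besselrev}: the process $\widetilde W$ run backwards from its first passage to a high level, before that level was ever reached, converges (as the level $\to\infty$) to (minus) a $\BES^3(2\theta)$ process — this is the precise sense of "$\BM(2\theta)$ conditioned to stay below $0$ on the negative half-line" that defines $\widetilde Z_-$ in \eqref{eq:Ztildedef}. One must check that the level is genuinely large with high probability: $\mathcal M\widetilde W(0)$ is an a.s.\ finite random variable (finite supremum of a negatively-indexed drifted BM on $(-\infty,0]$ is not the issue — it is $\sup_{y\le 0}\widetilde W(y)$, finite a.s.), and $\zeta_L/L\to\Uniform[0,1]$, so the level divided by $L$ converges to a positive random variable, which suffices to apply the $L\to\infty$ asymptotics in Proposition~\ref{prop:besselrev} conditionally. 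The independence of the left-increment process from the right-increment process (and of $\overline Z$ from $\widetilde Z$) follows from the independent-increments structure of Brownian motion together with the fact that the reversal only uses the path strictly before $b_{\zeta_L}$.

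Assembling: condition on $b_{\zeta_L}$ and on the level; to the right, $(\overline Z, \widetilde Z)|_{[0,\infty)}$ is exactly $(\BM, \BM(2\theta))$ with no dependence on $L$; to the left, $(\overline Z,\widetilde Z)|_{(-\infty,0]}$ converges to $(\BM$ reversed$, -\BES^3(2\theta))$ by Rogers–Pitman; and the left and right parts are independent. Feeding this into the continuous map $\Phi^\diamond$ in its reduced form \eqref{eq:Phidiamondfornuhat} and using the continuous mapping theorem (noting $\Phi^\diamond$ is continuous in the topology of uniform convergence on compacts, and that the convergence $\pi_{\mathrm{Sh};\pm}[\underline f+(\zeta_L,0)]\to\Phi^\diamond[\overline Z_\infty,\widetilde Z_\infty]$ automatically takes place in the stronger $\CFP$ topology because all the functions involved have the deterministic asymptotic slopes $\pm 2\theta$, so the quadratic-tail functionals converge as well — this is exactly the role of the asymptotic-slope conditions in the definition of $\mathcal X(\theta)\subset\CFP$) yields \eqref{eq:pishonbigintervalconvinlawtonuhat} with limit $\widehat\nu_\theta$ as in \eqref{eq:nuthetahatdef}. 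I expect the main obstacle to be the rigorous left-of-shock step: making the "BM conditioned to stay negative" heuristic precise uniformly as the passage level grows, i.e.\ carefully citing Rogers–Pitman (Proposition~\ref{prop:besselrev}) with the randomized level $\mathcal M\widetilde W(0)+\zeta_L/2$ and controlling the joint law of that level with the forward increments, rather than any issue with the algebra of $\Phi^\diamond$ or the topology.
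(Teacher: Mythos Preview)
Your overall strategy matches the paper's: represent $\underline{f}=\Phi^\diamond[\overline{W},\widetilde{W}]$, identify $b_{\zeta_L}$ with the first-passage time $\tau_{\xi_L}$ of $\widetilde{W}$ to level $\xi_L=\mathcal{M}\widetilde{W}(0)+\zeta_L/2$, and analyze the recentered pair $(\overline{Q}_{\xi_L},\widetilde{Q}_{\xi_L})$ via strong Markov on the right and Rogers--Pitman on the left. The identity $\pi_{\mathrm{Sh};-}[\underline{f}+(\zeta_L,0)]=\Phi^\diamond[\overline{Q}_{\xi_L},\widetilde{Q}_{\xi_L}]$ with $\mathcal{M}\widetilde{Q}_{\xi_L}(0)=0$ is also correct and is exactly the paper's \eqref{eq:fintermsofQs}.

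However, two steps are genuine gaps. First, $\Phi^\diamond$ is \emph{not} continuous in the uniform-on-compacts topology: for $x<0$ the second component involves $\mathcal{M}\widetilde{Z}(x)=\sup_{y\le x}\widetilde{Z}(y)$, a functional depending on the entire negative half-line, so the continuous mapping theorem does not apply. The paper circumvents this by using that Proposition~\ref{prop:besselrev} is an \emph{exact} distributional identity for each fixed level (not merely a limit as the level grows, as you phrase it). It constructs, for each $L$, a process $\widetilde{Z}_{\xi_L}$ with exactly the law of $\widetilde{Z}$ from \eqref{eq:Ztildedef}, coupled so that $\widetilde{Z}_{\xi_L}=\widetilde{Q}_{\xi_L}$ on $[-\tau_{\xi_L},\infty)$; thus $\Phi^\diamond[\overline{Q}_{\xi_L},\widetilde{Z}_{\xi_L}]\sim\widehat{\nu}_\theta$ exactly for every $L$. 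One then checks that on an explicit high-probability event $E_x^L$ (requiring the tails of both $\widetilde{Q}_{\xi_L}$ and $\widetilde{Z}_{\xi_L}$ on $(-\infty,-\tau_{\xi_L}]$ to lie below the supremum on $[-\tau_{\xi_L},x]$) the running maxima $\mathcal{M}\widetilde{Q}_{\xi_L}$ and $\mathcal{M}\widetilde{Z}_{\xi_L}$ agree, and hence $\pi_{\mathrm{Sh};-}[\underline{f}+(\zeta_L,0)]|_{[-K,K]}=\Phi^\diamond[\overline{Q}_{\xi_L},\widetilde{Z}_{\xi_L}]|_{[-K,K]}$. This tail control is precisely the missing piece in your continuous-mapping step.

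Second, the upgrade to the $\CFP$ topology is not automatic from the asymptotic slopes: having slope $\pm 2\theta$ for each $L$ does not give a bound on $\sup_x[f_L(x)-ax^2]$ that is \emph{uniform in $L$}, which is what Lemma~\ref{lem:alt_conv} requires. The paper obtains this via explicit linear bounds on $\overline{Q}_{\xi_L}$ and $\widetilde{Q}_{\xi_L}$ holding with high probability uniformly in $L$ (the events $A_C^L$).
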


\begin{proof}
By Lemma~\ref{lem:bplusminussame}, it suffices to show the statement for the shift $\pi_{\mathrm{Sh};-}[\underline{f}+(\zeta_{L},0)]$. By Proposition~\ref{prop:nuthetainvariant}, we can represent $\underline f$ as 
\begin{equation}
\underline{f}=\Phi^{\diamond}[\overline{Q},\widetilde{Q}],\label{eq:funderlinedef}
\end{equation}
where $(\overline{Q},\widetilde{Q})\sim\mathrm{BM}(0)\otimes\BM(2\theta)$.
For $L > 0$, let $\xi_L$ be a random variable such that 
\begin{equation}
\mathbb{P}(\xi_L>0)=1,\text{ and }\xi_L\text{ is independent of }(\widetilde{Q}(x))_{x\ge0}.\label{eq:xiind}
\end{equation}
We will specifically define $\xi_L$ later in the proof. Given this random variable, define
the stopping time
\begin{equation}
\tau_{\xi_L}=\min\{x\in\mathbb{R}\st\widetilde{Q}(x)=\xi_L\},\label{eq:tauxidef}
\end{equation}
and the process
\begin{equation}
x \mapsto \widetilde{Q}_{\xi_L}(x)\coloneqq\widetilde{Q}(x+\tau_{\xi_L})-\widetilde{Q}(\tau_{\xi_L})=\widetilde{Q}(x+\tau_{\xi_L})-\xi_L.\label{eq:qtildexidef}
\end{equation}
Since $(\widetilde{Q}(x))_{x\ge0}$ has drift $2\theta>0$, we see
that $\tau_{\xi_L}<\infty$ with probability $1$. Also, since $\tau_{\xi_L}$
is a stopping time with respect to the usual filtration of $(\widetilde{Q}(x))_{x\ge0}$,
the strong Markov property tells us that the process $(\widetilde{Q}_{\xi_L}(x))_{x\ge0}$
is another standard Brownian motion with drift $2\theta$, which moreover
is independent of $(\tau_{\xi_L},(\widetilde{Q}(x))_{0\le x\le\tau_{\xi_L}})$.
Now by Proposition~\ref{prop:besselrev}, in an appropriate coupling, we have a random variable $\widetilde{R}_{-}\sim\BES^{3}(2\theta)$
such that, if we define the last-passage time
\[
\sigma_{\xi_L}\coloneqq\sup\{x\ge0\st\widetilde{R}_{-}(x)=\xi_L\},
\]
then 
\[
\sigma_{\xi_L}=\tau_{\xi_L}\qquad\text{and}\qquad\widetilde{Q}_{\xi_L}(x)=-\widetilde{R}_{-}(-x)\qquad\text{for all }x\in[-\tau_{\xi_L},0].
\]
Moreover, we can (and do) choose $\widetilde{R}_{-}$ to be independent
of $(\widetilde{Q}_{\xi_L}(x))_{x\ge0}$. Next, define
\be \label{Ztilde_def}
\widetilde{Z}_{\xi_L}(x)=\begin{cases}
-\widetilde{R}_{-}(-x) & x\le0, \\
\widetilde{Q}_{\xi_L}(x) & x\ge0,
\end{cases}
\ee
so that
\begin{equation}
\widetilde{Z}_{\xi_L}\deq \widetilde{Z},\label{eq:ZtildeZ}
\end{equation}
with $\widetilde{Z}$ defined in \eqref{eq:Ztildedef}, and
\begin{equation}
\widetilde{Z}_{\xi_L}|_{[-\sigma_{\xi_L},\infty)}=\widetilde{Q}_{\xi_L}|_{[-\tau_{\xi_L},\infty)},\qquad\text{almost surely.}\label{eq:ZtildematchesQtilde}
\end{equation}
Now, as alluded to previously, we define the random variable $\xi_L$ as
\begin{equation}
\xi_L\coloneqq\sup_{y\in(-\infty,0]}\widetilde{Q}(y)+\zeta_{L}/2.\label{eq:xichoice}
\end{equation}
This choice of $\xi_L$ clearly satisfies the assumption \eqref{eq:xiind}.
Moreover, comparing \eqref{eq:b-Phidiamond} and \eqref{eq:tauxidef},
we see that
\begin{equation}
\mathfrak{b}_{-}[\underline{f}+(\zeta_{L},0)]\overset{\eqref{eq:funderlinedef}}{=}\mathfrak{b}_{-}[\Phi^{\diamond}[\overline{Q},\widetilde{Q}]+(\zeta_{L},0)]\overset{\eqref{eq:b-Phidiamond}}{=}\tau_{\xi_L}.\label{eq:bintermsoftau}
\end{equation}
We note that $\xi_L\to\infty$ in probability as $L\to\infty$, and
therefore that
\begin{equation}
\tau_{\xi_L}\to\infty\text{ in probability as }L\to\infty.\label{eq:tauxitoinfty}
\end{equation}
Analogously to \eqref{eq:qtildexidef}, define
\[
\overline{Q}_{\xi_L}(x)=\overline{Q}(x+\tau_{\xi_L})-\overline{Q}(\tau_{\xi_L}),
\]
so $\overline{Q}_{\xi_L}\sim\BM$ by the strong Markov property, and so, by \eqref{eq:ZtildeZ} (recalling the definition \eqref{eq:nuthetahatdef}), we have
\begin{equation}
\Phi^{\diamond}[\overline{Q}_{\xi_L},\widetilde{Z}_{\xi_L}]\sim\widehat{\nu}_{\theta}.\label{eq:Phidiamonddistnuhat}
\end{equation}
 Recalling \eqref{eq:qtildexidef}, we can compute
\be \label{eq:MxiL_tilde}
\mathcal{M}\widetilde{Q}(x+\tau_{\xi_L})  =\mathcal{M}[\widetilde{Q}(\cdot+\tau_{\xi_L})](x)=\mathcal{M}\widetilde{Q}_{\xi_L}(x)+\xi_L,
\ee
and hence we have
\begin{align}
  \pi_{\mathrm{Sh};-}[\underline{f}+(\zeta_{L},0)](x)\overset{\eqref{eq:bintermsoftau}}&{=}\underline{f}(x + \tau_{\xi_L})-\underline{f}(\tau_{\xi_L})\nonumber \\
 & =\Phi^{\diamond}[\overline{Q},\widetilde{Q}](x+\tau_{\xi_L})-\Phi^{\diamond}[\overline{Q},\widetilde{Q}](\tau_{\xi_L})\nonumber \\
 &=\left(\overline{Q}_{\xi_L}(x)-\widetilde{Q}_{\xi_L}(x),\overline{Q}_{\xi_L}(x)-\widetilde{Q}_{\xi_L}(x)+2\mathcal{M}\widetilde{Q}(x+\tau_{\xi_L})-2\mathcal{M}\widetilde{Q}(\tau_{\xi_L})\right)\nonumber \\
 \overset{\eqref{eq:MxiL_tilde}}&{=}\left(\overline{Q}_{\xi_L}(x)-\widetilde{Q}_{\xi_L}(x),\overline{Q}_{\xi_L}(x)-\widetilde{Q}_{\xi_L}(x)+2\mathcal{M}\widetilde{Q}_{\xi_L}(x)\right),\label{eq:fintermsofQs}
\end{align}
where in the last step, we have used the fact that $\mathcal M \wt Q_{\xi_L}(0) = 0$, which follows because $\wt Q_{\xi_L}(0) = 0$, and $\wt Q_{\tau_{\xi_L}} = \wt Q(x + \tau_{\xi_L}) -\wt Q(\tau_{\xi_L}) < 0 $ for $x < 0$, since $\tau_{\xi_L}$ is a first-passage time. For $x\in\mathbb{R}$, we define the event
\begin{align}
E_{x}^L & \coloneqq\left\{ \tau_{\xi_L}\ge-x,\ \sup_{y\in(-\infty,0]}\widetilde{Q}(y)\le\sup_{y\in[0,x + \tau_{\xi_L}]}\widetilde{Q}(y),\text{ and }\sup_{y\in(-\infty,-\tau_{\xi_L}]}\widetilde{Z}_{\xi_L}(y)\le\sup_{y\in[-\tau_{\xi_L},x]}\widetilde{Z}_{\xi_L}(y)\right\} .\label{eq:condition}
\end{align}
On the event $E_{x}^L$, we have (recalling the definition \eqref{eq:qtildexidef})
that
\[
\sup_{y\in(-\infty,-\tau_{\xi_L}]}\widetilde{Q}_{\xi_L}(y)\le\sup_{y\in[-\tau_{\xi_L},x]}\widetilde{Q}_{\xi_L}(y),
\]
and hence
\begin{equation}
\mathcal{M}\widetilde{Q}_{\xi_L}(x)=\sup_{y\in[-\tau_{\xi_L},x]}\widetilde{Q}_{\xi_L}(y)\overset{\eqref{eq:ZtildematchesQtilde}}{=}\sup_{y\in[-\tau_{\xi_L},x]}\widetilde{Z}_{\xi_L}(y)=\mathcal{M}\widetilde{Z}_{\xi_L}(x),\label{eq:supQtildeintermsofZtilde}
\end{equation}
with the last identity holding by the last condition in the definition
\eqref{eq:condition} of $E_{x}^L$. We claim that this implies
that
\begin{equation}
E_{x}^L\subseteq\left\{ \pi_{\mathrm{Sh};-}[\underline{f}+(\zeta_{L},0)](x)=\Phi^{\diamond}[\overline{Q}_{\xi_L},\widetilde{Z}_{\xi_L}](x)\right\} .\label{eq:EximpliesfisPhi}
\end{equation}
Indeed, on the event $E_{x}^L$, we can write
\begin{align*}
\Phi^{\diamond} & [\overline{Q}_{\xi_L},\widetilde{Z}_{\xi_L}](x)\overset{\eqref{eq:Phidiamondfornuhat}}{=}\left(\overline{Q}_{\xi_L}(x)-\widetilde{Z}_{\xi_L}(x),\overline{Q}_{\xi_L}(x)+2\mathcal{M}\widetilde{Z}_{\xi_L}(x)-\widetilde{Z}_{\xi_L}(x)\right)\\
 & =\left(\overline{Q}_{\xi_L}(x)-\widetilde{Q}_{\xi_L}(x),\overline{Q}_{\xi_L}(x)+2\mathcal{M}\widetilde{Q}_{\xi_L}(x)-\widetilde{Q}_{\xi_L}(x)\right)\overset{\eqref{eq:fintermsofQs}}{=}\pi_{\mathrm{Sh};-}[\underline{f}+(\zeta_{L},0)](x),
\end{align*}
where in the second identity we use \eqref{eq:ZtildematchesQtilde}
(along with the fact that $x\ge-\tau_{\xi_L}$ on $E_{x}^L$ by the definition
\eqref{eq:condition}), and \eqref{eq:supQtildeintermsofZtilde}.

We also note from the definition \eqref{eq:condition} of $E_{x}^L$
that
\begin{equation}
x'\ge x\implies E_{x}^L\subseteq E_{x'}^L,\label{eq:Eincreasing}
\end{equation}
and that, for any fixed $x\in\mathbb{R}$, we have
\begin{equation}
\lim_{L\to\infty}\mathbb{P}(E_{x}^L)=1\label{eq:limPEx}
\end{equation}
by \eqref{eq:tauxitoinfty}. (Recall that the event $E_{x}^L$ depends
on $L$ through the dependence \eqref{eq:xichoice} of $\xi_L$
on $L$.) Hence, for any fixed $K>0$, we have
\be \label{eq:ExL_sets}
\begin{aligned}
  \liminf_{L\to\infty} {} & \mathbb{P}\left(\pi_{\mathrm{Sh};-}[\underline{f}+(\zeta_{L},0)]\big|_{[-K,K]}=\Phi^{\diamond}[\overline{Q}_{\xi_L},\widetilde{Z}_{\xi_L}]\big|_{[-K,K]}\right)\\
 & \overset{\eqref{eq:EximpliesfisPhi}}{\ge}\liminf_{L\to\infty}\mathbb{P}\left[\bigcap_{x\in[-K,K]}E_{x}^L\right]\overset{\eqref{eq:Eincreasing}}{=}\liminf_{L\to\infty}\mathbb{P}(E_{-K}^L)\overset{\eqref{eq:limPEx}}{=}1,
\end{aligned}
\ee
and in light of \eqref{eq:Phidiamonddistnuhat} this implies the convergence  in \eqref{eq:pishonbigintervalconvinlawtonuhat}, with respect to the topology of uniform convergence on compact sets. We now upgrade to convergence on the space $\CFP$. For $C > 0$, we define the event 
\be \label{ACL}
A_{C}^L = \Bigl\{\overline Q_{\xi_L}(x) \le C + |x|, \quad  |\wt Q_{\xi_L}(x) - 2\theta x|\le C + \theta|x|,\quad \forall x \in \R     \Bigr\}
\ee
We know that $\overline Q_{\xi_L} \sim \BM$, so,  with arbitrarily high probability, by taking $C$ large enough,
\be \label{overline Q small}
\overline Q_{\xi_L}(x) \le C + |x|,\quad\forall x \in \R. 
\ee
Furthermore, from \eqref{Ztilde_def} and \eqref{eq:ZtildematchesQtilde}, since $\BM(2\theta)$ and $\BES^3(2\theta)$ both have asymptotic slope $2\theta$, we have that, with arbitrarily high probability, by taking $C$ large enough,
\be \label{eq:QxiL}
|\wt Q_{\xi_L}(x) - 2\theta x| \le C + \theta|x| ,\quad\forall  x \ge -\tau_{\xi_L}.
\ee
On the other hand, since $\wt Q_{\xi_L}(x) \overset{\eqref{eq:qtildexidef}}= \wt Q(x + \tau_{\xi_L}) - \wt Q(\tau_\xi)$ and $\wt Q \sim \BM(2\theta)$, for every $\ve > 0$, we may obtain 
\be \label{eq:Qbd2}
|\wt Q_{\xi_L}(x) - 2\theta x| \le |\wt Q(x + \tau_{\xi_L}) - 2\theta(x + \tau_{\xi_L})| + |\wt Q(\tau_{\xi_L}) - 2\theta \tau_{\xi_L}| \le C +2 \ve \xi_L + \ve |x| \quad\text{for all }x \in \R
\ee
with arbitrarily high probability, by choosing $C$ large enough.
Using \eqref{overline Q small} combined with \eqref{eq:QxiL} for $x \ge -\tau_{\xi_L}$ and \eqref{eq:Qbd2} for $x < -\tau_{\xi_L}$ with $\ve = \f{\theta}{3}$, we see from the definition of $A_C^L$ \eqref{ACL} that for any $\delta > 0$, we may choose $C > 0$ so that 
\be \label{eq:ACLto1}
\liminf_{L \to \infty} \Pp(A_C^L) \ge 1-\delta.
\ee
 On the event $A_C^L$, Equation \eqref{eq:fintermsofQs} implies that the first and second components  of $\pi_{\mathrm{Sh};-}[\underline{f}+(\zeta_{L},0)]$ are bounded above by $2C + (3\theta  +1)|x|$, and  $4C + 9\theta|x|$, respectively. Note that, whenever $A,B > 0$ and $m \in \N$, there exists a common compact set $K$ (depending on $A,B,m$ such that 
 \[
 \sup_{x \in \R}\Bigl[f(x) - \f{1}{m}x^2\Bigr] = \sup_{x \in K}\Bigl[f(x) - \f{1}{m}x^2\Bigr]
 \]
 for all continuous functions $f$ satisfying $f(0) = 0$ and  $f(x) \le A + B|x|$. Hence, \eqref{eq:ExL_sets} and \eqref{eq:ACLto1} imply that $\pi_{\mathrm{Sh};-}[\underline{f}+(\zeta_{L},0)] - \Phi^{\diamond}[\overline{Q}_{\xi_L},\widetilde{Z}_{\xi_L}]$ converges in probability (with respect to the $L^1$ metric on $\CFP^2$ induced by the metric on $\CFP$ defined in \eqref{eq:DCFP_metric}), as $L \to \infty$, to $0$. Since $\Phi^{\diamond}[\overline{Q}_{\xi_L},\widetilde{Z}_{\xi_L}] \sim \widehat \nu_\theta$ for all $L > 0$, this completes the proof.   
\end{proof}
We are now ready to prove the following analogue of \cite[Theorem~1.8]{Dunlap-Sorensen-24}
and \cite[Theorem~1.1]{Dunlap-Ryzhik-2020} in the present setting.
With Lemma~\ref{lem:convergetonuhat} in hand, the proof is the same
as before.
\begin{theorem} \label{thm:stat_from_shock_general}
For a measure $\mu$ on $\mathcal{C}_{\mathrm{FP};0}^{2}$, let $\mathbb{E}_{\mu}$
denote expectation under which $\underline{f}\sim\mu$, for appropriate coupling of directed landscape $\Ll$ and $f$, where $f$ is independent of $\F_{\ge 0}$. Let $\underline{h}(x,t)=(h_{-},h_{+})(x,t)\coloneqq\underline{h}(x,t \mid\underline{f})$.
Then we have, for any $F\in\mathcal{C}_{\mathrm{b}}(\mathcal{C}_{\mathrm{FP};0}^{2})$
and any $t > 0$, that 
\[
\mathbb{E}_{\widehat{\nu}_{\theta}}[F(\underline f)]=\mathbb{E}_{\widehat{\nu}_{\theta}}[F(\pi_{\mathrm{Sh};\pm}[\underline{h}(\cdot,t)])].
\]
\end{theorem}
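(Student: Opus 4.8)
The plan is to deduce this from Lemma~\ref{lem:convergetonuhat} together with the joint invariance of $\nu_\theta$ (Proposition~\ref{prop:nuthetainvariant}) and the shift-idempotence Lemma~\ref{lem:shift_idempotent}, following the same argument as in \cite[Theorem~1.8]{Dunlap-Sorensen-24} and \cite[Theorem~1.1]{Dunlap-Ryzhik-2020}. First I would fix $t>0$ and $F\in\mathcal C_{\mathrm b}(\mathcal C_{\mathrm{FP};0}^2)$, take $\underline f\sim\nu_\theta$ independent of $\F_{\ge 0}$, and for each $L>0$ let $\zeta_L\sim\Uniform([0,L])$ be independent of everything. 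Since $\nu_\theta$ is supported on $\mathcal X(\theta)$ and a constant shift in the first coordinate preserves $\mathcal X(\theta)$, we have $\underline f+(\zeta_L,0)\in\mathcal X(\theta)$ a.s., so Lemma~\ref{lem:shift_idempotent} applies with initial data $\underline f+(\zeta_L,0)$ and gives
\[
\Law\bigl(\pi_{\mathrm{Sh};\pm}[\underline h(\cdot,t\mid \underline f+(\zeta_L,0))]\bigr)=\Law\bigl(\pi_{\mathrm{Sh};\pm}[\underline h(\cdot,t\mid \pi_{\mathrm{Sh};\pm}[\underline f+(\zeta_L,0)])]\bigr).
\]

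Next I would rewrite both sides. On the left, by Lemma~\ref{lem:KPZFP_max}-type reasoning the evolution commutes with the height shift $(\zeta_L,0)$ only in the first coordinate, but more simply: $\underline h(\cdot,t\mid\underline f+(\zeta_L,0))=\underline h(\cdot,t\mid\underline f)+(\zeta_L,0)$ by linearity of the variational formula in the additive constant, and then $\pi_{\mathrm{Sh};\pm}$ kills the constant $(\zeta_L,0)$ provided $\mathfrak b_\pm$ is computed from the shifted tuple; care is needed here because shifting only the first coordinate by $\zeta_L$ changes $\mathfrak b_\pm$. The cleaner route, matching the cited proofs, is: by Proposition~\ref{prop:nuthetainvariant}, $\pi_0[\underline h(\cdot,t\mid\underline f)]\sim\nu_\theta$; combined with the fact that for $\underline g\sim\nu_\theta$ and $\zeta\sim\Uniform([0,L])$ independent, $\pi_{\mathrm{Sh};\pm}[\underline g+(\zeta,0)]$ has a law that (by the metric-composition/translation structure of $\Ll$ and the explicit form of $\pi_{\mathrm{Sh};\pm}$) matches $\pi_{\mathrm{Sh};\pm}[(\text{evolved }\nu_\theta)+(\zeta,0)]$, so that the left side of the displayed identity has law $\Law(\pi_{\mathrm{Sh};\pm}[\underline f'+(\zeta_L,0)])$ with $\underline f'\sim\nu_\theta$. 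On the right side, by Lemma~\ref{lem:convergetonuhat}, $\Law(\pi_{\mathrm{Sh};\pm}[\underline f+(\zeta_L,0)])\to\widehat\nu_\theta$ weakly on $\CFP$ as $L\to\infty$. Therefore, applying $F$, passing $L\to\infty$, and using the Feller property of the shift-recentered KPZ semigroup (Proposition~\ref{prop:Markov_feller}) to pass the limit inside the evolution, both sides converge:
\[
\Ee_{\widehat\nu_\theta}[F(\pi_{\mathrm{Sh};\pm}[\underline h(\cdot,t)])]=\lim_{L\to\infty}\Ee[F(\pi_{\mathrm{Sh};\pm}[\underline h(\cdot,t\mid \pi_{\mathrm{Sh};\pm}[\underline f+(\zeta_L,0)])])]
\]
and the right side above equals $\lim_{L\to\infty}\Ee[F(\pi_{\mathrm{Sh};\pm}[\underline h(\cdot,t\mid\underline f+(\zeta_L,0))])]$, which by the invariance of $\nu_\theta$ and Lemma~\ref{lem:convergetonuhat} again equals $\Ee_{\widehat\nu_\theta}[F(\underline f)]$.

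Let me state the logical skeleton precisely: (1) $\nu_\theta$ is invariant, so evolving then shifting by $\zeta_L$ and recentering has the same law as shifting by $\zeta_L$ and recentering directly, for every $L$; (2) Lemma~\ref{lem:shift_idempotent} says recentering \emph{first} and then evolving and recentering gives the same law as evolving and recentering; (3) Lemma~\ref{lem:convergetonuhat} says the $\zeta_L$-averaged recentered $\nu_\theta$ converges to $\widehat\nu_\theta$; (4) the Feller property lets us commute $L\to\infty$ with the bounded continuous test function applied to the evolved-and-recentered process. Chaining (3)$\to$(2)$\to$(1)$\to$(3) with (4) at the appropriate step yields $\Ee_{\widehat\nu_\theta}[F(\underline f)]=\Ee_{\widehat\nu_\theta}[F(\pi_{\mathrm{Sh};\pm}[\underline h(\cdot,t)])]$.

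The main obstacle is step (4): making rigorous the interchange of the $L\to\infty$ limit with the time-$t$ evolution. The map $\underline f\mapsto\Ee[F(\pi_{\mathrm{Sh};\pm}[\underline h(\cdot,t\mid\underline f)])]$ is the composition of the KPZ semigroup with the (discontinuous!) recentering $\pi_{\mathrm{Sh};\pm}$, so it is \emph{not} obviously a bounded continuous function of $\underline f$, and Proposition~\ref{prop:Markov_feller} applies to continuous linear recentering maps, not to $\pi_{\mathrm{Sh};\pm}$. The resolution, as in \cite{Dunlap-Sorensen-24}, is that one does not need continuity of that composed map on all of $\CFP^2$: it suffices that $\pi_{\mathrm{Sh};\pm}$ is a.s.\ continuous at $\widehat\nu_\theta$-typical points — which holds because, by Lemma~\ref{nutheta_fact}, under $\widehat\nu_\theta$ the coincidence set $\{f_-=f_+\}$ is the single point $\{0\}$, so $\mathfrak b_\pm$ is continuous there — and that the same a.s.-continuity holds along the approximating sequence; then the continuous mapping theorem plus the Feller property of $P_t$ (not $P_t^\pi$) on $\CFP^2$ gives the convergence of $\Ee[F(\pi_{\mathrm{Sh};\pm}[\underline h(\cdot,t\mid\cdot)])]$ evaluated along the weakly convergent sequence. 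A secondary technical point is the bookkeeping in step (1): one must check that shifting the first coordinate by the constant $\zeta_L$ commutes appropriately with the evolution (immediate, by linearity of the $\sup$ in the additive constant of the first coordinate) and that $\mathfrak b_\pm[\underline h(\cdot,t\mid\underline f)+(\zeta_L,0)]=\mathfrak b_\pm[\underline h(\cdot,t\mid\underline f+(\zeta_L,0))]$, which follows from Lemma~\ref{lem:KPZFP_max} and the definition \eqref{eq:bdef}. Modulo these points, the proof is word-for-word the argument of \cite[Theorem~1.8]{Dunlap-Sorensen-24}.
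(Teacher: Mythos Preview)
Your approach is the paper's, and you correctly identify the three ingredients (Lemma~\ref{lem:convergetonuhat}, Lemma~\ref{lem:shift_idempotent}, Proposition~\ref{prop:nuthetainvariant}). But your step~(1) in the skeleton is wrong as stated: the claim that ``evolving then shifting by $\zeta_L$ and recentering has the same law as shifting by $\zeta_L$ and recentering directly'' does \emph{not} hold for each fixed $L$. After writing $\underline h(\cdot,t\mid \underline f+(\zeta_L,0)) = \underline h(\cdot,t\mid \underline f)+(\zeta_L,0)$ and applying Lemma~\ref{lem:pi_constant}, you get
\[
\pi_{\mathrm{Sh};\pm}\bigl[\underline h(\cdot,t)+(\zeta_L,0)\bigr]
=\pi_{\mathrm{Sh};\pm}\bigl[\pi_0[\underline h(\cdot,t)]+(\zeta_L + h_-(0,t)-h_+(0,t),\,0)\bigr].
\]
Now $\pi_0[\underline h(\cdot,t)]\sim\nu_\theta$ by invariance, but the shift $\zeta_L + h_-(0,t)-h_+(0,t)$ is \emph{not} independent of $\pi_0[\underline h(\cdot,t)]$, so this is not the same in law as $\pi_{\mathrm{Sh};\pm}[\underline f' +(\zeta_L,0)]$ with fresh $\underline f'\sim\nu_\theta$. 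The paper closes this with a total-variation argument you omit: since $h_-(0,t)-h_+(0,t)$ is tight and $\zeta_L\sim\Uniform([0,L])$ is independent of everything, the joint law of $(\pi_0[\underline h(\cdot,t)],\,\zeta_L + h_-(0,t)-h_+(0,t))$ converges in total variation to that of $(\pi_0[\underline h(\cdot,t)],\,\zeta_L)$ as $L\to\infty$. Without this step the chain does not close.

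On your ``main obstacle'' (step~(4)): the concern is legitimate---the first equality in the paper's chain (passing from $\widehat\nu_\theta$ initial data to the approximants) is justified only tersely by citing Lemma~\ref{lem:convergetonuhat}, and $\pi_{\mathrm{Sh};\pm}$ is not continuous on all of $\CFP^2$. Your proposed fix via a.s.\ continuity at $\widehat\nu_\theta$-typical points is not quite enough, though: what is needed is continuity at the \emph{post-evolution} law, which is exactly what you are trying to determine, so invoking Lemma~\ref{nutheta_fact} here is circular. A cleaner route is to observe that the proof of Lemma~\ref{lem:convergetonuhat} actually furnishes, for each $L$, a coupling in which $\pi_{\mathrm{Sh};\pm}[\underline f+(\zeta_L,0)]$ coincides with an exact $\widehat\nu_\theta$-sample on $[-K,K]$ with probability $\to 1$, together with uniform-in-$L$ linear tail bounds; combining this with the a.s.\ continuity of the solution map (Proposition~\ref{prop:Feller}) and a tightness bound on $\mathfrak b_\pm$ gives the needed convergence without first knowing the evolved law.
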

\begin{proof}
Let $\zeta_L \sim \text{Unif}[0,L]$, independent of everything else. Then, we have
\begin{align*}
\mathbb{E}_{\widehat{\nu}_{\theta}}[F(\pi_{\mathrm{Sh};\pm}[\underline{h}(\cdot,t)])] & =\lim_{L\to\infty}\mathbb{E}_{\nu_{\theta}}\left[F\left(\pi_{\mathrm{Sh};\pm}\left[\underline{h}(\cdot,t\mid \pi_{\mathrm{Sh};\pm}[\underline{f}+(\zeta_L,0)])\right]\right)\right]\\
& =\lim_{L\to\infty}\mathbb{E}_{\nu_{\theta}}\left[F\left(\pi_{\mathrm{Sh};\pm}\left[\underline{h}(\cdot,t\mid \underline{f}+(\zeta_L,0))\right]\right)\right] \\ 
& =\lim_{L\to\infty}\mathbb{E}_{\nu_{\theta}}\left[F\left(\pi_{\mathrm{Sh};\pm}\left[\underline{h}(\cdot,t\mid \underline{f}) + (\zeta_L,0)\right]\right)\right] \\
 & =\lim_{L\to\infty}\mathbb{E}_{\nu_{\theta}}\left[F\left(\pi_{\mathrm{Sh};\pm}\left[\pi_{0}\left[\underline{h}(\cdot,t)\right]+\underline{h}(0,t)+(\zeta_L,0)\right]\right)\right]\\
 & =\lim_{L\to\infty}\mathbb{E}_{\nu_{\theta}}\left[F\left(\pi_{\mathrm{Sh};\pm}\left[\pi_{0}\left[\underline{h}(\cdot,t)\right]+\left(\zeta_L + h_-(0,t) - h_+(0,t),0\right)\right]\right)\right]\\
 & =\lim_{L\to\infty}\mathbb{E}_{\nu_{\theta}}\left[F\left(\pi_{\mathrm{Sh};\pm}\left[\pi_{0}\left[\underline{h}(\cdot,t)\right]+\left(\zeta_{L},0\right)\right]\right)\right]\\
 & =\lim_{L\to\infty}\mathbb{E}_{\nu_{\theta}}\left[F\left(\pi_{\mathrm{Sh};\pm}\left[\underline{f}+\left(\zeta_{L},0\right)\right]\right)\right] =\mathbb{E}_{\widehat{\nu}_{\theta}}[F(\underline f)].
\end{align*}
The first and last identities are by Lemma~\ref{lem:convergetonuhat}, the second follows by Lemma \ref{lem:shift_idempotent}, the third follows because the KPZ fixed point evolution commutes with height shifts of the initial data, 
the fourth is by the definition \eqref{eq:pix0}, the fifth is
by Lemma \ref{lem:pi_constant}, the sixth is by the fact
that, for fixed $t > 0$, the total variation distance between $\Law\left(\left(\zeta_L + h_-(0,t) - h_+(0,t),0\right)\right)$
and $\Law\left(\left(\zeta_L,0\right)\right)$ goes to $0$ as $L\to\infty$, 
and the seventh is by the invariance in Proposition~\ref{prop:nuthetainvariant}.
\end{proof}

\begin{proof}[Proof of Theorem~\ref{thm:KPZFP_invmeas_shock}]
Let $\underline f = (f_-,f_+) \sim \widehat \nu_\theta$. Theorem~\ref{thm:stat_from_shock_general} gives us the invariance 
\be \label{eq:bpm_invar}
\pi_{\mathfrak b_{\pm}[h(\cdot,t \mid \underline f)] }h(\cdot,t \mid \underline f) \deq  \underline f.
\ee
By Lemma~\ref{nutheta_fact}, $\underline f \in \mathcal X(\theta)$ almost surely, and $\mathfrak b_{\pm}[\underline f] = 0$. Then,  Proposition~\ref{prop:interfaces_are_shocks} implies that $b_t^{\pm} = \mathfrak b_{\pm}[h(\cdot,t \mid \underline f)]$, so we can restate \eqref{eq:bpm_invar} as
\be \label{eq:bpm_shift2}
\pi_{b_t^{\pm} }h(\cdot,t \mid \underline f) \deq  \underline f.
\ee
In particular, 
\be \label{eq:hbtpm_eq}
h(\cdot,t \mid \underline f) - h(b_t^-,t\mid \underline f) \deq
h(\cdot,t \mid \underline f) - h(b_t^+,t\mid \underline f).
\ee
Since $\underline f \in \mathcal X(\theta)$, Lemmas~\ref{lem:slope_conserved} and~\ref{KPZFP_attr} imply that  $h(\cdot,t \mid \underline f) \in \mathcal X(\theta)$. Hence, we see that the difference $h(\cdot,t \mid f_+) - h(\cdot,t \mid f_-)$ is nondecreasing, and $x < b_t^-$ if and only if $h(x,t \mid f_+) < h(x,t \mid f_-)$. Then, by the distributional equality in \eqref{eq:hbtpm_eq}, we must have $b_t^- = b_t^+$ with probability $1$. 

Lastly, from Lemma~\ref{lem:KPZFP_max} and \eqref{eq:bpm_shift2}, we get
\[
\pi_{b_t^{\pm}} h(\cdot,t \mid f_- \vee f_+) = \pi_{b_t^{\pm}}[h(\cdot,t \mid f_-) \vee h(\cdot,t \mid f_+)] \deq f_- \vee f_+.
\]
By Lemma~\ref{nutheta_fact}, $f_- \vee f_+$ has the law of the measure described in the statement of the theorem; and this completes the proof. 
\end{proof}

\subsection{Invariant measure from the shock for the open KPZ fixed point} \label{sec:open_to_full_conv}

Here, we prove Theorem~\ref{thm:open_converge}.
\begin{proof}[Proof of Theorem~\ref{thm:open_converge}]
We first prove \eqref{eq:AAsymptoticallyuniform}. Let $F\in\mathcal{C}([0,1])$.
Let $\mathbb{E}$ denote expectation such that $Z\sim\nu^{-\theta;L}$ and
$\overline{\mathbb{E}}$ denote expectation such that $Z\sim\BM$.
Then, we have
\begin{align}
\mathbb{E} & [F(A/L)] =\frac{1}{C_{u;L}}\overline{\mathbb{E}}\left[\exp\left\{ -2^{3/2}\theta\min_{x\in[0,L]}Z(x)+2^{1/2}\theta Z(L)\right\} F(A/L)\right]\nonumber \\
 & =\frac{1}{C_{u;L}}\overline{\mathbb{E}}\left[\exp\left\{ 2^{1/2}\theta \left(Z(0)-Z(A)\right)+2^{1/2}\theta\left(Z(L)-Z(A)\right)\right\} F(A/L)\right]\nonumber \\
 & =\frac{1}{C_{u;L}}\overline{\mathbb{E}}\left[\exp\left\{ 2^{1/2}\theta \left(Z(0)-Z(A)\right)+2^{1/2}\theta \left(Z(L)-Z(A)\right)\right\} F(A/L)\right]\nonumber \\
 & =\frac{1}{C_{u;L}}\overline{\mathbb{E}}\left[\overline{\mathbb{E}}\left[\exp\left\{ 2^{1/2}\theta \left(Z(0)-Z(A)\right)\right\} \ \middle|\ A\right]\overline{\mathbb{E}}\left[\exp\left\{ 2^{1/2}\theta \left(Z(L)-Z(A)\right)\right\} \ \middle|\ A\right]F(A/L)\right],\label{eq:develop-EFA}
\end{align}
with the last identity by the independence assertion in Proposition~\ref{prop:BM_min}\ref{enu:denisov-conditional}.
Now also by Proposition~\ref{prop:BM_min}\ref{enu:denisov-conditional}
along with the explicit description of the law of the endpoint of the Brownian meander (see, for example, \cite[page 118]{Durrett-Iglehart-Miller-1977}, we have, when $Z \sim \BM$, that conditional on $A$, 
\[
A^{-1/2}\left(Z(0)-Z(A)\right),(L-A)^{-1/2}\left(Z(L)-Z(A)\right)\sim\Rayleigh(1),
\]
where $\Rayleigh(1)$ is the law of the Rayleigh distribution with
parameter $1$, which has pdf $t\mapsto t\e^{-t^{2}/2}\mathbf{1}_{t\ge0}$.
This implies that
\begin{align}
\overline{\mathbb{E}} & \left[\exp\left\{ 2^{1/2}\theta \left(Z(0)-Z(A)\right)\right\} \ \middle|\ A\right]\nonumber \\
 & =\int_{0}^{\infty}t\e^{-t^{2}/2}\e^{t\sqrt{2A\theta^{2}}}\,\dif t=\int_{0}^{\infty}\left(-\frac{\dif}{\dif t}+\sqrt{2A\theta^{2}}\right)\e^{-t^{2}/2+t\sqrt{2A\theta^{2}}}\,\dif t\nonumber \\
 & =1+(\sqrt{2A\theta^{2}} + 1)\e^{A\theta^{2}}\int_{0}^{\infty}\e^{-\frac{1}{2}\left(t-\sqrt{2A\theta^{2}}\right)^{2}}\,\dif t=1+(\sqrt{2A\theta^{2}}+1)\e^{A\theta^{2}}\int_{-\sqrt{2A\theta^{2}}}^{\infty}\e^{-\frac{1}{2}t^{2}}\,\dif t.\label{eq:leftside}
\end{align}
Similarly, we have
\begin{equation}
\overline{\mathbb{E}}\left[\exp\left\{ 2^{1/2}\theta \left(Z(L)-Z(A)\right)\right\} \ \middle|\ A\right]=1+(\sqrt{2(L-A)\theta ^{2}} + 1)\e^{(L-A)\theta^{2}}\int_{-\sqrt{2(L-A)\theta^{2}}}^{\infty}\e^{-\frac{1}{2}t^{2}}\,\dif t.\label{eq:rightside}
\end{equation}
Using \eqref{eq:leftside} and \eqref{eq:rightside} in \eqref{eq:develop-EFA},
and also using the fact that $A\sim\Arcsine$ (see Proposition~\ref{prop:BM_min}\ref{enu:arcsine}), we see that
\begin{align*}
\mathbb{E}[F(A/L)] & =\frac{\int_{0}^{1}F(a)\prod_{z\in\{a,1-a\}}\left\{ z^{-1/2}\left(1+(\sqrt{2Lz\theta ^{2}} + 1)\e^{Lz\theta ^{2}}\int_{-\sqrt{2Lz\theta ^{2}}}^{\infty}\e^{-\frac{1}{2}t^{2}}\,\dif t\right)\right\} \,\dif a}{\int_{0}^{1}\prod_{z\in\{a,1-a\}}\left\{ z^{-1/2}\left(1+(\sqrt{2Lz\theta ^{2}} + 1)\e^{Lz\theta ^{2}}\int_{-\sqrt{2Lz\theta ^{2}}}^{\infty}\e^{-\frac{1}{2}t^{2}}\,\dif t\right)\right\} \,\dif a}\\
 & =\frac{\int_{0}^{1}F(a)\prod_{z\in\{a,1-a\}}\left\{ z^{-1/2}+(\sqrt{2L\theta^{2}} + z^{-1/2})\e^{Lz\theta^{2}}\int_{-\sqrt{2Lz\theta^{2}}}^{\infty}\e^{-\frac{1}{2}t^{2}}\,\dif t\right\} \,\dif a}{\int_{0}^{1}\prod_{z\in\{a,1-a\}}\left\{ z^{-1/2}+(\sqrt{2L\theta^{2}} + z^{-1/2})\e^{Lz\theta^{2}}\int_{-\sqrt{2Lz\theta^{2}}}^{\infty}\e^{-\frac{1}{2}t^{2}}\,\dif t\right\} \,\dif a}\\
 & =\frac{\int_{0}^{1}F(a)\prod_{z\in\{a,1-a\}} \left\{\e^{-Lz\theta ^{2}}(2Lz\theta^2)^{-1/2}+\bigl(1 +(2Lz \theta^2)^{-1/2} \bigr)\int_{-\sqrt{2Lz\theta ^{2}}}^{\infty}\e^{-\frac{1}{2}t^{2}}\,\dif t\right\} \,\dif a}{\int_{0}^{1}\prod_{z\in\{a,1-a\}} \left\{\e^{-Lz\theta^{2}}(2Lz\theta^2)^{-1/2}+\bigl(1 +(2Lz \theta^2)^{-1/2} \bigr)\int_{-\sqrt{2Lz\theta ^{2}}}^{\infty}\e^{-\frac{1}{2}t^{2}}\,\dif t\right\} \,\dif a}.
\end{align*}
As $L\to\infty$, by the dominated convergence theorem, the numerator
approaches $2\pi\int_{0}^{1}F(a)\,\dif a$ while the denominator
approaches $2\pi$. This completes the proof of \eqref{eq:AAsymptoticallyuniform}.

Now we work towards a proof of the convergence in distribution of $f_A$. It follows from the convergence of $A/L$ to the uniform distribution on $[0,1]$ that
\begin{equation}
\min\{A,L-A\}\to\infty\qquad\text{in probability as }L\to\infty.\label{eq:Aawayfromboundary}
\end{equation}
Since $A$ is independent of $B \sim \BM$, the strong Markov property gives us that \[
\Bigl(\f{B(A+2x)-B(A)}{\sqrt 2}:x \in \Bigl[-\f{A}{2},L - \f{A}{2}\Bigr]\Bigr)
\]
has the law of $\BM$, restricted to $[-\f{A}{2},L - \f{A}{2}]$. Hence, by Brownian scaling, it suffices to show that, if $Z \sim \nu^{-\theta;L}$, and we define 
\[
Z_{A}(x)=\begin{cases}
Z(A+x)-Z(A), & x\in[-A,L-A];\\
-Z(A), & x\le-A;\\
Z(L)-Z(A), & x\ge L-A,
\end{cases}
\]
then
\begin{equation}
\lim_{L\to\infty}\Law(Z_{A})=\BES^{3}(2^{1/2}\theta)\label{eq:convergetoshockmeasure}
\end{equation}
weakly with respect to the topology of uniform convergence on compact
sets. Fix $K>0$ and let $F\in\mathcal{C}_b(\mathcal{C}([-K,K]))$. We
write $F(f)=\widetilde{F}(f|_{[-K,0]},f|_{[0,K]})$. Similarly to
the computation leading to \eqref{eq:develop-EFA}, we have
\begin{align*}
\mathbb{E}[F(Z_{A})] & =\frac{1}{C_{u;L}}\overline{\mathbb{E}}\left[\exp\left\{ 2^{1/2}\theta (Z(0)-Z(A))+2^{1/2}\theta (Z(L)-Z(A))\right\} F(Z_{A})\right]\\
 & =\frac{1}{Z_{u;L}}\overline{\mathbb{E}}\left[\overline{\mathbb{E}}\left[\exp\left\{ 2^{1/2}\theta Z_{A}(-A)+2^{1/2}\theta Z_{A}(L-A)\right\} \widetilde{F}(Z_{A}|_{[-K,0]},Z_{A}|_{[0,K]})\ \middle|\ A\right]\right].
\end{align*}
Then, by Proposition
\ref{prop:BM_min}\ref{enu:denisov-conditional} and \eqref{eq:meander-girsanov}, To the left and right of $A$, $Z_A$ has the law of two independent Brownian meanders with drift, of length $A$ and $L-A$, respectively. Then, \eqref{eq:convergetoshockmeasure} follows by \eqref{eq:Aawayfromboundary} and the convergence of the Brownian meander with drift to the Bessel process with drift proved in Proposition \ref{prop:meander-to-bessel}.
\end{proof}

\section{Characterizing the extremal invariant measures} \label{sec:characterize}
In this section, we prove Theorem~\ref{thm:KPZFP_characterize}. We do this in two parts. In Section~\ref{sec:slopes}, we prove Proposition~\ref{prop:inv_meas_have_slope}, which states that all extremal invariant measures are supported on functions having fixed asymptotic slopes. Combined with Proposition~\ref{prop:invariance_of_SH}, this leads to Corollary~\ref{cor:almost_characterization}, which reduces the characterization to showing the nonexistence of V-shaped invariant measures. This nonexistence is handled in Section~\ref{sec:no_V-shape}. Below, we recall the state space $\CFP$ defined in \eqref{eq:CFP}.

\subsection{Proving extremal invariant measures  have deterministic asymptotic slopes} \label{sec:slopes} The purpose of this subsection is to prove Proposition~\ref{prop:inv_meas_have_slope} below. To do so, we introduce the concept of an eternal solution of the KPZ fixed point. Given a realization of the directed landscape $\Ll$, an \textit{eternal solution} 
is a function $b:\R^2 \to \R$ such that, for all $s < t$ and $x \in \R$,
\[
b(x,t) = \sup_{z \in \R}[b(z,s) + \Ll(z,s;x,t)].
\]
A \emph{semi-infinite geodesic}  rooted at the point $(x,t)$ is encoded by a continuous function $g:(-\infty,t] \to \R$ such that, for any $s < t$, the restriction $g|_{[s,t]}$ is a geodesic for $\Ll$. We say the semi-infinite geodesic $g$ has \emph{direction} $\theta$ if 
\[
\lim_{s \to -\infty} \f{g(s)}{|s|} = \theta.
\]
Proposition \ref{prop:all_directions}, originally from \cite{Busa-Sepp-Sore-22a}, states that all semi-infinite geodesics have a direction. For a semi-infinite geodesic $g$, define $\zeta(g)$ to be its direction. 

The following lemma essentially follows from standard results in the theory of random dynamical systems; we present the details for completeness.
\begin{lemma} \label{lem:eternal_soln_existence}
    Let $\mu$ be an invariant measure for the recentered KPZ fixed point on the space $\CFP$. Then, there exists a coupling of the directed landscape $\Ll$ with a 
    continuous eternal solution $b$ satisfying $b(0,0) = 0$ and
    \be \label{eq:b_t_law}
    \pi_0[b(\cdot,t)] \sim \mu \quad\text{for all }t \in \R.
    \ee
\end{lemma}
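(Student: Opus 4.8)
The plan is to construct the eternal solution $b$ as a subsequential limit of KPZ fixed point evolutions started from the distant past. Fix a sequence $s_n \to -\infty$, and for each $n$ let $\underline{f}^{(n)} \sim \mu$ be chosen independently of $\F_{\ge s_n}$, coupled with a single directed landscape $\Ll$. Define the random function $b^{(n)}(x,t) := h(x,t \mid s_n, f^{(n)}) - h(0,s_n \mid s_n, f^{(n)})$ for $t \ge s_n$, i.e., the recentered evolution started at time $s_n$. By invariance of $\mu$ for the recentered dynamics, together with the Markov/cocycle structure of the KPZ fixed point via $\Ll$ (Proposition~\ref{prop:Markov_feller}), the law of $\pi_0[b^{(n)}(\cdot,t)]$ is $\mu$ for every $t \ge s_n$. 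The aim is to pass to the limit $n \to \infty$ along a subsequence and obtain a process $b$ defined for all $(x,t) \in \R^2$ that still satisfies the metric-composition / variational identity and has $\pi_0[b(\cdot,t)] \sim \mu$ for all $t \in \R$.

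First I would set up tightness. Fix a large finite time window $[-T,T]$. The one-point marginals of $\pi_0[b^{(n)}(\cdot,t)]$ are all equal (law $\mu$) and hence tight; combined with the modulus-of-continuity bounds for the KPZ fixed point inherited from the directed landscape (Lemma~\ref{lem:Landscape_global_bound} and the bounds behind Proposition~\ref{prop:h_pres_CFP}), this gives tightness of the space-time fields $(b^{(n)}(x,t))_{(x,t) \in [-R,R]\times[-T,T]}$ in the topology of uniform convergence on compacts, for each fixed $R,T$. A diagonal argument over an exhausting sequence of compacta then produces a subsequence along which $b^{(n)}$ converges in distribution, jointly with $\Ll$, to a limit $(b,\Ll)$; by Skorokhod representation we may assume this convergence is almost sure. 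Continuity of the variational map $f \mapsto \sup_{z}[f(z) + \Ll(z,s;x,t)]$ under uniform-on-compacts convergence (again using the global growth bounds to control the tails of the supremum — this is where the $\CFP$ state space is used) lets us pass the identity $b^{(n)}(x,t) = \sup_{z}[b^{(n)}(z,s) + \Ll(z,s;x,t)]$, valid for $s_n \le s < t$, to the limit, giving the eternal-solution property for all $s < t$. The recentering $b^{(n)}(0,0) = 0$ passes to $b(0,0) = 0$, and since $\pi_0[b^{(n)}(\cdot,t)] \sim \mu$ for all $n$ with $s_n < t$ and the projection $\pi_0$ is continuous, the limit satisfies $\pi_0[b(\cdot,t)] \sim \mu$ for every fixed $t \in \R$.

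The main obstacle I anticipate is the tail control needed to pass the supremum to the limit: a priori the limit $b(\cdot,s)$ only enjoys uniform-on-compacts convergence, so one must rule out mass in the variational problem escaping to spatial infinity. This is handled by the quadratic growth bounds on $\Ll$ (Lemma~\ref{lem:Landscape_global_bound}): since each $b^{(n)}(\cdot,s)$ lies in $\CFP$ with uniformly (in $n$, along the subsequence, by tightness) controlled growth, the supremum defining $b^{(n)}(x,t)$ is effectively over a bounded region depending only on $\Ll$ and the growth constants, and this region is stable under the limit — so $\sup_{z}[b^{(n)}(z,s)+\Ll(z,s;x,t)] \to \sup_{z}[b(z,s)+\Ll(z,s;x,t)]$. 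One also needs the limiting field $b$ to actually take values in $\CFP$ (not merely $\C(\R^2)$) so that \eqref{eq:b_t_law} makes sense in the stated topology; this follows because $\pi_0[b(\cdot,t)] \sim \mu$ and $\mu$ is supported on $\CFP$. Finally, a routine check confirms that $b$ is continuous in $(x,t)$ jointly, using the joint continuity of $\Ll$ and the uniform modulus bounds, which completes the construction.
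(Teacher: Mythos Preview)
Your proposal is correct and follows essentially the same approach as the paper: construct $b$ as a subsequential limit (via Skorokhod) of evolutions $h(\cdot,\cdot \mid s_n, f^{(n)})$ started in the distant past, using invariance of $\mu$ to control the marginals and then passing the variational identity to the limit.

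Two points where the paper's execution differs from yours and is worth noting. First, rather than arguing tightness via modulus-of-continuity estimates, the paper observes that for any fixed $s$ the entire law of $b_T|_{\{t\ge s\}}$ (jointly with $\Ll$) is literally \emph{independent of $T$} once $T$ is large, because $b_T$ on $[s,\infty)$ is a measurable function of $(\pi_0[b_T(\cdot,s)],\Ll|_{[s,\infty)})$ and the first factor has law $\mu$ independent of the second; this makes tightness immediate. Second, and more substantively, the paper avoids your tail-control step for passing the supremum entirely: it first gets the inequality $b(x,t)\ge\sup_w[b(w,s)+\Ll(w,s;x,t)]$ by pointwise limits, and then upgrades to equality by noting that both sides have the \emph{same law} (since $(b(\cdot,s),\Ll)$ inherits the same joint law as $(b_T(\cdot,s),\Ll_T)$, under which the identity holds). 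Your proposed route---uniform-in-$n$ control of the growth in $\CFP$ to localize the maximizer---can be made to work, but ``uniformly controlled growth by tightness'' is a bit loose as written: tightness gives control in probability, not a.s.\ simultaneously along the Skorokhod-coupled sequence. The equality-of-laws trick sidesteps this cleanly.
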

\begin{proof}
Let $f \sim \mu$, independent of $\Ll$. For each $T > 0$, consider the function $b_T:\R \times [-T + 1,\infty) \to \R$ defined by 
\[
b_T(x,t) = h(x,t \mid -T,f) - h(0,0 \mid -T,f).
\]
By Proposition~\ref{prop:h_pres_CFP}, $b_T$ is continuous with probability $1$; we extend $b_T$ to a continuous function $\R^2 \to \R$ in some measurable way. By the assumed invariance of $\mu$, we have, for all $t > - T+1$,
\be \label{eq:bT_law_stable}
\Law(\pi_0[b_T(\cdot,t)]) = \mu.
\ee
We view $(b_T,\Ll)$ as an element of the space $\C(\R^2,\R) \times \C(\Rup,\R)$, together with the topology of uniform convergence on compact sets. We claim that $(b_T,\Ll)$ is tight in this space, and any subsequential limit of $(b_T,\Ll)$ is a tuple $(b,\Ll)$, where $\Ll$ is the directed landscape and $b$ is an eternal solution with the desired property. First, observe that, for $-T< s < t$, by the metric composition  property of $\Ll$,
\begin{equation} \label{eq:sgroup}
\begin{aligned}
h(x,t \mid - T,f) &= \sup_{z \in \R}[f(z) + \Ll(z,-T;x,t)]  \\
&= \sup_{z,w \in \R}[f(z) + \Ll(z,-T;w,s) + \Ll(w,s;x,t)] \\
&= \sup_{w \in \R}[h(w,s\mid - T,f) + \Ll(w,s;x,t)].
\end{aligned}
\end{equation}
Then, for $-T + 1 < s < t \wedge 0$,
\begin{align*}
b_T(x,t) &= h(x,t \mid -T,f) - h(0,0 \mid - T,f) \\
&= \sup_{z \in \R}\bigl[h(w,s \mid - T,f) + \Ll(w,s;x,t) \bigr] - \sup_{z \in \R}\bigl[h(w,s \mid - T,f) + \Ll(w,s;0,0) \bigr] \\
&= \sup_{z \in \R}\bigl[b_T(w,s) - b_T(0,s) + \Ll(w,s;x,t)  \bigr] - \sup_{z \in \R}\bigl[b_T(w,s) - b_T(0,s) + \Ll(w,s;0,0)  \bigr].
\end{align*}
Note the process $w \mapsto b_T(w,s) - b_T(0,s)$ is measurable with respect to the $\sigma$-algebra generated by $\F_{\le s}$ and the random function $f$, while $w \mapsto \Ll(w,s;x,t)$ and $w \mapsto \Ll(w,s;0,0)$ are measurable with respect to $\F_{\ge s}$ (Recall Definition~\ref{def:sigma_alg}). By independence of these two $\sigma$-algebras and \eqref{eq:bT_law_stable}, we see that for each fixed $s < 0$, the law of $\{b_T(x,t): t > s, x \in \R \}$ does not depend on $T$, so long as $T >  1- s$. Hence, the process $b_T$ is tight as $T \to \infty$. Coupled with $\Ll$, we have that $(b_T,\Ll)$ is tight. By Skorokhod representation (see e.g.~\cite[Theorem~11.7.2]{dudl} or~\cite[Theorem~3.1.8]{EKbook}), we may let $(b,\Ll)$ be a subsequential limit defined on some probability space as the almost sure limit of some coupling $(b_T,\Ll_T)$, where $\Ll_T$ is a copy of the directed landscape for each $T$. By \eqref{eq:bT_law_stable}, $b$ satisfies \eqref{eq:b_t_law}. It remains to show that $b$ is an eternal solution.   

We can apply \eqref{eq:sgroup} again to get that, for $-T + 1 < s < t$,
\begin{equation} \label{eq:bT_prelim}
\begin{aligned}
b_T(x,t) &= \sup_{w \in \R}\bigl[h(w,s \mid -T,f)  + \Ll_T(w,s;x,t) \bigr]- h(0,0\mid-T,f) \\
&= \sup_{w \in \R}[b_T(w,s) + \Ll_T(w,s;x,t)].
\end{aligned}
\end{equation}
Taking limits as $T \to \infty$, we see that, for all $w \in \R$, 
\[
b(x,t) = \lim_{T \to \infty} b_T(x,t) \ge \lim_{T \to \infty}[b_T(w,s) + \Ll_T(w,s;x,t)] =b(w,s) + \Ll(w,s;x,t),
\]
and hence,
\be \label{eq:bxtge}
b(x,t) \ge \sup_{w \in \R}[b(w,s) + \Ll(w,s;x,t)].
\ee
Additionally, we have seen that for all sufficiently large $T$, the law of $w \mapsto b_T(w,s)$ and the law of $b_T(x,t)$ does not depend on $T$, so for each $t > s$ and $x \in \R$,
\begin{align*}
&\quad\,\Law\bigl(b(x,t)\bigr) = \Law(b_T(x,t)) \\
&= \Law\bigl(\sup_{w \in \R}[b_T(w,s) + \Ll_T(w,s;x,t)]\bigr) =  \Law\bigl(\sup_{w \in \R}[b(w,s) + \Ll(w,s;x,t)]\bigr).
\end{align*}
Thus, \eqref{eq:bxtge} is an equality with probability $1$ for each fixed $t > s$ and $x \in \R$. This equality immediately extends to all rational $x$ and rational pairs $t >s$ on an event of probability $1$, and then extends to all points by continuity. Hence, $b$ is an eternal solution, as desired. 
\end{proof}

In Proposition~\ref{prop:geodesics_from_b}, originally from \cite{Bhattacharjee-Busani-Sorensen-25}, it is shown that any eternal solution $b$ gives rise to a family of infinite geodesics. For such an eternal solution $b$, $(x,t) \in \R^2$, and 
$s < t$, define $g_{(x,t)}^{b,\mathrm{R}}(s)$ to be the rightmost maximizer of 
\be \label{eq:bLfun}
z \mapsto b(z,s) + \Ll(z,s;x,t)
\ee
over $z \in \R$, whenever rightmost maximizers exist. We also define $g_{(x,t)}^{b,\mathrm{R}}(t) = x$.  We now prove the following lemmas.
\begin{lemma} \label{lem:ordering}
If $b$ is an eternal solution such that rightmost maximizers in \eqref{eq:bLfun} exist, then whenever $x < y$ and $t \in \R$,
\[
g_{(x,t)}^{b,\mathrm{R}}(s) \le g_{(y,t)}^{b,\mathrm{R}}(s),\text{ for all }s \le t,\qquad\text{and therefore,}\quad \zeta(g_{(x,t)}^{b,\mathrm{R}}) \le \zeta(g_{(y,t)}^{b,\mathrm{R}}).
\]

\end{lemma}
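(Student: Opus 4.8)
The plan is to prove monotonicity of the rightmost maximizers via a standard crossing/swapping argument for geodesics, exploiting that any geodesic through a common point can be truncated and rerouted. First I would fix $x < y$ and $t \in \R$, and suppose for contradiction that there exists $s < t$ with $g_{(x,t)}^{b,\mathrm R}(s) > g_{(y,t)}^{b,\mathrm R}(s)$; write $u = g_{(x,t)}^{b,\mathrm R}(s)$ and $v = g_{(y,t)}^{b,\mathrm R}(s)$, so $u > v$. Since these are maximizers of $z \mapsto b(z,s) + \Ll(z,s;x,t)$ and $z \mapsto b(z,s) + \Ll(z,s;y,t)$ respectively, I would consider the geodesics realizing $\Ll(u,s;x,t)$ and $\Ll(v,s;y,t)$ as paths from time $s$ to time $t$. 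Because $u > v$ at time $s$ and $x < y$ at time $t$, these two paths must cross at some intermediate time $r \in (s,t)$ (continuity of paths plus the intermediate value theorem); let $w$ be a crossing point.

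The key step is the swapping inequality: at the crossing point $w$ at time $r$, we can reroute. Let $\gamma_1$ be the geodesic from $(u,s)$ to $(x,t)$ and $\gamma_2$ the geodesic from $(v,s)$ to $(y,t)$, with $\gamma_1(r) = \gamma_2(r) = w$. Then the concatenation of $\gamma_2|_{[s,r]}$ (from $(v,s)$ to $(w,r)$) with $\gamma_1|_{[r,t]}$ (from $(w,r)$ to $(x,t)$) is a path from $(v,s)$ to $(x,t)$, and similarly the concatenation of $\gamma_1|_{[s,r]}$ with $\gamma_2|_{[r,t]}$ is a path from $(u,s)$ to $(y,t)$. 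Using the metric composition property \eqref{L_comp} (or really just that $\Ll$-length of a geodesic splits at intermediate times, combined with the reverse triangle inequality \eqref{triangle}), this yields
\[
\Ll(v,s;w,r) + \Ll(w,r;x,t) \le \Ll(v,s;x,t), \qquad \Ll(u,s;w,r) + \Ll(w,r;y,t) \le \Ll(u,s;y,t),
\]
while $\Ll(u,s;x,t) = \Ll(u,s;w,r) + \Ll(w,r;x,t)$ and $\Ll(v,s;y,t) = \Ll(v,s;w,r) + \Ll(w,r;y,t)$ since $\gamma_1,\gamma_2$ are geodesics through $w$ at time $r$. Adding $b(u,s)$ and $b(v,s)$ appropriately and summing the two inequalities, one obtains
\[
\bigl[b(u,s) + \Ll(u,s;x,t)\bigr] + \bigl[b(v,s) + \Ll(v,s;y,t)\bigr] \le \bigl[b(v,s) + \Ll(v,s;x,t)\bigr] + \bigl[b(u,s) + \Ll(u,s;y,t)\bigr].
\]
This says the value of the $(x,t)$-functional at $v$ plus the value of the $(y,t)$-functional at $u$ is at least the sum of the two maxima. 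Since $u$ is \emph{the} maximizer (in fact rightmost) for the $(x,t)$-functional and $v$ for the $(y,t)$-functional, and $v < u$, I would argue this forces $v$ to also be a maximizer for $(x,t)$, contradicting that $u$ is the \emph{rightmost} maximizer — unless the inequality is actually an equality and $v$ is strictly to the left of $u$, which still contradicts rightmost-ness. Handling the degenerate/equality cases carefully (making sure the crossing point can be chosen so that the reroutes are genuinely admissible, and that ties are broken consistently with the "rightmost" convention) is where I'd be most careful. The direction inequality $\zeta(g_{(x,t)}^{b,\mathrm R}) \le \zeta(g_{(y,t)}^{b,\mathrm R})$ then follows immediately by dividing by $|s|$ and taking $s \to -\infty$, using that these directions exist by Proposition~\ref{prop:all_directions} applied to the semi-infinite geodesics built in Proposition~\ref{prop:geodesics_from_b}.

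The main obstacle I anticipate is the bookkeeping around the "rightmost" selection and the possibility that the two geodesics coincide on a time interval rather than crossing transversally: in that case $u$ and $v$ might be connected to a common point, and one needs to check that the rerouting argument still produces a maximizer of the $(x,t)$-functional strictly to the left of (or equal to but distinct from) $u$. This is a routine but slightly delicate case analysis; in the directed landscape literature this kind of ordering of geodesics is standard (cf. the ordering of geodesics in \cite{Directed_Landscape, Busa-Sepp-Sore-22a}), and the argument is robust because we only ever need \eqref{triangle} and the splitting of geodesic length at intermediate times, both of which hold almost surely and simultaneously for all space-time points.
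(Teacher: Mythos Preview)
Your swapping argument is essentially correct, but you misidentify the contradiction at the end. From the equality
\[
\bigl[b(u,s) + \Ll(u,s;x,t)\bigr] + \bigl[b(v,s) + \Ll(v,s;y,t)\bigr] = \bigl[b(v,s) + \Ll(v,s;x,t)\bigr] + \bigl[b(u,s) + \Ll(u,s;y,t)\bigr]
\]
(forced because each bracketed term on the left is a maximum and dominates the corresponding term on the right), you conclude that $v$ is a maximizer of the $(x,t)$-functional. That is true, but it does \emph{not} contradict $u$ being rightmost, since $v < u$. The contradiction is the other equality: $u$ is also a maximizer of the $(y,t)$-functional, and since $u > v$ this contradicts $v$ being the \emph{rightmost} maximizer for $(y,t)$. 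With this correction your argument goes through cleanly, and the worry about degenerate crossings is unnecessary once the contradiction is stated correctly.

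The paper takes a different and shorter route: rather than swapping at a crossing point, it invokes the consistency property of Proposition~\ref{prop:geodesics_from_b}\ref{itm: consis}. Since $g_{(x,t)}^{b,\mathrm R}(t) = x < y = g_{(y,t)}^{b,\mathrm R}(t)$ and both paths are continuous, if they ever meet at some $(w,r)$ with $r<t$ then consistency gives $g_{(x,t)}^{b,\mathrm R}(s) = g_{(w,r)}^{b,\mathrm R}(s) = g_{(y,t)}^{b,\mathrm R}(s)$ for all $s \le r$, so they coalesce from that point backward and can never cross. Your approach is more self-contained (it does not appeal to the consistency statement, only to the reverse triangle inequality and the definition of rightmost maximizer), while the paper's approach is a two-line consequence of a structural fact already recorded. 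Both are standard in this literature; the paper's is simply packaged more efficiently given the available inputs.
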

\begin{proof}
The geodesics are continuous, and we have $g_{(x,t)}^{b,\mathrm{R}}(t) = x < y = g_{(y,t)}^{b,\mathrm{R}}(t)$ by definition. By Proposition~\ref{prop:geodesics_from_b}\ref{itm: consis}, if $w := g_{(x,t)}^{b,\mathrm{R}}(r) = g_{(y,t)}^{b,\mathrm{R}}(r)$ for some $r < t$, then, for all $s \le r$,
\[
g_{(x,t)}^{b,\mathrm{R}}(s) = g_{(w,r)}^{b,\mathrm{R}}(s) = g_{(y,t)}^{b,\mathrm{R}}(s). \qedhere
\]
\end{proof}

\begin{lemma} \label{lem:only_3_slopes}
Let $b$ be an eternal solution for $\Ll$ such that, for all $s < t$ and $x \in \R$, leftmost and rightmost maximizers of the function
\[
z \mapsto b(z,s) + \Ll(z,s;x,t)
\]
exist. For shorthand, let $g_{(x,t)} = g^{b,\mathrm{R}}_{(x,t)}$. If, for each $t \in \R$, the function $x \mapsto b(x,t)$ is independent of $\F_{\ge t}$ and the law of the function $x \mapsto \pi_0[b(\cdot ,t)](x)$ is the same for all $t \in \R$, then
\[
\Pp\bigl(0 < \zeta(g_{(x,0)})<\zeta(g_{(y,0)}) \text{ for some } x < y\bigr) = \Pp\bigl(\zeta(g_{(x,0)})<\zeta(g_{(y,0)}) < 0 \text{ for some } x < y\bigr) = 0.
\]
\end{lemma}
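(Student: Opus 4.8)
The plan is to argue by contradiction, using that directions of rightmost geodesics are monotone in the spatial variable (Lemma~\ref{lem:ordering}) and monotone in time, together with the time-stationarity of the eternal solution $b$. For $t\in\R$ let $\Theta_t:=\{\zeta(g_{(x,t)})\st x\in\R\}\subseteq[-\infty,\infty]$ be the set of directions of rightmost geodesics rooted at time $t$; these are well defined by Proposition~\ref{prop:all_directions}. First I would isolate two structural facts. \emph{Time monotonicity}: for $s\le t$ and any $x$, the consistency of rightmost geodesics (Proposition~\ref{prop:geodesics_from_b}\ref{itm: consis}) gives $g_{(x,t)}|_{(-\infty,s]}=g_{(w,s)}|_{(-\infty,s]}$ with $w=g_{(x,t)}(s)$, hence $\zeta(g_{(x,t)})=\zeta(g_{(w,s)})\in\Theta_s$; thus $\Theta_t\subseteq\Theta_s$ and, for every Borel $I$, $\Theta_t\cap I$ is non-increasing in $t$. \emph{Time stationarity}: under the hypotheses the process $(\pi_0[b(\cdot,s)])_{s\le t}$ is, for each $t$, a stationary Markov chain for the time-homogeneous recentered semigroup of Proposition~\ref{prop:Markov_feller} with one-time marginal $\mu$ (here the invariance of $\mu$ enters); since $\Theta_t$, and more generally any functional of the time-$t$ geodesic picture, is measurable with respect to $(\pi_0[b(\cdot,s)])_{s\le t}$ and $\F_{\le t}$ (the additive constants dropped in passing from $b$ to $\pi_0[b]$ do not change the maximizers defining the $g_{(x,s)}$), and the directed landscape is stationary under time translation, such functionals have a law that does not depend on $t$.

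Suppose, for contradiction, that $\Pp\bigl(0<\zeta(g_{(x,0)})<\zeta(g_{(y,0)})\text{ for some }x<y\bigr)>0$. By Lemma~\ref{lem:ordering} this event equals $\{\#(\Theta_0\cap(0,\infty))\ge2\}$, so there are rationals $0<p<q$ with $\delta:=\Pp\bigl(\Theta_0\cap[p,q]\ne\emptyset,\ \Theta_0\cap(q,\infty)\ne\emptyset\bigr)>0$. Set $L_t:=\ind\{\Theta_t\cap[p,q]\ne\emptyset\}\cdot\ind\{\Theta_t\cap(q,\infty)\ne\emptyset\}$. By time monotonicity $L_t$ is non-increasing in $t$, by time stationarity $L_t\deq L_0$, and $L_t\in\{0,1\}$; a non-increasing $\{0,1\}$-valued process with $t$-independent marginal is a.s.\ constant in $t$, so $L_t=L_0$ for all $t$ a.s. Hence on $A:=\{L_0=1\}$, which has probability $\delta$, one has $\Theta_t\cap[p,q]\ne\emptyset$ and $\Theta_t\cap(q,\infty)\ne\emptyset$ for every $t\in\R$.

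On $A$ put $c_t:=\sup\{x\in\R\st\zeta(g_{(x,t)})\le q\}$; this set is nonempty (some direction lies in $[p,q]$) and, by Lemma~\ref{lem:ordering}, bounded above (a root with direction $>q$ lies to the right of every root with direction $\le q$), so $c_t\in\R$ on $A$. The geometric heart of the argument is that $c_t\to+\infty$ as $t\to-\infty$ on $A$: choosing (measurably on $A$) a root $x_0$ with $\zeta(g_{(x_0,0)})\in[p,q]$ and letting $\gamma:=g_{(x_0,0)}$, we have $\gamma(s)/|s|\to\zeta(g_{(x_0,0)})\ge p>0$, so $\gamma(s)\to+\infty$ as $s\to-\infty$, while by consistency $\zeta(g_{(\gamma(s),s)})=\zeta(g_{(x_0,0)})\le q$, so $\gamma(s)\le c_s$ and hence $c_s\to+\infty$. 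Now, again by time stationarity applied to the time-$t$-measurable quantity ``$L_t=1$ and $c_t\le R$'' (using $A=\{L_t=1\}$ a.s.), $\Pp(A\cap\{c_t\le R\})$ does not depend on $t$; it equals $\Pp(A\cap\{c_0\le R\})$, and it tends to $0$ as $t\to-\infty$ by bounded convergence since $c_t\to+\infty$ on $A$. Therefore $\Pp(A\cap\{c_0\le R\})=0$ for every $R>0$, and letting $R\to\infty$ gives $\Pp(A)=0$, contradicting $\delta>0$. This proves the first identity; the second follows by running the same argument for the spatially reflected eternal solution $(x,t)\mapsto b(-x,t)$ associated to the reflected directed landscape $(y,s;x,t)\mapsto\Ll(-y,s;-x,t)$, under which directions are negated and the roles of leftmost and rightmost maximizers are interchanged (which is why both are assumed to exist in the hypothesis).

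The step I expect to require the most care is the time-stationarity claim: one must check that the entire backward geodesic picture at time $t$ — not merely the one-time marginal of $b(\cdot,t)$ — is genuinely equidistributed across $t$, i.e.\ that $(\pi_0[b(\cdot,s)])_{s\le t}$ together with $\F_{\le t}$ is stationary under time shifts, so that all the functionals $L_t$ and $\{c_t\le R\}$ have $t$-independent law. This is where the recentered Markov/Feller structure of Proposition~\ref{prop:Markov_feller}, the invariance of $\mu$, and the time-translation invariance of $\Ll$ must be combined. The remaining geometric input — that a geodesic of positive direction drags the interface $c_t$ off to $+\infty$ as $t\to-\infty$, which is incompatible with $c_t$ having a $t$-independent (hence tight on $A$) marginal — is a short consequence of Lemma~\ref{lem:ordering} and the geodesic consistency already used there.
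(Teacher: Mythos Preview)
Your argument is correct and close in spirit to the paper's, but organized differently. Both argue by contradiction and rely on the $t$-independence of the law of the geodesic picture rooted at time $t$; the paper verifies this by showing that for fixed roots $x_1<x_2$ and each $r>0$ the law of $(g_{(x_1,t)}(t-r),g_{(x_2,t)}(t-r))$ does not depend on $t$, while you need the same for the full map $x\mapsto\zeta(g_{(x,t)})$, which follows by the identical reasoning (and which you rightly flag as the step needing care). The structural difference is that you first exploit the time-monotonicity $\Theta_t\subseteq\Theta_s$ for $s\le t$---a nice observation the paper does not use---to conclude that $L_t$ is a.s.\ constant in $t$, and then derive the contradiction from a spatial interface $c_t$ that must drift to $+\infty$ on $A$ while having stationary marginal. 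The paper instead fixes rational roots $x_1<x_2$ and rationals $0<q_1<q_2$, defines $A_t=\{q_1<\zeta(g_{(x_1,t)})<q_2<\zeta(g_{(x_2,t)})\}$, and shows directly (via the same geometric input: a geodesic of positive direction eventually overtakes any fixed point, so by ordering $\zeta(g_{(x_2,s)})\le\zeta(g_{(x_1,t)})<q_2$ for $s$ small) that $\ind_{A_t}\to0$ as $t\to-\infty$; since $\Pp(A_t)$ is constant, $\Pp(A_0)=0$. Your two-step decomposition cleanly separates the monotone and stationary ingredients at the cost of the auxiliary $c_t$ and a slightly stronger stationarity statement; the paper's one-step version is shorter, though its events $A_t$ are not themselves monotone in $t$.
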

\begin{proof}
We just show that the first probability is $0$, since the other statement follows by a symmetric proof.

Assume, to the contrary, that 
\[
\Pp\bigl(0 < \zeta(g_{(x,0)})<\zeta(g_{(y,0)}) \text{ for some } x < y\bigr) > 0.
\]
Then, by the ordering in Lemma~\ref{lem:ordering}, there exists $0 < q_1 < q_2$ and $x_1 < x_2$ with $q_1,q_2,x_1,x_2 \in \Q$ so that 
\[
\Pp\Bigl(q_1< \zeta(g_{(x_1,0)}) < q_2  <  \zeta(g_{(x_2,0)})\Bigr) > 0.
\]
For this choice of $q_1,q_2,x_1,x_2$, and $t \in \R$, define the event 
\[
A_t := \Bigl\{ q_1< \zeta(g_{(x_1,t)}) < q_2  <  \zeta(g_{(x_2,t)})  \Bigr\}.
\]
Our assumption is that $\Pp(A_0) > 0$. Recall that, for $i = 1,2$, and $r > 0$, $g_{(x_i,t)}(t-r)$ is the rightmost maximizer of  
\[
z \mapsto b(z,t-r) + \Ll(t-r,s;x_i,t),
\]
which is the same as the rightmost maximizer of 
\[
z \mapsto \pi_0[b(\cdot,t-r)](z) + \Ll(z,t-r;x_i,t).
\]
Then, by stationarity of the increments of $b$ and shift stationarity of $\Ll$ (Lemma~\ref{lm:landscape_symm}\ref{itm:time_stat}), we have that, for each $r > 0$ the law of the tuple $(g_{(x_1,t)}(t-r),g_{(x_2,t)}(t-r))$ is the same for all $t \in \R$. Dividing by $|r|$ and taking limits as $r \to -\infty$, we have that the law of $\bigl(\zeta(g_{(x_1,t)}),\zeta(g_{(x_2,t)})\bigr)$ is the same for all $t \in \R$, and so  $\Pp(A_t) = \Pp(A_0)$ for all $t \in \R$.

If the event $A_t$ occurs for some $t \in \R$, we now claim that there exists $S < t$ so that $A_s$ does not occur for all $s \le S$. To see this, assume that $A_t$ occurs. Then, 
\[
0 < q_1 < \lim_{s \to -\infty} \f{g_{(x_1,t)} (s)}{|s|} < q_2,
\]
so there exists $S < t$ so that $g_{(x_1,t)}(s) > x_2$ for all $s \le S$. Then, for $r \le s$, by the ordering of geodesics in Lemma~\ref{lem:ordering} and consistency of the semi-infinite geodesics in Proposition~\ref{prop:geodesics_from_b}\ref{itm: consis},
\[
\
g_{(x_2,s)}(r) \le g_{(x_1,t)}(r).
\]
Dividing by $|r|$ and taking limits as $r \to -\infty$, we see that
\[
\zeta(g_{(x_2,s)}) \le \zeta(g_{(x_1,t)}) < q_2.
\]
Then, $A_s$ does not occur by definition of the event. 

Thus, we have shown that $\lim_{s \to -\infty} \ind_{A_s} = 0$ almost surely. This is because either $\ind_{A_s} = 0$ for all $s \in \R$, or else there exists some $t$ such that $\ind_{A_t} = 1$, in which case there exists some $S < t$ so that $\ind_{A_s} = 0$ for all $s \le S$. By the bounded convergence theorem, $\Pp(A_s) \to 0$ as $s \to -\infty$. But we have seen that $\Pp(A_s) = \Pp(A_0)$ for all $s \in \R$, so $\Pp(A_0) = 0$, a contradiction to our assumption.
\end{proof}

We now present the first main ingredient in the classification of invariant measures for the KPZ fixed point. That is, all extremal invariant measures are supported on functions have fixed asymptotic slopes at $\pm \infty$.
\begin{proposition} \label{prop:inv_meas_have_slope}
Let $\mu$ be an extremal (i.e., ergodic) invariant measure for the recentered KPZ fixed point evolution on the space $\CFP$. Then, there exist deterministic constants $-\infty < -\alpha \le \beta < \infty$ such that,  for $f \sim \mu$, $\mu$-almost surely,
\[
\lim_{x \to -\infty} \f{f(x)}{x} = -2\alpha,\quad\text{and}\quad \lim_{x \to +\infty} \f{f(x)}{x} = 2\beta.
\]
\end{proposition}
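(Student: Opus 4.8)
The plan is to use the eternal-solution machinery of Lemma~\ref{lem:eternal_soln_existence}: from the invariant measure $\mu$ produce an eternal solution $b$ of the KPZ fixed point with $b(0,0)=0$ and $\pi_0[b(\cdot,t)]\sim\mu$ for all $t\in\R$, and then read the two asymptotic slopes of $b(\cdot,0)$ — equivalently, of a $\mu$-typical $f$, since slopes are unchanged by recentering — off the directions of the backward semi-infinite geodesics $g_{(x,0)}^{b,\mathrm R}$ built from $b$. First I would check these geodesics are well defined: each $b(\cdot,s)$ lies in $\CFP$ and so has sub-quadratic upper growth, while Lemma~\ref{lem:Landscape_global_bound} bounds $\Ll(z,s;x,t)$ from above by $-c z^2 + C$ for $|z|$ large; hence $z\mapsto b(z,s)+\Ll(z,s;x,t)\to-\infty$ as $|z|\to\infty$, so the maximum is attained and leftmost/rightmost maximizers exist, and Proposition~\ref{prop:geodesics_from_b} applies. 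The key structural point is that $g_{(x,0)}^{b,\mathrm R}$ depends on $b$ only through the increments $\pi_0[b(\cdot,s)]$, $s\le 0$; by the construction in Lemma~\ref{lem:eternal_soln_existence} as limits of $h(\cdot,s\mid -T,f)-h(0,s\mid -T,f)$, the family $\{\pi_0[b(\cdot,s)]\}_{s\le 0}$ is independent of $\F_{\ge 0}$, and the time-increments of $b$ are stationary with $\pi_0[b(\cdot,t)]\sim\mu$.

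Next I would invoke Lemma~\ref{lem:only_3_slopes}. By Proposition~\ref{prop:all_directions} each $g_{(x,0)}^{b,\mathrm R}$ has a finite direction, and by Lemma~\ref{lem:ordering} the map $x\mapsto\zeta(g_{(x,0)}^{b,\mathrm R})$ is nondecreasing; its hypotheses hold by the previous paragraph, so almost surely this map is constant on $\{x:\zeta>0\}$ and on $\{x:\zeta<0\}$. Combined with monotonicity this forces it to take at most three values, so the limits $\zeta_-:=\lim_{x\to-\infty}\zeta(g_{(x,0)}^{b,\mathrm R})$ and $\zeta_+:=\lim_{x\to+\infty}\zeta(g_{(x,0)}^{b,\mathrm R})$ exist, are finite (in fact each is attained, as $\zeta(g_{(x,0)}^{b,\mathrm R})$, for $x$ of sufficiently large absolute value), and satisfy $\zeta_-\le\zeta_+$. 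In particular there is a random $x_1$ with $\zeta(g_{(x,0)}^{b,\mathrm R})=\zeta_+$ for all $x\ge x_1$, and symmetrically on the left.

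The heart of the proof is then to identify $\zeta_\pm$ with the asymptotic slopes of $b(\cdot,0)$. For $x\ge x_1$ the geodesics $g_{(x,0)}^{b,\mathrm R}$ and $g_{(x_1,0)}^{b,\mathrm R}$ share the direction $\zeta_+$, hence coalesce by coalescence of codirectional semi-infinite geodesics in the directed landscape (\cite{Busa-Sepp-Sore-22a,Busani-2023}, used in the version valid for all directions at once); let $(w_s,s)$ be their common location for $s$ below the coalescence time. Applying the eternal-solution identity $b(y,0)=b(w_s,s)+\Ll(w_s,s;y,0)$ at $y=x$ and $y=x_1$ and subtracting gives $b(x,0)-b(x_1,0)=\Ll(w_s,s;x,0)-\Ll(w_s,s;x_1,0)$ for all such $s$; letting $s\to-\infty$ along the common geodesic identifies the right-hand side with the Busemann function $W^{\zeta_+}(x_1,0;x,0)$ of Propositions~\ref{prop:invariance_of_SH} and~\ref{prop:Buse_basic_properties}, so $b(x,0)-b(x_1,0)=W^{\zeta_+}(x_1,0;x,0)$ for all $x\ge x_1$. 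Since $x\mapsto W^{\theta}(x_1,0;x,0)$ has asymptotic slope $2\theta$, this gives $\lim_{x\to+\infty}b(x,0)/x=2\zeta_+$, and the symmetric argument gives $\lim_{x\to-\infty}b(x,0)/x=2\zeta_-$. Because slopes are recentering-invariant and $\pi_0[b(\cdot,0)]\sim\mu$, it follows that $\mu$-a.e.\ $f$ has $\lim_{x\to+\infty}f(x)/x=2\zeta_+(f)$ and $\lim_{x\to-\infty}f(x)/x=2\zeta_-(f)$ for measurable functions $\zeta_\pm\colon\CFP\to\R$ of $f$ alone.

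Finally, I would upgrade $\zeta_\pm$ to deterministic constants. Asymptotic slopes are conserved by the KPZ fixed point evolution (Lemma~\ref{lem:slope_conserved}) and unchanged by recentering, so $f\mapsto\arctan\zeta_\pm(f)$ are bounded measurable functions that are a.s.\ constant along trajectories of the recentered dynamics under which $\mu$ is stationary; extremality (time-ergodicity) of $\mu$ then forces $\zeta_\pm$ to equal $\mu$-a.s.\ deterministic constants, which we name $\beta:=\zeta_+$ and $-\alpha:=\zeta_-$, and $-\infty<-\alpha\le\beta<\infty$ by the previous step. The hard part is the third step: turning the qualitative fact that the geodesic directions take at most three values into a quantitative pointwise slope statement. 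This is where one must invoke coalescence of semi-infinite geodesics in the \emph{random} direction $\zeta_+$ — so the coalescence and Busemann-process inputs have to be used in their versions valid for all directions simultaneously — and carefully identify the stabilized $\Ll$-increment along the coalesced geodesic with the Busemann function $W^{\zeta_+}$, whose slope $2\zeta_+$ closes the argument.
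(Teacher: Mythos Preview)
Your proposal is correct and follows the same route as the paper: build the eternal solution $b$ via Lemma~\ref{lem:eternal_soln_existence}, invoke Lemma~\ref{lem:only_3_slopes} to reduce to at most three geodesic directions, coalesce the extreme-direction geodesics, identify the resulting $b$-increment with a Busemann increment, read off the slope from Proposition~\ref{prop:Buse_basic_properties}\ref{it:Wslope}, and use ergodicity (via slope conservation, Lemma~\ref{lem:slope_conserved}) to make the slopes deterministic. The one place where the paper is tighter is your ``letting $s\to-\infty$'' identification: rather than a limit argument, the paper first invokes Proposition~\ref{prop:Busani_N3G} to recognize each $g^{b,\mathrm R}_{(x,0)}$ of direction $\beta$ as a $\beta\sigg$ Busemann geodesic for some sign $\sigg\in\{-,+\}$, then uses the weight identity~\eqref{eqn:SIG_weight} along Busemann geodesics (together with additivity, Proposition~\ref{prop:Buse_basic_properties}\ref{itm:DL_Buse_add}) to get $b(x,0)-b(M,0)=W^{\beta\sig}(M,0;x,0)$ directly --- no passage to the limit is needed, and this also handles the sign $\sigg$, which your sketch leaves implicit.
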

\begin{proof}
By Lemma~\ref{lem:slope_conserved}, asymptotic slopes are conserved quantities for the KPZ fixed point, so it suffices to show that for any invariant measure $\mu$, when $f \sim \mu$ the (possibly random) limits
\[
\lim_{x \to \pm \infty} \f{f(x)}{x}
\]
exist almost surely, and the limit to $-\infty$ is bounded above by the limit to $+\infty$. On an appropriate probability space, let $b$ be the eternal solution associated to $\Ll$ obtained in Lemma~\ref{lem:eternal_soln_existence}. Then, $f$ has the same law as $x \mapsto b(x,0) - b(0,0) = b(x,0)$, so it suffices to prove the asymptotic slopes for $b(x,0)$. By definition of the state space \eqref{eq:CFP} and the modulus of continuity bounds in Lemma~\ref{lm:landscape_symm}, leftmost and rightmost maximizers in \eqref{eq:bLfun} exist for all $s < t$ and $x \in \R$. For $(x,t) \in \R$, adopt the shorthand notation $g_{(x,t)} = g_{(x,t)}^{b,\mathrm{R}}$. By Lemma~\ref{lem:only_3_slopes}, there are at most $3$ distinct values of $\zeta(g_{(x,0)})$ as $x$ varies over $\R$ (at most one negative value and at most one positive value). By the ordering of geodesics in Lemma~\ref{lem:ordering} and Propositions~\ref{prop:Busani_N3G} and Proposition~\ref{prop:geod_ordering}, there exist (possibly random) $\alpha \le \beta$, $\sigg_1,\sigg_2 \in \{-,+\}$, and $m < M$, such that, for all $x \le m$, $g_{(x,0)}$ is a $-\alpha \sigg_1$ Busemann geodesic, and for all $x \ge M$, $g_{(x,0)}$ is a $\beta \sigg_2$ Busemann geodesic, as in Definition~\ref{def:Buse_geodesics}. Now let $x \ge M$.  By Proposition~\ref{prop:geod_ordering}, there exists $T = T(M,x)$ such that, for all $s \le T$, $g_{(M,0)}(s) = g_{(x,0)}(s)$. Call this common value $g(s)$. Then, using Equation \eqref{eqn:SIG_weight} from Proposition~\ref{prop:DL_SIG_cons_intro}\ref{itm:arb_geod_cons} in the third line below, followed by the additivity in Proposition~\ref{prop:Buse_basic_properties}\ref{itm:DL_Buse_add},
\begin{align*}
b(x,0) - b(M,0) &= \sup_{z \in \R}[b(z,T) + \Ll(z,t;x,0)] - \sup_{z \in \R}[b(z,T) + \Ll(z,t;M,0)] \\
&=  b(g(s),T) + \Ll(g(s),T;x,0) - [b(g(s),T) + \Ll(g(s),T;M,0)] \\
&= W^{\beta  \sig_2}(g(s),T;x,0) - W^{\beta \sig_2}(g(s),T;M,0) \\
&= W^{\beta \sig_2}(M,0;x,0).
\end{align*}
We observe here that, while the value $T$ depends on $M$ and $x$, the equality $b(x,0) - b(M,0) = W^{\beta \sig_2}(M,0;x,0)$ is true for all $x \ge M$. Then, using the additivity of $W^{\beta \sig_2}$ and the asymptotic slopes in Proposition~\ref{prop:Buse_basic_properties}\ref{it:Wslope},
\[
\lim_{x \to +\infty} \f{b(x,0)}{x} = \lim_{x \to +\infty} \f{b(x,0) - b(M,0)}{x} = \lim_{x \to +\infty} \f{W^{\beta \sig_2}(M,0;x,0)}{x} = 2\beta.
\]
A symmetric argument shows that 
\[
\lim_{x \to -\infty} \f{b(x,0)}{x} =-2\alpha. \qedhere
\]
\end{proof}

\begin{corollary} \label{cor:almost_characterization}
  Let $\mu$ be an extremal invariant measure for the recentered KPZ fixed point on the space $\CFP$. Then, $\mu = \BM(2\theta;\sqrt 2)$ for some $\theta \in \R$, or there exists $\theta > 0$ such that $\mu(\mathcal V_0(\theta)) = 1$, where $\mathcal{V}_0(\theta)$ is defined in \eqref{zero_spaces}. 
\end{corollary}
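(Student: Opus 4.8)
The plan is to deduce this corollary immediately from the two facts already in hand. Proposition~\ref{prop:inv_meas_have_slope} tells us that an extremal invariant measure $\mu$ is supported on functions with \emph{deterministic} asymptotic slopes $-2\alpha$ at $-\infty$ and $2\beta$ at $+\infty$, with $-\alpha\le\beta$; and Proposition~\ref{prop:uniform_upbd} tells us that $\BM(2\theta,\sqrt2)$ is the \emph{unique} invariant measure on $\CFP$ supported on functions satisfying the drift condition~\eqref{eqn:drift_assumptions} for that $\theta$. Since $\mu$ is invariant for the \emph{recentered} dynamics, it is also supported on $\mathcal{C}_{\mathrm{FP};0}$, so $f(0)=0$ for $f\sim\mu$. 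Writing $s_-:=-2\alpha$ and $s_+:=2\beta$ for the two slopes (so $s_-\le s_+$), the entire content of the corollary is a short bookkeeping check: either $(s_-,s_+)$ is of the ``balanced'' form $s_-=-s_+$ with $s_+>0$, in which case $\mu$ is $V$-shaped, or else $f$ satisfies one of the three regimes of~\eqref{eqn:drift_assumptions} for an explicit $\theta$ read off from $(s_-,s_+)$, and then Proposition~\ref{prop:uniform_upbd} forces $\mu=\BM(2\theta,\sqrt2)$.

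Concretely I would organize the case analysis by the sign of $s_+$. If $s_+\le 0$ then $s_-\le 0$ as well, and taking $\theta=s_-/2\le 0$ the condition~\eqref{eqn:drift_assumptions} holds: the $\theta=0$ clause when $s_-=s_+=0$, and the $\theta<0$ clause otherwise, using $s_+\le 0<-s_-=-2\theta$ to get $\limsup_{x\to+\infty}f(x)/x=s_+\in[-\infty,-2\theta)$ when $s_-<0$. If $s_+>0$ and $s_->-s_+$ (which in particular covers every configuration with $s_-\ge 0$), take $\theta=s_+/2>0$: then $\lim_{x\to+\infty}f(x)/x=2\theta$ and $\liminf_{x\to-\infty}f(x)/x=s_->-2\theta$, so the $\theta>0$ clause holds. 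If $s_+>0$ and $s_-<-s_+$, take $\theta=s_-/2<0$: then $\lim_{x\to-\infty}f(x)/x=2\theta$ and $\limsup_{x\to+\infty}f(x)/x=s_+<-2\theta$, so the $\theta<0$ clause holds. In each of these subcases Proposition~\ref{prop:uniform_upbd} gives $\mu=\BM(2\theta,\sqrt2)$. The only remaining possibility is $s_+>0$ with $s_-=-s_+$; setting $\theta:=s_+/2>0$ yields $\lim_{x\to+\infty}f(x)/|x|=\lim_{x\to-\infty}f(x)/|x|=2\theta$ almost surely, i.e.\ $f\in\mathcal V(\theta)$ almost surely, which together with $f(0)=0$ almost surely gives $\mu(\mathcal V_0(\theta))=1$ (recall~\eqref{zero_spaces}).

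There is no genuine obstacle in this step: all the substance sits in Propositions~\ref{prop:inv_meas_have_slope} and~\ref{prop:uniform_upbd}, and the corollary is purely a matter of confirming that the finitely many slope configurations are exhausted by ``matches~\eqref{eqn:drift_assumptions}'' and ``is $V$-shaped.'' The one point worth double-checking is that the balanced configuration $s_-=-s_+$ (with $s_+>0$) genuinely fails~\eqref{eqn:drift_assumptions} for \emph{every} $\theta$ --- it does, since the $\theta>0$ clause would force $\theta=s_+/2$ but then requires $\liminf_{x\to-\infty}f(x)/x>-2\theta$, which $s_-=-s_+$ violates, and symmetrically for the $\theta<0$ clause --- so that this case truly must be handled separately. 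That is exactly the case disposed of in Section~\ref{sec:no_V-shape}, which together with this corollary completes the proof of Theorem~\ref{thm:KPZFP_characterize}.
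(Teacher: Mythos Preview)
Your proof is correct and follows essentially the same approach as the paper's: both deduce the corollary directly from Proposition~\ref{prop:inv_meas_have_slope} and the uniqueness in Proposition~\ref{prop:uniform_upbd}, with the only remaining case being the balanced $V$-shaped configuration. Your case analysis on the sign of $s_+$ and the relation of $s_-$ to $-s_+$ is just a more explicit unpacking of what the paper compresses into the single sentence ``if $\alpha\neq\beta$ or $\beta\le0$, then $\mu$ is one of $\BM(-2\alpha,\sqrt2)$, $\BM(0,\sqrt2)$, or $\BM(2\beta,\sqrt2)$.''
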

\begin{proof}
Assume that $\mu$ is an invariant measure and let $f \sim \mu$. Since we recenter at $0$, we must have $f(0) = 0$, $\mu$-almost surely. By Proposition~\ref{prop:inv_meas_have_slope}, there exist deterministic $-\alpha \le \beta$ such that, $\mu$-almost surely, 
\[
\lim_{x \to -\infty} \f{f(x)}{x} = -2\alpha,\quad\text{and}\quad \lim_{x \to +\infty} \f{f(x)}{x} = 2\beta.
\]
It suffices to show that if $\alpha \neq \beta$ or $\beta \le 0$, then $\mu = \BM(2\theta,\sqrt 2)$ for some $\theta \in \R$. In each of these cases, since $G_{\theta} \sim \BM(2\theta,\sqrt 2)$ (Definition~\ref{def:SH}), the $k = 1$ case of the uniqueness in Proposition~\ref{prop:uniform_upbd} implies that $\mu$ is one of  $\BM(-2\alpha,\sqrt 2),\BM(0,\sqrt 2)$, or $\BM(2\beta,\sqrt 2)$. 
\end{proof}

\subsection{Ruling out $V$-shaped invariant measures} \label{sec:no_V-shape}
In this section, we rule out the second case given in Corollary~\ref{cor:almost_characterization}. 
Recall the measure $\nu_\theta$ from Definition~\ref{nutheta_def}.  A key input is the following:
 \begin{proposition} \label{prop:hpm_dif}
    Let $\theta > 0$ and $(f_-,f_+) \sim \nu_\theta$, and define $h_{\pm}(x,t) = h(x,t \mid f_{\pm})$. Then, as $t \to \infty$, we have the distributional convergence
    \[
    \f{h_+(0,t) - h_-(0,t)}{\sqrt t}\Longrightarrow \Nor(0,4\theta).
    \]
\end{proposition}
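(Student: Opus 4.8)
The plan is to exploit the joint invariance of $\nu_\theta$ together with the stationary-horizon structure to reduce the fluctuations of $h_+(0,t) - h_-(0,t)$ to an explicit computation. Recall from Definition~\ref{nutheta_def} that $(f_-,f_+) \sim \nu_\theta$ means $(f_-,f_+) = (G_{-\theta},G_\theta)$ for the stationary horizon, and from Proposition~\ref{prop:invariance_of_SH} that the pair evolves under the coupled KPZ fixed point so that $\pi_0[\underline h(\cdot,t)] \sim \nu_\theta$ for every $t > 0$. In particular $h_+(0,t) - h_-(0,t)$ is \emph{not} stationary in $t$ (the $\pi_0$ recentering kills the relative height), so the first step is to express this difference through a quantity whose law \emph{is} known. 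Using the representation in Proposition~\ref{prop:nuthetainvariant}, write $(f_-,f_+) = \Phi^\diamond[\overline W, \widetilde W]$ with $(\overline W,\widetilde W) \sim \BM(0) \otimes \BM(2\theta)$, so that $f_+ - f_- = 2\mathcal M\widetilde W - 2\mathcal M\widetilde W(0)$ is a reflected-type process; by Lemma~\ref{lem:KPZFP_max} and the monotonicity of $\Phi^\diamond$ under the dynamics, $h_+(x,t) - h_-(x,t)$ has, at each fixed $t$, the law of $2\mathcal M \widehat W(x) - 2\mathcal M\widehat W(0)$ for a fresh $\widehat W \sim \BM(2\theta)$. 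Evaluating at $x = 0$ is degenerate, so instead the idea is to track the difference \emph{of the two height processes along the shock}, or equivalently to use the identity $h_+(0,t) - h_-(0,t) = [h_+(0,t) - h_+(b_t,t)] - [h_-(0,t) - h_-(b_t,t)]$ where $b_t$ is the competition interface, and note $h_+(b_t,t) = h_-(b_t,t)$.

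The cleaner route, and the one I would actually carry out, is to use the Busemann/stationary-horizon description directly. By Proposition~\ref{prop:invariance_of_SH}, there is a Busemann process $W^\theta$ measurable with respect to $\Ll$ such that the increments of $h_\pm$ from a far-past initial time agree with $W^{\pm 2\theta}$; more usefully, the \emph{relative} quantity $h_+(0,t) - h_-(0,t)$ can be written, via the metric composition property \eqref{L_comp} and the fact that $f_\pm$ are the time-$0$ marginals of the stationary horizon evolving in $\Ll$, as an additive functional of $\Ll$ over the strip $\R \times [0,t]$. Concretely, I would show
\[
h_+(0,t) - h_-(0,t) = \bigl(h_+(0,t) - h_+(0,0)\bigr) - \bigl(h_-(0,t) - h_-(0,0)\bigr) + \bigl(f_+(0) - f_-(0)\bigr),
\]
with $f_+(0) - f_-(0) = 0$ since $0$ lies in the flat interval of $\nu_\theta$ at time $0$, and then identify $h_\pm(0,t) - h_\pm(0,0)$ with $W^{\pm 2\theta}(0,0;0,t)$ (the temporal Busemann increment), which by the shift-covariance in Lemma~\ref{lm:landscape_symm} and the stationarity of the horizon has stationary independent increments in $t$ after rescaling. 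The difference $W^{2\theta}(0,0;0,t) - W^{-2\theta}(0,0;0,t)$ is then a process with stationary increments, mean zero by symmetry $\theta \leftrightarrow -\theta$ combined with $\Ll(y,s;x,t) \deq \Ll(-x,-t;-y,-s)$, and one computes its variance growth: by the known one-point fluctuation scaling of the KPZ fixed point started from $\BM(2\theta,\sqrt2)$ initial data (a $t^{1/3}$ object) the \emph{individual} terms are too rough, but the \emph{difference} of the two stationary evolutions is the relevant "second-class particle" quantity, which diffuses. Its variance is $4\theta \cdot t$, pinned down either by a direct computation with the Gaussian increments of $2\mathcal M\widetilde W$ evaluated at the shock displacement $b_t$, or — most efficiently — by the explicit second-moment formula for stationary KPZ-fixed-point increments, giving $\Var(h_+(0,t) - h_-(0,t)) = 4\theta t + o(t)$.

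The central limit theorem itself then follows because $h_+(0,t) - h_-(0,t)$, as a function of $t$, has stationary increments (by time-stationarity of the directed landscape, Lemma~\ref{lm:landscape_symm}\ref{itm:time_stat}) and a mixing property inherited from the independence of disjoint time-strips of $\Ll$ (Definition~\ref{def:sigma_alg}); an application of a CLT for stationary mixing sequences sampled along integer times, combined with a tightness/interpolation estimate for non-integer $t$, yields $t^{-1/2}(h_+(0,t) - h_-(0,t)) \Longrightarrow \Nor(0,4\theta)$. \textbf{The main obstacle} I anticipate is the identification of the exact variance constant $4\theta$: the soft arguments give Gaussianity and $\sqrt t$ scaling, but pinning the constant requires either the explicit Gaussian structure of $2\mathcal M\widetilde W$ together with control of where the shock $b_t$ sits (which ties back to Theorem~\ref{thm:shock_fluctuations}\ref{itm:joint_stat_shock}, creating a potential circularity to be avoided), or a direct stationary-covariance computation for the KPZ fixed point that must be done carefully. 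I would resolve this by computing $\Ee[(h_+(0,t)-h_-(0,t))^2]$ directly: condition on $\Ll|_{[0,t]}$, use that at time $t$ the pair $(h_-,h_+)(\cdot,t)$ has law $\nu_\theta$ so $h_+(x,t)-h_-(x,t)$ is a known reflected process, and extract the variance from its value at the (random, $\Ll$-measurable) location $-b_t$, using that $b_t/\sqrt t$ is tight and asymptotically independent of the relevant increment — the $4\theta$ then emerges from $\Var(2\mathcal M\widetilde W(x)) \sim$ linear-in-$x$ behavior of the Brownian-with-drift running max.
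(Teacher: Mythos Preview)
Your proposal has genuine gaps that cannot be filled as written. The central issue is the CLT step: you claim that $D(t) := h_+(0,t) - h_-(0,t)$ has stationary increments and satisfies a CLT via ``mixing inherited from the independence of disjoint time-strips of $\Ll$.'' Stationary increments is fine (by time-stationarity of $\Ll$), but the mixing claim is not. The increment $D(t) - D(s)$ depends on $\pi_0[\underline h(\cdot,s)]$ \emph{and} on $\Ll|_{[s,t]}$; the former is not independent of earlier increments (it is determined by $(f_-,f_+)$ and $\Ll|_{[0,s]}$). Equivalently, if you pass to the Busemann coupling and write $D(t) = W^{\theta}(0,0;0,t) - W^{-\theta}(0,0;0,t)$, the temporal Busemann increment $W^{\pm\theta}(0,s;0,t)$ depends on the landscape all the way down to $-\infty$, not just on the strip $[s,t]$. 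So there is no obvious martingale or strong-mixing structure, and none of the stationary-sequence CLTs apply without substantial additional work that you have not indicated. Your proposed variance computation is also circular: you suggest extracting the constant $4\theta$ from the location of $b_t$, but Theorem~\ref{thm:shock_fluctuations}\ref{itm:joint_stat_shock} is \emph{deduced from} Proposition~\ref{prop:hpm_dif} via Lemma~\ref{lem:h_to_b}, not the other way around.

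The paper's proof bypasses all of this with a decomposition that uses \emph{spatial} invariance at a single fixed time, not a temporal CLT. Write
\[
h_+(0,t) - h_-(0,t) \;=\; -\Bigl[\bigl(h_+(-\theta t,t) - h_+(0,t)\bigr) - \bigl(h_-(\theta t,t) - h_-(0,t)\bigr)\Bigr] \;+\; \bigl[h_+(-\theta t,t) - h_-(\theta t,t)\bigr].
\]
The first bracket is a pair of spatial increments at the fixed time $t$, so by the invariance in Proposition~\ref{prop:invariance_of_SH} it has the same law as $f_+(-\theta t) - f_-(\theta t)$; from the explicit stationary-horizon description (Definitions~\ref{def:SH} and~\ref{nutheta_def}) this is $W_2(-\theta t) - W_1(\theta t)$ plus a stationary correction $\mathcal S_\theta(-\theta t) - \mathcal S_\theta(0)$, and dividing by $\sqrt t$ gives $\Nor(0,4\theta)$ directly. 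For the second bracket, skew-stationarity (Lemma~\ref{lm:landscape_symm}\ref{itm:skew_stat}) followed by KPZ rescaling shows that $t^{-1/3}\bigl(h_\pm(\mp\theta t,t) + \theta^2 t\bigr)$ has a fixed law independent of $t$, so $t^{-1/2}\bigl(h_+(-\theta t,t) - h_-(\theta t,t)\bigr) \to 0$ in probability. The choice of the spatial points $\mp\theta t$ is exactly what makes the shear remove the drift and reduce each term to a $t^{1/3}$-scale object; the whole argument is then a direct computation with the symmetries, requiring no CLT, no mixing estimate, and no separate variance calculation.
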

\begin{proof}
We first show that 
\be \label{eq:piece1}
t^{-1/2}\Bigl(h_+(-\theta t,t) - h_+(0,t) - \bigl(h_-(\theta t,t) - h_-(0,t)\bigr)\Bigr) \Longrightarrow \mathcal N(0,4\theta).
\ee
To see this, recall that, because $\nu_\theta$ is jointly invariant for the KPZ fixed point (Proposition~\ref{prop:invariance_of_SH}),
\begin{align*}
&\quad \, h_+(-\theta t,t) - h_+(0,t) - \bigl(h_-(\theta t,t) - h_-(0,t)\bigr) \\
&\deq f_+(-\theta t) - f_-(\theta t) \\
&\deq W_2(-\theta t) + \mathcal S_\theta(-\theta t) - \mathcal S_\theta(0) - W_1(\theta t).
\end{align*}
We get the last equality by the explicit description of the law $\nu_\theta$ from Definitions~\ref{def:SH} and~\ref{nutheta_def}. Here, $W_1$ and $W_2$ are independent, $W_1 \sim \BM(-2\theta,\sqrt 2)$, and $W_2 \sim \BM(2\theta,\sqrt 2)$, and $x \mapsto \mathcal S_\theta(x)$ is the stationary process 
\[
x \mapsto \sup_{y \le x}[W_2(x) - W_2(y) - (W_1(x) - W_1(y))].
\]
Upon dividing by $t^{1/2}$, we get that 
\begin{align*}
&\quad\, t^{-1/2}\Bigl(h_+(-\theta t,t) - h_+(0,t) - (h_-(\theta t,t) - h_-(0,t))\Bigr)  \\
&\deq  t^{-1/2}(W_2(-\theta t) - W_1(\theta t)) + t^{-1/2}(\mathcal S_\theta(-\theta t) - \mathcal S_\theta(0)),
\end{align*}
The term $t^{-1/2}(W_2(-\theta t) - W_1(\theta t))$ has the law $\mathcal N(0,4\theta)$. The error term $t^{-1/2}(\mathcal S_\theta(-\theta t) - \mathcal S_\theta(0))$ goes to $0$ in probability by stationarity of $\mathcal S_\theta$. 

By \eqref{eq:piece1}, it now suffices to show that, as $t \to \infty$,
\[
t^{-1/2}\Bigl(h_+(-\theta t,t) - h_-(\theta t,t)   \Bigr) \to 0 \quad\text{in probability.}
\]
Using the Skew-stationarity in Lemma~\ref{lm:landscape_symm},
\begin{align*}
    h_+(-\theta t,t) &= \sup_{y \in \R}[\Ll(y,0;-\theta t,t) + h_+(0,y)] \\
    &\deq \sup_{y \in \R}[\Ll(y,0;0,t) + h_+(0,y) - 2\theta y] - t \theta^2 \\
    &\deq \sup_{y \in \R}[\Ll(y,0;0,t) + h_0(y)] - t\theta^2,
\end{align*}
where $h_0$ is a Brownian motion with diffusivity $\sqrt 2$ and drift $0$. By a symmetric argument, this is the same as the distribution of $h_-(\theta t,t)$. Then, using the rescaling invariance of Lemma~\ref{lm:landscape_symm} with $q = t^{1/3}$ along with Brownian scaling,
\begin{align*}
\sup_{y \in \R}[\Ll(y,0;0,t) + h_0(y)] &\deq t^{1/3} \sup_{y \in \R}[\Ll(t^{-2/3}y,0;0,1) + h_0(t^{-2/3}y)] \\
&= t^{1/3}\sup_{y \in \R}[\Ll(y,0;0,1) + h_0(y)].
\end{align*}
In summary, the laws of 
\[
\f{h_+(-\theta t,t) + t\theta^2}{t^{1/3}}\quad\text{and}\quad \f{h_-(\theta t,t) + t\theta^2}{t^{1/3}}
\]
are the same and do not depend on $t$. Thus,
\[
t^{-1/2}\Bigl(h_+(-\theta t,t) - h_-(\theta t,t)   \Bigr)= t^{-1/6}\Biggl(\f{h_+(-\theta t,t) + t\theta^2}{t^{1/3}} - \f{h_-(\theta t,t) + t\theta^2}{t^{1/3}}\Biggr) \to 0 \text{ in probability},
\]
as desired. 
\end{proof}
Recall the map $V:\mathcal Y(\theta) \to \mathcal V(\theta)$ given in \eqref{eq:V_map}. We show the following.
\begin{lemma}\label{lem:VAinverses}
    Let $\theta > 0$. There exists a measurable map $\mathbf A:\mathcal V(\theta) \to \mathcal Y(\theta)$ such that, for all $f \in \mathcal V(\theta)$, $V\bigl[\mathbf A[f]\bigr] = f$. 
\end{lemma}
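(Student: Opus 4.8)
The plan is to build $\mathbf A$ explicitly by splitting $f$ at its minimum. Since $f$ is continuous and $f(x)/|x|\to 2\theta>0$ forces $f(x)\to+\infty$ as $|x|\to\infty$, the value $m\coloneqq\min_{x\in\R}f(x)$ is attained and $\{f=m\}$ is a nonempty compact set; let $c$ and $d$ be its smallest and largest elements, so $c\le d$. I would define $\mathbf A[f]=(f_-,f_+)$ by
\[
f_-(x)=\begin{cases} f(x), & x\le d,\\ m-2\theta(x-d), & x>d,\end{cases}
\qquad
f_+(x)=\begin{cases} m-2\theta(c-x), & x<c,\\ f(x), & x\ge c.\end{cases}
\]

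First I would check that $\mathbf A[f]\in\mathcal Y(\theta)$. Both functions are continuous because the two pieces agree at the junctions ($f(d)=m$, resp.\ $f(c)=m$); and since $\theta>0$ the affine pieces lie strictly below $m\le f$, so $f_\pm\le f$ pointwise and hence $f_\pm\in\CFP$ by comparison with $f$. The asymptotic slopes are immediate: $f_-$ coincides with $f$ near $-\infty$ (so $f_-(x)/x\to-2\theta$) and with $m-2\theta(x-d)$ near $+\infty$, and symmetrically for $f_+$. Next I would verify $V[\mathbf A[f]]=f$ by a three-case check: on $[c,d]$ both $f_-$ and $f_+$ equal $f$; for $x<c$ one has $f_-(x)=f(x)$ while $f_+(x)=m-2\theta(c-x)<m\le f(x)$, so the maximum is $f(x)$; and the case $x>d$ is symmetric. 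Thus $f_-\vee f_+=f$ everywhere. (The degenerate case $c=d$ is covered by the same formulas with no modification.)

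Finally, for measurability of $\mathbf A$: since the Borel $\sigma$-algebra of $\CFP$ (and of $\CFP^2$) is generated by the evaluation maps $f\mapsto f(x)$, it suffices to show that each $f\mapsto f_\pm(x)$ is Borel, and for this it is enough that $f\mapsto m$, $f\mapsto c$, and $f\mapsto d$ are Borel. Here $m=\inf_{q\in\Q}f(q)$ is a countable infimum of evaluations, and I would use $c=\lim_{n\to\infty}\inf\{q\in\Q: f(q)<m+1/n\}$ — an increasing limit, since the open strict sublevel sets $\{f<m+1/n\}$ decrease to $\{f=m\}$ and each has rational infimum equal to its infimum — with the analogous supremum formula for $d$; each of these is a Borel function of $f$.

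I do not anticipate a genuine obstacle: the statement is soft and the construction is forced up to inessential choices. The only point requiring a little care is the measurability of the minimizer endpoints $c$ and $d$, which is precisely why I pass to strict sublevel sets over the rationals rather than working directly with the level set $\{f=m\}$; everything else is bookkeeping on the three regions $x<c$, $c\le x\le d$, $x>d$.
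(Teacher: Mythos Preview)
Your proof is correct and takes essentially the same approach as the paper's: split $f$ at a global minimizer and extend each half by an affine function of the appropriate slope. The only cosmetic differences are that the paper uses a single splitting point (the leftmost minimizer, your $c$) rather than both $c$ and $d$, and writes slope $\pm\theta$ for the affine extensions (apparently a slip; your $\pm 2\theta$ is what the definition of $\mathcal Y(\theta)$ actually demands), while you also give a more explicit measurability argument for $m,c,d$ than the paper does.
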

\begin{proof}
    For $f \in \mathcal V(\theta)$, we define $\mathbf A[f] \coloneqq (f_-,f_+)$ as follows. Since $f$ is continuous and satisfies $\lim_{|x| \to \infty
} \f{f(x)}{x} = 2\theta > 0$, $f$ attains its global minimum on $\R$. Let $x_{\mathrm{min}}$ be the leftmost minimizer and set 
\be \label{A_def}
\begin{aligned}
f_-(x) &= \begin{cases}
f(x) &x \le x_{\mathrm{min}} \\
f(x_{\mathrm{min}}) - \theta(x - x_{\min}) &x \ge x_{\mathrm{min}}
    \end{cases} \\
    f_+(x) &= \begin{cases}
    f(x_{\mathrm{min}}) + \theta(x -x_{\min}) &x \le x_{\min} \\
    f(x) &x \ge x_{\min}.
    \end{cases}
\end{aligned}
\ee
It follows immediately that $(f_-,f_+) \in \mathcal Y(\theta)$ and $V[f_-,f_+](x) = f_-(x) \vee f_+(x) = f(x)$. 
\end{proof}

\begin{proposition} \label{prop:no_V_shaped}
Let $\theta > 0$. There is no probability measure $\mu$ on $\mathcal C_{\mathrm{FP}}$ such that $\mu$ is an invariant measure for the recentered KPZ fixed point evolution and $\mu(\mathcal V_0(\theta)) = 1$.  
\end{proposition}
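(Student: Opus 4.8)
The plan is to argue by contradiction: from a putative $V$-shaped invariant measure we extract a shock whose displacement grows on scale $\sqrt{t}$, which is impossible once the recentered profile is itself stationary. So suppose $\mu$ is invariant for the recentered KPZ fixed point with $\mu(\mathcal V_0(\theta))=1$, and let $f\sim\mu$ be independent of $\Ll$. Invariance gives $h(\cdot,t\mid f)-h(0,t\mid f)\sim\mu$ for all $t>0$, so the location $X_t$ of the leftmost global minimizer of $x\mapsto h(x,t\mid f)$ — which is unchanged by the deterministic recentering at $0$ — has the fixed law $\rho:=\mu\circ x_{\min}^{-1}$ for every $t$; in particular $\{X_t\}_{t\ge0}$, hence $\{X_t-X_0\}_{t\ge0}$, is tight. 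I would then decompose $f=f_-\vee f_+$ via the measurable section $\mathbf A$ of Lemma~\ref{lem:VAinverses}: with the explicit choice in~\eqref{A_def}, $f_-$ and $f_+$ cross exactly at $X_0:=x_{\min}(f)$, $f_-$ has asymptotic slope $-2\theta$ and $f_+$ has asymptotic slope $+2\theta$, and $(f_-,f_+)$ satisfies the drift conditions~\eqref{eqn:drift_assumptions} with parameters $-\theta$ and $\theta$, respectively. By Lemma~\ref{lem:KPZFP_max}, $h(\cdot,t\mid f)=h_-(\cdot,t)\vee h_+(\cdot,t)$ with $h_\pm(\cdot,t):=h(\cdot,t\mid f_\pm)$, and by Lemma~\ref{lem:slope_conserved} the arm $h_\pm(\cdot,t)$ retains asymptotic slope $\pm2\theta$, so $h(\cdot,t\mid f)$ is again $V$-shaped.

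Next I would identify $X_t$, up to a tight error, with $\tfrac{1}{4\theta}\bigl(h_-(0,t)-h_+(0,t)\bigr)$. Since the two arms have slopes $\mp2\theta$, the difference $h_+(x,t)-h_-(x,t)$ equals $4\theta x$ up to sublinear corrections, so the shock (the crossing region of the two arms) sits at $\tfrac{1}{4\theta}(h_-(0,t)-h_+(0,t))+O(1)$, and the minimizer of $h_-\vee h_+$ lies within $O(1)$ of it; this is the deterministic comparison made precise in the spirit of Lemma~\ref{lem:h_to_b}. To avoid assuming $\mathbf A[f]\in\mathcal X(\theta)$ (which need not hold, as $f_+-f_-$ need not be nondecreasing), I would run this through the competition-interface function $d_{X_0}(\cdot,t;f)$ of~\eqref{eq:dp_def}, which is nondecreasing by~\cite[Proposition~4.1]{Rahman-Virag-21}; Lemmas~\ref{lem:slope_conserved} and~\ref{KPZFP_attr} together with Proposition~\ref{prop:interfaces_are_shocks} then pin the shock to $I^{\pm}_{X_0}(t,f)$ for $t>0$. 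Combining this with the tightness of $\{X_t-X_0\}$ forces $h_-(0,t)-h_+(0,t)$ to be tight in $t$.

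Finally I would contradict this by showing that $t^{-1/2}\bigl(h_+(0,t)-h_-(0,t)\bigr)$ converges to a nondegenerate Gaussian. Because $(f_-,f_+)$ satisfies~\eqref{eqn:drift_assumptions} with parameters $(-\theta,\theta)$, the attractiveness statement of Proposition~\ref{prop:invariance_of_SH} (upgraded to $\CFP$ in Proposition~\ref{prop:uniform_upbd}) lets me couple the evolution of $(f_-,f_+)$ with that of a $\nu_\theta$-distributed pair (Definition~\ref{nutheta_def}) in the same environment $\Ll$ and reduce the fluctuation question for $h_+(0,t)-h_-(0,t)$ to Proposition~\ref{prop:hpm_dif}, which yields the $\Nor(0,4\theta)$ limit; alternatively, one re-runs the one-point argument of Proposition~\ref{prop:hpm_dif} directly, using the skew-stationarity and $q$-rescaling invariance of $\Ll$ (Lemma~\ref{lm:landscape_symm}) to see that $h_\pm(\mp\theta t,t)+t\theta^2$ fluctuates only at scale $t^{1/3}$ while $h_+(-\theta t,t)-h_+(0,t)$ and $h_-(\theta t,t)-h_-(0,t)$ supply the $\sqrt t$-scale Gaussian forced by the slopes $\pm2\theta$. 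This contradicts the tightness of the previous step, so no invariant measure supported on $\mathcal V_0(\theta)$ can exist.

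The step I expect to be the main obstacle is the middle one: rigorously tying the global minimizer of $h(\cdot,t\mid f)$ to $\tfrac{1}{4\theta}(h_-(0,t)-h_+(0,t))$ with a genuinely tight (not merely $o(\sqrt t)$) error, while coping with the possible non-monotonicity of $h_+(\cdot,t)-h_-(\cdot,t)$ — one must either prove that the KPZ fixed point flow carries $\mathcal Y(\theta)$ into $\mathcal X(\theta)$ for all positive times, or phrase the whole comparison through the monotone competition interface $d_{X_0}$. A secondary difficulty is that Proposition~\ref{prop:invariance_of_SH} provides almost-sure agreement of the \emph{recentered} arm profiles with the Busemann functions but not of the raw heights $h_\pm(0,t)$, so transferring the $\sqrt t$-fluctuation from $\nu_\theta$-data to $(f_-,f_+)$-data needs an extra estimate on the height offset (or the direct one-point computation).
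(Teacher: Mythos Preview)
Your overall architecture matches the paper's: decompose $f$ via $\mathbf A$, force tightness of some quantity via the invariance of $\mu$, then contradict with the $\sqrt t$ fluctuation of Proposition~\ref{prop:hpm_dif}. The two places where the paper's execution diverges from yours are exactly the two obstacles you flagged.

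The paper never tracks the minimizer $X_t$. Instead it shows tightness of $h_+(0,U_T)-h_-(0,U_T)$ directly, with $U_T\sim\Uniform([0,T])$: if this were unbounded along some $T_k\uparrow\infty$, then on an event of uniformly positive probability one has both $(h_+-h_-)(0,U_{T_k})$ large and, for some fixed far-left $x$, $h_+(x,U_{T_k})-h_+(0,U_{T_k})\le 0$ (using the $\CFP^2$ convergence of the recentered arms to $\nu_\theta$). On that event $h_{\mathsf V}=h_+$ at both $0$ and $x$, forcing $h_{\mathsf V}(x,U_{T_k})-h_{\mathsf V}(0,U_{T_k})\le 0$; but this probability is independent of $k$ by invariance of $\mu$ and must vanish as $x\to-\infty$ since $f_{\mathsf V}\in\mathcal V_0(\theta)$ almost surely, a contradiction. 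This sidesteps the minimizer-to-shock comparison entirely and never requires $(h_-,h_+)(\cdot,t)\in\mathcal X(\theta)$.

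The paper also does not transfer Proposition~\ref{prop:hpm_dif} to $\mathbf A[f]$-data. Once the recentered pair and the height difference are jointly tight, Krylov--Bogoliubov (via the Feller property, Proposition~\ref{prop:Markov_feller}) produces a jointly invariant measure $\psi$ for the pair dynamics recentered by the average of $h_\pm(0,\cdot)$; by construction $\pi_0$ pushes $\psi$ forward to $\nu_\theta$, so Proposition~\ref{prop:hpm_dif} applies to $\psi$-data verbatim, and the resulting $\sqrt t$-growth contradicts the stationarity of the height difference under $\psi$. Your coupling or one-point ideas could probably be pushed through, but the Krylov--Bogoliubov step absorbs both of your difficulties in one move.
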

\begin{proof}
    With analogous ingredients having been established in the present setting, the proof follows very closely that of \cite[Proposition~4.3]{Dunlap-Sorensen-24}. We reproduce the details for completeness.

    We argue by contradiction. Suppose that such a measure $\mu$ exists. Let $f_{\mathsf{V}}\sim\mu$, let $\underline{f} = \mathbf{A}[f_{\mathsf{V}}]$, and let $(\underline{h},h_{\mathsf{V}})(x,t) = (h_-,h_+,h_{\mathsf{V}})(x,t) = \underline{h}(x,t \mid (\underline{f},f_{\mathsf{V}}))$.
    Lemma~\ref{lem:VAinverses} shows that $V[\underline{f}] = f_{\mathsf{V}}$, and then
    Lemma~\ref{lem:KPZFP_max} implies that, in fact,
    \begin{equation}\label{eq:VishV}
    V[\underline{h}(\cdot,t)](x) = h_{\mathsf{V}}(x,t)\qquad\text{for all }t\ge0\text{ and }x\in\R.
    \end{equation}

    Now we let $U_T\sim \Uniform([0,T])$ (independent of all other random variables). By Proposition~\ref{prop:uniform_upbd}, we have (since $\underline{f}\in \mathcal{Y}(\theta)$ by construction) that 
    \begin{equation}\label{eq:convergence}
      \Law(\underline{h}(\cdot,t)-\underline{h}(0,t))\xrightarrow[t\to\infty]{}\nu_\theta,\qquad \text{weakly in the $\mathcal{C}_{\mathrm{FP}}^2$ topology.}
    \end{equation}
We claim that 
\begin{equation}\label{eq:hdifftightcontradiction}
  \text{for all $\eps>0$, there exists $K<\infty$ such that $\sup\limits_{T\in (0,\infty)} \mathbb{P}(h_+(0,U_T)-h_-(0,U_T)>K)<\eps$.}
\end{equation}
We prove \eqref{eq:hdifftightcontradiction} by contradiction. Suppose that there exists some $\eps>0$ and a sequence $T_k\uparrow\infty$ such that
\[
  \mathbb{P}((h_+ - h_-)(0,U_{T_k})>k)\ge \eps\qquad\text{for each }k\in\mathbb{N}.
\]
Now by \eqref{eq:convergence}, there is an $M_0<\infty$ such that if $x<-M_0$, then there exist $A(x),C(x)\in (0,\infty)$ such that
\begin{equation*}
  \sup_{k\in\mathbb{N}} \mathbb{P}\bigl(\left|(h_+-h_-)(x,U_{T_k})-(h_+-h_-)(0,U_{T_k})\right|>A(x)\bigr)<\frac\eps4,
\end{equation*}
and
\[
  \sup_{k\ge C(x)} \mathbb{P}\left(h_+(x,U_{T_k})-h_+ (0,U_{T_k})>0\right)<\frac\eps4.
\]
Combining the last three displays, we see that for all $x\le -M_0$ and $k\ge C(x)$, we have $\mathbb{P}(E_{k,x})\ge \eps/2$, where $E_{k,x}$ is the event that the following three inequalities hold: 
\begin{align}
  (h_+-h_-)(0,U_{T_k})&\ge k,\label{eq:zerofar}\\
  (h_+-h_-)(x,U_{T_k}) &\ge k-A(x),\qquad\text{and}\label{eq:xbigger}\\
  h_+(x,U_{T_k})-h_+(0,U_{T_k})&\le 0.\label{eq:xsmaller}
\end{align}
If $k\ge A(x)$, then on the event $E_{k,x}$, we have by~\eqref{eq:zerofar} and~\eqref{eq:xbigger} that $V[\underline{h}(\cdot, U_{T_k})](y) = h_+(y,U_{T_k})$ for $y\in\{0,x\}$, and hence  
\[h_{\mathsf{V}}(x,U_{T_k}) -h_{\mathsf{V}}(0,U_{T_k}) = V[\underline{h}(\cdot, U_{T_k})](x)-V[\underline{h}(\cdot, U_{T_k})](0) = h_+(x,U_{T_k}) -h_+(0,U_{T_k})\le 0, \]
with the inequality following by \eqref{eq:xsmaller}.
Thus, we in fact have
\[
  \sup_{k\ge C(x)\vee A(x)}\mathbb{P}\left(h_{\mathsf{V}}(x,U_{T_k})-h_{\mathsf{V}}(0,U_{T_k})\le 0\right)\ge \frac\eps2.
\]
But the increments of $h_{\mathsf{V}}$ were assumed to be stationary in time, so the last probability does not depend on $k$, and indeed, we obtain the uniform statement
\[
  \inf_{x\le -M_0}\mathbb{P}\left(h_{\mathsf{V}}(x,U_{T_k})-h_{\mathsf{V}}(0,U_{T_k})\le 0\right)\ge \frac\eps2\qquad\text{for all $k$}.
\]
This contradicts the assumption that $f_{\mathsf{V}}\in \mathcal{V}_0(\theta)$ $\mu$-almost surely. Hence the proof of \eqref{eq:hdifftightcontradiction} is complete, and by using a symmetrical argument, we can actually conclude that
\[
  \sup_{T\in (0,\infty)}\mathbb{P}\left(|h_+(0,U_T)-h_-(0,U_T)| >K\right)<\eps,
\]
so the family $(h_+(0,U_T)-h_-(0,U_T))$ is in fact tight. Using this along with \eqref{eq:convergence}, we see that if we define the projection
\[\tilde\pi[(f_-,f_+)](x)\coloneqq \left(f_-(x)-\frac12(f_+(0)+f_-(0)),f_+(x)-\frac12(f_+(0)+f_-(0))\right),\]
then we in fact have
that $\tilde\pi[\underline{h}(\cdot,t)]$ 
is also tight in the topology of $\mathcal{C}^2_{\mathrm{FP}}$.  Therefore, there exists a sequence $T_k\uparrow\infty$ and a limiting measure $\psi$ on $\mathcal{C}^2_{\mathrm{FP}}$ such that
\begin{equation}\label{eq:conv-to-psi}
  \lim_{k\to\infty} \Law(\tilde\pi[\underline{h}(U_{T_k},\cdot)]) = \psi\qquad\text{weakly on }\mathcal{C}^2_{\mathrm{FP}}.
\end{equation}
Now, Proposition~\ref{prop:Markov_feller} means that the semigroup for the recentered process $\tilde\pi[\underline{h}]$ has the Feller property. 
Thus, we can apply the Krylov--Bogoliubov theorem \cite[Theorem~3.1.1]{Da-Prato-Zabczyk-1996} to conclude that $\psi$ is actually invariant for this semigroup. In particular, this means that if $\tilde{\underline{h}}(0,\cdot) = (\tilde{h}_-,\tilde{h}_+)(0,\cdot)\sim \psi$ and $\tilde{\underline{h}}$ evolves as the KPZ fixed point, then the law of $\tilde\pi[\tilde{\underline{h}}(\cdot,t)]$ does not depend on $t$. Since $\tilde{h}_+(0,t)-\tilde{h}_-(0,t)$ is also the difference of the two coordinates of $\tilde\pi[\tilde{h}(\cdot,t)](0)$, this in particular means that the family $(\tilde{h}_+(0,t)-\tilde{h}_-(0,t))_t$ 
is a family of identically distributed random variables and hence tight. On the other hand, by \eqref{eq:conv-to-psi} and the continuity of $\pi_0$, along with the fact that $\pi_0\circ \tilde \pi = \pi_0$, we see that $\pi_0[\tilde{\underline{h}}(\cdot,0)]\sim\nu_\theta$, and then 
Proposition~\ref{prop:hpm_dif} tells us that $(\tilde{h}_+(0,t)-\tilde{h}_-(0,t))_t$ is not tight, a contradiction. 
\end{proof}

\begin{proof}[Proof of Theorem~\ref{thm:KPZFP_characterize}]
The measures $\BM(2\theta,\sqrt 2)$ are each invariant for the KPZ fixed point \cite{KPZfixed}. Since these measures are mutually singular (indexed by the asymptotic drift condition), the classification follows immediately from Corollary~\ref{cor:almost_characterization} and Proposition~\ref{prop:no_V_shaped}.
\end{proof}

\section{Shock fluctuations} \label{sec:shocks}
\subsection{Fluctuations of the shocks from fluctuations of the difference of two solutions}
To study the fluctuations of the shocks in Theorem \ref{thm:shock_fluctuations}, we first find the fluctuations of the quantity $h_+(0,t) - h_-(0,t)$. The following is a generalized version of \cite[Lemma~5.1]{Dunlap-Sorensen-24}, and allows us to translate between fluctuations of $h_+(0,t) - h_-(0,t)$ and fluctuations of the shock.
\begin{lemma}
\label{lem:h_to_b}Fix $\theta>0$. Let $\{\mathcal{J}(x,t)\st t\ge0,x\in\R\}$
be a real-valued stochastic process such that the following hold.
\begin{enumerate} [label=\rm(\roman{*}), ref=\rm(\roman{*})]  \itemsep=3pt
\item \label{enu:cont}For each fixed $t\ge0$, with probability $1$, $x\mapsto\mathcal{J}(x,t)$
is continuous and nondecreasing.
\item \label{enu:drift}For each fixed $t\ge0$, we have the almost sure limits $\lim\limits_{|x|\to\infty}\frac{\mathcal{J}(x,t)}{x}=4\theta$.
In particular, $\lim\limits_{x\to\pm\infty}\mathcal{J}(x,t)=\pm\infty$.
\item \label{enu:dist}For some exponent $\alpha>0$, $t^{-\alpha}\mathcal{J}(0,t)$
converges in distribution to an almost surely finite random variable
$Y$.
\item \label{enu:to0}Given the exponent $\alpha$ from Assumption~\ref{enu:dist},
for each $t\ge0$ and $\ve\in(0,4\theta)$, the random variable $t^{-\alpha}M_{t,\ve,\theta}$
converges to $0$ in probability, where
\[
M_{t,\ve,\theta}\coloneqq\sup_{x\in\R}\left[\left|\mathcal{J}(x,t)-\mathcal{J}(0,t)-4\theta x\right|-\ve|x|\right].
\]
 Note that $M_{t,\ve,\theta}$ is almost surely finite by Assumption~\ref{enu:drift}.
\end{enumerate}
For $t > 0$, define
\be \label{eq:btpm_def}
b_t^- = \inf\{x \in \R: \mathcal J(x,t) = 0\},\quad\text{and}\quad b_t^+ = \sup\{x \in \R: \mathcal J(x,t) = 0\},
\ee
noting that $-\infty < b_t^- \le b_t^+ < \infty$ by Assumptions~\ref{enu:cont} and~\ref{enu:drift}.
Then, as $t\to\infty$, $t^{-\alpha}(b_{t}^+ - b_t^-)$ converges in probability to $0$, and $t^{-\alpha} b_t^+$ converges in distribution to $-\frac{Y}{4\theta}$.
\end{lemma}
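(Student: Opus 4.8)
The plan is to show that the shock location is essentially determined by the value $\mathcal{J}(0,t)$ divided by $-4\theta$, with errors controlled by the quantity $M_{t,\ve,\theta}$. The starting point is the elementary observation that, by Assumption~\ref{enu:drift}, for any $x$ we have the two-sided bound
\[
\mathcal{J}(0,t) + 4\theta x - M_{t,\ve,\theta} - \ve|x| \le \mathcal{J}(x,t) \le \mathcal{J}(0,t) + 4\theta x + M_{t,\ve,\theta} + \ve|x|.
\]
First I would use this, together with Assumption~\ref{enu:cont} (so that $\mathcal{J}(\cdot,t)$ actually vanishes on the interval $[b_t^-,b_t^+]$), to sandwich $b_t^\pm$. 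Specifically, plugging $x = b_t^+$ into the left inequality and $x = b_t^-$ into the right inequality and using $\mathcal{J}(b_t^\pm,t)=0$ gives, after rearranging,
\[
-\frac{\mathcal{J}(0,t) + M_{t,\ve,\theta}}{4\theta + \ve\operatorname{sgn}(b_t^+)} \le b_t^+, \qquad b_t^- \le -\frac{\mathcal{J}(0,t) - M_{t,\ve,\theta}}{4\theta - \ve\operatorname{sgn}(b_t^-)}
\]
(one must be a little careful with the sign of $x$ and hence which of $4\theta\pm\ve$ appears, but since $\ve < 4\theta$ the denominators are positive and bounded away from $0$, and one gets clean bounds of the form $|b_t^\pm + \mathcal{J}(0,t)/(4\theta)| \le C_\theta(M_{t,\ve,\theta} + \ve|\mathcal{J}(0,t)|)$ for an appropriate constant, valid once $\ve$ is small). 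The reverse inequalities, bounding $b_t^\pm$ from the other side, come from the fact that $\mathcal{J}(x,t) < 0$ for $x$ slightly less than $b_t^-$ and $\mathcal{J}(x,t) > 0$ for $x$ slightly greater than $b_t^+$, again combined with the displayed two-sided bound.

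Next I would divide by $t^\alpha$. By Assumption~\ref{enu:dist}, $t^{-\alpha}\mathcal{J}(0,t) \Rightarrow Y$, and by Assumption~\ref{enu:to0}, $t^{-\alpha}M_{t,\ve,\theta} \to 0$ in probability for each fixed $\ve \in (0,4\theta)$. The sandwich then reads
\[
\left| t^{-\alpha} b_t^\pm + \frac{t^{-\alpha}\mathcal{J}(0,t)}{4\theta} \right| \le C_\theta\!\left( t^{-\alpha} M_{t,\ve,\theta} + \ve\, t^{-\alpha}|\mathcal{J}(0,t)| \right).
\]
Since $t^{-\alpha}|\mathcal{J}(0,t)|$ is tight (it converges in distribution to $|Y|$), the right-hand side can be made small in probability by first taking $t\to\infty$ (killing the $M$ term) and then sending $\ve\downarrow 0$; this is a standard $\eps$-$\delta$ argument with the order of limits, which I would spell out via: for any $\delta>0$ and $\eta>0$, choose $\ve$ small so that $\Pp(C_\theta \ve |Y| > \delta/2) < \eta/2$ and $\limsup$ the probability that $C_\theta\ve t^{-\alpha}|\mathcal J(0,t)|$ exceeds $\delta/2$ using the convergence in distribution, then choose $t$ large so $\Pp(C_\theta t^{-\alpha}M_{t,\ve,\theta} > \delta/2) < \eta/2$. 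This shows $t^{-\alpha} b_t^\pm + t^{-\alpha}\mathcal{J}(0,t)/(4\theta) \to 0$ in probability. In particular the difference $t^{-\alpha}(b_t^+ - b_t^-)$, which is bounded by twice the right-hand side above, converges to $0$ in probability, and by Slutsky's theorem $t^{-\alpha} b_t^+ = -t^{-\alpha}\mathcal{J}(0,t)/(4\theta) + o_{\Pp}(1) \Rightarrow -Y/(4\theta)$.

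The main obstacle I anticipate is purely bookkeeping: getting the signs and the exact constant in the sandwich right, because the coefficient multiplying $|x|$ in the error is $\ve|x|$ but the leading term is $4\theta x$ (signed), so for $x>0$ one effectively divides by $4\theta+\ve$ or $4\theta-\ve$ depending on which inequality one uses, and similarly for $x<0$. The cleanest way to handle this is probably to not solve for $b_t^\pm$ exactly but instead argue as follows: set $c_t = -\mathcal{J}(0,t)/(4\theta)$; show that for $x = c_t + \rho$ with $\rho > 0$ large enough (of order $(M_{t,\ve,\theta} + \ve|c_t|)/(4\theta - \ve) + O(1)$) one has $\mathcal{J}(x,t) > 0$, hence $b_t^+ < c_t + \rho$; and symmetrically $\mathcal{J}(x,t)<0$ for $x = c_t - \rho$, hence $b_t^- > c_t - \rho$. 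This avoids inverting anything and directly yields $|b_t^\pm - c_t| \le \rho$, which is exactly the bound needed. Everything else is an application of the three convergence hypotheses and Slutsky's lemma.
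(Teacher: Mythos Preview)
Your proposal is correct and follows essentially the same approach as the paper: both arguments start from the two-sided bound $|\mathcal{J}(x,t)-\mathcal{J}(0,t)-4\theta x|\le M_{t,\ve,\theta}+\ve|x|$, evaluate at the zeros $b_t^\pm$, and conclude via tightness of $t^{-\alpha}\mathcal{J}(0,t)$, $t^{-\alpha}M_{t,\ve,\theta}\to 0$, and sending $\ve\downarrow 0$. The paper handles the sign bookkeeping by splitting into cases according to the sign of $\mathcal{J}(0,t)$ and deriving explicit inequalities with denominators $4\theta\pm\ve$, whereas your suggested cleanup (showing $\mathcal{J}(c_t\pm\rho,t)$ has the right sign for $c_t=-\mathcal{J}(0,t)/(4\theta)$) is a slightly cleaner way to reach the same sandwich.
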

\begin{proof}
Let $\ve\in(0,4\theta)$. By the definition of $M_{t,\ve,\theta}$,
we have 
\begin{align}
-M_{t,\ve,\theta}+(4\theta-\ve)x & \le\mathcal{J}(x,t)-\mathcal{J}(0,t)\le M_{t,\ve,\theta}+(4\theta+\ve)x,\qquad x\ge0;\label{eq:positivepart}\\
-M_{t,\ve,\theta}+(4\theta+\ve)x & \le\mathcal{J}(x,t)-\mathcal{J}(0,t)\le M_{t,\ve,\theta}+(4\theta-\ve)x,\qquad x\le0.\label{eq:negativepart}
\end{align}
We consider three cases. If $\mathcal{J}(0,t)< 0$, then since $x\mapsto\mathcal{J}(x,t)$
is nondecreasing, we have $b_{t}^+\ge b_t^- > 0$. By \eqref{eq:positivepart},
this implies that 
\[
-M_{t,\ve,\theta}+(4\theta-\ve)b_{t}^\pm\le-\mathcal{J}(0,t)\le M_{t,\ve,\theta}+(4\theta+\ve)b_{t}^\pm,
\]
and so 
\[
\frac{-M_{t,\ve,\theta}-\mathcal{J}(0,t)}{4\theta+\ve}\le b_t^- \le b_{t}^+\le\frac{M_{t,\ve,\theta}-\mathcal{J}(0,t)}{4\theta-\ve}.
\]
Similarly, if $\mathcal{J}(0,t)>0$, then $b_t^- \le b_t^+ < 0$, and
\[
\frac{-M_{t,\ve,\theta}-\mathcal{J}(0,t)}{4\theta-\ve}\le b_{t}^- \le b_t^+\le\frac{M_{t,\ve,\theta}-\mathcal{J}(0,t)}{4\theta+\ve}.
\]
Lastly, when $\mathcal J(0,t) = 0$, then $b_t^- \le 0 \le b_t^+$, and
\[
-\f{M_{t,\ve,\theta}}{4\theta - \ve} \le b_t^- \le b_t^+ \le \f{M_{t,\ve,\theta}}{4\theta - \ve}.
\]
Thus, in all cases, we have
\begin{equation}
\frac{-M_{t,\ve,\theta}}{4\theta+\ve}+\min\left\{ \frac{-\mathcal{J}(0,t)}{4\theta-\ve},\frac{-\mathcal{J}(0,t)}{4\theta+\ve}\right\} \le b_{t}^- \le b_t^+\le\frac{M_{t,\ve,\theta}}{4\theta-\ve}+\max\left\{ \frac{-\mathcal{J}(0,t)}{4\theta-\ve},\frac{-\mathcal{J}(0,t)}{4\theta+\ve}\right\} .\label{eq:btbd}
\end{equation}
Now, Assumption~\ref{enu:to0} states that $t^{-\alpha}M_{t,\ve,\theta}$
converges to $0$ in probability for each fixed $\ve$, and Assumption~\ref{enu:dist}
states that $t^{-\alpha}\mathcal{J}(0,t)$ converges in distribution
to $Y$. Using these assumptions in \eqref{eq:btbd}, we see that
the collections of random variables $(t^{-\alpha}b_{t}^{\pm})_{t\ge1}$ are tight, and if $\mathfrak b^{\pm}$ are subsequential limits, then for any $\ve > 0$, we have the stochastic ordering
\[
\min\left\{ \frac{-Y}{4\theta-\ve},\frac{-Y}{4\theta+\ve}\right\} \lesssim \mathfrak b^- \lesssim \mathfrak b^+ \lesssim \max\left\{ \frac{-Y}{4\theta-\ve},\frac{-Y}{4\theta+\ve}\right\}.
\]
Letting $\ve\downarrow0$, we see that  $t^{-\alpha} b_t^{\pm}$ both converge in distribution to $-\f{Y}{4\theta}$. Since 
$b_t^- \le b_t^+$, we must have that $t^{-\alpha}(b_t^+ - b_t^-)$ converges to $0$ in probability. 
\end{proof}

In Proposition~\ref{prop:hpm_dif}, we have shown the limiting fluctuations for $h_+(0,t) - h_-(0,t)$ when $(f_-,f_+) \sim \nu_\theta$. We now prove analogous results for the two remaining cases in Theorem~\ref{thm:shock_fluctuations}. We first prove a lemma.
    \begin{lemma} \label{lem:flat_rescale}
For Let $f_0 \in \mathcal C_{\mathrm FP}$ be independent of $\F_{\ge 0}$,  and for $\theta \in \R$, let $f_\theta(x) = f_0(x) + 2\theta x$. Let $h_\theta$ denote the KPZ fixed point started from initial condition $f_\theta$ at time $0$. Then, for $t > 0$,
\[
\bigl(h_\theta(xt^{2/3},t)\bigr)_{x \in \R} \deq \bigl(t^{1/3} h(x,1 \mid \wt f_0) + 2\theta xt^{2/3} + \theta^2 t\bigr)_{x \in \R},
\]
where 
\[
\wt f_0(x) = t^{-1/3}f_0(\theta t + xt^{2/3}).
\]
In particular, when $f_0 \equiv 0$, we have $h(\cdot,1 \mid \wt f_0) = h_0(\cdot,1)$.
\end{lemma}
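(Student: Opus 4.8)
The plan is to deduce the identity from the three elementary symmetries of the directed landscape recorded in Lemma~\ref{lm:landscape_symm} — spatial stationarity, skew stationarity, and $1{:}2{:}3$ rescaling — together with the independence of $f_0$ and $\mathcal{F}_{\ge 0}$ (hence of $f_0$ and $\Ll$). Since every distributional manipulation below is an identity for the random function $\Ll$ alone, applied with the $f_0$-argument held fixed, it suffices to prove the identity for an arbitrary \emph{deterministic} $f_0\in\CFP$; the general statement then follows by conditioning on $f_0$ and integrating against its law. (Note that $\widetilde f_0\in\CFP$, since the defining condition of $\CFP$ is stable under the affine reparametrization $x\mapsto\theta t+xt^{2/3}$ and multiplication by $t^{-1/3}$, so $h(\cdot,1\mid\widetilde f_0)$ is well-defined by Proposition~\ref{prop:h_pres_CFP}.)

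The key step — the one that produces the drift $2\theta xt^{2/3}$ and the constant $\theta^2 t$ in the statement — is to absorb the tilt $2\theta y$ in $f_\theta(y)=f_0(y)+2\theta y$ into the landscape via skew stationarity. Reversing the manipulation performed in the proof of Proposition~\ref{prop:hpm_dif} (there a right-endpoint shift was traded for an initial-data tilt; here we go the other way), skew stationarity gives, as an equality in distribution of random functions of $(y,x)$,
\[
\Ll(y,0;xt^{2/3},t)+2\theta y\;\deq\;\Ll(y,0;xt^{2/3}+\theta t,t)+2\theta xt^{2/3}+\theta^2 t.
\]
Applying the measurable map $F\mapsto\bigl(x\mapsto\sup_{y}[f_0(y)+F(y,x)]\bigr)$ to both sides, and using $h_\theta(xt^{2/3},t)=\sup_{y}[f_0(y)+2\theta y+\Ll(y,0;xt^{2/3},t)]$, we obtain
\[
\bigl(h_\theta(xt^{2/3},t)\bigr)_{x\in\R}\;\deq\;\Bigl(2\theta xt^{2/3}+\theta^2 t+\sup_{y}\bigl[f_0(y)+\Ll(y,0;xt^{2/3}+\theta t,t)\bigr]\Bigr)_{x\in\R}.
\]

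The remainder is bookkeeping with the other two symmetries. Spatial stationarity (shifting both spatial arguments of $\Ll$ by $-\theta t$) replaces $\Ll(y,0;xt^{2/3}+\theta t,t)$ by $\Ll(y-\theta t,0;xt^{2/3},t)$; after the substitution $z=y-\theta t$ the supremum becomes $\sup_{z}[f_0(z+\theta t)+\Ll(z,0;xt^{2/3},t)]$. Then the $1{:}2{:}3$ rescaling of $\Ll$ with parameter $q=t^{1/3}$ gives $\bigl(\Ll(z,0;xt^{2/3},t)\bigr)_{z,x}\deq\bigl(t^{1/3}\Ll(zt^{-2/3},0;x,1)\bigr)_{z,x}$; substituting $z=z't^{2/3}$ and pulling out the factor $t^{1/3}$, the supremum becomes
\[
t^{1/3}\sup_{z'}\bigl[t^{-1/3}f_0(\theta t+z't^{2/3})+\Ll(z',0;x,1)\bigr]=t^{1/3}\,h(x,1\mid\widetilde f_0)
\]
by the definition of $\widetilde f_0$. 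Chaining the three displayed identities proves the claim for deterministic $f_0$, and hence in general; the ``in particular'' statement is immediate, since $f_0\equiv0$ forces $\widetilde f_0\equiv0$, whence $h(\cdot,1\mid\widetilde f_0)=h(\cdot,1\mid0)=h_0(\cdot,1)$.

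The step I expect to require the most care is fixing the precise form of the skew-stationarity identity: the shear symmetry of $\Ll$ is most naturally phrased as a shift of spacetime endpoints accompanied by the parabolic correction inherited from the $-\tfrac{(x-y)^2}{t}$ asymptotic of $\Ll$, and one must carefully translate the initial-data tilt $2\theta y$ into such an endpoint shift with the correct sign and the correct quadratic-in-$\theta$ constant $\theta^2 t$. This is exactly the computation already carried out (for a single value of the endpoint) in the proof of Proposition~\ref{prop:hpm_dif}, which we follow. Everything else is a routine change of variables, with the independence of $f_0$ and $\Ll$ legitimizing the treatment of $f_0$ as a frozen parameter throughout.
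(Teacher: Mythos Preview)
Your proof is correct and follows essentially the same route as the paper: apply skew stationarity to trade the tilt $2\theta y$ for an endpoint shift (producing $2\theta xt^{2/3}+\theta^2 t$), then spatial stationarity to recenter, then the $1{:}2{:}3$ rescaling to reduce to time $1$ with initial data $\widetilde f_0$. Your version spells out a bit more justification (the reduction to deterministic $f_0$ via independence, and membership of $\widetilde f_0$ in $\CFP$), but the chain of distributional identities is identical to the paper's.
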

\begin{proof}
Below, we use the skew-stationarity, time stationarity, then rescaling from Lemma~\ref{lm:landscape_symm} to get the following distributional equalities, which each hold as processes in $x$:
\begin{align*}
    h_\theta(xt^{2/3},t) &= \sup_{y \in \R}[f_0(y) + 2\theta y + \Ll(y,0;xt^{2/3},t)] \\
    &\deq \sup_{y \in \R}[f_0(y) +\Ll(y,0;xt^{2/3} + \theta t,t)] + 2\theta xt^{2/3} + \theta^2 t \\
    &\deq \sup_{y \in \R}[f_0(y) + \Ll(y-\theta t,0;xt^{2/3},t)]+ 2\theta x t^{2/3} + \theta^2 t \\
    &= \sup_{z \in \R}[f_0(zt^{2/3} + \theta t) + \Ll(zt^{2/3},0;xt^{2/3},t)] + 2\theta xt^{2/3} + \theta^2 t \\
    &\deq t^{1/3} \sup_{z \in \R}[\wt f_0(z) + \Ll(z,0;x,1)] + 2\theta x + \theta^2 t \\
    &= t^{1/3} h_0(x,1 \mid \wt f_0) + 2\theta xt^{2/3} + \theta^2 t. \qedhere
\end{align*}
\end{proof}

\subsection{Fluctuations of the shock in the stationary case}

\begin{proposition} \label{prop:tilt_h=-_dif}
Let $(f_-,f_+) \sim \widehat \nu_\theta$, and let $h_\pm(\cdot,t) = h(\cdot,t \mid f)$. As $t\to \infty$, we have the following convergence in distribution
\[
\f{h_+(0,t) - h_-(0,t)}{t^{1/2}} \Longrightarrow \mathcal N(0,4\theta).
\]
\end{proposition}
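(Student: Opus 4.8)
\emph{Proposal.} The plan is to mirror the proof of Proposition~\ref{prop:hpm_dif}, replacing the joint invariance of $\nu_\theta$ by the shock-invariance of $\widehat\nu_\theta$ (Theorem~\ref{thm:stat_from_shock_general}) and making all estimates explicit via the $\BM\otimes\BES^3$ description \eqref{eq:Phidiamondfornuhat} of $\widehat\nu_\theta$. Write $\mathcal J(x,t)=h_+(x,t)-h_-(x,t)$; by Lemmas~\ref{lem:slope_conserved} and~\ref{KPZFP_attr}, $(h_-,h_+)(\cdot,t)\in\mathcal X(\theta)$, so $\mathcal J(\cdot,t)$ is continuous, nondecreasing, of asymptotic slope $4\theta$, and by Theorem~\ref{thm:KPZFP_invmeas_shock} it has a unique zero $b_t$. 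Decompose
\[
h_+(0,t)-h_-(0,t)=-A_t+B_t,\qquad A_t\coloneqq[h_+(-\theta t,t)-h_+(0,t)]-[h_-(\theta t,t)-h_-(0,t)],\qquad B_t\coloneqq h_+(-\theta t,t)-h_-(\theta t,t).
\]
Exactly as in the second half of the proof of Proposition~\ref{prop:hpm_dif}, skew-stationarity and the $q=t^{1/3}$ rescaling from Lemma~\ref{lm:landscape_symm} give $h_+(-\theta t,t)+\theta^2 t\deq t^{1/3}\sup_y\bigl[\mathcal L(y,0;0,1)+t^{-1/3}(f_+(t^{2/3}y)-2\theta t^{2/3}y)\bigr]$, with the analogous identity for $h_-(\theta t,t)+\theta^2 t$ using $f_-+2\theta\,\cdot$. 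Using \eqref{eq:Phidiamondfornuhat}, Proposition~\ref{prop:2M-X}, and the $\BES^3$ asymptotics in Appendix~\ref{sec:Bessel}, one checks that $f_+-2\theta\,\cdot$ and $f_-+2\theta\,\cdot$ have vanishing asymptotic slope at $\pm\infty$ and Brownian-type oscillations, so their rescalings are tight in $\CFP$; hence $h_+(-\theta t,t)+\theta^2 t$ and $h_-(\theta t,t)+\theta^2 t$ are both $O_{\Pp}(t^{1/3})$, so $t^{-1/2}B_t\to 0$ in probability.

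The main work is $A_t$. Writing $\underline h(x,t)=\pi_{\mathrm{Sh}}[\underline h(\cdot,t)](x-b_t)+\underline h(b_t,t)$ and setting $(\widehat g_-,\widehat g_+)\coloneqq\pi_{\mathrm{Sh}}[\underline h(\cdot,t)]$, which is distributed as $\widehat\nu_\theta$ by Theorem~\ref{thm:stat_from_shock_general}, one gets $A_t=[\widehat g_+(-\theta t-b_t)-\widehat g_+(-b_t)]-[\widehat g_-(\theta t-b_t)-\widehat g_-(-b_t)]$. Write $\widehat g_\pm(x)=\pm2\theta x+\eta_\pm(x)$, where by \eqref{eq:Phidiamondfornuhat} the drift-removed processes $\eta_\pm$ are explicit combinations of $\overline Z$, of a $\BES^3(2\theta)$-minus-drift process, of a running maximum of $\BM(2\theta)$, and of a future-infimum residual; by the Bessel facts in Appendix~\ref{sec:Bessel} these all have increments of size $O_{\Pp}(1+\ell^{1/2})$ over intervals of length $\ell$, uniformly in the base point. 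The key cancellation is that the drift part of $\widehat g_+(-\theta t-b_t)-\widehat g_+(-b_t)$ equals $2\theta\bigl((-\theta t-b_t)-(-b_t)\bigr)=-2\theta^2 t$, which is free of $b_t$ (and similarly for $\widehat g_-$); hence $A_t=[\widehat g_+(-\theta t)-\widehat g_-(\theta t)]+\mathcal E_t$, where $\widehat g_\pm(0)=0$, $\widehat g_+(-\theta t)-\widehat g_-(\theta t)$ has the law of $f_+(-\theta t)-f_-(\theta t)$ under $\widehat\nu_\theta$, and $\mathcal E_t$ is a difference of $\eta_\pm$-increments over intervals of length $|b_t|$, so $\mathcal E_t=O_{\Pp}(1+|b_t|^{1/2})$. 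A parallel estimate, using $(\widehat g_+-\widehat g_-)(s)-4\theta s=2\mathcal M\widetilde Z(s)-4\theta s=O_{\Pp}(1+|s|^{1/2})$ uniformly and the identity $\mathcal J(x,t)=(\widehat g_+-\widehat g_-)(x-b_t)$, bounds $M_{t,\ve,\theta}=\sup_x[\,|\mathcal J(x,t)-\mathcal J(0,t)-4\theta x|-\ve|x|\,]$ by $O_{\Pp}(1+|b_t|^{1/2}+\ve|b_t|)$. Since $f_+(-\theta t)-f_-(\theta t)=O_{\Pp}(t^{1/2})$ (from the last paragraph) and $B_t=O_{\Pp}(t^{1/3})$, we get $|\mathcal J(0,t)|=O_{\Pp}(t^{1/2}+|b_t|^{1/2})$; inserting these bounds into the deterministic sandwich \eqref{eq:btbd} from the proof of Lemma~\ref{lem:h_to_b} and choosing $\ve<2\theta$, a one-step bootstrap gives $b_t=O_{\Pp}(t^{1/2})$, hence $\mathcal E_t=O_{\Pp}(t^{1/4})=o_{\Pp}(t^{1/2})$.

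Finally one computes the limiting law of $t^{-1/2}\bigl(f_+(-\theta t)-f_-(\theta t)\bigr)$ under $\widehat\nu_\theta$. From \eqref{eq:Phidiamondfornuhat} and \eqref{eq:Ztildedef},
\[
f_+(-\theta t)-f_-(\theta t)=\bigl[\overline Z(-\theta t)-\overline Z(\theta t)\bigr]-\bigl(\widetilde Z_-(\theta t)-2\theta^2 t\bigr)+\bigl(\widetilde Z_+(\theta t)-2\theta^2 t\bigr)+2R(\theta t),
\]
where $R(s)=\widetilde Z_-(s)-\inf_{s'\ge s}\widetilde Z_-(s')$ is tight in $s$ by the $\BES^3$ asymptotics; the first three terms are independent (measurable functions of $\overline Z$, of $\widetilde Z_-$, and of $\widetilde Z_+$ respectively), and after dividing by $t^{1/2}$ they converge to $\mathcal N(0,2\theta)$, $\mathcal N(0,\theta)$, $\mathcal N(0,\theta)$ (using the expansion $\widetilde Z_-(s)-2\theta s=B_1(s)+O_{\Pp}(1)$ for a standard one-dimensional Brownian motion $B_1$), while $t^{-1/2}R(\theta t)\to0$; summing, $t^{-1/2}\bigl(f_+(-\theta t)-f_-(\theta t)\bigr)\Rightarrow\mathcal N(0,4\theta)$. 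Combining the three paragraphs, $t^{-1/2}(h_+(0,t)-h_-(0,t))=-t^{-1/2}(f_+(-\theta t)-f_-(\theta t))+o_{\Pp}(1)\Rightarrow\mathcal N(0,4\theta)$. The only essentially new obstacle compared to Proposition~\ref{prop:hpm_dif} is that $\widehat\nu_\theta$ is stationary only in the frame of the random shock $b_t$, so the increments at the deterministic points $0$ and $\pm\theta t$ are tied to $\widehat\nu_\theta$-data through a shift by $b_t$; controlling this shift is what forces the bootstrap above and the quantitative Bessel/Brownian increment estimates.
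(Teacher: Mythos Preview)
Your approach is genuinely different from the paper's, and considerably more elaborate. The paper does \emph{not} use the shock-invariance of $\widehat\nu_\theta$ at all. Instead it exploits a structural fact you have overlooked: under $\widehat\nu_\theta$, the processes $(f_+(x))_{x\ge 0}$ and $(f_-(-x))_{x\ge 0}$ are \emph{independent}, each distributed as $W+Y$ with $W\sim\BM$ and $Y\sim\BES^3(2\theta)$ (this follows from \eqref{eq:Phidiamondfornuhat} and Proposition~\ref{prop:2M-X}, since $f_+|_{[0,\infty)}$ depends only on $(\overline Z|_{[0,\infty)},\widetilde Z_+)$ while $f_-(-\cdot)|_{[0,\infty)}$ depends only on $(\overline Z|_{(-\infty,0]},\widetilde Z_-)$). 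The paper then sets $f_1=f_-+2\theta\,\cdot$ and $f_2=f_+-2\theta\,\cdot$ and proves two things directly: (i) $t^{-1/2}(f_2(\theta t)-f_1(-\theta t))\Rightarrow\mathcal N(0,4\theta)$ by the independence just noted plus Lemma~\ref{lem:Bess_lin_bd}; and (ii) $t^{-1/2}(h_+(0,t)-\theta^2 t-f_2(\theta t))\to 0$ (and symmetrically for $h_-$), via Lemma~\ref{lem:flat_rescale} and bounds on the rescaled initial condition $\widetilde f(t,y)=t^{-1/3}(f_2(t^{2/3}y+\theta t)-f_2(\theta t))$ that confine the maximizer. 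No recentering by $b_t$, no bootstrap.

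Your route can likely be made to work, but several steps are more delicate than your sketch suggests. The claim $h_+(-\theta t,t)+\theta^2 t=O_\Pp(t^{1/3})$ requires genuine tightness in $\CFP$ of the rescalings $z\mapsto t^{-1/3}(f_+(t^{2/3}z)-2\theta t^{2/3}z)$; the crude bound $|f_+(x)-2\theta x|\le C_\delta+\delta|x|$ only yields $O_\Pp(t^{1/3+\epsilon})$, so you need the sharper $O(|x|^{1/2+\epsilon})$ growth (which does hold, but needs to be said). More seriously, your bound $\mathcal E_t=O_\Pp(1+|b_t|^{1/2})$ involves increments of $\eta_\pm$ at base points $\pm\theta t$ over a random length $|b_t|$, where $b_t$ and $(\widehat g_-,\widehat g_+)$ are \emph{not} independent (they jointly determine $\underline h(\cdot,t)$). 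To salvage this you need an almost-sure uniform modulus of the form $|\eta_\pm(x)-\eta_\pm(y)|\le C(1+|x-y|^{1/2}\mathrm{polylog}(2+|x|+|y|))$, and then an a~priori bound $|b_t|=O_\Pp(t)$ to start the bootstrap. Both are obtainable, but they add nontrivial work that the paper's direct argument avoids entirely.
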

\begin{proof}
Recall from the discussion of $\widehat \nu_\theta$ below \eqref{eq:M0def} that $(f_-(-x))_{x \ge 0}$ and $(f_+(x))_{x \ge 0}$ are independent and equal in law to $(W(x) + Y(x))_{x \ge 0}$, where $W \sim \BM$ and $Y \sim \BES^3(2\theta)$ are independent. One can also readily see the almost sure limits
\be\label{eq:fpmlim}
\lim_{|x| \to \infty} \f{f_{\pm}(x)}{x} = \pm 2\theta.
\ee
Let $f_1,f_2$ be defined by 
\[
f_1(x) = f_-(x) + 2\theta x,\qquad\text{and}\qquad f_2(x) = f_+(x) - 2\theta x.
\]
It suffices to show that
\begin{equation} \label{eq:f2f1lim}
\lim_{t \to \infty} \f{f_2(\theta t) - f_1(-\theta t)}{t^{1/2}} \Longrightarrow \mathcal N(0,4\theta),
\end{equation}
and then show that 
\begin{equation} \label{h+lim}
\f{h_+(0,t) - t\theta^2 - f_2(\theta t)}{t^{1/2}} \to 0 \qquad\text{and}\qquad  \f{h_-(0,t) - t\theta^2 - f_1(-\theta t)}{t^{1/2}} \to 0 \qquad\text{in probability}.
\end{equation}
We start with \eqref{eq:f2f1lim}: We have seen that $f_1(-\theta t)$ and $f_2(\theta t)$ are independent and equal in law, so it suffices to show that 
\begin{equation} \label{eq:f2lim}
\f{f_2(\theta t)}{t^{1/2}} \Longrightarrow \mathcal N(0,2\theta).
\end{equation}
Since $\theta t > 0$, we may write
\[
f_2(\theta t) = f_+(\theta t) - 2\theta^2 t = W(\theta t) + Y(\theta t) - 2\theta^2 t, 
\]
where $W$ is a standard Brownian motion, and $Y$ is an independent $
\BES^{3}(2\theta)$ process. We see immediately that $t^{-1/2}W(\theta t) \sim \mathcal N(0,\theta)$. Furthermore, by Equation \eqref{eq:YtoNor} of Lemma~\ref{lem:Bess_lin_bd}, 
\begin{align*}
\f{Y(\theta t) - 2\theta^2 t}{t^{1/2}} \Longrightarrow \mathcal N(0,\theta).
\end{align*}
Thus, we have proved \eqref{eq:f2lim} and therefore also \eqref{eq:f2f1lim}. 

We turn to proving \eqref{h+lim}. We prove the first statement, and the proof of the second is symmetric.  By Lemma~\ref{lem:flat_rescale}, 
\begin{equation} \label{eq:hplus}
\begin{aligned}
h_+(0,t) - \theta^2t - f_2(\theta t) 
\deq t^{1/3} \sup_{y \in \R}[ \wt f(y,t) + \Ll(y,0;0,1)],
\end{aligned}
\end{equation}
where we define
\[
\wt f(y,t) := t^{-1/3}\bigl(f_2(t^{2/3} y + \theta t) - f_2(\theta t)\bigr).
\]
Dividing by $t^{1/2}$, we get 
\begin{equation} \label{eq:1/6fty}
\f{h_+(0,t) - t\theta^2 - f_2(\theta t)}{t^{1/2}} \deq t^{-1/6} \sup_{y \in \R}[\wt f(y,t) + \Ll(y,0;0,1)].
\end{equation}
Let $A_{1,t}'$ be the event where 
\[
h_+(0,t) = \sup_{y \ge 0}[f_2(y) + 2\theta y + \Ll(y,0;0,t)],
\]
and let $A_{1,t}$ be the event on which
\begin{equation} \label{eq:A1tp}
\sup_{y \in \R}[\wt f(y,t) + \Ll(y,0;0,1)] = \sup_{y \ge -\theta t^{1/3}}[\wt f(y,t) + \Ll(y,0;0,1) ].
\end{equation}
From the scaling in Lemma~\ref{lem:flat_rescale} that gives us \eqref{eq:hplus}, we see that $\Pp(A_{1,t}) = \Pp(A_{1,t}')$. By \eqref{eq:fpmlim} and Lemma~\ref{lem:unq}, $\Pp(A_{1,t}') \to 1$ as $t \to \infty$. For $C,\delta > 0$, 

\begin{equation} \label{eq:A2tp}
\text{Let $A_{t,2}(C,\delta)$ be the event on which, for all $y \ge -\theta t^{1/3}$},\quad \wt f(y,t) \le C + \delta |y|.
\end{equation}
Note first that, by setting $y = 0$ and noting that $\wt f(0,t) = 0$, 
\begin{equation} \label{eq:fL1}
\sup_{y \in \R}[ \wt f(y,t) + \Ll(y,0;0,1)] \ge \Ll(0,0;0,1).
\end{equation}
Furthermore, on the event $A_{1,t} \cap A_{2,t}(C,\delta)$, we see that
\begin{equation} \label{eq:fL2}
\begin{aligned}
  \sup_{y \in \R}\Bigl[ \wt f(y,t) + \Ll(y,0;0,1)\Bigr] &\le \sup_{y \ge - \theta t^{1/3}}\Bigl[C + \delta |y| + \Ll(y,0;0,1) \Bigr] \\
  &\le \sup_{y \in \R}\Bigl[C + \delta |y|  - y^2 + C'\log\bigl(2\sqrt{y^2 + 1} + 2\bigr) \log^{2/3}(\sqrt{y^2 + 1} + 2) \Bigr],
  \end{aligned}
\end{equation}
where $C'$ is the random constant from Lemma~\ref{lem:Landscape_global_bound}. Combining \eqref{eq:fL1} and \eqref{eq:fL2}, we see that there exists an almost surely finite random variable  $X(C,\delta)$ (whose law does not depend on $t$) such that, on the event $A_{1,t} \cap A_{2,t}(C,\delta)$, 
\begin{equation} \label{eq:fL3}
\Biggl|\sup_{y \in \R}\Bigl[ \wt f(y,t) + \Ll(y,0;0,1)\Bigr]\Biggr| \le X(C,\delta).
\end{equation}
Then, combining \eqref{eq:1/6fty} and \eqref{eq:fL3}, for $\ve > 0$,
\begin{align*}
\Pp\Biggl(\Biggl|\f{h_+(0,t) - t\theta^2 - f_2(\theta t)}{t^{1/2}}\Biggr| \ge \ve \Biggr) \le \Pp(A_{1,t}^c) + P(A_{2,t}(C,\delta)^c) + \Pp(X(C,\delta) \ge \ve t^{1/6}).
\end{align*}
We have seen already that $\Pp(A_{1,t}^c) \to 0$, and since the law of $X(C,\delta)$ does not depend on $t$, we have  $\Pp(X(C,\delta) \ge \ve t^{1/6}) \to 0$. Hence, we complete the proof by showing that, for every $\eta > 0$, we may choose $C > 0$ and $\delta > 0$ so that 
\begin{equation} \label{eq:PA2to1}
\limsup_{t \to \infty} \Pp(A_{2,t}(C,\delta)^c) \le \eta.
\end{equation} First recall that $f_2$ for $y \ge 0$ has the law of $W(y) + Y(y) - 2\theta y$, where $W$ is a standard Brownian motion, and $Y$ is an independent $\BES^{3}(2\theta)$ process. Then, as a process in $y$ for $y \ge -\theta t^{1/3}$,
\begin{align*}
\wt f(y,t) &= t^{-1/3}\bigl(f_2(t^{2/3} y + \theta t) - f_2(\theta t)\bigr) \\
&\deq  t^{-1/3} \Bigl( W(t^{2/3} y + \theta t) - W(\theta t) + Y(t^{2/3} y + \theta t) - Y(\theta t)  - 2\theta t^{2/3} y\Bigr).
\end{align*}
By scaling and shift invariance of Brownian motion, the law of $ t^{-1/3} \bigl( W(t^{2/3} y + \theta t) - W(\theta t)\bigr)$ is the same for all $t$. By sub-linearity Brownian motion and Equation \eqref{eq:Ybd} of Lemma~\ref{lem:Bess_lin_bd}, we may choose $\delta = 1$ and sufficiently large $C$ so that $\Pp(A_{2,t}(C,\delta))\ge 1-\eta$, completing the proof. 
\end{proof}

We now prove Condition~\ref{enu:to0} of Lemma~\ref{lem:h_to_b} for the conditions where $(f_-,f_+) \sim \nu_\theta$ or $\widehat \nu_\theta$. 

\begin{lemma} \label{lem:nnutheta_Mto0}
    Let $(f_-,f_+) \sim \nu_\theta$, and let $h_\pm$ be the KPZ fixed point started from these initial conditions. Then, for each $\ve > 0$, as $t \to \infty$,
    \[
    t^{-1/2}\sup_{x \in \R} \Bigl[|h_+(x,t) - h_+(0,t) - (h_-(x,t) -h_-(0,t)) - 4\theta x|-\ve|x|\Bigr]\to 0 \quad\text{ in probability}.
    \]
\end{lemma}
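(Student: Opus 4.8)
The plan is to observe that, because $\nu_\theta$ is jointly invariant for the KPZ fixed point, the random quantity whose scaled size we must control has a distribution that does not depend on $t$, so the statement reduces to showing this $t$-independent distribution is that of an almost surely finite random variable. Concretely, write $\mathcal J(x,t) = h_+(x,t) - h_+(0,t) - (h_-(x,t) - h_-(0,t))$, so $\mathcal J(0,t)=0$ and the quantity of interest is $M_{t,\ve,\theta} := \sup_{x\in\R}\bigl[|\mathcal J(x,t) - 4\theta x| - \ve|x|\bigr]$, a measurable functional of the random function $\mathcal J(\cdot,t)$. By Proposition~\ref{prop:nuthetainvariant}, $\pi_0[\underline h(\cdot,t)]\sim\nu_\theta$ for every deterministic $t\ge 0$; taking the difference of the two coordinates of $\pi_0[\underline h(\cdot,t)]$ gives $\mathcal J(\cdot,t)\deq f_+ - f_-$ as random elements of $\C_{\mathrm{FP}}$, for each fixed $t$. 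Hence $M_{t,\ve,\theta}$ has the same law as $M := \sup_{x\in\R}\bigl[|f_+(x)-f_-(x)-4\theta x| - \ve|x|\bigr]$ for all $t$.

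The second step is to show $M<\infty$ almost surely. By Definitions~\ref{def:SH} and~\ref{nutheta_def}, under $\nu_\theta$ the marginal laws are $f_-\sim\BM(-2\theta,\sqrt2)$ and $f_+\sim\BM(2\theta,\sqrt2)$, so the strong law of large numbers for Brownian motion yields $f_\pm(x)/x\to\pm2\theta$ almost surely as $|x|\to\infty$, and therefore $\phi(x):=f_+(x)-f_-(x)$ satisfies $\phi(x)/x\to4\theta$ as $|x|\to\infty$. (Alternatively, this can be read off directly from the representation $\phi=2\mathcal M\widetilde W - 2\mathcal M\widetilde W(0)$ in the proof of Proposition~\ref{prop:nuthetainvariant}, together with $\widetilde W\sim\BM(2\theta)$.) Consequently, for almost every realization there is a random $R<\infty$ with $|\phi(x)-4\theta x|\le\tfrac\ve2|x|$ whenever $|x|\ge R$, so the supremum defining $M$ is attained on $[-R,R]$ and is finite by continuity of $\phi$.

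Finally, since $M_{t,\ve,\theta}\deq M$ for all $t$ with $M$ almost surely finite, for every $\delta>0$ we have $\Pp\bigl(t^{-1/2}M_{t,\ve,\theta}>\delta\bigr)=\Pp(M>\delta\sqrt t)\to0$ as $t\to\infty$, which is exactly the asserted convergence in probability. I do not expect a genuine obstacle in this lemma: the content is entirely soft, resting on the joint invariance of $\nu_\theta$ (which freezes the law of $M_{t,\ve,\theta}$ in $t$) and on the deterministic asymptotic slopes of $\nu_\theta$ at $\pm\infty$, with the $t^{-1/2}$ scaling playing no role beyond killing a tight family. By contrast, the companion estimate for $\widehat\nu_\theta$ is less immediate, since $\widehat\nu_\theta$ is preserved only under the shock-frame recentering $\pi_{\mathrm{Sh};\pm}$ and not under $\pi_0$; there one must instead combine Theorem~\ref{thm:stat_from_shock_general} with the linear bounds on the Bessel process from Lemma~\ref{lem:Bess_lin_bd}.
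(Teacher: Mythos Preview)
Your proof is correct and follows essentially the same approach as the paper: use invariance of $\nu_\theta$ to see that the law of $M_{t,\ve,\theta}$ does not depend on $t$, then show it is almost surely finite via the asymptotic slopes, so the $t^{-1/2}$ factor kills it. The only cosmetic difference is that the paper first splits via the triangle inequality and uses the marginal invariance of each $h_\pm(\cdot,t)-h_\pm(0,t)\sim\BM(\pm 2\theta,\sqrt 2)$ separately, whereas you work directly with the difference $h_+-h_-$ using the joint invariance of $\nu_\theta$; both routes yield the result immediately.
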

\begin{proof}
By the triangle inequality, it suffices to show that 
\begin{align*}
&t^{-1/2}\sup_{x \in \R} \Bigl[|h_+(x,t) - h_+(0,t) - 2\theta x|-\f{\ve}{2}|x|\Bigr]\to 0,\quad\text{and}\\
&t^{-1/2}\sup_{x \in \R} \Bigl[|h_-(x,t) - h_-(0,t) + 2\theta x|-\f{\ve}{2}|x|\Bigr]\to 0, \quad\text{in probability}
\end{align*}
We prove the first limit, the second following a symmetric proof. By invariance of Brownian motion with arbitrary drift and diffusion coefficient $\sqrt 2$ under the KPZ fixed point, we have that 
\begin{align*}
&\quad \,t^{-1/2}\sup_{x \in \R} \Bigl[|h_+(x,t) - h_+(0,t) - 2\theta x|-\f{\ve}{2}|x|\Bigr] \deq t^{-1/2}\sup_{x \in \R} \Bigl[|B(x)|-\f{\ve}{2}|x|\Bigr],
\end{align*}
where $B \sim \BM(0,\sqrt 2)$. The quantity $\sup_{x \in \R} \Bigl[|B(x)|-\f{\ve}{2}|x|\Bigr]$ is almost surely finite since $B(x) = o(|x|)$ as $|x| \to \infty$, and this completes the proof. 
\end{proof}

\begin{lemma} \label{hatnutheta_Mto0}
     Let $(f_-,f_+) \sim \widehat \nu_\theta$, and let $h_\pm$ be the KPZ fixed point started from these initial conditions. Then, for each $\ve > 0$, as $t \to \infty$,
    \[
    t^{-1/2}\sup_{x \in \R} \Bigl[|h_+(x,t) - h_+(0,t) - (h_-(x,t) -h_-(0,t)) - 4\theta x|-\ve|x|\Bigr]\to 0 \quad\text{ in probability}.
    \]
\end{lemma}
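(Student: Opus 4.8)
The plan is to reduce Lemma~\ref{hatnutheta_Mto0} to the already-established Lemma~\ref{lem:nnutheta_Mto0} by exploiting the fact that $\widehat\nu_\theta$ and $\nu_\theta$ differ only in a ``small'' neighborhood of the shock, together with the fact that the quantity to be controlled only involves the \emph{difference} $h_+ - h_-$ of the two coupled solutions, which is robust to local modifications of the initial data. More concretely, recall from Proposition~\ref{prop:nuthetainvariant} and the construction of $\widehat\nu_\theta$ in \eqref{eq:nuthetahatdef} that if $(f_-,f_+)\sim\widehat\nu_\theta$ then $(f_-,f_+) = \Phi^\diamond[\overline Z,\widetilde Z]$, whereas if $(g_-,g_+)\sim\nu_\theta$ then $(g_-,g_+) = \Phi^\diamond[\overline W,\widetilde W]$ with $\widetilde W\sim\BM(2\theta)$ in place of the ``conditioned'' process $\widetilde Z$. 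The key structural input, exactly as in the proof of Lemma~\ref{lem:convergetonuhat}, is that under the strong Markov property at the first passage time $\tau_{\xi_L}$ of $\widetilde W$ to a suitable high level $\xi_L$ (with $\zeta_L\sim\Uniform([0,L])$), the shifted tuple $\pi_{\mathrm{Sh};-}[\underline g + (\zeta_L,0)]$ agrees with a sample from $\widehat\nu_\theta$ on $[-K,K]$ with probability tending to $1$ as $L\to\infty$, for each fixed $K$.

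First I would set up the same coupling as in the proof of Lemma~\ref{lem:convergetonuhat}: write $\underline g = (g_-,g_+) = \Phi^\diamond[\overline Q,\widetilde Q]\sim\nu_\theta$, pick $\zeta_L\sim\Uniform([0,L])$, define $\xi_L$ by \eqref{eq:xichoice}, $\tau_{\xi_L}$ by \eqref{eq:tauxidef}, and build $\widetilde Z_{\xi_L}$ as in \eqref{Ztilde_def}, so that $\Phi^\diamond[\overline Q_{\xi_L},\widetilde Z_{\xi_L}]\sim\widehat\nu_\theta$ and, on the events $E_x^L$ of \eqref{eq:condition}, $\pi_{\mathrm{Sh};-}[\underline g + (\zeta_L,0)]$ coincides with $\Phi^\diamond[\overline Q_{\xi_L},\widetilde Z_{\xi_L}]$ on $[-K,K]$. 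Next, let $(h_-,h_+)$ be the KPZ fixed point started from $\underline g + (\zeta_L,0)$ in an environment $\Ll$ independent of $\underline g$; by Lemma~\ref{lem:KPZFP_max}-type reasoning and the spatial stationarity of $\Ll$ (Lemma~\ref{lm:landscape_symm}), the law of $x\mapsto (h_+(x,t)-h_+(0,t)) - (h_-(x,t)-h_-(0,t))$, where the solutions are started from $\pi_{\mathrm{Sh};-}[\underline g + (\zeta_L,0)]$, is the same as when started from $\underline g + (\zeta_L,0)$ itself up to a deterministic spatial shift by $\mathfrak b_-$, which does not change the supremum in the statement because the inner expression is evaluated after recentering at $x=0$. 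Then I would invoke Lemma~\ref{lem:nnutheta_Mto0}: the quantity
\[
M_t(\underline g + (\zeta_L,0)) := \sup_{x\in\R}\Bigl[\,\bigl|h_+(x,t)-h_+(0,t)-(h_-(x,t)-h_-(0,t)) - 4\theta x\bigr| - \ve|x|\,\Bigr]
\]
satisfies $t^{-1/2}M_t\to 0$ in probability when the initial data is $\nu_\theta$ (the extra $(\zeta_L,0)$ shift is irrelevant since $M_t$ only sees $h_+ - h_-$ and a global shift of $f_+$ by $\zeta_L$ changes $h_+$ by $\zeta_L$, hence cancels in the difference after recentering). This passes through because adding a constant to $f_+$ adds the same constant to $h_+(x,t)$ for every $x$.

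The final step, and the main obstacle, is to transfer the convergence from $\nu_\theta$-started data to $\widehat\nu_\theta$-started data despite the absence of absolute continuity between the two measures. The mechanism is: for fixed $K$, with probability $\to 1$ as $L\to\infty$ the recentered initial conditions agree on $[-K,K]$, but the supremum defining $M_t$ ranges over all of $\R$, so I cannot simply restrict to a compact window. To close this gap I would use the finite-speed-of-influence / global growth bounds on the directed landscape (Lemma~\ref{lem:Landscape_global_bound}) together with the linear growth bounds on the two initial conditions — namely, on the event $A_C^L$ of \eqref{ACL} one has uniform linear upper bounds on both $f_-$ and $f_+$ (and by the construction, matching lower bounds coming from the $\BES^3(2\theta)$ and $\BM(2\theta)$ asymptotic slopes) — to argue that, outside a large window $[-R_t,R_t]$ with $R_t = o(t^{1/2})$ chosen appropriately, the contribution to $M_t$ is negligible with high probability, uniformly for both initial laws. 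This is precisely the kind of tail estimate already used in the ``upgrade to $\CFP$'' portion of the proof of Lemma~\ref{lem:convergetonuhat} (see \eqref{ACL}--\eqref{eq:ACLto1}) and in the proof of Proposition~\ref{prop:tilt_h=-_dif} via \eqref{eq:fL2}--\eqref{eq:fL3}. Combining the compact-window agreement (probability $\to 1$), the a priori tail bound (negligible contribution outside $[-R_t,R_t]$), and Lemma~\ref{lem:nnutheta_Mto0} (which controls $t^{-1/2}M_t$ on the compact window for $\nu_\theta$ data, hence also for $\widehat\nu_\theta$ data that agrees there), a standard $\eps/\delta$ argument yields $t^{-1/2}M_t\to 0$ in probability for $(f_-,f_+)\sim\widehat\nu_\theta$, which is the claim. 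Alternatively — and perhaps more cleanly — one can bypass the transfer entirely by observing that $h_+(x,t)-h_+(0,t)-2\theta x$ started from $f_+$ with $f_+ - 2\theta x \sim \BES^3(2\theta)$-like data still has the property that, after the KPZ evolution, it is distributed as $\BM(0,\sqrt 2)$ plus a lower-order correction via Proposition~\ref{prop:uniform_upbd} and the attractiveness in Proposition~\ref{prop:invariance_of_SH} (since the asymptotic slope of $f_\pm$ is $\pm 2\theta$, condition~\eqref{eqn:drift_assumptions} holds); running the same argument as in Lemma~\ref{lem:nnutheta_Mto0} then gives the bound directly, with the sublinearity of two-sided Brownian motion doing the work. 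I would present this second route as the main proof, falling back on the coupling argument only if the attractiveness input does not apply cleanly to the $\widehat\nu_\theta$ marginals.
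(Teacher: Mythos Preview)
Your proposal sketches two routes, but both have real gaps, and neither matches the paper's argument.

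\textbf{On the coupling route.} The coupling from Lemma~\ref{lem:convergetonuhat} matches $\pi_{\mathrm{Sh};-}[\underline g+(\zeta_L,0)]$ with a $\widehat\nu_\theta$ sample on a \emph{fixed} compact window $[-K,K]$, with probability $\to 1$ as $L\to\infty$. But the functional $M_t$ involves $h_\pm(x,t)$, and for any $x$ these values depend on the initial data at geodesic starting points, which are at distance of order $\theta t$ from the origin (Lemma~\ref{lem:unq}). So agreement of initial data on a fixed compact does not translate into agreement of $h_\pm(\cdot,t)-h_\pm(0,t)$ for large $t$. You acknowledge this and propose to cut at $R_t=o(t^{1/2})$, but that does not help: even for $x\in[-R_t,R_t]$ the relevant initial-data window is still of size $O(t)$, far larger than any fixed $K$. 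To salvage the coupling idea you would need to let $L=L(t)\to\infty$ so that $\tau_{\xi_L}\gg t$ with high probability, and then localize the KPZ maximizers; none of this is indicated.

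\textbf{On the attractiveness route.} The proof of Lemma~\ref{lem:nnutheta_Mto0} works because, under $\nu_\theta$, the law of $h_+(\cdot,t)-h_+(0,t)$ is \emph{exactly} $\BM(2\theta,\sqrt 2)$ for every $t$; the supremum $\sup_x[|B(x)|-\tfrac{\ve}{2}|x|]$ then has a fixed, $t$-independent law, and the prefactor $t^{-1/2}$ kills it. Under $\widehat\nu_\theta$ the marginals are not invariant, so this argument does not run. Propositions~\ref{prop:invariance_of_SH} and~\ref{prop:uniform_upbd} only give uniform-on-compacts convergence plus a linear envelope with random constants $A,B$; they say nothing about the \emph{uniformity in $t$} of the whole supremum over $\R$, which is exactly what you need.

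\textbf{What the paper does instead.} The paper treats $h_+$ and $h_-$ separately and, for each, rescales the spatial variable by $t^{2/3}$ and applies Lemma~\ref{lem:flat_rescale} (shear plus rescaling invariance of $\Ll$). This converts the supremum into
\[
\sup_{x\in\R}\Bigl[t^{-1/6}\bigl|h(x,1\mid \wt f^t)-h(0,1\mid \wt f^t)\bigr|-\tfrac{\ve}{2}|x|\,t^{1/6}\Bigr],
\]
where $\wt f^t(y)=t^{-1/3}\bigl(f_+(\theta t+yt^{2/3})-f_+(\theta t)-2\theta yt^{2/3}\bigr)$. One then shows, via Brownian scaling and the Bessel bound Lemma~\ref{lem:Bess_lin_bd}, that $|\wt f^t(y)|\le C+|y|$ with probability $\ge 1-\eta$ uniformly in $t$. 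Lemma~\ref{lem:KPZ_linear_preserve} propagates this to a linear bound on $h(\cdot,1\mid\wt f^t)$, again uniform in $t$. On that high-probability event the displayed supremum is at most $\sup_x[t^{-1/6}(2A+B|x|)-\tfrac{\ve}{2}|x|t^{1/6}]\to 0$. The point is that the rescaling turns the time-$t$ problem into a time-$1$ problem with tightly controlled initial data, sidestepping entirely the lack of exact invariance of $\widehat\nu_\theta$.
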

\begin{proof}
    As in the proof of Lemma~\ref{lem:nnutheta_Mto0}, we prove that 
    \be \label{eq:goal1}
    t^{-1/2} \sup_{x \in \R}\Bigl[ |h_+(x,t) - h_+(0,t) - 2\theta x| - \f{\ve}{2} |x| \Bigr] \to 0,\quad\text{in probability},
    \ee
    and the analogous statement for $h_-$ is proved similarly. Define the function $f_0$ by $f_0(x) = f_+(x) - 2\theta x$. By a change of variable, followed by an application of Lemma~\ref{lem:flat_rescale}, we have 
    \be \label{eq:ineq123}
    \begin{aligned}
        &\quad \, t^{-1/2} \sup_{x \in \R}\Bigl[ |h_+(x,t) - h_+(0,t) - 2\theta x| - \f{\ve}{2} |x| \Bigr] \\
        &=  \sup_{x \in \R}\Bigl[ t^{-1/2}|h_+(xt^{2/3},t) - h_+(0,t) - 2\theta xt^{2/3}| - \f{\ve}{2} |x| t^{1/6} \Bigr] \\
        &\deq \sup_{x \in \R}\Bigl[t^{-1/6}|h(x,1\mid \wt f^t) - h(0,1 \mid \wt f^t)| - \f{\ve}{2} |x| t^{1/6}  \Bigr] \\
        &\le \sup_{x \in \R}\Bigl[t^{-1/6}|h(x,1\mid \wt f^t)| +t^{-1/6} |h(0,1 \mid \wt f^t)| - \f{\ve}{2} |x| t^{1/6}  \Bigr],
        \end{aligned}
    \ee
    where 
    \[
    \wt f^t(y) := t^{-1/3}\bigl(f_0(yt^{2/3} + \theta t) - f_0(\theta t)\bigr) = t^{-1/3}\bigl(f_+(\theta t + yt^{2/3}) - f_+(\theta t) - 2\theta yt^{2/3}\bigr).
    \] 
    Note that, after the direct application of Lemma~\ref{lem:flat_rescale}, we have added and subtracted the term $t^{-1/3} f_0(\theta t)$. Then, from the description of the measure $\widehat \nu_\theta$, we have that, as a process in $y$, 
    \[
    \wt f^t(y) \deq t^{-1/3}\Bigl(W(\theta t + yt^{2/3}) - W(\theta t) + Y(\theta t + yt^{2/3}) - Y(\theta t) - 2\theta yt^{2/3} \Bigr),
    \]
    where $W \sim \BM$, and $Y = (Y(x))_{x \ge 0}$ is and independent process that is a $\BES^3(2\theta)$ process for $x \ge 0$ and a $\BM(2\theta)$ process for $x < 0$. Then, by Brownian rescaling and Lemma~\ref{lem:Bess_lin_bd},  for any $\eta > 0$, we may choose $C > 0$ large enough so that $\Pp(|\wt f^t(y)| \le C + |y|,\;\;\forall\, y \in \R) \ge 1 - \eta$. Then, by Lemma~\ref{lem:KPZ_linear_preserve}, we may choose $A,B > 0$ such that 
    \[
    \Pp\Bigl(|h(x,1 \mid \wt f^t)| \le A + B|x|,\;\forall\, x \in \R) \ge 1 - 2\eta.
    \]
    Then, with probability at least $1 - 2\eta$, we can bound the last term in \eqref{eq:ineq123} as follows:
    \[
    \sup_{x \in \R}\Bigl[t^{-1/6}|h(x,1\mid \wt f^t)| +t^{-1/6} |h(0,1 \mid \wt f^t)| - \f{\ve}{2} |x| t^{1/6}  \Bigr] \le \sup_{x \in \R}\Bigl[ t^{-1/6}(2A + B|x|) - \f{\ve}{2}|x|t^{1/6}\Bigr] \overset{t \to \infty}{\longrightarrow} 0.
    \]
    Since \eqref{eq:goal1} is nonnegative, this completes the proof. 
\end{proof}

\subsection{Fluctuations of the shock in the flat case}

 \begin{proposition} \label{prop:flat_h_dif}
     Let $h_\pm(x,t)$ be the KPZ fixed point started from initial data $\pm 2\theta x$. Then, as $t \to \infty$,
     \[
     \f{h_+(0,t) - h_-(0,t)}{t^{1/3}} \Longrightarrow \f{X_1 - X_2}{2^{2/3}},
     \] 
     where $X_1$ and $X_2$ are independent Tracy-Widom GOE random variables. 
 \end{proposition}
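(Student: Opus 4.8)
The plan is to first pin down the marginal laws and then establish asymptotic \emph{independence} of $h_+(0,t)$ and $h_-(0,t)$ by isolating the small, shared portion of the directed landscape near the common endpoint $(0,t)$ and decoupling the rest via a Poisson last-passage percolation approximation. Write $h_\pm^{\mathrm{resc}}(t) := t^{-1/3}(h_\pm(0,t)-\theta^2 t)$. I would begin by applying Lemma~\ref{lem:flat_rescale} with $f_0\equiv 0$ and $\theta$ replaced by $\pm\theta$, which gives, exactly and for every $t>0$, the identity in law $h_\pm^{\mathrm{resc}}(t)\deq \sup_{y\in\R}\Ll(y,0;0,1)=h_0(0,1)$. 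Using the reflection symmetry of $\Ll$ from Lemma~\ref{lm:landscape_symm}, $\sup_y\Ll(y,0;0,1)\deq\sup_y[\mathcal A_2(y)-y^2]$, which is the one-point marginal of the KPZ fixed point started from flat data and equals $2^{-2/3}X$ with $X$ Tracy--Widom GOE (see \cite{KPZfixed}). Hence it suffices to prove the joint convergence $(h_+^{\mathrm{resc}}(t),h_-^{\mathrm{resc}}(t))\Longrightarrow(2^{-2/3}X_1,2^{-2/3}X_2)$ with $X_1,X_2$ independent, since the difference of the two coordinates then converges to $2^{-2/3}(X_1-X_2)=(X_1-X_2)/2^{2/3}$.

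Next I would introduce a slowly growing cutoff $T=T(t)$, say $T=(\log t)^2$, and use the metric composition property at time $t-T$ to write $h_\pm(0,t)=\sup_{z}[\psi_\pm(z)+\Ll(z,t-T;0,t)]$ with $\psi_\pm(z):=\sup_y[\pm2\theta y+\Ll(y,0;z,t-T)]$. Using the global parabolic bound of Lemma~\ref{lem:Landscape_global_bound} together with transversal-fluctuation estimates for directed-landscape geodesics, one shows that with probability tending to $1$: the maximizing $y$ and $z$ lie in windows $I_\pm$ around $\pm\theta t$ (width $O(t^{2/3}\sqrt{\log t})$) and $J_\pm$ around $\pm\theta T$ (polylogarithmic width), respectively; and every geodesic from a point of $I_\pm\times\{0\}$ to a point of $J_\pm\times\{t-T\}$ stays in the half-plane $\{\pm x>\theta T/2\}$ (the choice $T=(\log t)^2$ is made exactly so that $\theta T$ beats the transversal fluctuation of the geodesic bundle at time $t-T$). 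On this event $\psi_+|_{J_+}$ and $\psi_-|_{J_-}$ are determined by $\Ll$ restricted to two \emph{disjoint} spacetime half-strips, one in $\{x>0\}$ and one in $\{x<0\}$. The two heights still share the top layer $z\mapsto\Ll(z,t-T;0,t)$; to handle it, write this layer as $-z^2/T+\Phi_T(z)$ where $\Phi_T$ is (after rescaling) a stationary parabolic-Airy process of typical size $O(T^{1/3})$, so that $t^{-1/3}\sup_{z\in J_\pm}|\Phi_T(z)|\to0$ in probability. Replacing the layer by its deterministic part $-z^2/T$ changes $\sup_z[\psi_\pm(z)+\Ll(z,t-T;0,t)]$ by at most $2\sup_z|\Phi_T(z)|$ over the polylogarithmic range of near-optimal $z$; hence $h_\pm^{\mathrm{resc}}(t)=G^t_\pm(\psi_\pm)+o_{\mathbb P}(1)$ for measurable maps $G^t_\pm$ depending on $\psi_\pm$ and deterministic data only.

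To finish, for each fixed $t$ I would approximate $\Ll$ by a rescaled Poisson LPP field $\Ll^{(n)}$ converging in distribution (with the relevant suprema) to $\Ll$, by \cite{Dauvergne-Virag-21}, and let $\psi_\pm^{(n)},h_\pm^{(n),\mathrm{resc}}$ be the corresponding quantities, so $h_\pm^{(n),\mathrm{resc}}\Rightarrow h_\pm^{\mathrm{resc}}(t)$ as $n\to\infty$. In Poisson LPP each last-passage value is a function of the Poisson points in the rectangle spanned by its endpoints, and Poisson points in disjoint regions are independent; so if the geodesics realizing $\psi_+^{(n)}|_{J_+}$ (resp. $\psi_-^{(n)}|_{J_-}$) stay in the half-plane image of $\{x>0\}$ (resp. $\{x<0\}$), then these restricted profiles coincide with the versions built from Poisson points in that half-plane only, which are genuinely independent. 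The localization estimates of the previous paragraph hold uniformly in $n$ (using uniform-in-$n$ parabolic tail bounds for $\Ll^{(n)}$), so this event has probability $\ge1-\varepsilon_t$ with $\varepsilon_t\to0$, uniformly in $n$. Letting $n\to\infty$ and combining with the negligibility of the shared layer, $(h_+^{\mathrm{resc}}(t),h_-^{\mathrm{resc}}(t))$ is, up to an error vanishing as $t\to\infty$, a product measure; since its marginals are exactly $2^{-2/3}X$ for every $t$, it converges to $(2^{-2/3}X_1,2^{-2/3}X_2)$ with $X_1,X_2$ independent. This mirrors the decoupling in \cite[Proposition~2.6]{Dauvergne-2024}.

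The hard part will be the last step together with its interface with the localization: making the geodesic-confinement estimates quantitative and uniform in the Poisson-LPP parameter $n$, and correctly translating ``the geodesic bundle stays in $\{x>0\}$, with the shared layer peeled off'' into an honest disjointness statement for the underlying Poisson configurations, so that \emph{exact} independence is available at the discrete level before passing to the limit. Controlling the shared top layer (which is what forces the growing cutoff $T$) is the second main technical point.
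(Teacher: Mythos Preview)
Your strategy is sound and, like the paper, hinges on a Poisson LPP decoupling in the spirit of \cite[Proposition~2.6]{Dauvergne-2024}; the marginal identification $h_\pm^{\mathrm{resc}}(t)\deq h_0(0,1)=2^{-2/3}X$ via Lemma~\ref{lem:flat_rescale} is exactly what the paper does as well. The genuine difference is \emph{how} you separate the two geodesic bundles. You peel off a thin temporal layer of width $T=(\log t)^2$ near the common terminal point $(0,t)$, argue that its contribution is $O(T^{1/3})=o(t^{1/3})$, and then decouple the remaining profiles $\psi_\pm$ whose relevant geodesics live in opposite half-planes. The paper instead performs a \emph{spatial} shift: it compares $h_\pm(0,t)$ with $h_\pm(\pm t^{1/2},t)$ (the difference is $2\theta t^{1/2}+o(t^{1/3})$ by the same rescaling lemma), so that the geodesics now run from near $\pm\theta t$ to $\pm t^{1/2}$ and stay on one side of the diagonal for their entire length, with no shared layer to excise. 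On the coupling side, the paper builds once and for all three coupled landscapes $(\mathcal L,\mathcal L_-,\mathcal L_+)$ as a subsequential limit of Poisson LPP driven by $(X,X_-,X_+)$ with $X$ using $X_-$ left of the diagonal and $X_+$ right of it; then $\mathcal L_\pm$ are genuinely independent, and on the geodesic-confinement event the $\mathcal L$-quantities agree with the $\mathcal L_\pm$-quantities. This sidesteps your ``for each fixed $t$, approximate and pass $n\to\infty$'' scheme and the attendant need for estimates uniform in $n$.

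What each approach buys: the paper's spatial shift is cleaner---one comparison via Lemma~\ref{lem:flat_rescale} replaces your two-step ``peel and bound the layer'' argument, and the three-landscape coupling makes the independence literal rather than asymptotic-through-a-prelimit. Your route is conceptually natural (isolate the shared randomness and show it is small), and would work, but you correctly flag the hard part: turning ``$\psi_\pm^{(n)}$ agree with half-plane versions on a high-probability event'' into an honest product-measure statement after $n\to\infty$ is exactly the step the paper's explicit coupling handles for free.
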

\begin{proof}
  We will first create a coupling of three copies of the directed landscape, which we call $\Ll,\Ll_-,\Ll_+$. We start with approximation via Poisson last-passage percolation,  similar to the approach in the proof of in \cite[Proposition~2.6]{Dauvergne-2024}. Let $X_-,X_+$ be independent rate-$1$ Poisson processes in the plane $\R^2 = \{(x,t): x,t \in \R\}$. Let $X$ be the Poisson process in  that uses points of $X_-$ to the left of the main diagonal line $\{x= t\}$, and uses points of $X_+$ to the right of the main diagonal. Let $L^N_-,L^N_+,L^N$ be the associated rescaled versions of Poisson last-passage percolation in \eqref{eq:rescale_PLPP}. By Proposition~\ref{prop:Poisson_to_DL}\ref{itm:LPP_conv}, each of $L^N_-,L^N_+,L^N$ converge in distribution, with respect to the topology of uniform convergence on compact sets of $\Rup$, to the directed landscape. Hence, the joint law $(L^N_-,L^N_+,L^N,X_-,X_+,X)$ is tight with respect to the product topology, where the topology on the first three components in uniform on compact sets of $\Rup$, and the last three components are given the standard topology on locally finite point processes. Let  $(\Ll_-,\Ll_+,\Ll,X_-,X_+,X)$ be some subsequential limit,  and it follows immediately that $\Ll,\Ll_-,\Ll_+$ are each marginally directed landscapes and $\Ll_-$ and $\Ll_+$ are independent. By Skorokhod representation (see e.g.~\cite[Theorem~11.7.2]{dudl} or~\cite[Theorem~3.1.8]{EKbook}), we can find fields $(\overline L^N_-,\overline L^N_+,\overline L^N,\overline X_-^N,\overline X^N_+,\overline X^N)$ on some probability space $(\Omega,\Ff,\Pp)$  such that $(\overline L^N_-,\overline L^N_+,\overline L^N,\overline X_-^N,\overline X^N_+,\overline X^N) \deq ( L^N_-,L^N_+,L^N,X_-, X_+,X)$ and 
  such that the convergence $\overline L^N_{\pm} \to \Ll_{\pm}$ and $\overline L^N \to \Ll$ holds almost surely in the sense of uniform convergence on compact sets of $\Rup$. For notational simplicity, since we never use the coupling of $( L^N_-,L^N_+,L^N,X_-, X_+,X)$ between different values of $N$, we will drop the bar notation and write $( L^N_-, L^N_+, L^N, X_-^N, X^N_+, X^N) = (\overline L^N_-,\overline L^N_+,\overline L^N,\overline X_-^N,\overline X^N_+,\overline X^N)$.

Fix $\ve  \in (0, \theta)$. Let $A_{t,1}$ be the event on which 
\[
\begin{aligned}
h_-(0,t) &= \sup_{y \in [(-\theta - \ve)t,(-\theta + \ve)t ]}[f_-(y) + \Ll(y,0;0,t)],\quad \text{and} \\
h_+(0,t) &= \sup_{y \in [(\theta - \ve)t,(\theta + \ve)t]}[f_+(y) + \Ll(y,0;0,t)].
\end{aligned},
\]
where we recall $f_{\pm}(x) = \pm 2\theta x$. 
By Lemma~\ref{lem:unq}, $\lim_{t \to \infty} \Pp(A_{t,1}) = 1$. Recalling the definition of the function $d_p$ from \eqref{eq:dp_def},  it follows immediately from definition that on the event $A_{t,1}$, we have $d_{(-\theta + \ve)t}(0,t;f_-) \le 0$ and $d_{(\theta -\ve)t}(0,t;f_+) \ge 0$. By spatial monotonicity of the function $d_p$, we have $d_{(-\theta + \ve)t}(-t^{1/2},t;f_-) \le 0$ and $d_{(\theta -\ve)t}(t^{1/2},t;f_+) \ge 0$ as well, so that, on $A_{t,1}$,
\begin{equation} \label{eq:h-pt12}
\begin{aligned}
h_-(-t^{1/2},t) &= \sup_{y \le (-\theta + \ve)t}[f_-(y) + \Ll(y,0;-t^{1/2},t)],\quad \text{and} \\
h_+(t^{1/2},t) &= \sup_{y \ge  (\theta - \ve)t}[f_+(y) + \Ll(y,0;t^{1/2},t)].
\end{aligned}
\end{equation}
Next, define $A_{t,2}$ to be the event on which the following hold. 
\begin{enumerate}
\item For both $\Ll_-$ and $\Ll$, there is a unique geodesic from $((-\theta + \ve)t,0)$ to $(-t^{1/2},t)$, and the geodesics for both $\Ll_-$ and $\Ll$ lie entirely in the set $\{(x,s): x \le -t^{1/4}\}$.
\item For both $\Ll_+$ and $\Ll$, there is a unique geodesic from $((\theta - \ve)t,0)$ to $(t^{1/2},t)$, and the geodesics for both $\Ll_+$ and $\Ll$ lie entirely in the set $\{(x,s): x \ge t^{1/4}\}$. 
\end{enumerate}
Recall here that $\ve  \in (0,\theta)$ has been fixed, so we exclude it from the notation for $A_{t}$. By Lemmas~\ref{lem:geodesics} and~\ref{lem:geodesic_max}, 
\[
\liminf_{t \to \infty} \Pp(A_{t,2})  = 1.
\]
By Proposition~\ref{prop:Poisson_to_DL}\ref{itm:geod_conv}, on the event $A_{t,2}$, the following holds for all sufficiently large $N$.
\begin{enumerate}
\item For both $L^N_-$ and $L^N$, the rightmost geodesic (in rescaled coordinates) from $((-\theta + \ve)t,0)$ to $(-t^{1/2},t)$ lies entirely in the set $\{(x,s): x \le 0\}$. By monotonicity of geodesics, all geodesics for $L^N_-$ and $L^N$ from $(y,0)$ to $(-t^{1/2},t)$ for $y \le (-\theta + \ve)t$ also lie in the set $\{(x,s): x \le 0\}$. Thus, the unscaled geodesic lies in the set $\{(x,s): x \le s\}$ and both use only points of $X_-^N$.
\item For both $L^N_+$ and $L^N$, the leftmost geodesic (in rescaled coordinates) from $((\theta - \ve)t,0)$ to $(t^{1/2},t)$ lies entirely in the set $\{(x,s): x \ge 0\}$. By monotonicity of geodesics, all geodesics for $L^N_+$ and $L^N$ from $(y,0)$ to $(t^{1/2},t)$ for $y \ge ((\theta -\ve)t,0)$ lie in the set $\{(x,s): x \ge 0\}$. Thus, the unscaled geodesic lies in the set $\{(x,s): x \ge s\}$, and both use only points of $X_+^N$.
\end{enumerate}
Then,  on the event $A_{t,2}$, for all sufficiently large $N$, we have 
\begin{align*}
L^N_-(y,0;-t^{1/2},t) &= L^N(y,0;-t^{1/2},t) \quad\text{for all}\quad y \le (-\theta + \ve)t, \quad\text{and} \\
L^N_+(y,0;t^{1/2},t) &= L^N(y,0;t^{1/2},t)\quad\text{for all}\quad y \ge (\theta -\ve)t.
\end{align*}
Taking limits as $N \to \infty$, on the event $A_{t,2}$, we have 
\begin{align*}
\Ll_-(y,0;-t^{1/2},t) &= \Ll(y,0;-t^{1/2},t) \quad\text{for all}\quad y \le (-\theta + \ve)t, \quad\text{and} \\
\Ll_+(y,0;t^{1/2},t) &= \Ll(y,0;t^{1/2},t)\quad\text{for all}\quad y \ge (\theta -\ve)t.
\end{align*}
so by \eqref{eq:h-pt12}, on the high probability event $A_{t,1} \cap A_{t,2}$, we have
\be \label{eq:hpm_phipm}
\begin{aligned}
h_-(-t^{1/2},t) &= \phi_-(-t^{1/2},t) := \sup_{y \le (-\theta + \ve)t}[f_-(y) + \Ll_-(y,0;-t^{1/2},t)],\quad \text{and} \\
h_+(t^{1/2},t) &= \phi_+(t^{1/2},t) := \sup_{y \ge  (\theta - \ve)t}[f_+(y) + \Ll_+(y,0;t^{1/2},t)],
\end{aligned}
\ee
and we note that $\phi_-(-t^{1/2},t)$ and $\phi_+(t^{1/2},t)$ are independent.

Next, by Lemma~\ref{lem:flat_rescale}, we have 
\be \label{eq:hpm_dif}
\begin{aligned}
&h_+(t^{1/2},t) - h_+(0,t) \deq t^{1/3}[h_0(t^{-1/6},1) - h_0(0,1) ] + 2\theta t^{1/2},\quad\text{and} \\
&h_-(-t^{1/2},t) - h_-(0,t) \deq t^{1/3}[h_0(-t^{-1/6},1) - h_0(0,1)] + 2\theta t^{1/2},
\end{aligned}
\ee
where $h_0$ is the KPZ fixed point started from $0$ (flat) initial data. By continuity of the KPZ fixed point (Proposition~\ref{prop:h_pres_CFP}), combined with \eqref{eq:hpm_phipm}, since $\Pp(A_{t,1} \cap A_{t,2}) \to 1$ as $t \to \infty$, we have that 
\be \label{phipmdif}
\f{\phi_+(t^{1/2},t) -h_+(0,t) - 2\theta t^{1/2}}{t^{1/3}}\to 0,\quad\text{and}\quad \f{\phi_-(-t^{1/2},t) -h_-(0,t) - 2\theta t^{1/2}}{t^{1/3}} \to 0,
\ee
in probability. It therefore suffices to show that 
\[
\f{\phi_+(t^{1/2},t) - \phi_-(-t^{1/2},t)}{t^{1/3}} \Longrightarrow \f{X_1 - X_2}{2^{2/3}},
\]
where $X_1$ and $X_2$ are independent Tracy-Widom GOE random variables. Since $\phi_-$ and $\phi_+$ are independent, it further suffices to show that 
\[
\f{\phi_{\pm}(\pm t^{1/2},t) - \theta^2 t - 2\theta t^{1/2}}{t^{1/3}}
\]
each marginally converge to $2^{-2/3} X$, where $X$ is a Tracy-Widom GOE random variable. This follows by \eqref{phipmdif} and the fact that $t^{-1/3}(h_{\pm}(0,t) - \theta^2 t)$ both have the distribution of $2^{-2/3} X$ in the prelimit. Indeed, by Lemma~\ref{lem:flat_rescale}, we have that 
\[
t^{-1/3}(h_{\pm}(0,t) - \theta^2 t) \deq h_0(0,1) = \sup_{y \in \R}[\Ll(y,0;0,1)] \deq \sup_{y \in \R}[\Ll(y,-1;0,0)] ]\deq \sup_{y \in \R}[\Ll(0,0;y,1)],
\]
where the last two steps are the temporal reflection and shift-stationarity of the directed landscape in Lemma \ref{lm:landscape_symm} (originally from \cite{Directed_Landscape}).

By the construction of the directed landscape (see, specifically, \cite[Definitions 8.1(1) and 10.1(1)]{Directed_Landscape}), the process $y \mapsto \Ll(0,0;y,1)$ is distributed as the $\text{Airy}_2$ process (sometimes also called the $\text{Airy}_2$ process minus a parabola).
Then, by \cite[Corollary 1.3]{Joansson-03}, $h_0(0,1)$ has the law of $2^{-2/3}X$ (note the scaling factor $2^{-2/3}$ was missing in \cite{Joansson-03}; this was corrected in \cite[Section 2]{Corwin-Quastel-Remenik-2013}). 
\end{proof}

\begin{lemma} \label{lem:max_to_infty}
For $\theta > 0$, let $h_\pm$ denote the KPZ fixed point, started from initial data $\pm 2\theta x$. Then, for each $\ve > 0$, as $t \to \infty$
\[
t^{-1/3} \sup_{x \in \R}\bigl[|h_+(x,t) - h_+(0,t) - (h_-(x,t) - h_-(0,t)) - 4\theta x| - \ve |x|\bigr]\to 0\quad\text{in probability.}
\]
\end{lemma}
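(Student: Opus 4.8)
The plan is to mirror the structure of Lemmas~\ref{lem:nnutheta_Mto0} and~\ref{hatnutheta_Mto0}, with the scaling exponent $1/3$ in place of $1/2$. First I would reduce to a one-sided statement. Writing $A(x,t) = h_+(x,t) - h_+(0,t) - 2\theta x$ and $B(x,t) = h_-(x,t) - h_-(0,t) + 2\theta x$, the quantity inside the supremum equals $A(x,t) - B(x,t)$, so by the triangle inequality
\[
\sup_{x\in\R}\bigl[|A(x,t) - B(x,t)| - \ve|x|\bigr] \le \sup_{x\in\R}\bigl[|A(x,t)| - \tfrac\ve2|x|\bigr] + \sup_{x\in\R}\bigl[|B(x,t)| - \tfrac\ve2|x|\bigr],
\]
and it suffices to prove that each term on the right, divided by $t^{1/3}$, tends to $0$ in probability. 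The two terms are symmetric, being treated by Lemma~\ref{lem:flat_rescale} with $f_0 \equiv 0$ and drift $+\theta$, respectively $-\theta$, so I consider only the $h_+$ term.

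Applying Lemma~\ref{lem:flat_rescale} with $f_0\equiv 0$ gives the process-level identity $h_+(xt^{2/3},t) - h_+(0,t) - 2\theta xt^{2/3} \deq t^{1/3}\bigl(h_0(x,1) - h_0(0,1)\bigr)$, where $h_0$ is the KPZ fixed point from flat ($\equiv 0$) initial data. Substituting $x = yt^{2/3}$ in the supremum and dividing by $t^{1/3}$ turns the $h_+$ term into the distributional identity, valid for each fixed $t$,
\[
t^{-1/3}\sup_{x\in\R}\bigl[|A(x,t)| - \tfrac\ve2|x|\bigr] \deq \Gamma_t := \sup_{y\in\R}\bigl[|h_0(y,1) - h_0(0,1)| - \tfrac\ve2|y|\,t^{1/3}\bigr],
\]
so it is enough to show $\Gamma_t \to 0$ almost surely; since the limit is constant, this yields convergence in probability of the left-hand side, which has the same law.

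To control $\Gamma_t$ I would use that $x\mapsto h_0(x,1)$ is continuous (Proposition~\ref{prop:h_pres_CFP}) and has at most linear growth: by Lemma~\ref{lem:KPZ_linear_preserve} applied to the (trivially linearly bounded) flat initial data, for each $\eta>0$ there are $A,B>0$ with $\Pp\bigl(|h_0(y,1)|\le A + B|y|\ \text{for all } y\bigr)\ge 1-\eta$; work on this event. Once $\tfrac\ve2 t^{1/3} > B$, the map $y\mapsto |h_0(y,1) - h_0(0,1)| - \tfrac\ve2|y|t^{1/3}$ tends to $-\infty$ at $\pm\infty$, so the supremum defining $\Gamma_t$ is attained, and every $y$ at which this map is nonnegative satisfies $|y|\le R_t := (A + |h_0(0,1)|)/(\tfrac\ve2 t^{1/3} - B)$, with $R_t\to 0$. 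Hence $0\le \Gamma_t \le \sup_{|y|\le R_t}|h_0(y,1) - h_0(0,1)| \to 0$ as $t\to\infty$ by continuity of $h_0(\cdot,1)$ at the origin; letting $\eta\downarrow 0$ (the target being convergence in probability) completes the argument. One can also bypass Lemma~\ref{lem:KPZ_linear_preserve} by noting that $h_0(\cdot,1)\deq 2^{1/3}\mathcal A_1(2^{-2/3}\cdot)$ is a stationary process with Tracy--Widom GOE marginals, whence $h_0(y,1) = o(|y|)$ almost surely by Borel--Cantelli, and the same localization argument applies.

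The only step that is not pure bookkeeping is the last one --- producing the a.s.\ linear (or sublinear) bound on $h_0(\cdot,1)$ and localizing the maximizer of $\Gamma_t$ to a shrinking neighborhood of $0$ --- and even this is routine given Lemma~\ref{lem:KPZ_linear_preserve} (or the stationarity of $\mathcal A_1$); I do not foresee any substantial obstacle.
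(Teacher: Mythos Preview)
Your proposal is correct and essentially identical to the paper's proof: the paper likewise reduces via the triangle inequality to the $h_+$ term, applies Lemma~\ref{lem:flat_rescale} with $f_0\equiv 0$ to obtain the same $\Gamma_t$, and then shows $\Gamma_t\to 0$ almost surely by combining continuity of $h_0(\cdot,1)$ at the origin with an a.s.\ linear bound on $h_0(\cdot,1)$. The only cosmetic difference is that the paper cites Lemma~\ref{lem:slope_conserved} (preservation of the zero slope) for the linear bound rather than Lemma~\ref{lem:KPZ_linear_preserve} or the stationarity of $\mathcal A_1$, and organizes the localization via a fixed $a>0$ rather than the shrinking $R_t$---but the content is the same.
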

\begin{proof}
By the triangle inequality, it suffices to prove that 
\begin{align*}
& t^{-1/3} \sup_{x \in \R}\Bigl[|h_+(x,t) - h_+(0,t) - 2\theta x| - \f{\ve}{2} |x|\Bigr]\to 0 \quad\text{and}\\
&t^{-1/3} \sup_{x \in \R}\Bigl[|h_-(x,t) - h_-(0,t) + 2\theta x| - \f{\ve}{2} |x|\Bigr] \to 0, \quad\text{in probability}.
\end{align*}
 We prove the first limit, with the second following a symmetric proof. 
Applying a change of variable in the first line below, followed by an application of Lemma~\ref{lem:flat_rescale}, 
\begin{align*}
&\quad\, t^{-1/3}\sup_{x \in \R}\Bigl[|h_+(x,t) - h_+(0,t) - 2\theta x| - \f{\ve}{2} |x|\Bigr] \\
&= t^{-1/3}\sup_{x \in \R}\Bigl[|h_+(xt^{2/3},t) - h_+(0,t) - 2\theta x t^{2/3}| - \ve |x|t^{2/3}\Bigr] \\
&\deq \sup_{x \in \R}\bigl[|h_0(x,1) - h_0(0,1)| - \ve|x|t^{1/3}  \bigr].
\end{align*}
It therefore suffices to show that this last quantity converges to $0$ almost surely. By evaluating at $x = 0$, we get
\be \label{eq:h0bd1}
\sup_{x \in \R}\bigl[|h_0(x,1) - h_0(0,1)| - \ve|x|t^{1/3}  \bigr] \ge 0
\ee
Next, let $\delta > 0$ be arbitrary. By the spatial continuity of $h_0$ (Proposition~\ref{prop:h_pres_CFP}), we may choose a random $a > 0$ sufficiently small so that $|h_0(x,1) - h_0(0,1)| \le \delta$ for all $|x| \le a$. By the preservation of slopes in Lemma~\ref{lem:slope_conserved}, we have $\lim_{|x| \to \infty} \f{h_0(x,1)}{x} = 0$, so there exist random $A,B > 0$ so that $|h_0(x,1) - h_0(0,1)| \le A + B|x|$. Hence, for all sufficiently large $t$, 
\[
\sup_{|x| > a}[|h_0(x,1) - h_0(0,1)| - \ve|x|t^{1/3} ] < 0.
\]
Combined with \eqref{eq:h0bd1}, we have, for sufficiently large $t$,
\[
0 \le \sup_{x \in \R}\bigl[|h_0(x,1) - h_0(0,1)| - \ve|x|t^{1/3}  \bigr] \le \sup_{|x| \le a}[\delta  - \ve |x|t^{1/3}] = \delta,
\]
and this completes the proof since $\delta > 0$ is arbitrary. 
\end{proof}

\subsection{Proof of Theorem~\ref{thm:shock_fluctuations}}
Here, we put all of the pieces together for the proof of Theorem~\ref{thm:shock_fluctuations}. Each item follows by an application of Lemma~\ref{lem:h_to_b} for the function $\mathcal J(x,t) = h_+(x,t) - h_-(x,t)$, where $h_{\pm}$ is the KPZ fixed point from initial conditions $\underline f = (f_-,f_+)$ distributed according to three options. For Item~\ref{itm:stat_shock}, we take $\underline f \sim \widehat \nu_\theta$, for Item~\ref{itm:flat_shock}, we take $f_\pm(x) = \pm 2\theta x$, and for Item~\ref{itm:joint_stat_shock}, we take $\underline f \sim \nu_\theta$. In each case, $\underline f \in \mathcal X(\theta)$ almost surely (Proposition~\ref{prop:nuthetainvariant} and Lemma~\ref{nutheta_fact}), and by the preservation of the space $\mathcal X(\theta)$ under the KPZ fixed point (Lemmas~\ref{lem:slope_conserved}--\ref{KPZFP_attr}), Conditions~\ref{enu:cont} and~\ref{enu:drift} of Lemma~\ref{lem:h_to_b} are satisfied. 

Define $b_t^{\pm}$ as in \eqref{eq:btpm_def}. In the case $f_{\pm} = \pm 2 \theta x$, it is immediate that $b_t^- = b_t^+ = 0$, and in the case $(f_-,f_+) \sim \widehat \nu_\theta$, Lemma~\ref{nutheta_fact} implies that $b_0^- = b_0^+ = 0$, almost surely. Hence, in all cases, Proposition~\ref{prop:interfaces_are_shocks} implies that this notion of $b_t^{\pm}$ matches the definition in the statement of the theorem.

\noindent For Item~\ref{itm:stat_shock}, $\alpha = \f{1}{2}$, and Conditions~\ref{enu:dist}--\ref{enu:to0} of Lemma~\ref{lem:h_to_b} are satisfied by Proposition~\ref{prop:tilt_h=-_dif} and Lemma~\ref{hatnutheta_Mto0}. 

\noindent For Item~\ref{itm:flat_shock}, $\alpha = \f{1}{3}$, and Conditions~\ref{enu:dist}--\ref{enu:to0} are satisfied by Proposition~\ref{prop:flat_h_dif} and Lemma~\ref{lem:max_to_infty}.

\noindent For Item~\ref{itm:joint_stat_shock}, $\alpha = \f{1}{2}$, and Conditions~\ref{enu:dist}--\ref{enu:to0} are satisfied by Proposition~\ref{prop:hpm_dif} and Lemma~\ref{lem:nnutheta_Mto0}. \qed

\appendix
\section{The directed landscape and KPZ fixed point} \label{sec:DLKPZFP}
Here, we collect some auxiliary results regarding the directed landscape and KPZ fixed point.

\begin{lemma}\cite[Lemma~10.2]{Directed_Landscape},  \cite[Proposition~1.23]{Dauvergne-Virag-21}  \label{lm:landscape_symm}
As a random continuous function of $(y,s;x,t) \in \Rup$, the directed landscape $\Ll$ satisfies the following distributional symmetries, for all  $r,c \in \R$ and $q > 0$.
\begin{enumerate} [label=\rm(\roman{*}), ref=\rm(\roman{*})]  \itemsep=3pt
    \item {\rm(Space-time stationarity)}  \label{itm:time_stat} \ \ $\Ll(y,s;x,t) \deq \Ll(y+c,s + r;x+c,t + r).
    $
    \item {\rm(Skew stationarity)} \label{itm:skew_stat}
    \ \ $
    \Ll(y,s;x,t) \deq \Ll(y + cs,s;x + ct,t) -2c(y - x) + (t- s)c^2.  
    $
    \item \label{itm:DL_reflect} {\rm(Spatial and temporal reflections)} 
    \ \ $
    \Ll(y,s;x,t) \deq \Ll(-y,s;-x,t) \deq \Ll(x,-t;y,-s).
    $
    \item \label{itm:DL_rescaling} {\rm(Rescaling)} 
    \ \ $
    \Ll(y,s;x,t) \deq q\Ll(q^{-2}y,q^{-3}s;q^{-2}x,q^{-3}t).
    $
\end{enumerate}
\end{lemma}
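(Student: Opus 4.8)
The final statement to prove is Lemma~\ref{lm:landscape_symm}, recording the four standard distributional symmetries of the directed landscape. Since these are cited verbatim from \cite[Lemma~10.2]{Directed_Landscape} and \cite[Proposition~1.23]{Dauvergne-Virag-21}, the ``proof'' here is really a pointer to those references, but I would include a short sketch of how each symmetry arises at the level of Brownian last-passage percolation (BLPP), since the directed landscape is constructed as a scaling limit of BLPP and each symmetry descends from a corresponding symmetry of the prelimit.

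First, for space-time stationarity \ref{itm:time_stat}: the directed landscape is built from a field of independent two-sided Brownian motions indexed by $\Z$, and the BLPP last-passage value between scaled points is invariant under simultaneously shifting the spatial endpoints by a common amount and the time endpoints by a common amount; this is just reindexing the Brownian environment and using stationarity of Brownian increments. Taking the scaling limit (using the convergence in \cite{Directed_Landscape}, or the more general convergence in \cite{Dauvergne-Virag-21}) preserves the equality in distribution. Second, for skew stationarity \ref{itm:skew_stat}: at the BLPP level this corresponds to adding a linear drift to each Brownian motion in the environment, which, after a shear of the spatial coordinate proportional to time, leaves the law of the environment unchanged up to the deterministic quadratic correction $(t-s)c^2$ and the linear term $-2c(y-x)$; one checks that these are exactly the terms produced by completing the square in the shifted last-passage functional. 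Third, the reflection symmetries \ref{itm:DL_reflect} follow from the invariance of Brownian motion under $x \mapsto -x$ (giving spatial reflection) and from the time-reversal symmetry of last-passage percolation, in which reversing the direction of time and swapping source and sink leaves the law invariant. Fourth, the rescaling \ref{itm:DL_rescaling} is the KPZ $1{:}2{:}3$ scaling: Brownian scaling of the environment together with the parabolic scaling of the last-passage geometry produces the factor $q$ in front and the $q^{-2}$, $q^{-3}$ scalings of space and time.

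I expect the only genuinely nontrivial point—and the one actually carried out in \cite{Directed_Landscape,Dauvergne-Virag-21}—is verifying that each prelimit symmetry survives the passage to the limit, i.e.\ that the scaling-limit map intertwines the discrete symmetry with the claimed continuum symmetry, which requires knowing the convergence holds jointly in the relevant parameters. Since that verification is precisely the content of the cited results, in the write-up I would simply state that the lemma is a restatement of \cite[Lemma~10.2]{Directed_Landscape} and \cite[Proposition~1.23]{Dauvergne-Virag-21} and refer the reader there, optionally adding the one-line indications above for the reader's orientation. No separate argument is needed in the present paper.

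\begin{proof}
All four statements are recorded in \cite[Lemma~10.2]{Directed_Landscape} and \cite[Proposition~1.23]{Dauvergne-Virag-21}; we refer the reader there for the proofs. We only recall that each symmetry is inherited, under the scaling-limit construction of $\Ll$, from the corresponding symmetry of Brownian last-passage percolation: \ref{itm:time_stat} from reindexing and stationarity of the Brownian environment; \ref{itm:skew_stat} from adding a common linear drift to the environment together with a time-proportional shear of the spatial coordinate, the deterministic terms $-2c(y-x)$ and $(t-s)c^2$ arising from completing the square; \ref{itm:DL_reflect} from the reflection invariance $x \mapsto -x$ of Brownian motion and from the time-reversal symmetry of last-passage percolation; and \ref{itm:DL_rescaling} from Brownian scaling of the environment combined with the parabolic scaling of the last-passage geometry, which is the $1{:}2{:}3$ KPZ scaling.
\end{proof}
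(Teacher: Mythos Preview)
Your proposal is correct and matches the paper's approach: the paper simply states this lemma with citations to \cite[Lemma~10.2]{Directed_Landscape} and \cite[Proposition~1.23]{Dauvergne-Virag-21} and gives no proof at all, treating it as a quoted input. Your added sketch of how each symmetry descends from Brownian last-passage percolation is accurate and harmless, but the paper does not include even that much.
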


\begin{lemma}\cite[Corollary 10.7]{Directed_Landscape},  \label{lem:Landscape_global_bound}
There exists a random constant $C$ satisfying
\[
\Pp(C > m) \le ce^{-dm^{3/2}}
\]
for universal constants $c,d$, such that for all $v = (y,s;x,t) \in \Rup$, we have 
\[
\Bigl|\Ll(y,s;x,t) + \f{(y - x)^2}{t - s}\Bigr| \le C (t - s)^{1/3} \log^{4/3} \Bigl(\f{2(\|v\| + 2)}{t - s}\Bigr)\log^{2/3}(\|v\| + 2),
\]
where $\|v\|$ is the Euclidean norm.
\end{lemma}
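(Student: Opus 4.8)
The statement is quoted verbatim from \cite[Corollary~10.7]{Directed_Landscape} and is used here purely as an external input, so the ``proof'' in the present paper is the citation itself. For orientation, I record the structure of the argument in \cite{Directed_Landscape} and how I would reconstruct it. The first step is a reduction to a single unit time interval. Using space-time stationarity (Lemma~\ref{lm:landscape_symm}\ref{itm:time_stat}) to translate to $s=0$ and the rescaling symmetry (Lemma~\ref{lm:landscape_symm}\ref{itm:DL_rescaling}) with $q=(t-s)^{1/3}$, one has the identity in law
\[
\Ll(y,s;x,t)\deq (t-s)^{1/3}\,\Ll\bigl((t-s)^{-2/3}y,\,0;\,(t-s)^{-2/3}x,\,1\bigr),
\]
under which the parabola $\tfrac{(y-x)^2}{t-s}$ also rescales consistently. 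Hence everything reduces to an estimate on the Airy sheet $\Ss(u,w):=\Ll(u,0;w,1)$ of the form $|\Ss(u,w)+(u-w)^2|\le C'\,\Lambda(|u|+|w|)$ for an explicit poly-logarithmic $\Lambda$, where $C'$ has an exponential tail with exponent $\tfrac32$.

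The heart of the matter is this Airy-sheet bound, and I would prove it by a chaining/union-bound argument resting on two ingredients. First, pointwise tail estimates: each horizontal slice $w\mapsto\Ss(u,w)+(u-w)^2$ has the law of the parabolic Airy$_2$ process, whose one-point marginal is the Tracy--Widom GUE distribution, with upper tail $e^{-\Theta(m^{3/2})}$ and lower tail $e^{-\Theta(m^{3})}$; combined with the shift-invariance of the Airy sheet this yields $\Pp\bigl(|\Ss(u,w)+(u-w)^2|>m\bigr)\le c\,e^{-d m^{3/2}}$ uniformly in $(u,w)$. Second, quantitative regularity of $\Ss$ (locally $(1/3)^-$-H\"older, indeed locally Brownian in each argument), allowing one to interpolate the pointwise bound from a fine grid to the continuum. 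Covering a box of side $R$ by a grid of mesh $\delta$, union-bounding over the $\sim (R/\delta)^2$ grid points, balancing the grid discretization error against the exponent (which forces $m\gtrsim\log^{2/3}(R/\delta)$), summing over dyadic scales of $R$, and absorbing the parabola produces the polylogarithmic bound with the claimed $\tfrac32$-exponential tail on the resulting constant. This chaining estimate, with the sharp powers of the logarithm and the sharp tail exponent, is the only genuinely substantial step.

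Finally, undoing the reductions of the first step transfers the Airy-sheet estimate to all of $\Rup$. The spatial rescaling by $(t-s)^{-2/3}$ converts a log-factor of the form $\log(|u|+|w|+2)$ into $\log\bigl(\tfrac{2(\|v\|+2)}{t-s}\bigr)$ (the term that dominates when $t-s$ is small, and is comparable to $\log(\|v\|+2)$ otherwise), while a second log-factor records the absolute spatial scale $\log(\|v\|+2)$, and the overall weight $(t-s)^{1/3}$ is the rescaling prefactor; the precise split into powers $\tfrac43$ and $\tfrac23$ is bookkeeping within this reduction that I would not reproduce in detail. I expect the main obstacle to be exactly the Airy-sheet chaining bound; everything flanking it is routine use of the symmetries in Lemma~\ref{lm:landscape_symm}.
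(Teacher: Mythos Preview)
Your proposal is correct: the paper does not prove this lemma at all but simply cites it as \cite[Corollary~10.7]{Directed_Landscape}, exactly as you note in your first sentence. Your subsequent sketch of the argument from \cite{Directed_Landscape} (reduction to the Airy sheet via the symmetries of Lemma~\ref{lm:landscape_symm}, then a chaining/union bound over a grid using Tracy--Widom tails and local regularity) is a faithful outline of how that corollary is obtained there, and is more than the present paper itself provides.
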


\begin{lemma} \cite[Lemma~B.9]{Busa-Sepp-Sore-22arXiv} \cite[Lemma B.7]{Busa-Sepp-Sore-22a} \label{lem:slope_conserved}
Let $f \in \CFP$, independent of $\{\Ll(y,0;x,t):y,x \in \R, t > 0\}$. Then, for all $t > 0$,
\[
\lim_{x \to \pm \infty} \f{h(x,t \mid f)}{x} = \lim_{x \to \pm \infty} \f{f(x)}{x},
\]
whenever both of the limits on the right-hand side exist in $\R$. 
\end{lemma}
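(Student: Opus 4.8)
The plan is to argue directly from the variational formula $h(x,t\mid f)=\sup_{y\in\R}[f(y)+\Ll(y,0;x,t)]$ together with the global parabolic estimate of Lemma~\ref{lem:Landscape_global_bound}, which says $\Ll(y,0;x,t)=-\tfrac{(y-x)^2}{t}+E(y,x,t)$ with $|E(y,x,t)|\le C\,t^{1/3}\log^{4/3}\!\big(\tfrac{2(\|v\|+2)}{t}\big)\log^{2/3}(\|v\|+2)$, $v=(y,0;x,t)$; in particular $|E(y,x,t)|$ grows only polylogarithmically (times a fixed power $1/3$) in $\|v\|$, hence is negligible against the parabola $(y-x)^2/t$. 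It suffices to treat the limit at $+\infty$: the limit at $-\infty$ follows by the identical argument, or formally by applying the $+\infty$ statement to $x\mapsto f(-x)\in\CFP$ using the spatial reflection symmetry $\Ll(y,0;x,t)\deq\Ll(-y,0;-x,t)$ of Lemma~\ref{lm:landscape_symm}\ref{itm:DL_reflect}. Write $\lambda:=\lim_{x\to+\infty}f(x)/x$, assumed finite.

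For the lower bound, take $y=x$ in the supremum, so $h(x,t\mid f)\ge f(x)+\Ll(x,0;x,t)$. Here the parabolic term vanishes, and Lemma~\ref{lem:Landscape_global_bound} gives $|\Ll(x,0;x,t)|\le C\,t^{1/3}\log^{4/3}\!\big(\tfrac{2(|x|+t+2)}{t}\big)\log^{2/3}(|x|+2)=o(|x|)$ as $x\to\infty$. Dividing by $x$ and sending $x\to\infty$ yields $\liminf_{x\to+\infty}h(x,t\mid f)/x\ge\lambda$.

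The upper bound is the main point. Fix $\eps>0$. Since $f(y)/y\to\lambda$ and $f$ is continuous, choose $R=R(\eps)$ with $f(y)\le(\lambda+\eps)y+C_\eps$ for $y\ge R$, $\sup_{|y|\le R}f(y)\le C_\eps$, and, using $f\in\CFP$, $f(y)\le\tfrac{1}{2t}y^2+C_{\eps,t}$ for $y\le -R$. I would then split $\sup_{y\in\R}$ into the three regions $\{y\ge R\}$, $\{|y|\le R\}$, $\{y\le -R\}$ and bound each via $f(y)+\Ll(y,0;x,t)\le f(y)-\tfrac{(y-x)^2}{t}+E(y,x,t)$. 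On $\{y\ge R\}$: when $|y-x|=O(x)$, writing $y=x+w$ gives $(\lambda+\eps)y-\tfrac{w^2}{t}\le(\lambda+\eps)x+\tfrac{(\lambda+\eps)^2t}{4}$, and $E=O_t(x^{o(1)})=o(x)$, so the contribution is at most $(\lambda+\eps)x+o(x)$; when $|y-x|$ is a large multiple of $x$, the parabola $\tfrac{(y-x)^2}{t}$ is of order $y^2/t$ and dominates both $(\lambda+\eps)y$ and $E$, so the bracket is negative for large $x$. On $\{y\le -R\}$: with $y=-s$, $s\ge R$, one computes $\tfrac{s^2}{2t}-\tfrac{(x+s)^2}{t}\le-\tfrac{x^2}{t}$, and $E=o(x^2)$ uniformly, so this region contributes a negative quantity for large $x$. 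On $\{|y|\le R\}$: $f$ is bounded there while $\sup_{|y|\le R}\Ll(y,0;x,t)\to-\infty$, so this region is also negative for large $x$. Combining, $h(x,t\mid f)\le(\lambda+\eps)x+o(x)$, hence $\limsup_{x\to+\infty}h(x,t\mid f)/x\le\lambda+\eps$; letting $\eps\downarrow0$ and using the lower bound gives $\lim_{x\to+\infty}h(x,t\mid f)/x=\lambda$.

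The only real difficulty is bookkeeping in the upper bound: one must make the error $E(y,x,t)$ of Lemma~\ref{lem:Landscape_global_bound} uniform over the unbounded range of $y$ and verify that, outside an $O(x)$-window around $y=x$, it is genuinely dominated by the quadratic term $\tfrac{(y-x)^2}{t}$, so that each regional supremum is finite and controlled uniformly in $x$. Since $E$ is subpolynomial in $\|(y,0;x,t)\|$ while the parabola is quadratic in $|y-x|$, this domination is routine; no input beyond Lemma~\ref{lem:Landscape_global_bound} and the prescribed growth of $f$ is required.
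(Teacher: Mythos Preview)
The paper does not supply a proof of this lemma; it is quoted verbatim from \cite{Busa-Sepp-Sore-22arXiv,Busa-Sepp-Sore-22a}. So there is no in-paper argument to compare against, and the relevant question is simply whether your argument is sound.

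Your approach is correct and is the natural one: plug the global parabolic bound (Lemma~\ref{lem:Landscape_global_bound}) into the variational formula, use $y=x$ for the lower bound, and for the upper bound split the $y$-range according to the known growth of $f$. The only places where more care is warranted are the uniformity issues you already flag. Concretely, on $\{y\ge R\}$ it is cleaner to avoid the informal dichotomy ``$|y-x|=O(x)$'' versus ``$|y-x|$ a large multiple of $x$'' and instead bound directly: writing $y=x+w$ and using $E(y,x,t)\le C_t'\log^2(|w|+|x|+2)$, the function $(\lambda+\eps)w-\tfrac{w^2}{t}+C_t'\log^2(|w|+|x|+2)$ has a global maximum in $w$ that is $O(\log^2 x)=o(x)$, giving the desired $(\lambda+\eps)x+o(x)$ in one stroke. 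On $\{y\le -R\}$, the same polylogarithmic bound on $E$ is dominated by the residual quadratic $-\tfrac{s^2}{2t}$, so the supremum is finite and at most $-\tfrac{x^2}{t}+O(\log^2 x)$ uniformly in $s$. With these refinements the sketch is a complete proof. Incidentally, your argument for the $+\infty$ limit uses only the $+\infty$ slope of $f$ together with $f\in\CFP$, so it actually yields a slightly sharper statement than what the lemma asserts.
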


We cite the following lemma.
\begin{lemma} \cite[Proposition~4]{Pimentel-21a} \label{KPZFP_attr}
If $f_-(y) - f_-(x) \le f_+(y) - f_+(x)$ for all $y \ge x$, then $h_-(y,t) - h_-(x,t) \le h_+(y,t) - h_+(x,t)$ for all $t > 0$ and $y \ge x$, where $h_{\pm}(x,t) = h(x,t \mid f_{\pm})$.
\end{lemma}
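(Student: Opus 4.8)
The plan is to deduce this directly from the variational representation \eqref{eq:FKPFP_def} of the KPZ fixed point together with the almost sure existence of geodesics (Lemma~\ref{lem:geodesics}), by a ``crossing and swapping'' argument of the kind standard in the attractiveness theory of last-passage models. First observe that the conclusion is equivalent to the assertion that $x \mapsto h_+(x,t) - h_-(x,t)$ is nondecreasing, i.e.\ that for $x \le y$ and $t > 0$,
\[
h_-(y,t) + h_+(x,t) \;\le\; h_+(y,t) + h_-(x,t).
\]
Since $f_\pm \in \CFP$, the parabolic decay of $\Ll$ (Lemma~\ref{lem:Landscape_global_bound}; compare the proof of Proposition~\ref{prop:h_pres_CFP}) forces the suprema defining $h_\pm$ to be attained, so I would fix a maximizer $y^\ast$ realizing $h_-(y,t) = f_-(y^\ast) + \Ll(y^\ast,0;y,t)$ and a maximizer $x^\ast$ realizing $h_+(x,t) = f_+(x^\ast) + \Ll(x^\ast,0;x,t)$, and split the argument according to the relative order of $x^\ast$ and $y^\ast$.

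If $y^\ast > x^\ast$, the stated hypothesis, which says precisely that $f_+ - f_-$ is nondecreasing, gives $f_+(y^\ast) - f_-(y^\ast) \ge f_+(x^\ast) - f_-(x^\ast)$, equivalently $f_-(y^\ast) + f_+(x^\ast) \le f_+(y^\ast) + f_-(x^\ast)$; combined with the bounds $h_+(y,t) \ge f_+(y^\ast) + \Ll(y^\ast,0;y,t)$ and $h_-(x,t) \ge f_-(x^\ast) + \Ll(x^\ast,0;x,t)$ obtained by using the possibly suboptimal starting points $y^\ast$ and $x^\ast$ in the variational formulas, this yields the displayed inequality at once. If instead $y^\ast \le x^\ast$, the key input is the quadrangle inequality for the directed landscape: for $y_1 \le y_2$ and $x_1 \le x_2$,
\[
\Ll(y_1,0;x_1,t) + \Ll(y_2,0;x_2,t) \;\ge\; \Ll(y_1,0;x_2,t) + \Ll(y_2,0;x_1,t),
\]
which I would establish by taking the geodesics $\gamma_1$ from $(y_1,0)$ to $(x_2,t)$ and $\gamma_2$ from $(y_2,0)$ to $(x_1,t)$ (Lemma~\ref{lem:geodesics}), noting that $\gamma_1(0) = y_1 \le y_2 = \gamma_2(0)$ while $\gamma_1(t) = x_2 \ge x_1 = \gamma_2(t)$, so that by continuity and the intermediate value theorem $\gamma_1$ and $\gamma_2$ meet at some $(z,s)$ with $s \in [0,t]$, and then combining the additivity of $\Ll$ along each geodesic through $(z,s)$ with the reverse triangle inequality \eqref{triangle} applied to the two ``swapped'' pairs $(y_1,0)$--$(x_1,t)$ and $(y_2,0)$--$(x_2,t)$. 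Applying this with $y_1 = y^\ast$, $y_2 = x^\ast$, $x_1 = x$, $x_2 = y$ gives $\Ll(y^\ast,0;y,t) + \Ll(x^\ast,0;x,t) \le \Ll(y^\ast,0;x,t) + \Ll(x^\ast,0;y,t)$, and the displayed inequality then follows after bounding $h_-(x,t) \ge f_-(y^\ast) + \Ll(y^\ast,0;x,t)$ and $h_+(y,t) \ge f_+(x^\ast) + \Ll(x^\ast,0;y,t)$.

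The step requiring genuine care is the crossing: one needs geodesics to exist between the relevant endpoints (which is exactly Lemma~\ref{lem:geodesics}) and one must choose the \emph{crossed} pair $\gamma_1,\gamma_2$ rather than the parallel one, since only for crossed endpoints does continuity force an intersection point at which to apply metric composition. Everything else is bookkeeping with the variational formula and the existence of maximizers. (Since the statement is quoted from \cite[Proposition~4]{Pimentel-21a}, one may alternatively simply invoke that reference; the sketch above is the directed-landscape translation of the same monotonicity argument.)
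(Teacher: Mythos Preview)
Your argument is correct and is precisely the standard attractiveness proof for last-passage type models via the quadrangle inequality for $\Ll$. The paper itself offers no proof of this lemma: it simply cites \cite[Proposition~4]{Pimentel-21a}, so there is nothing further to compare against, and your parenthetical remark about invoking the reference directly is exactly what the authors do.
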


\begin{lemma} \cite[Lemma B.6]{Busa-Sepp-Sore-22arXiv}, \cite[Lemma B.5]{Busa-Sepp-Sore-22a}\label{lem:unq}
	 Fix $\xi\in \R$ and $a>0$. Consider the KPZ fixed point $\h$ starting at time $s$ from a function $f \in \CFP$. 
For $t > s$, let $Z_f^{a,s,t}\subseteq\R$ denote the set of exit points from the time horizon $s$ of the geodesics associated with $\h$ from $\{t\}\times [-a,a]$. That is,
	\begin{equation} \label{exitpt}
		Z_f^{a,s,t}=\bigcup_{x\in [-a,a]} \argmax_{y\in\R} [f(y)+\Ll(y,s;x,t)].
	\end{equation}
	Then, on the full probability event of Lemma~\ref{lem:Landscape_global_bound}, whenever $f\in \CFP$ satisfies condition~\eqref{eqn:drift_assumptions}, and when  $\ve>0$, $a > 0$, and $s \in \R$, there exists  a random $t_0 = t_0(\ve,a,s) > s \vee 0$ such that for any $t> t_0$,
	\be \label{upexit}
	Z_f^{a,s,t}\subset \big[(\theta-\ve)t,(\theta +\ve)t\big].
	\ee
	In particular, if $f$ is a random function almost surely satisfying condition~\eqref{eqn:drift_assumptions}, then this random $t_0$ exists almost surely, and
\[
	\lim_{t \to \infty} \Pp\Big(Z_f^{a,s,t}\subset \big[(\theta-\ve)t,(\theta +\ve)t\big]\Big) = 1.
\]  
Furthermore, an analogous statement holds on the same full-probability event if $t$ is held fixed and $s \to -\infty$. That is, there exists a random $s_0 = s_0(\ve,a,t)< t \wedge 0 $ such that for any $s < s_0$,
\begin{equation} \label{downexit}
Z_f^{a,s,t}\subset \big[-(\theta-\ve)s,-(\theta +\ve)s\big]
\end{equation}
\end{lemma}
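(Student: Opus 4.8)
The plan is to reduce the statement to a deterministic localization estimate on the full-probability event $\Omega_{\mathrm L}$ of Lemma~\ref{lem:Landscape_global_bound}, on which one may write $\Ll(y,s;x,t)=-\tfrac{(y-x)^2}{t-s}+E(y,s;x,t)$ with $|E(y,s;x,t)|\le C(t-s)^{1/3}\,\mathrm{polylog}\bigl(\|(y,s;x,t)\|,\,t-s\bigr)$ and $C<\infty$. Fix $s$ and $a$. The heuristic is that, since $\Ll(y,s;x,t)\approx -\tfrac{(y-x)^2}{t-s}$ while $f(y)\approx 2\theta y$ as $y\to+\infty$, the function $y\mapsto f(y)+\Ll(y,s;x,t)$ has its maximum near the vertex $y=x+\theta(t-s)$, which equals $(\theta+o(1))t$ uniformly over $x\in[-a,a]$ as $t\to\infty$; the content of the lemma is to upgrade this to genuine confinement to $[(\theta-\eps)t,(\theta+\eps)t]$, which follows once one shows that every $y$ outside that window contributes strictly below the leading order.

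\textbf{Step 1 (lower bound).} Take a test point $y_0$ within distance $1$ of $\theta t$. Using the lower bound in Lemma~\ref{lem:Landscape_global_bound}, the asymptotics $\lim_{y\to+\infty}f(y)/y=2\theta$ from \eqref{eqn:drift_assumptions}, and the fact that the error term is $O(t^{1/3}\log^2 t)=o(t)$, one obtains, uniformly over $x\in[-a,a]$,
\[
\sup_{y\in\R}\bigl[f(y)+\Ll(y,s;x,t)\bigr]\ \ge\ f(y_0)+\Ll(y_0,s;x,t)\ \ge\ 2\theta^2 t-\frac{(\theta t-x)^2}{t-s}-o(t)\ =\ \theta^2 t-o(t).
\]

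\textbf{Step 2 (upper bound off the window) and conclusion.} I split $\R\setminus[(\theta-\eps)t,(\theta+\eps)t]$ into $\{y>(\theta+\eps)t\}$, $\{0\le y<(\theta-\eps)t\}$, and $\{y<0\}$, fix a small $\eta\in(0,\eps)$, and on each piece use $\Ll(y,s;x,t)\le -\tfrac{(y-x)^2}{t-s}+o(t)$ together with the appropriate one-sided growth bound from \eqref{eqn:drift_assumptions}: $f(y)\le(2\theta+\eta)y+O_\eta(1)$ for $y\ge0$ (from $\lim_{y\to+\infty}f(y)/y=2\theta$ and continuity), and $f(y)\le(2\theta-\delta)|y|+O(1)$ for $y\le 0$ and some $\delta>0$ (from the \emph{strict} inequality $\liminf_{y\to-\infty}f(y)/y>-2\theta$; the degenerate case $\liminf=+\infty$ only makes this easier). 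On the first two pieces the concave function $y\mapsto(2\theta+\eta)y-\tfrac{(y-x)^2}{t-s}$ is monotone, so its supremum over the piece is attained at the endpoint $(\theta\pm\eps)t$ and is at most $(\theta\pm\eps)(\theta\mp\eps+\eta)\,t+o(t)\le(\theta^2-\tfrac12\eps^2)t$ for $t$ large; on $\{y<0\}$ the maximum of $(2\theta-\delta)(-y)-\tfrac{(y-x)^2}{t-s}$ is $\tfrac14(2\theta-\delta)^2 t+o(t)<\theta^2 t$. In all cases the supremum over the piece is at most $(\theta^2-c(\eps))t$ for some $c(\eps)>0$ and all large $t$, uniformly over $x\in[-a,a]$. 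Comparing with Step~1 gives a finite (on $\Omega_{\mathrm L}$, hence almost surely, since $C<\infty$ a.s.) threshold $t_0=t_0(\eps,a,s)$ such that for $t>t_0$ no maximizer in \eqref{exitpt} for any $x\in[-a,a]$ lies outside $[(\theta-\eps)t,(\theta+\eps)t]$, which is \eqref{upexit}; the probability statement is immediate since $\{Z_f^{a,s,t}\subset[(\theta-\eps)t,(\theta+\eps)t]\}\supseteq\Omega_{\mathrm L}\cap\{t>t_0\}$ and $t_0<\infty$ a.s. The statement \eqref{downexit} follows either by the temporal reflection $\Ll(y,s;x,t)\deq\Ll(x,-t;y,-s)$ of Lemma~\ref{lm:landscape_symm}\ref{itm:DL_reflect}, or by rerunning Steps~1--2 with $t$ fixed and $s\to-\infty$: now $t-s\to\infty$, the effective vertex $x+\theta(t-s)$ sits near $-\theta s$, all polylogarithmic errors are $o(|s|)$, and the confining window becomes $[-(\theta-\eps)s,-(\theta+\eps)s]$.

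\textbf{Main obstacle.} The essential point is Step~2: one must check that the contribution of \emph{every} $y$ outside the window is strictly below $\theta^2 t$, which relies on the strict gaps between $\theta\pm\eps$ and $\theta$ and, on the left tail, on the \emph{strict} inequality $\liminf_{y\to-\infty}f(y)/y>-2\theta$ in \eqref{eqn:drift_assumptions}. Beyond this, the bulk of the work is purely bookkeeping: ensuring that all the polylogarithmic error terms from Lemma~\ref{lem:Landscape_global_bound} are genuinely $o(t)$ (respectively $o(|s|)$) and that every estimate is uniform over $x\in[-a,a]$, so that the single threshold $t_0$ (respectively $s_0$) controls the entire union in \eqref{exitpt}.
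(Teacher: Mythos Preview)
The paper does not include its own proof of this lemma; it is quoted from \cite[Lemma~B.6]{Busa-Sepp-Sore-22arXiv} and \cite[Lemma~B.5]{Busa-Sepp-Sore-22a} and appears in Appendix~\ref{sec:DLKPZFP} as a cited input with no argument supplied. There is therefore nothing in the present paper to compare against.

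Your argument is correct and is exactly the standard one used in the cited references: control $\Ll$ by its parabola via Lemma~\ref{lem:Landscape_global_bound}, pick a test point near the expected maximizer $y\approx\theta t$ to obtain the lower bound $\theta^2 t - o(t)$ for the supremum, and then show that on each piece of $\R\setminus[(\theta-\eps)t,(\theta+\eps)t]$ the slope hypotheses \eqref{eqn:drift_assumptions} force the value below $(\theta^2-c(\eps))t$. The bookkeeping you flag (uniformity in $x\in[-a,a]$, polylog errors being $o(t)$) is indeed routine. The only minor omission is that you write out the $\theta>0$ case only; for $\theta<0$ one swaps left and right, and for $\theta=0$ the two-sided bound $f(y)\le \eta|y|+O_\eta(1)$ (which follows from both conditions in \eqref{eqn:drift_assumptions}) replaces the one-sided growth, with the lower bound taken at $y=x$ rather than at $\theta t$. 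No strict inequality is needed in the $\theta=0$ case since the parabola alone already dominates off the window.
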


The following statement is weaker than that stated in \cite{Busa-Sepp-Sore-22a}, but we only include the statement we need for brevity.
\begin{lemma} \cite[Lemma~B.8(ii)]{Busa-Sepp-Sore-22arXiv}, \cite[Lemma B.6(ii)]{Busa-Sepp-Sore-22a} \label{lem:KPZ_linear_preserve}
   Assume $f \in \CFP$ satisfies $|f(x)| \le a + b|x|$ for all $x \in \R$, for some constants $a,b > 0$. Then, for any $0 < S < T$, there exists $A = A(a,S,T),B = B(b,S,T) > 0$ (measurable with respect to $\Ll$) such that $|h(x,t \mid f)| \le A + B|x|$ for all $t \in [S,T]$ and $x \in \R$.
\end{lemma}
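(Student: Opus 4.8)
The plan is to read off everything from the variational formula $h(x,t\mid f)=\sup_{y\in\R}[f(y)+\Ll(y,0;x,t)]$ together with the global parabolic bound on $\Ll$ in Lemma~\ref{lem:Landscape_global_bound}. Writing $v=(y,0;x,t)$, one has $\|v\|\le |x|+|y|+T$, and since $t\in[S,T]$ the quantity $\tfrac{2(\|v\|+2)}{t}$ is bounded by $\tfrac{2(\|v\|+2)}{S}$; hence the logarithmic factor in Lemma~\ref{lem:Landscape_global_bound} is at most a deterministic (depending on $S,T$) multiple of $\log^{2}(|x|+|y|+2)$. Combining this with $t^{1/3}\le T^{1/3}$, there is an $\Ll$-measurable random constant $C_1=C_1(S,T)$ such that
\[
\Bigl|\Ll(y,0;x,t)+\tfrac{(y-x)^2}{t}\Bigr|\le C_1\log^2(|x|+|y|+2)\qquad\text{for all }x,y\in\R,\ t\in[S,T].
\]

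For the upper bound I would use $f(y)\le a+b|y|$, $\tfrac{(y-x)^2}{t}\ge\tfrac{(y-x)^2}{T}$, and substitute $y=x+w$ to get
\[
h(x,t\mid f)\le a+b|x|+\sup_{w\in\R}\Bigl[b|w|-\tfrac{w^2}{T}+C_1\log^2(|x|+|w|+2)\Bigr].
\]
Split the supremum over $|w|\le |x|$ and $|w|>|x|$: on the first region $\log^2(|x|+|w|+2)\le\log^2(2|x|+2)$, while on the second region the term $-w^2/T$ dominates $b|w|+C_1\log^2(2|w|+2)$, so that part of the supremum is bounded by an $\Ll$-measurable constant $C_2=C_2(S,T)$. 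This yields $h(x,t\mid f)\le a+b|x|+C_3\log^2(|x|+2)+C_2$ for $\Ll$-measurable $C_2,C_3$, and since $\log^2(|x|+2)=o(|x|)$ this is at most $A+(b+1)|x|$ for a suitable $\Ll$-measurable $A$. For the lower bound one simply tests $y=x$: $h(x,t\mid f)\ge f(x)+\Ll(x,0;x,t)\ge -a-b|x|-C_1\log^2(2|x|+2)\ge -A-(b+1)|x|$ after enlarging $A$. Combining the two displays gives $|h(x,t\mid f)|\le A+B|x|$ with $B=b+1$ and $A$ depending on $a,S,T$ and on the random constant $C$ of Lemma~\ref{lem:Landscape_global_bound}, hence $\Ll$-measurable, as required.

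The only mildly delicate point is keeping everything uniform in $t\in[S,T]$ while controlling the logarithmic corrections: the endpoint $S>0$ is what prevents $\tfrac{2(\|v\|+2)}{t}$ from blowing up, and the bound $\tfrac{(y-x)^2}{t}\ge\tfrac{(y-x)^2}{T}$ is what makes the supremum over $y$ finite and forces the near-diagonal localization responsible for the linear-in-$|x|$ growth. Beyond this bookkeeping there is no substantial obstacle; this is essentially the argument in the cited references \cite{Busa-Sepp-Sore-22arXiv,Busa-Sepp-Sore-22a}.
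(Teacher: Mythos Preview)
Your argument is correct. The paper does not supply its own proof of this lemma---it is imported from \cite{Busa-Sepp-Sore-22arXiv,Busa-Sepp-Sore-22a}---and what you have written is precisely the intended derivation: plug the global bound of Lemma~\ref{lem:Landscape_global_bound} into the variational formula, use $t\in[S,T]$ to control the $t^{1/3}$ and $1/t$ factors, and observe that the quadratic $-(y-x)^2/T$ dominates the linear and polylogarithmic terms so the supremum localizes to $|y-x|=O(1)$ up to a logarithmic correction in $|x|$; the lower bound comes from the single test point $y=x$.

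One cosmetic remark: your final constants have $A$ depending on $b$ as well (through the $b^2T/4$ and $C_2$ terms), whereas the lemma's notation writes $A=A(a,S,T)$. This is harmless---any proof will produce such a dependence, and the notation in the statement is informal (both $A$ and $B$ are also allowed to be $\Ll$-measurable). If you want the cleanest match, you can simply state that $A,B$ depend on $a,b,S,T$ and on the random constant $C$ of Lemma~\ref{lem:Landscape_global_bound}.
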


In the following, for points $(y,s),(x,t)$, let $\{\gamma_{(x,s),(y,t)}(u): s \le u \le t\}$ denote the almost surely unique directed landscape geodesic between the points (as in Lemma~\ref{lem:geodesics}). 
\begin{lemma} \label{lem:geodesic_max}
Let $\alpha  \in (0,\f{1}{2})$ and $r > 0$. Then, 
\[
\liminf_{t \to \infty} \Pp\Bigl(\max_{s \in [0,t]}\gamma_{(-rt,0),(-t^{1/2},t)}(s) < -t^\alpha \Bigr) = 1.
\]
\end{lemma}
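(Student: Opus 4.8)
The plan is to estimate the geodesic location by comparing the endpoints $(-rt,0)$ and $(-t^{1/2},t)$ and using the parabolic structure of the directed landscape together with the modulus of continuity for geodesics. The geodesic $\gamma := \gamma_{(-rt,0),(-t^{1/2},t)}$ runs from a point with spatial coordinate $-rt$ (order $t$) at time $0$ up to spatial coordinate $-t^{1/2}$ (order $t^{1/2}$) at time $t$. Heuristically the geodesic stays close to the straight line joining these two points, which has spatial coordinate of order $-t$ for all but the very last stretch near time $t$, so it should easily stay to the left of $-t^\alpha$ since $\alpha<1/2<1$. The content of the lemma is making this quantitative with high probability.

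First I would invoke the rescaling symmetry of the directed landscape (Lemma~\ref{lm:landscape_symm}\ref{itm:DL_rescaling}) with $q = t^{1/3}$ to reduce the problem to unit time scale: after rescaling, the geodesic runs from spatial coordinate $-rt/t^{2/3} = -rt^{1/3}$ at time $0$ to spatial coordinate $-t^{1/2}/t^{2/3} = -t^{-1/6}$ at time $1$, and the threshold $-t^\alpha$ becomes $-t^\alpha/t^{2/3} = -t^{\alpha - 2/3}$; since $\alpha < 1/2$, we have $\alpha - 2/3 < -1/6$, so the threshold is still to the left of the right endpoint, but now the right endpoint is near $0$ and the left endpoint tends to $-\infty$. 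Actually it is cleaner to not rescale and argue directly. The key input is Lemma~\ref{lem:Landscape_global_bound}: for a candidate geodesic location to be to the right of $-t^\alpha$ at some intermediate time $s \in (0,t)$, the metric composition property \eqref{L_comp} forces
\[
\Ll(-rt,0;-t^{1/2},t) = \sup_{z\in\R}\bigl[\Ll(-rt,0;z,s) + \Ll(z,s;-t^{1/2},t)\bigr]
\]
to be attained at some $z \ge -t^\alpha$. One then bounds the left side from below using a near-straight-line comparison (plugging $z = $ the linear interpolant into the right side and applying Lemma~\ref{lem:Landscape_global_bound}), obtaining that it is at least $-\tfrac{(rt - t^{1/2})^2}{t} + O(t^{1/3}\log^{2} t) = -r^2 t + O(t^{2/3})$ with high probability. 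On the other hand, if the maximizer $z$ were $\ge -t^\alpha$, then $\Ll(-rt,0;z,s)$ would, by Lemma~\ref{lem:Landscape_global_bound} again, be at most $-\tfrac{(rt - t^\alpha)^2}{s} + (\text{error}) \le -\tfrac{(rt)^2}{t}(1 + c) + \cdots$ roughly — and more carefully, combined with an upper bound on $\Ll(z,s;-t^{1/2},t)$, this sum would be strictly smaller than $-r^2 t$ by a margin of order $t$ uniformly over $s\in[0,t]$ and $z \ge -t^\alpha$. Since $t^{\alpha}= o(t)$, this contradiction holds with probability tending to $1$.

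The main obstacle is organizing the uniformity: the constraint must be ruled out simultaneously over all intermediate times $s \in [0,t]$ and all candidate crossing points $z \ge -t^{\alpha}$, and the error terms in Lemma~\ref{lem:Landscape_global_bound} grow logarithmically in $\|v\|/(t-s)$, which blows up as $s \to t$. To handle $s$ near $t$ one should note that for $s$ within, say, $t^{3\alpha}$ of $t$, continuity of the geodesic (it starts the interval $[s,t]$ at the point it was at, and ends at $-t^{1/2}$, which is itself far to the left of $-t^\alpha$ when $\alpha<1/2$) together with the modulus-of-continuity bound for geodesics (the $3/4$-Hölder-type transversal fluctuation bounds underlying Lemma~\ref{lem:geodesic_max} itself in the references, e.g.\ \cite[Theorem~12.1]{Directed_Landscape}) forces the geodesic to remain within $O((t-s)^{2/3+o(1)}) = o(t^\alpha)$ of $-t^{1/2}$, hence to the left of $-t^\alpha$. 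For $s$ bounded away from $t$ the logarithmic errors are harmless and the parabolic-cost comparison above closes the argument. A union bound over a polynomial grid of $s$-values combined with the geodesic's uniform continuity upgrades the pointwise-in-$s$ statement to the uniform maximum over $[0,t]$. I expect the transversal fluctuation estimate near the top endpoint to be the delicate part; everything else is a routine application of Lemmas~\ref{lem:Landscape_global_bound} and~\ref{lm:landscape_symm}.
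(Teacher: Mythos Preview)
Your direct parabolic-cost approach via metric composition and Lemma~\ref{lem:Landscape_global_bound} would work, but the paper takes a shorter route: using space shift, shear, and rescaling (Lemma~\ref{lm:landscape_symm}), one writes $\gamma(s) \deq t^{2/3}\widehat\gamma(s/t) + (r - t^{-1/2})s - rt$ where $\widehat\gamma$ is the geodesic from $(0,0)$ to $(0,1)$, so the event $\{\max_s \gamma(s) \ge -t^\alpha\}$ becomes a deterministic inequality involving only the random H\"older constant $C$ in the bound $|\widehat\gamma(u)| \le C(1-u)^{2/3+\varepsilon}$ from \cite[Proposition~12.3]{Directed_Landscape}; elementary case analysis in $u$ near and far from $1$ then shows this forces $C \to \infty$, which has vanishing probability. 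Two corrections to your sketch: first, your worry about $s \to t$ is unfounded, since the error in Lemma~\ref{lem:Landscape_global_bound} carries the prefactor $(t-s)^{1/3}$ which kills the logarithmic blow-up, making the total error $O(t^{1/3+o(1)})$ uniformly in $s$ --- so your two-regime split and appeal to transversal fluctuation are unnecessary (and would in any case require the same symmetry reduction to invoke \cite[Proposition~12.3]{Directed_Landscape}); second, your claimed margin of order $t$ is too optimistic --- the parabolic-cost gap $\tfrac{t}{s(t-s)}(-t^\alpha - z^*)^2$ at $z=-t^\alpha$ drops to order $t^{1/2}$ when $t-s \sim t^{1/2}$ (where the interpolant $z^* \approx -t^{1/2}$ is closest to $-t^\alpha$), though this still dominates the $O(t^{1/3+o(1)})$ errors. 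Your route has the merit of using only the global bound on $\Ll$ rather than the finer geodesic regularity; the paper's buys brevity and avoids all uniformity-in-$s$ bookkeeping.
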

\begin{proof}
Let $\gamma$ denote $\gamma_{(-rt,0),(-t^{1/2},t)}$ for shorthand. We will now use invariance of $\Ll$ from Lemma~\ref{lm:landscape_symm} applied to the geodesics. By shift invariance, followed by shear invariance, then rescaling, $\gamma$ has the same law as 
\[
t^{2/3}\wh \gamma(s/t) + (r - t^{-1/2})s - rt,
\]
where $\wh \gamma:[0,1] \to \R$ is the directed landscape geodesic from $(0,0)$ to $(0,1)$. Hence,
\be \label{eq:gamma1}
\begin{aligned}
\Pp\Bigl(\max_{s \in [0,t]}\gamma(s) \ge -t^\alpha \Bigr) &= \Pp\Bigl(\max_{s \in [0,t]}\bigl[t^{2/3}\wh \gamma(s/t) + (r - t^{-1/2})s - rt\bigr] \ge -t^\alpha \Bigr) \\
&= \Pp\Bigl(\max_{u \in [0,1]} [\wh \gamma(u) + (r - t^{-1/2})t^{1/3} u] \ge -t^{\alpha - 2/3} + rt^{1/3}  \Bigr).
\end{aligned}
\ee
Let $\ve > 0$ be a small number chosen sufficiently small at a later point. Since $\wh \gamma(1) = 0$, by \cite[Proposition~12.3]{Directed_Landscape}, there exists a random constant $C$ so that, for all $u \in [0,1]$,
\[
|\wh \gamma(u)| \le C (1-u)^{2/3 - \ve}.
\]
Hence, 
\be \label{gamma2}
\begin{aligned}
&\quad \, \Pp\Bigl(\max_{u \in [0,1]} [\wh \gamma(u) + (r - t^{-1/2})t^{1/3} u] \ge -t^{\alpha - 2/3} + rt^{1/3}  \Bigr) \\
&\le \Pp\Bigl(\max_{u \in [0,1]}[C(1-u)^{2/3 - \ve} + (r - t^{-1/2})t^{1/3} u] \ge - t^{\alpha- 2/3} + rt^{1/3}\Bigr)
\end{aligned}
\ee
Now, let $\beta > 0$ be a small number to be determined. If $u \le 1 - t^{-\beta}$, then we have
\begin{align*}
&\quad \,C(1-u)^{2/3 - \ve} + (r - t^{-1/2})t^{1/3} u \\
&\le C + rt^{1/3} u \le C + rt^{1/3} - t^{1/3 - \beta},
\end{align*}
and so 
\be \label{Cge1}
\text{If}\quad C(1-u)^{2/3 - \ve} + (r - t^{-1/2})t^{1/3} u \ge - t^{\alpha- 2/3} + rt^{1/3}, \quad \text{then}\quad  C \ge t^{1/3 - \beta} - t^{\alpha - 2/3}.
\ee
On the other hand, if $ 1 - t^{-\beta} \le u \le 1$, then 
\begin{align*}
&\quad \, C(1-u)^{2/3 - \ve} + (r - t^{-1/2})t^{1/3} u  \\
&\le C(1-u)^{2/3 - \ve} + rt^{1/3} - t^{-1/6}u \\
&\le C t^{-\beta(2/3 - \ve)} + rt^{1/3} - t^{-1/6} + t^{-1/6 - \beta},
\end{align*}
so
\be \label{Cge2}
\begin{aligned}
\text{If}\quad &C(1-u)^{2/3 - \ve} + (r - t^{-1/2})t^{1/3} u \ge - t^{\alpha- 2/3} + rt^{1/3},\quad\text{then} \\
&C \ge  t^{\beta(2/3 - \ve)}\Bigl(t^{-1/6} - t^{\alpha - 2/3} - t^{-1/6 - \beta}\Bigr) \ge \f{1}{2}t^{\beta(2/3 - \ve) - 1/6},
\end{aligned}
\ee
where the last inequality holds for all sufficiently large $t$, noting that  $-\f{1}{6} > \alpha - \f{2}{3}$ since $\alpha < \f{1}{2}$ so the term in the parenthesis is positive and of order $t^{-1/6}$.
Hence, combining \eqref{eq:gamma1}, \eqref{gamma2}, \eqref{Cge1}, and \eqref{Cge2}, we see that 
\be \label{probgamma}
\Pp\Bigl(\max_{s \in [0,t]}\gamma(s) \ge -t^\alpha \Bigr) \le \Pp\Biggl(C \ge \min\Bigl(\f{1}{2}t^{\beta(2/3 - \ve) - 1/6},t^{1/3 - \beta} - t^{\alpha - 2/3}\Bigr)\Biggr).
\ee
Then, if we choose, for example, $\ve = \f{1}{9}$, and $\f{3}{10} < \beta < \f{1}{3}$, then since $\alpha \in (0,1/2)$, $\beta < 1-\alpha$, and we have
\[
\f{1}{3} - \beta > \max\Bigl(1,\alpha - \f{2}{3}\Bigr),\quad\text{and}\quad \beta\Bigl(\f{2}{3} - \ve\Bigr) > \f{1}{6}.
\]
Then, the probability in \eqref{probgamma} tends to $0$, as desired. 
\end{proof}

We finish this section by including a result from \cite{Dauvergne-Virag-21} about Poisson last-passage percolation. Let $X$ be a rate-$1$ Poisson process in the plane. For $\mbf p \le \mbf q \in \R^2$ (in coordinate-wise ordering), we let $d(\mbf p;\mbf q)$ be the maximal number of points of $X$ that can be collected in an up-right polygonal path from $\mbf p$ to $\mbf q$. We will consider geodesics in this path as maximal paths that are linear between any two points of $X$. 

\begin{proposition} \cite[Theorem~1.7]{Dauvergne-Virag-21}, \cite[Theorem~13.1]{Directed_Landscape} \label{prop:Poisson_to_DL}
Let $d$ denote Poisson last-passage percolation in the plane, built from the Poisson point process $X$ as described above. The following hold.
\begin{enumerate} [label=\rm(\roman{*}), ref=\rm(\roman{*})]  \itemsep=3pt
\item \label{itm:LPP_conv} There exist constants $\alpha,\beta,\tau,\chi > 0$ such that, if we define
\be \label{eq:rescale_PLPP}
L^N(y,s;x,t) := \f{1}{\chi N^{1/3}} \Bigl(d(sN + yN^{2/3}, sN; tN + xN^{2/3},tN) - \alpha N(t-s) - \beta N^{2/3}(x-y)\Bigr),
\ee
then there exists a coupling of copies of $L^N$ and $\Ll$ such that, as $N \to \infty$, $L^N$ converges to $\Ll$ with respect to the topology of uniform convergence on compact sets of $\Rup$.
\item \label{itm:geod_conv} In any such coupling where the convergence $L^N \to \Ll$ holds, let $\pi_N$ be the image of an arbitrary geodesic for $L$ under the linear map $F_N(x,t) := \bigl(N^{-2/3}(x-t),N^{-1}t\bigr)$. If, for $(\mbf p;\mbf q) \in \Rup$, the endpoints of $\pi_N$ converge to $\mbf p$ and $\mbf q$, then the sequence of geodesics $\pi_N$ (seen as a subset of $\R^2)$ is precompact in the Hausdorff topology on $\R^2$, and if there is a unique geodesic $\gamma$ from $\mbf p$ to $\mbf q$ for $\Ll$, then $\pi_N$ converges to $\gamma$ in the Hausdorff topology.
\end{enumerate}

\end{proposition}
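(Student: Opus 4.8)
Both parts of this statement are proven in the cited works — part~\ref{itm:LPP_conv} is (essentially) the main convergence theorem of \cite{Dauvergne-Virag-21}, refining \cite[Theorem~13.1]{Directed_Landscape}, and part~\ref{itm:geod_conv} is the accompanying geodesic-stability statement — so our ``proof'' is to cite them; what follows is a sketch of the underlying strategy. For part~\ref{itm:LPP_conv} the plan has two halves: tightness and identification. Tightness of the rescaled fields $L^N$ in the topology of uniform convergence on compact subsets of $\Rup$ follows from a priori modulus-of-continuity bounds for Poisson last-passage percolation, which rest on the KPZ-class one-point estimates — super-exponential upper tail and exponential lower tail for the centered, $N^{1/3}$-rescaled passage time — fed into a chaining argument, as in the Brownian last-passage percolation treatment of \cite{Directed_Landscape}. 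To identify the limit one invokes the characterization of the directed landscape as the unique (in law) random continuous function on $\Rup$ that has independent increments over disjoint time intervals, satisfies the metric composition law \eqref{L_comp}, and has $(y,x)\mapsto\Ll(y,0;x,1)$ distributed as the Airy sheet. The first two properties survive the limit because they hold exactly for the prelimit Poisson last-passage values, and the Airy-sheet marginal is extracted from the integrable (Poissonized Plancherel / determinantal) structure of Poisson last-passage percolation, which yields convergence of the relevant last-passage processes to the parabolic Airy line ensemble and hence of the corresponding last-passage functional to the Airy sheet.

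For part~\ref{itm:geod_conv}, given a coupling in which $L^N\to\Ll$ almost surely, uniformly on compacts, one first shows the rescaled geodesics $\pi_N$ are precompact in the Hausdorff topology on $\R^2$: an a priori transversal-fluctuation bound guarantees that, with probability tending to $1$ uniformly in $N$, a geodesic between $O(1)$-separated endpoints in the rescaled coordinates stays inside a common $O(1)$ parabolic window, so the curves $\pi_N$ all lie in a fixed compact set and form a precompact family. Any Hausdorff subsequential limit $\pi$ of the $\pi_N$ is a path from $\mbf p$ to $\mbf q$, and a standard stability argument — passing the identity ``$\pi_N$ realizes the last-passage value of $L^N$'' through the convergence $L^N\to\Ll$ together with the reverse triangle inequality for $\Ll$ — shows that $\pi$ has $\Ll$-length equal to $\Ll(\mbf p;\mbf q)$, i.e.\ $\pi$ is a geodesic for $\Ll$ from $\mbf p$ to $\mbf q$. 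If this geodesic is unique, every subsequential limit coincides with $\gamma$, and hence $\pi_N\to\gamma$ in the Hausdorff topology.

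The genuinely hard inputs — the technical core of \cite{Dauvergne-Virag-21}, which we do not reprove — are the identification of the Airy-sheet marginal from the integrable structure of Poisson last-passage percolation, and the quantitative estimates (uniform one-point tail bounds and transversal-fluctuation bounds) underpinning both the tightness step in part~\ref{itm:LPP_conv} and the geodesic-precompactness step in part~\ref{itm:geod_conv}. For the present paper these are black boxes, and the only thing we add is the observation that one may run the construction simultaneously for the three coupled point processes $X_-,X_+,X$ appearing in the proof of Proposition~\ref{prop:flat_h_dif}, which is immediate since the argument above is insensitive to such a coupling.
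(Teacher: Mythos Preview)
Your proposal is correct and matches the paper's approach: the paper gives no proof of this proposition at all, simply citing \cite[Theorem~1.7]{Dauvergne-Virag-21} and \cite[Theorem~13.1]{Directed_Landscape} in the statement header. Your additional expository sketch of the strategy behind those cited results is a reasonable supplement but not required.
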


\section{Busemann process and semi-infinite geodesics in the directed landscape} \label{sec:SIG}
\begin{proposition}\cite[Lemma~A.12]{Bhattacharjee-Busani-Sorensen-25}\label{prop:geodesics_from_b}
Let $b:\R^2 \to \R$ be a  function satisfying, for all $s < t$, and $x \in \R$
\be \label{eq:global_appx}
b(x,t) = \sup_{z \in \R} [b(z,s) + \Ll(z,s;x,t)], 
\ee
and assume that for each $t > s$ and $x \in \R$, leftmost and rightmost maximizers in the equation above exist in $\R$. Define $g_{(x,t)}^{b,L/R}(s)$ to be the leftmost/rightmost maximizers, and set 
\[
g_{(x,t)}^{b,L/R}(t) = x.
\]
Then the following statements hold:
\begin{enumerate} [label=\rm(\roman{*}), ref=\rm(\roman{*})]  \itemsep=3pt
\item \label{itm:geod} $g_{(x,t)}^{b,L}:(-\infty,t] \to \R$ \rm{(}resp. $g_{(x,t)}^{b,\mathrm{R}}:(-\infty,t] \to \R$\rm{)} is a semi-infinite geodesic that is the leftmost (resp. rightmost) directed landscape geodesic between any two of its points. In particular, $g_{(x,t)}^{b,L}$ and $g_{(x,t)}^{b,\mathrm{R}}$ are continuous functions $(-\infty,t] \to \R$.
\item \label{itm: consis} Let $s < r < t$ and $x \in \R$, and let $
w = g_{(x,t)}^{b,L}(r)$.
Then,
\[
g_{(x,t)}^{b,L}(s) = g_{(w,r)}^{b,L}(s),
\]
and the same holds when replacing $L$ with $R$. 
\end{enumerate}
\end{proposition}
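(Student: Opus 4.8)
The plan is to prove both assertions for $g^{b,\mathrm R}$; the leftmost case is entirely symmetric. Work on the full-probability event of Lemma~\ref{lem:geodesics} on which leftmost and rightmost $\Ll$-geodesics between all pairs of points exist. The engine of the argument is a \emph{value-propagation} identity: for $s<r<t$ and $x\in\R$, writing $w=g_{(x,t)}^{b,\mathrm R}(r)$ and $z_1=g_{(w,r)}^{b,\mathrm R}(s)$, one has $b(w,r)+\Ll(w,r;x,t)=b(x,t)$, the point $z_1$ is a maximizer of $z\mapsto b(z,s)+\Ll(z,s;x,t)$, and $\Ll(z_1,s;x,t)=\Ll(z_1,s;w,r)+\Ll(w,r;x,t)$. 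This is obtained by chaining \eqref{eq:global_appx} (applied at the time pairs $(r,t)$ and $(s,r)$) with the reverse triangle inequality \eqref{triangle}, and observing that all the resulting inequalities must be equalities because $\sup_z[b(z,s)+\Ll(z,s;x,t)]=b(x,t)$, again by \eqref{eq:global_appx}. The identical chaining, applied to the point $v^{\ast}=\gamma(r)$ where $\gamma$ is the rightmost $\Ll$-geodesic from $(z_0,s)$ to $(x,t)$ and $z_0=g_{(x,t)}^{b,\mathrm R}(s)$, shows that $v^{\ast}$ is a maximizer of $z\mapsto b(z,r)+\Ll(z,r;x,t)$ and that $z_0$ is a maximizer of $z\mapsto b(z,s)+\Ll(z,s;v^{\ast},r)$.

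Next I would establish spatial monotonicity of the base point: for fixed $s<t$, the map $x\mapsto g_{(x,t)}^{b,\mathrm R}(s)$ is nondecreasing, and likewise $v\mapsto g_{(v,r)}^{b,\mathrm R}(s)$ for fixed $s<r$. This rests on the crossing inequality $\Ll(y_1,s;x_1,t)+\Ll(y_2,s;x_2,t)\ge\Ll(y_1,s;x_2,t)+\Ll(y_2,s;x_1,t)$ for $y_1<y_2$ and $x_1<x_2$, which holds because a geodesic from $(y_1,s)$ to $(x_2,t)$ and a geodesic from $(y_2,s)$ to $(x_1,t)$ must intersect, and swapping their tails at an intersection point produces paths between the ``parallel'' endpoint pairs with the same total $\Ll$-length. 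Combining this inequality with the optimality of the maximizers $z_i=g_{(x_i,t)}^{b,\mathrm R}(s)$ in a short computation shows that if $z_1>z_2$ then $z_1$ would also be a maximizer for the terminal point $x_2$, contradicting that $z_2$ is the rightmost such; hence $z_1\le z_2$.

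With these ingredients the consistency statement in Proposition~\ref{prop:geodesics_from_b}\ref{itm: consis} follows: with $z_0=g_{(x,t)}^{b,\mathrm R}(s)$, $w=g_{(x,t)}^{b,\mathrm R}(r)$, $z_1=g_{(w,r)}^{b,\mathrm R}(s)$, value propagation gives $z_1\le z_0$ directly, while its $v^{\ast}$-version gives $v^{\ast}\le w$ and $z_0\le g_{(v^{\ast},r)}^{b,\mathrm R}(s)$, so monotonicity in the terminal variable yields $g_{(v^{\ast},r)}^{b,\mathrm R}(s)\le g_{(w,r)}^{b,\mathrm R}(s)=z_1$, and thus $z_0=z_1$. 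For the geodesic property \ref{itm:geod}, fix $s<t$ and let $\gamma$ be the rightmost $\Ll$-geodesic from $(z_0,s)$ to $(x,t)$. For $r\in(s,t)$, value propagation together with the just-proved identity $g_{(w,r)}^{b,\mathrm R}(s)=z_0$ gives $\Ll(z_0,s;x,t)=\Ll(z_0,s;w,r)+\Ll(w,r;x,t)$, so by the metric composition property \eqref{L_comp} the point $w=g_{(x,t)}^{b,\mathrm R}(r)$ realizes the supremum defining $\Ll(z_0,s;x,t)$ at level $r$ and therefore lies weakly to the left of $\gamma(r)$ (the rightmost such maximizer, through which the rightmost geodesic passes); conversely the reverse chaining shows $\gamma(r)$ is a maximizer of $z\mapsto b(z,r)+\Ll(z,r;x,t)$, hence lies weakly to the left of the rightmost such maximizer $g_{(x,t)}^{b,\mathrm R}(r)$. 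Thus $g_{(x,t)}^{b,\mathrm R}|_{[s,t]}=\gamma$, which is a geodesic and in particular continuous; letting $s\to-\infty$ yields continuity on $(-\infty,t]$ and the semi-infinite geodesic property, and the ``rightmost between any two of its points'' claim follows since a subinterval restriction of a rightmost $\Ll$-geodesic is again the rightmost geodesic between the restricted endpoints.

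The main obstacle is the consistency step, specifically the inequality $z_0\le z_1$: unlike $z_1\le z_0$, which drops straight out of value propagation, this direction requires transferring a maximizer statement from one base point of the eternal solution to another, and the only clean route I see is through the spatial monotonicity of $x\mapsto g_{(x,t)}^{b,\mathrm R}(s)$, which is why the crossing/tail-swapping inequality for $\Ll$ must be invoked. Everything else is bookkeeping with \eqref{eq:global_appx}, \eqref{L_comp}, and \eqref{triangle}.
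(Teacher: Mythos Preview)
The paper does not actually prove this proposition; it is quoted directly from \cite[Lemma~A.12]{Bhattacharjee-Busani-Sorensen-25} without argument, so there is no in-paper proof to compare against. Your proposal is therefore a self-contained proof, and it is correct. The three ingredients you isolate---the value-propagation identity obtained by chaining \eqref{eq:global_appx} at two time levels with the reverse triangle inequality \eqref{triangle}, the crossing inequality giving spatial monotonicity of $x\mapsto g_{(x,t)}^{b,\mathrm R}(s)$, and the observation that $\gamma(r)$ is the rightmost maximizer of $z\mapsto \Ll(z_0,s;z,r)+\Ll(z,r;x,t)$---are exactly the right ones, and your sandwich argument $z_1\le z_0\le g_{(v^\ast,r)}^{b,\mathrm R}(s)\le z_1$ for consistency is clean. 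The only point worth a remark is that the ``restriction of a rightmost geodesic is again rightmost'' claim, while standard, deserves a one-line justification (if a restricted segment were not rightmost, splicing in a strictly-right alternative would contradict rightmostness of the full geodesic); once that is said, your argument that $g_{(x,t)}^{b,\mathrm R}|_{[s,t]}=\gamma$ immediately yields part~\ref{itm:geod} for arbitrary subintervals of $(-\infty,t]$.
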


\begin{proposition} \cite[Theorem~2.5(i)]{Busa-Sepp-Sore-22a} \label{prop:all_directions}
With probability $1$, every semi-infinite geodesic $g$ in the directed landscape has a direction $\theta \in \R$. That is,
\[
\lim_{s \to -\infty} \f{g(s)}{|s|} = \theta.
\]
\end{proposition}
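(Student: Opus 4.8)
\textbf{Proof proposal for Proposition~\ref{prop:all_directions}.}

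The plan is to exploit the coalescence structure of semi-infinite geodesics in the directed landscape, combined with the fact that the directed landscape is ``close to'' the Brownian motion / parabolic geometry at every scale. First I would recall that by Lemma~\ref{lem:geodesics}, for each fixed root $(x,t)$ and each $s < t$ there is an a.s.\ unique geodesic from $(y,s)$ to $(x,t)$, and by the modulus-of-continuity/growth bounds in Lemma~\ref{lem:Landscape_global_bound} one controls the transversal fluctuations of geodesics: a geodesic from $(y,s)$ to $(x,t)$ stays, with overwhelming probability, within $O\bigl((t-s)^{2/3}\log^{\cdots}(\cdot)\bigr)$ of the straight line joining its endpoints. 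The key quantity to control is $g(s)/|s|$ as $s \to -\infty$, and the strategy is to show $\liminf_{s\to-\infty} g(s)/|s| = \limsup_{s\to-\infty} g(s)/|s|$ almost surely, simultaneously for all semi-infinite geodesics.

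The main steps, in order, would be: (1) Fix a countable dense set of rational roots $(x,t)$ and rational ``directions'' $\theta$, and for each pair construct the (leftmost and rightmost) geodesic targeting the point $(\theta n, -n)$ from $(x,t)$ as $n \to \infty$; using the transversal-fluctuation bound and a Borel--Cantelli argument along the subsequence $n = 2^k$ (then filling in by monotonicity/nesting of geodesics, as in Lemma~\ref{lem:ordering}-type orderings), show that any subsequential limit is a semi-infinite geodesic with $g(s)/|s| \to \theta$. (2) Show that \emph{every} semi-infinite geodesic $g$ rooted at a rational point has a direction: if $g$ had $\liminf < \limsup$, then for two rationals $\theta_1 < \theta_2$ strictly between them, $g$ would have to cross the directed geodesics of direction $\theta_1$ and $\theta_2$ infinitely often as $s\to-\infty$; but geodesics in the directed landscape coalesce (two geodesics that touch must agree from that point backwards), and the transversal bound forces the $\theta_i$-directed geodesics to separate linearly, so $g$ cannot oscillate between them — contradiction. (3) Upgrade from rational roots to all roots: any semi-infinite geodesic rooted at an arbitrary $(x,t)$ passes through (indeed, by coalescence, eventually coincides with) a geodesic rooted at a nearby rational point in its past, so it inherits a direction. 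A soft way to organize (2)--(3) is to use the a.s.\ statement that, on a full-probability event, for every compact set and every pair of directions the $\theta_1$- and $\theta_2$-directed Busemann geodesics from any time horizon eventually lie on opposite sides of any fixed vertical strip; this is exactly the type of input available from the Busemann-process machinery of \cite{Busa-Sepp-Sore-22a} that the paper is citing.

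The hard part will be step (2): ruling out oscillation. The naive ``it must cross infinitely often'' argument needs the geometric input that the competing reference geodesics genuinely separate at linear speed with the crossing behavior controlled uniformly in $s$, and one must be careful that the geodesic $g$ is not itself one of the exceptional geodesics where uniqueness or the fluctuation bound fails. I would handle this by working on the single full-probability event of Lemma~\ref{lem:Landscape_global_bound} together with the full-probability coalescence event, and by using the leftmost/rightmost geodesics (which are defined everywhere simultaneously by Lemma~\ref{lem:geodesics}) as the reference family, so that all the relevant events are measurable and the exceptional null sets are collected once and for all. The remaining estimates — the transversal fluctuation bound and the linear separation of distinct-direction geodesics — are then routine consequences of the parabolic shape of $\Ll$ recorded in Lemma~\ref{lem:Landscape_global_bound}.
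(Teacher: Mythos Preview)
The paper does not prove this proposition at all: it is recorded in Appendix~\ref{sec:SIG} purely as a citation of \cite[Theorem~2.5(i)]{Busa-Sepp-Sore-22a}, to be used as a black box in Section~\ref{sec:slopes}. So there is no ``paper's proof'' to compare against, only the original argument in \cite{Busa-Sepp-Sore-22a}, which indeed proceeds by first constructing the full Busemann process (Proposition~\ref{prop:Buse_basic_properties}) and the associated $\theta\sigg$ geodesics from \emph{every} spacetime point on a single full-probability event (Proposition~\ref{prop:DL_SIG_cons_intro}), and then trapping an arbitrary semi-infinite geodesic between such reference geodesics. Your overall plan is therefore the standard one.

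That said, your step~(2) contains a genuine gap. The claim ``two geodesics that touch must agree from that point backwards'' is false for arbitrary semi-infinite geodesics: coalescence going backward in time is a property of the \emph{leftmost} (resp.\ \emph{rightmost}) families via the consistency in Proposition~\ref{prop:geodesics_from_b}\ref{itm: consis}, not of an arbitrary $g$ against a reference geodesic. What is actually true, and what makes the argument work, is the extremality in Proposition~\ref{prop:geodesics_from_b}\ref{itm:geod}: the rightmost $\theta$-Busemann geodesic $g_{\theta,R}$ from $(x,t)$ is the rightmost directed-landscape geodesic between any two of its points. Hence if $g$ and $g_{\theta,R}$ meet at some time $r<t$, then on $[r,t]$ both are geodesics between the same endpoints and therefore $g \le g_{\theta,R}$ on $[r,t]$. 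This one-sided ordering (not coalescence) is enough to rule out oscillation: if $\liminf_{s\to-\infty} g(s)/|s| < \theta < \limsup_{s\to-\infty} g(s)/|s|$, pick $s_m < s_0$ with $g(s_m) > g_{\theta,R}(s_m)$ and $g(s_0) < g_{\theta,R}(s_0)$; they cross at some $r\in(s_m,s_0)$, giving $g\le g_{\theta,R}$ on $[r,t]$; then a further $s_m'<s_m$ with $g(s_m')>g_{\theta,R}(s_m')$ forces a second crossing $r'<s_m$ and hence $g\le g_{\theta,R}$ on $[r',t]\ni s_m$, contradicting $g(s_m)>g_{\theta,R}(s_m)$. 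Your write-up should replace the coalescence assertion with this extremality argument.

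Your step~(3) is also overstated and, in fact, unnecessary. The assertion that an arbitrary $g$ ``eventually coincides with'' a geodesic from a nearby rational root is not justified (again, that is a coalescence statement you have not earned). The clean fix is to bypass step~(3) entirely: Proposition~\ref{prop:DL_SIG_cons_intro} already gives the reference $\theta\sigg$ Busemann geodesics from \emph{every} root $(x,t)$ simultaneously on one full-probability event, so the corrected step~(2) applies directly to any $g$ without first reducing to rational roots.
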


\begin{proposition} \cite[Theorem~5.1, Theorem~5.3, Lemma~5.12]{Busa-Sepp-Sore-22a} \label{prop:Buse_basic_properties}
On the probability space of the directed landscape $\Ll$, there exists a process
\[
\Bigl(W^{\theta \sig}(y,s;x,t): \theta \in \R, \sigg \in \{-,+\}, (y,s;x,t) \in \R^4\Bigr)
\]
satisfying the following properties:
\begin{enumerate}[label=\rm(\roman{*}), ref=\rm(\roman{*})]  \itemsep=3pt
\item{\rm(Continuity)} \label{itm:general_cts}  As an $\R^4 \to \R$ function,  $(y,s;x,t) \mapsto \W^{\theta \sig}(y,s;x,t)$ is  continuous. 
 \item {\rm(Additivity)} \label{itm:DL_Buse_add} For all $p,q,r \in \R^2$, 
    $\W^{\theta \sig}(p;q) + \W^{\theta \sig}(q;r) = \W^{\theta \sig}(p;r)$.   In particular, \\ $\W^{\theta \sig}(p;q) = -\W^{\theta \sig}(q;p)$ and $\W^{\theta \sig}(p;p) = 0$.
    \item {\rm(Monotonicity along a horizontal line)}
    \label{itm:DL_Buse_gen_mont} Whenever $\theta_1< \theta_2$, $x < y$, and $t \in \R$,
    \[
    \W^{\theta_1 -}(x,t;y,t) \le \W^{\theta_1 +}(x,t;y,t) \le \W^{\theta_2 -}(x,t;y,t) \le \W^{\theta_2 +}(x,t;y,t).
    \]
    \item {\rm(Evolution as the KPZ fixed point)}\label{itm:Buse_KPZ_description} For 
    all $x,y \in \R$ and $s < t$,
    \be\label{W_var}
    \W^{\theta \sig}(y,s;x,t) = \sup_{z \in \R}\{\W^{\theta \sig}(y,s;z,s) +
 \Ll(z,s;x,t)\}.
    \ee
    \item \label{it:Wslope} {\rm(Law and asymptotic slope)} For fixed $\theta \in \R$, $W^{\theta} := W^{\theta +} = W^{\theta -}$ with probability $1$, and for each $t \in \R$, the law of the process $(x \mapsto W^{\theta +}(0,t;x,t))_{\theta \in \R}$ is the stationary horizon (Definition~\ref{def:SH}). In particular, for each $\theta \in \R$, the random function $x \mapsto W^{\theta}(0,t;x,t)$ is a two-sided Brownian motion with variance $2$ and drift $2\theta$. Furthermore, on a single event of full probability, for all $t \in \R,\theta \in \R$, and $\sigg \in \{-,+\}$,
    \[
    \lim_{|x| \to \infty} \f{W^{\theta \sig}(0,t;x,t)}{x} = 2\theta.
    \]
\end{enumerate}
\end{proposition}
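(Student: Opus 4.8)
The plan is to obtain $W^{\theta\sig}$ from the stationary horizon together with its invariance and attractiveness under the KPZ fixed point, and then to read off the five listed properties. Recall from Definition~\ref{def:SH} that the stationary horizon $G=\{G_\theta\}_{\theta\in\R}$ has c\`adl\`ag paths in $\theta$, is monotone in the sense that $G_{\theta_1}\li G_{\theta_2}$ for $\theta_1<\theta_2$ (this is built into the queuing map $\Phi^k$), and has each marginal $G_\theta\sim\BM(2\theta,\sqrt2)$; recall also from Proposition~\ref{prop:invariance_of_SH} that there is a process $W^{\theta}$, measurable with respect to $\Ll$, whose time-$t$ horizontal marginal $\bigl(x\mapsto W^{\theta}(0,t;x,t)\bigr)_{\theta}$ is a stationary horizon for every $t$, arising as the a.s.\ $s\to-\infty$ limit of the recentered KPZ fixed point from slope-$2\theta$ data. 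The task is to enrich this into a two-parameter family indexed also by $\sig\in\{-,+\}$, defined for all four arguments in $\R$ (not only forward in time), jointly continuous, and satisfying the stated identities.

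First I would fix the horizontal marginals by taking $\bigl(x\mapsto W^{\theta+}(0,t;x,t)\bigr)_\theta$ to be a right-continuous-in-$\theta$ version of the process of Proposition~\ref{prop:invariance_of_SH} and letting $W^{\theta-}(0,t;\cdot,t)$ be its left limit in $\theta$; by the c\`adl\`ag structure of $G$ these exist simultaneously for all $(t,\theta)$ off a null set. For $s<t$ I would then \emph{define}
\[
W^{\theta\sig}(y,s;x,t):=\Bigl[\sup_{z\in\R}\{W^{\theta\sig}(0,s;z,s)+\Ll(z,s;x,t)\}\Bigr]-W^{\theta\sig}(0,s;y,s),
\]
which reduces to the horizontal marginal as $t\downarrow s$, is consistent across intermediate time levels by the metric composition property~\eqref{L_comp}, and makes the variational identity~\eqref{W_var} hold by construction; that this reproduces a stationary horizon at every time level $t$, with the same monotone coupling in $\theta$ (preserved pathwise by Lemma~\ref{KPZFP_attr}), is exactly the invariance assertion of Proposition~\ref{prop:invariance_of_SH}. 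For a general pair of points I would extend by additivity, putting $W^{\theta\sig}(p;q):=W^{\theta\sig}(o;q)-W^{\theta\sig}(o;p)$ for a reference point $o$ on a horizontal line strictly below both $p$ and $q$ (which is already defined by the previous display), and checking — via~\eqref{L_comp} again — that the value does not depend on $o$. This gives properties (ii) and (iv) immediately.

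The remaining properties follow as in \cite{Busa-Sepp-Sore-22a}. For (v): the horizontal marginal is a stationary horizon by construction, so each $x\mapsto W^{\theta}(0,t;x,t)$ is $\BM(2\theta,\sqrt2)$ (Definition~\ref{def:SH} and \cite{brownian_queues}); for \emph{fixed} $\theta$ the identity $W^{\theta+}=W^{\theta-}$ holds a.s.\ because the $\theta$-jump set of $G$ is a.s.\ countable, so a deterministic $\theta$ misses it; and the asymptotic slope $\lim_{|x|\to\infty}W^{\theta\sig}(0,t;x,t)/x=2\theta$, which one has a.s.\ for each fixed $(t,\theta,\sig)$ by the strong law for Brownian motion, is upgraded to a single full-probability event valid for all $(t,\theta,\sig)$ by sandwiching $W^{\theta\sig}(0,t;x,t)$ between $W^{\theta_1-}(0,t;x,t)$ and $W^{\theta_2+}(0,t;x,t)$ for rational $\theta_1<\theta<\theta_2$ using (iii). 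Property (iii) is the monotone coupling written out: since $W^{\theta}(x,t;y,t)=W^{\theta}(0,t;y,t)-W^{\theta}(0,t;x,t)$ is the increment over $[x,y]$ of the $\theta$-th stationary-horizon curve, the ordering $W^{\theta_1-}(0,t;\cdot,t)\li W^{\theta_1+}(0,t;\cdot,t)\li W^{\theta_2-}(0,t;\cdot,t)\li W^{\theta_2+}(0,t;\cdot,t)$ for $\theta_1<\theta_2$ — exactly the $\theta$-monotonicity of $G$ — yields the four-term inequality for $x<y$. Property (i) combines: on each horizontal line $W^{\theta\sig}$ is continuous because it is Brownian; its dependence on $(x,t)$ through~\eqref{W_var} is continuous by the joint continuity of $\Ll$ and the global parabolic growth bound of Lemma~\ref{lem:Landscape_global_bound} (which localizes the supremum uniformly on compacts, in the spirit of Proposition~\ref{prop:h_pres_CFP}); continuity in $\theta$ at a point where $W^{\theta+}=W^{\theta-}$, together with the countability of the bad $\theta$-set and additivity, then gives joint continuity in all of $(y,s;x,t;\theta,\sig)$ off a null set.

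The main obstacle, which is where the genuine work of \cite{Busa-Sepp-Sore-22a} lies, is making the construction \emph{global} rather than ``for each fixed parameter value'': producing one full-probability event on which the pull-from-$-\infty$ limit exists simultaneously for all times, all directions, and both signs, on which the resulting object is jointly continuous, and — if one wants the full Busemann interpretation behind these bare identities — on which it also coincides with the limit of last-passage values to points escaping to infinity in direction $\theta$. Establishing this requires the quantitative coalescence and ordering estimates for directed-landscape semi-infinite geodesics (Propositions~\ref{prop:geodesics_from_b}--\ref{prop:all_directions} and their quantitative refinements) together with the Brownian regularity of the stationary horizon; I would carry out the steps in the order existence and measurability, then the monotone coupling in $\theta$, then joint continuity, since each feeds the next.
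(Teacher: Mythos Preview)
The paper does not prove this proposition at all: it is a direct citation of \cite[Theorem~5.1, Theorem~5.3, Lemma~5.12]{Busa-Sepp-Sore-22a}, stated in the appendix as an input with no accompanying argument. So there is no ``paper's own proof'' to compare against.

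Your sketch is a reasonable summary of the architecture of the construction in \cite{Busa-Sepp-Sore-22a}, and you are candid that the substantive work---the simultaneous-for-all-parameters limit, the coalescence estimates, and the joint continuity---is deferred to that reference. One organizational caution: you invoke Proposition~\ref{prop:invariance_of_SH} to supply the process $W^\theta$ as a starting point, but in this paper that proposition is itself a citation of \cite{Busa-Sepp-Sore-22a} and already asserts the existence of the Busemann process; so your argument is less an independent derivation than a rearrangement of the same cited inputs. If the intent is to give a genuine proof rather than a roadmap, you would need to carry out the global construction (existence of the $s\to-\infty$ limit uniformly over compacts, for all $\theta$ and both signs, on one event) directly from properties of $\Ll$, which is precisely the content of the cited theorems.
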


The next cited result concerns the existence of infinite geodesics for the directed landscape. 
\begin{proposition}  \cite[Theorem~5.9]{Busa-Sepp-Sore-22a}\label{prop:DL_SIG_cons_intro}
 The following hold on a single event of probability $1$ across all initial points $(x,s) \in \R^2$, times $s < t$, directions $\theta  \in \R$, and signs $\sigg \in \{-,+\}$.
 \begin{enumerate} [label=\rm(\roman{*}), ref=\rm(\roman{*})]  \itemsep=3pt
 \item \label{itm:intro_SIG_bd} 
 All maximizers of $z \mapsto \W^{\theta \sig}(0,s;z,s)+ \Ll(z,s;x,t)$ are finite. Furthermore, as  $x,s,t$ vary over a compact set $K\subseteq \R$ with $s < t$, the set of all maximizers is bounded.
    \item \label{itm:arb_geod_cons}  
    Let $t = s_0 > s_1 > s_2 > \cdots$ be an arbitrary decreasing sequence with $\lim_{n \to \infty} s_n = -\infty$. Set $g(t) = x$, and for each $i \ge 1$, let $g(s_i)$ be \textit{any} maximizer of $z \mapsto W^{\theta \sig}(0,s_{i};z,s_{i}) + \Ll(z,s_i; g(s_{i - 1}),s_{i - 1})$ over $z \in \R$. Then, pick \textit{any} point-to-point geodesic of $\Ll$ from $(g(s_{i}),s_{i})$ to $(g(s_{i-1}),s_{i-1})$, and for $s_{i} < s < s_{i-1}$, let $g(s)$ be the location of this geodesic at time $s$. Then, regardless of the choices made at each step, the following hold.
    \begin{enumerate} [label=\rm(\alph{*}), ref=\rm(\alph{*})]
        \item \label{itm:g_is_geod} The path $g:(-\infty,t]\to \R$ is a semi-infinite geodesic.
        \item \label{itm:weight_of_geod} For all  $ s < u \le t$,
    \be \label{eqn:SIG_weight}
    \Ll(g(s),s;g(u),u) = \W^{\theta \sig}(g(s),s;g(u),u).
    \ee
    \item \label{itm:maxes} For all  $ s < u$ in $(-\infty,s]$, $g(u)$ maximizes $z \mapsto \W^{\theta \sig}(0,s;z,s) + \Ll(z,s;g(u),u) $ over $z \in \R$. 
    \item \label{itm:geo_dir} The geodesic $g$ has direction $\theta$, i.e., $g(s)/|s| \to \theta$ as $s \to -\infty$. 
    \end{enumerate}
    \end{enumerate}
    \end{proposition}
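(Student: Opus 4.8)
The plan is to prove the four assertions in the order (i), then (ii)\ref{itm:weight_of_geod}--\ref{itm:maxes}, then (ii)\ref{itm:g_is_geod}, and finally (ii)\ref{itm:geo_dir}, leaning throughout on the two structural properties of the Busemann process in Proposition~\ref{prop:Buse_basic_properties}: additivity~\ref{itm:DL_Buse_add} and the evolution identity~\eqref{W_var}, together with the uniform parabolic bound on $\Ll$ from Lemma~\ref{lem:Landscape_global_bound} and the fact (Proposition~\ref{prop:Buse_basic_properties}\ref{it:Wslope}) that $z\mapsto W^{\theta\sigg}(0,s;z,s)$ is a two-sided Brownian motion of drift $2\theta$. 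For (i): the map $z\mapsto W^{\theta\sigg}(0,s;z,s)+\Ll(z,s;x,t)$ is a Brownian motion with drift $2\theta$ plus a function bounded above, by Lemma~\ref{lem:Landscape_global_bound}, by $-(z-x)^2/(t-s)$ up to a logarithmic correction, so it tends to $-\infty$ as $|z|\to\infty$; hence maximizers exist and lie in a bounded interval. Since a single random constant controls the $\Ll$-bound simultaneously at all space-time points and Brownian motion is locally bounded, the set of maximizers stays bounded as $(x,s,t)$ ranges over a compact subset of $\{s<t\}$; intersecting the relevant events over a countable dense parameter set and closing up via the continuity in Proposition~\ref{prop:Buse_basic_properties}\ref{itm:general_cts} yields a single full-probability event.

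The core is the weight identity and maximality. First I would record the one-sided bound $W^{\theta\sigg}(y,s;x,t)\ge\Ll(y,s;x,t)$, obtained by taking $z=y$ in~\eqref{W_var} and using $W^{\theta\sigg}(y,s;y,s)=0$. Next, fix a node $s_i$: since by construction $g(s_i)$ maximizes $z\mapsto W^{\theta\sigg}(0,s_i;z,s_i)+\Ll(z,s_i;g(s_{i-1}),s_{i-1})$, equation~\eqref{W_var} gives $W^{\theta\sigg}(0,s_i;g(s_{i-1}),s_{i-1})=W^{\theta\sigg}(0,s_i;g(s_i),s_i)+\Ll(g(s_i),s_i;g(s_{i-1}),s_{i-1})$, and subtracting $W^{\theta\sigg}(0,s_i;g(s_i),s_i)$ and using additivity yields $\Ll(g(s_i),s_i;g(s_{i-1}),s_{i-1})=W^{\theta\sigg}(g(s_i),s_i;g(s_{i-1}),s_{i-1})$. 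Now for $s<u\le t$, insert all nodes lying in $(s,u)$ plus enough interior points of each cell; within a cell $g$ is a point-to-point geodesic, so $\Ll$ is additive along $g$ there, and combining with the node identities and additivity of $W^{\theta\sigg}$ gives, along a partition $s=r_m<\dots<r_0=u$ refining the nodes,
\[
\Ll(g(s),s;g(u),u)\ \ge\ \sum_k\Ll\bigl(g(r_k),r_k;g(r_{k-1}),r_{k-1}\bigr)\ =\ \sum_k W^{\theta\sigg}\bigl(g(r_k),r_k;g(r_{k-1}),r_{k-1}\bigr),
\]
and the right-hand side equals $W^{\theta\sigg}(g(s),s;g(u),u)\ge\Ll(g(s),s;g(u),u)$ by additivity and the one-sided bound; the first inequality is the reverse triangle inequality~\eqref{triangle}. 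Hence everything is an equality, proving~\ref{itm:weight_of_geod}; since $\Ll(g(s),s;g(u),u)$ then equals the sum of $\Ll$ over the partition, $g|_{[s,u]}$ is an $\Ll$-geodesic, which is~\ref{itm:g_is_geod}. Claim~\ref{itm:maxes} is the identity $W^{\theta\sigg}(0,s;g(u),u)=W^{\theta\sigg}(0,s;g(s),s)+\Ll(g(s),s;g(u),u)$ (additivity plus~\ref{itm:weight_of_geod}) read against~\eqref{W_var}.

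The hard part will be the direction statement~\ref{itm:geo_dir}, the only place requiring quantitative control of the maximizer rather than algebra. From~\ref{itm:maxes}, with $u$ fixed and $s\to-\infty$ the point $g(s)$ maximizes $z\mapsto W^{\theta\sigg}(0,s;z,s)+\Ll(z,s;g(u),u)$; by Proposition~\ref{prop:Buse_basic_properties}\ref{it:Wslope} the first term is $2\theta z+O(\sqrt{|z|\log|z|})$ and by Lemma~\ref{lem:Landscape_global_bound} the second is $-(z-g(u))^2/(u-s)$ up to an error that is $o(|u-s|)$ on the relevant $z$-scale, so the maximizer of the leading functional $2\theta z-(z-g(u))^2/(u-s)$ sits at $z=g(u)+\theta(u-s)$; dividing by $|s|$ and letting $s\to-\infty$ gives $g(s)/|s|\to\theta$, provided the error terms are shown to be $o(|s|)$. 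To make this rigorous and uniform I would fix rational $\theta$ and a sequence $s_n\downarrow-\infty$, use the Gaussian tail of $W^{\theta\sigg}(0,s_n;\cdot,s_n)-2\theta(\cdot)$ and the tail bound $\Pp(C>m)\le c e^{-dm^{3/2}}$ of Lemma~\ref{lem:Landscape_global_bound} together with Borel--Cantelli to pin $g(s_n)$ inside $\theta|s_n|(1+o(1))$, then fill in $s\in(s_{n+1},s_n)$ using that $g$ is a geodesic and hence has a controlled modulus of continuity, and finally pass from rational $\theta$ to all $\theta$ and both signs $\sigg$ via the monotonicity of $W^{\theta\sigg}$ in $\theta$ (Proposition~\ref{prop:Buse_basic_properties}\ref{itm:DL_Buse_gen_mont}) and the induced ordering of geodesics. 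Intersecting the countably many full-probability events produced in (i) and (ii) yields the single event on which all the assertions hold simultaneously across all $(x,s)$, $s<t$, $\theta$, and $\sigg$.
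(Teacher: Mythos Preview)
The paper does not prove this proposition: it is quoted verbatim from \cite[Theorem~5.9]{Busa-Sepp-Sore-22a} and used as a black box, so there is no ``paper's own proof'' to compare against. Your sketch is therefore an attempt to reprove a result from the cited reference.

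On the content of your sketch: the algebraic core for \ref{itm:g_is_geod}, \ref{itm:weight_of_geod}, \ref{itm:maxes} is correct---the one-sided bound $W^{\theta\sigg}\ge\Ll$ from \eqref{W_var}, the node identity $\Ll=W^{\theta\sigg}$ between consecutive $s_i$, additivity of $W^{\theta\sigg}$, and the reverse triangle inequality are exactly the right ingredients. However, your sandwich display claims $\sum_k\Ll=\sum_k W^{\theta\sigg}$ along an \emph{arbitrary} partition refining the nodes, which you have not justified: the termwise equality is only established at consecutive nodes, and $s,u$ need not be nodes. The clean order is (1) prove $\Ll=W^{\theta\sigg}$ between consecutive nodes; (2) telescope over nodes and use the sandwich to conclude $g|_{[s_j,s_0]}$ is a geodesic and $\Ll=W^{\theta\sigg}$ between any two nodes; (3) for general $s<u\le t$, enclose by nodes $s_j<s<u\le s_0$, use that $g$ is now a geodesic on $[s_j,s_0]$ so $\Ll$ is additive along $g$, write $W^{\theta\sigg}(g(s_j),s_j;g(s_0),s_0)=\Ll(g(s_j),s_j;g(s_0),s_0)$ as a three-term sum in both $\Ll$ and $W^{\theta\sigg}$, and apply $\Ll\le W^{\theta\sigg}$ termwise to force equality on each piece. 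For \ref{itm:intro_SIG_bd} you should invoke the \emph{uniform} slope statement in Proposition~\ref{prop:Buse_basic_properties}\ref{it:Wslope} (valid for all $\theta,\sigg$ on one event) rather than the Brownian law, which holds only for fixed $\theta$; otherwise you cannot get a single full-probability event across all $\theta$. Your outline for \ref{itm:geo_dir} is reasonable but, as you acknowledge, this is where the real work lies in \cite{Busa-Sepp-Sore-22a}, and your Borel--Cantelli/monotonicity sketch would need substantial detail to be a proof.
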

\begin{definition} \label{def:Buse_geodesics}
We refer to the  geodesics constructed  in Proposition~\ref{prop:DL_SIG_cons_intro}\ref{itm:arb_geod_cons} as $\theta \sig$ \textit{Busemann geodesics}, or simply {\it $\theta \sig$ geodesics}. 
\end{definition}

\begin{proposition} \cite[Theorem 1.7]{Busani-2023} \label{prop:Busani_N3G}
On a single event of probability $1$, for all $\theta \in \R$, every $\theta$-directed semi-infinite geodesic is either a $\theta-$ or $\theta +$ Busemann geodesic. 
\end{proposition}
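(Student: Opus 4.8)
The plan is to reduce the statement to ruling out a $\theta$-directed semi-infinite geodesic that runs strictly between the two extreme ones from its root, and then to kill such a geodesic using coalescence and the jump structure of the stationary horizon. Fix a realization of $\Ll$ in the full-probability events of Propositions~\ref{prop:all_directions}, \ref{prop:Buse_basic_properties}, \ref{prop:DL_SIG_cons_intro}, and~\ref{prop:geod_ordering}, so that every semi-infinite geodesic has a direction (Proposition~\ref{prop:all_directions}) and the $\theta\sigg$ Busemann geodesics exist and enjoy the ordering and consistency of Propositions~\ref{prop:DL_SIG_cons_intro} and~\ref{prop:geod_ordering}. The first step is to record that a $\theta$-directed semi-infinite geodesic $g$ with root $p_0=(x_0,t_0)$ is squeezed,
\[
g^{\theta-}_{p_0}(s)\le g(s)\le g^{\theta+}_{p_0}(s)\qquad\text{for all }s\le t_0,
\]
i.e.\ the $\theta-$ (resp.\ $\theta+$) Busemann geodesic from $p_0$ is the leftmost (resp.\ rightmost) $\theta$-directed semi-infinite geodesic from $p_0$; this follows from the monotonicity of $W^{\theta\sigg}$ in $\sigg$ (Proposition~\ref{prop:Buse_basic_properties}\ref{itm:DL_Buse_gen_mont}), the asymptotic slope (Proposition~\ref{prop:Buse_basic_properties}\ref{it:Wslope}), and the ordering statements in Proposition~\ref{prop:geod_ordering}. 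Hence it suffices to prove that $g$ coincides with $g^{\theta-}_{p_0}$ or with $g^{\theta+}_{p_0}$.

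The second step feeds in coalescence of same-sign Busemann geodesics (from \cite{Busa-Sepp-Sore-22a}): for each fixed $\theta$ and $\sigg\in\{-,+\}$ any two $\theta\sigg$ Busemann geodesics coalesce almost surely, and $x\mapsto g^{\theta\sigg}_{(x,t)}$ is nondecreasing along each horizontal line, so each sign organizes the $\theta\sigg$ geodesics into a single coalescing tree rooted at $-\infty$. Suppose, for contradiction, that $g$ differs from both extremes. By consistency of geodesics (Proposition~\ref{prop:geodesics_from_b}\ref{itm: consis}) there is a level $r_0<t_0$ with $g^{\theta-}_{p_0}(s)<g(s)<g^{\theta+}_{p_0}(s)$ for all $s\le r_0$; restricting, $g|_{(-\infty,r_0]}$ is a $\theta$-directed semi-infinite geodesic from $q_0:=(g(r_0),r_0)$, squeezed between $g^{\theta-}_{q_0}$ and $g^{\theta+}_{q_0}$, and by ordering in the root together with coalescence these merge into the trees through $p_0$. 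One checks, using the construction in Proposition~\ref{prop:DL_SIG_cons_intro}\ref{itm:arb_geod_cons}, that if $g$ ever joined either tree it would itself be a $\theta-$ or $\theta+$ Busemann geodesic; so under our assumption $g$ stays strictly between the $\theta-$ and $\theta+$ coalescing trees on all of $(-\infty,r_0]$ — an infinite ``river'' of $\theta$-directed geodesic never joining either tree.

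The main obstacle is to show that such a river cannot occur with positive probability, for any (random) $\theta$. The structural input is that the law of $(W^{\theta-},W^{\theta+})_{\theta\in\R}$ is the stationary horizon (Proposition~\ref{prop:Buse_basic_properties}\ref{it:Wslope}): the set of $\theta$ with $W^{\theta-}\ne W^{\theta+}$ is almost surely countable, and at each such $\theta$ the two increment profiles $z\mapsto W^{\theta-}(0,s;z,s)$ and $z\mapsto W^{\theta+}(0,s;z,s)$ differ only through a single Pitman-type correction, so against any fixed $\Ll(\cdot,s;q)$ their maximizer sets agree except at one interface location, leaving no room for a third geodesic to branch off. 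Turning this into a proof of the absence of the river is the crux; I would attack it either via a mass-transport / Burton--Keane argument on $\R^2$ applied to the river (in the spirit of the discrete ``no three geodesics'' theorems), imported to the directed landscape through the Poissonian approximation of Proposition~\ref{prop:Poisson_to_DL}, or by a direct analysis of the queueing structure of the stationary horizon's jumps. A secondary, more routine difficulty is the simultaneity in $\theta$: one first proves the dichotomy for rational $\theta$, then uses monotonicity of geodesics in $\theta$ together with left/right limits of the Busemann process to extend it to all $\theta$, including the random exceptional directions, on a single null set.
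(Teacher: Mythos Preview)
The paper does not prove this proposition at all: it is quoted verbatim as \cite[Theorem~1.7]{Busani-2023} and used as a black box. So there is no ``paper's own proof'' to compare against; you are attempting to reprove a result that the authors import wholesale from Busani's work.

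As for the proposal itself, the reduction steps (sandwiching a $\theta$-directed geodesic between the $\theta-$ and $\theta+$ Busemann geodesics from the same root, and then arguing that a strictly intermediate ``river'' cannot exist) are in the right spirit, but you yourself identify the real content and then do not carry it out. The sentence ``Turning this into a proof of the absence of the river is the crux; I would attack it either via a mass-transport / Burton--Keane argument \ldots\ or by a direct analysis of the queueing structure of the stationary horizon's jumps'' is not a proof but a wish list. Burton--Keane style arguments in the continuum directed landscape are delicate (there is no lattice amenability to lean on directly), and the claim that at a jump direction the two maximizer sets of $W^{\theta-}$ and $W^{\theta+}$ ``agree except at one interface location, leaving no room for a third geodesic'' is exactly the nontrivial geometric statement that Busani's paper establishes; asserting it here is circular. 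The ``secondary'' difficulty you mention, extending from rational $\theta$ to all $\theta$ simultaneously, is also not routine: the exceptional set of $\theta$ where $W^{\theta-}\neq W^{\theta+}$ is random, so a countable-union argument over rationals does not cover it, and monotonicity alone does not rule out an anomalous geodesic appearing precisely at a jump direction. In short, your outline captures the architecture of the argument but leaves the load-bearing step unproved.
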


\begin{proposition} \cite[Theorem~6.3, Theorem~7.1]{Busa-Sepp-Sore-22a} \label{prop:geod_ordering}
On a single event of probability $1$, for all $\theta \in \R$ and compact sets $K \subseteq \R^2$, there exists a time $T$ such that, for any two $\theta-$ geodesics $g_1^-,g_2^-$ and any two $\theta +$ geodesics, $g_1^+,g_2^+$ rooted at points in $K$, 
\[
g_1^-(s) = g_2^-(s) \le g_1^+(s) = g_2^+(s),\quad\text{for all }s \le T.
\]
If $\theta$ is point of continuity of the Busemann process, the inequality is an equality. If not, then there exists a $T\in\R$ such that strict inequality holds.  
\end{proposition}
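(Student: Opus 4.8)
The plan is to reconstruct the argument of \cite[Theorems~6.3 and~7.1]{Busa-Sepp-Sore-22a}, which splits into three pieces carried out in this order: (a) the cross-sign ordering $g^{\theta-}\le g^{\theta+}$, (b) coalescence of equal-sign Busemann geodesics rooted in a compact set, and (c) the continuity-point dichotomy. Throughout I will use the consistency property of Busemann geodesics (Proposition~\ref{prop:DL_SIG_cons_intro}\ref{itm:arb_geod_cons}), which reduces everything to comparing "first steps''.

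\textbf{Ordering.} By Proposition~\ref{prop:DL_SIG_cons_intro}\ref{itm:arb_geod_cons}, a $\theta\sig$ geodesic rooted at $(x,s)$ is obtained, going backward through any decreasing sequence of times, from successive maximizers of $z\mapsto W^{\theta\sig}(0,r;z,r)+\Ll(z,r;g(r'),r')$ interpolated by point-to-point geodesics. So it suffices to show: for $r<r'$ and $x_1\le x_2$, every maximizer of $z\mapsto W^{\theta-}(0,r;z,r)+\Ll(z,r;x_1,r')$ lies weakly to the left of every maximizer of $z\mapsto W^{\theta+}(0,r;z,r)+\Ll(z,r;x_2,r')$. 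This is the standard monotone-comparison-of-argmaxes lemma: if $F_2-F_1$ and $G_2-G_1$ are both nondecreasing, then any maximizer of $F_1+G_1$ is $\le$ any maximizer of $F_2+G_2$ (otherwise swapping the two argmax values strictly increases the sum of the two functionals). Here $F_2-F_1$ nondecreasing is exactly the Busemann monotonicity of Proposition~\ref{prop:Buse_basic_properties}\ref{itm:DL_Buse_gen_mont}, and $\Ll(\cdot,r;x_2,r')-\Ll(\cdot,r;x_1,r')$ nondecreasing is the usual monotonicity of directed-landscape increments used to construct geodesics in \cite{Directed_Landscape}. Having ordered the first-step locations, I induct down the time grid, using the non-crossing of leftmost/rightmost point-to-point geodesics with ordered endpoints to propagate the inequality to all $u\le s$. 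The uniform-over-$K$ time $T$ then comes from the sandwich argument in the next step.

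\textbf{Coalescence.} This is the crux and the main obstacle. Fix $x_1<x_2$ and let $g_1,g_2$ be the $\theta+$ geodesics rooted at $(x_1,0)$ and $(x_2,0)$. By the ordering, $g_1\le g_2$ pointwise, and by Proposition~\ref{prop:DL_SIG_cons_intro}\ref{itm:arb_geod_cons} once $g_1(s_0)=g_2(s_0)$ they coincide on all of $(-\infty,s_0]$; so non-coalescence forces $g_1(s)<g_2(s)$ for every $s\le 0$. I would rule this out as in \cite{Busa-Sepp-Sore-22a}: such a pair bounds an infinite geodesic-free corridor, inside which one can grow a $\theta$-directed semi-infinite geodesic that is neither a $\theta-$ nor a $\theta+$ Busemann geodesic, contradicting Proposition~\ref{prop:Busani_N3G}. (Alternatively, a Licea--Newman counting argument: by space--time stationarity, Lemma~\ref{lm:landscape_symm}\ref{itm:time_stat}, ``the $\theta+$ geodesics from $(k,0)$ and $(k+1,0)$ never coalesce'' has the same positive probability for all $k\in\Z$ under the hypothesis, forcing $\gtrsim N$ pairwise-disjoint $\theta+$ geodesics through a fixed horizontal line at time $-M$, whose positions are confined by the direction-$\theta$ property and the global bounds of Lemma~\ref{lem:Landscape_global_bound} to an interval that is $o(N)$ once $N\to\infty$, a contradiction after arranging the quantifiers.) Granted one coalescing pair, the compact-set statement follows by sandwiching: enclose $K$ in $[-R,R]^2$; by Proposition~\ref{prop:DL_SIG_cons_intro}\ref{itm:intro_SIG_bd} the geodesics emanating from $K$ lie at time $-R$ in a bounded interval $[a,b]$, so by consistency each is, for times $\le -R$, a $\theta+$ geodesic rooted in $[a,b]\times\{-R\}$, hence trapped by the ordering between the $\theta+$ geodesics from $(a,-R)$ and $(b,-R)$, which coalesce; so all of them coalesce at a common time $T$. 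Passing from a countable dense set of directions to all $\theta$ uses monotonicity in $\theta$, exactly as in Propositions~\ref{prop:all_directions} and~\ref{prop:Buse_basic_properties}.

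\textbf{Continuity dichotomy.} If $\theta$ is a continuity point of the Busemann process then $W^{\theta-}=W^{\theta+}$ on a full-probability event (Proposition~\ref{prop:Buse_basic_properties}\ref{it:Wslope} gives this for each fixed $\theta$, and continuity in $\theta$ upgrades it to all $\theta$ simultaneously), so the $\theta-$ and $\theta+$ variational problems coincide and, combined with coalescence, $g_1^-(s)=g_2^-(s)=g_1^+(s)=g_2^+(s)$ for $s\le T$. If $\theta$ is a discontinuity point, then on a positive-probability event $x\mapsto W^{\theta+}(0,0;x,0)-W^{\theta-}(0,0;x,0)$ is nondecreasing and nonconstant, hence strictly increasing on some interval; feeding this strict monotonicity into the argmax-comparison lemma shows the first-step maximizers of the $\theta-$ and $\theta+$ problems rooted at a common point are strictly ordered for all sufficiently negative horizons, whence $g^{\theta-}(s)<g^{\theta+}(s)$ for $s\le T$. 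Upgrading ``positive probability'' to ``almost surely'' uses a $0$--$1$ law coming from the independence of $\F_{\le r}$ and $\F_{\ge r}$ (Definition~\ref{def:sigma_alg}) together with translation invariance. I expect all the genuine difficulty to sit in the coalescence step; the ordering and the dichotomy are bookkeeping on top of the Busemann monotonicity and consistency already in hand.
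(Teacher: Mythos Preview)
The paper does not prove this proposition; it simply cites \cite[Theorems~6.3 and~7.1]{Busa-Sepp-Sore-22a} and uses the result as a black box. So there is no in-paper proof to compare against, and your write-up is a reconstruction of the argument from the cited reference rather than something the present paper supplies.

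That said, a word on your sketch. The ordering step and the compact-set sandwich are fine and match the standard argument. The coalescence step, however, has a circularity problem as written: your primary suggestion is to invoke Proposition~\ref{prop:Busani_N3G} (every $\theta$-directed geodesic is a $\theta\pm$ Busemann geodesic) to rule out a corridor, but in the literature the dependency runs the other way---Busani's N3G result in \cite{Busani-2023} builds on the coalescence and Busemann structure established in \cite{Busa-Sepp-Sore-22a}, so you cannot cite it to prove the latter. Your alternative Licea--Newman sketch is closer to what \cite{Busa-Sepp-Sore-22a} actually does, but the quantifiers need care: the bound ``interval of size $o(N)$'' does not follow from direction alone (the window at time $-M$ has size $o(M)$, not $o(N)$), and the real argument combines translation invariance with a Burton--Keane-style local modification to show that non-coalescence has probability zero. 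For the dichotomy, the ``positive probability $\Rightarrow$ probability one'' upgrade is not a tail $0$--$1$ law as you suggest; in \cite{Busa-Sepp-Sore-22a} it comes from the fact that $W^{\theta+}-W^{\theta-}$ is a nondecreasing process whose law is translation-invariant, so nonconstancy somewhere forces nonconstancy everywhere. These are fixable, but as written the coalescence step leans on a result that logically depends on what you are proving.
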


\section{The state space $\CFP$} \label{appx:state_space}
In this section, we prove topological details about the space $\CFP$ defined in Section~\ref{sec:state_space}.

The next lemma is an alternative description of the convergence in the space $\CFP$.
\begin{lemma} \label{lem:alt_conv}
    For a sequence $(f_n)_n$ taking values in  $\CFP$, we have $f_n\to f \in \CFP$ if and only if the following hold:
    \begin{enumerate} [label=\rm(\roman{*}), ref=\rm(\roman{*})]  
    \item \label{it:fntof_unif} $f_n$ converges to $f$ uniformly on compact sets.
    \item \label{it:sup_bd} For each $a > 0$, 
\[
\sup_{n \ge 1, x \in \R}[f_n(x) \vee f(x)  - ax^2]  < \infty.
\]
\end{enumerate}
\end{lemma}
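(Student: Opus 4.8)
The statement is a routine unpacking of the definition of the topology on $\CFP$ given in Section~\ref{sec:state_space}, so the plan is to verify the two implications directly. Recall that, by definition, $f_n \to f$ in $\CFP$ means (1) $f_n \to f$ uniformly on compact sets, and (2) for each $a > 0$, $\lim_{n\to\infty}\sup_{x\in\R}[f_n(x) - ax^2] = \sup_{x\in\R}[f(x) - ax^2]$. The claim is that condition (2) can be replaced by the single uniform bound $\sup_{n\ge 1, x\in\R}[f_n(x)\vee f(x) - ax^2] < \infty$ for each $a>0$, \emph{given} that (1) already holds.

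For the forward implication, assume $f_n \to f$ in $\CFP$. Condition~\ref{it:fntof_unif} is immediate. For condition~\ref{it:sup_bd}, fix $a > 0$. Since $f \in \CFP$, we have $M_a := \sup_x[f(x) - ax^2] < \infty$, and by the convergence of the suprema in the definition, the sequence $\bigl(\sup_x[f_n(x) - ax^2]\bigr)_n$ converges to $M_a$, hence is bounded, say by $M_a'$. Then $\sup_{n,x}[f_n(x)\vee f(x) - ax^2] \le M_a \vee M_a' < \infty$, which is exactly~\ref{it:sup_bd}.

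For the reverse implication, assume~\ref{it:fntof_unif} and~\ref{it:sup_bd}. First, \ref{it:sup_bd} with the uniform bound applied to $f$ alone shows $f \in \CFP$, so the statement makes sense. It remains to check condition (2) of the definition, i.e. $\lim_n \sup_x[f_n(x) - ax^2] = \sup_x[f(x) - ax^2]$ for each $a > 0$. Fix $a > 0$ and let $C_a := \sup_{n,x}[f_n(x)\vee f(x) - ax^2] < \infty$. The key point is that for any continuous $g$ with $g(x) - ax^2 \le C_a$ everywhere, the supremum $\sup_x[g(x) - (a/2)x^2]$ is attained on the fixed compact set $K_a := \{x : (a/2)x^2 \le C_a + |g(0)|\}$ — wait, more carefully: one uses the bound with a slightly smaller quadratic. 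Concretely, for $|x|$ large, $g(x) - (a/2)x^2 \le C_a + ax^2 - (a/2)x^2 = C_a + (a/2)x^2$... that is not decaying. Instead, I would argue as follows: the quantity of interest uses the quadratic $ax^2$ itself. Given $\varepsilon > 0$, pick $R$ large enough (depending only on $a$, $C_a$, and the value $M_{a/2} := \sup_x[f(x)\vee\sup_n f_n(x) - (a/2)x^2]$, which is finite by applying~\ref{it:sup_bd} with $a/2$) so that for $|x| > R$ and every $n$, $f_n(x) - ax^2 \le M_{a/2} - (a/2)x^2 < \sup_y[f(y) - ay^2] - \varepsilon$ and likewise for $f$; this is possible because $(a/2)x^2 \to \infty$. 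Hence for all $n$, $\sup_x[f_n(x) - ax^2] = \sup_{|x|\le R}[f_n(x) - ax^2]$ up to an error $\varepsilon$, and similarly for $f$. On the compact set $\{|x|\le R\}$, the uniform convergence~\ref{it:fntof_unif} gives $\sup_{|x|\le R}[f_n(x) - ax^2] \to \sup_{|x|\le R}[f(x) - ax^2]$. Combining, $\limsup_n$ and $\liminf_n$ of $\sup_x[f_n(x) - ax^2]$ both lie within $\varepsilon$ of $\sup_x[f(x) - ax^2]$; letting $\varepsilon \downarrow 0$ finishes the proof.

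The only mild subtlety — and the one step deserving care — is the uniform-in-$n$ truncation to a compact set: one must extract the finiteness of $\sup_{n,x}[f_n(x) - (a/2)x^2]$ from hypothesis~\ref{it:sup_bd} (applied with exponent $a/2$) and then use the genuinely decaying quadratic $-(a/2)x^2$ that remains after subtracting $ax^2$ from a function bounded by $C_{a/2} + (a/2)x^2$... again this does not decay. The correct bookkeeping: from~\ref{it:sup_bd} with exponent $a$, $f_n(x) \le C_a + ax^2$; this alone does not localize $\sup[f_n(x)-ax^2]$. So one genuinely must invoke~\ref{it:sup_bd} at a \emph{smaller} exponent, say $b \in (0,a)$: then $f_n(x) - ax^2 \le C_b + bx^2 - ax^2 = C_b - (a-b)x^2 \to -\infty$, uniformly in $n$, which localizes the supremum to $\{(a-b)x^2 \le C_b - \sup_y[f(y)-ay^2] + 1\}$, a compact set independent of $n$. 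I expect this to be the main (and essentially only) obstacle; everything else is bookkeeping with uniform convergence on compacts.
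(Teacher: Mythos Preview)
Your proposal is correct and follows essentially the same approach as the paper: apply hypothesis~\ref{it:sup_bd} with a \emph{smaller} exponent (the paper uses $a/2$, you eventually use a generic $b<a$) to force $f_n(x)-ax^2\to-\infty$ uniformly in $n$, localize the supremum to a fixed compact set, and conclude via uniform convergence. The paper's write-up is slightly cleaner in that it uses $M:=\inf_n f_n(0)>-\infty$ (finite by $f_n(0)\to f(0)$) as the threshold guaranteeing the supremum is achieved on $[-C,C]$ for every $n$, avoiding your $\varepsilon$/$\limsup$--$\liminf$ bookkeeping; but this is cosmetic.
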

\begin{proof}
    Recall that, by definition, $f_n \to f$ in $\CFP$ if the following two conditions hold:
    \begin{enumerate} [label=\rm(\arabic{*}), ref=\rm(\arabic{*})] 
        \item \label{it:fntof1} $f_n$ converges to $f$ uniformly on compact sets.
    \item \label{it:sup_conv} For each $a > 0$, 
\[
\lim_{n \to \infty}\sup_{x \in \R}[f_n(x)  - ax^2] = \sup_{x \in \R}[f(x)  - ax^2].
\]
\end{enumerate}
    Condition~\ref{it:sup_conv} clearly implies Condition~\ref{it:sup_bd}, so it suffices to prove that Conditions~\ref{it:fntof_unif}--\ref{it:sup_bd} together imply Condition~\ref{it:sup_conv}.

    Assume that Conditions~\ref{it:fntof_unif}--\ref{it:sup_bd} hold, and let $a > 0$.  Since $f_n(0) \to f(0)$, we have that
    \be \label{eq:Mdef}
    M := \inf_n f_n(0) > -\infty.
    \ee
    By Condition~\ref{it:sup_bd} applied with $\f{a}{2}$ in place of $a$, there exists $A > 0$ so that 
    \[
    f_n(x) \vee f(x) \le A + \f{a}{2}x^2,\quad\text{for all } x \in \R, n \ge 1.
    \]
    Hence, there exists $C > 0$ so that, for all $n \ge 1$,
    \[
    \sup_{x \notin [-C,C]} [f_n(x) \vee f(x) - ax^2] < M.
    \]
    By definition \eqref{eq:Mdef} of $M$, this implies that
    \be \label{eq:f-ax2_restrict}
    \sup_{x \in \R}[f_n(x) - ax^2] = \sup_{x \in [-C,C]}[f_n(x) - ax^2],\quad\text{and}\quad \sup_{x \in \R}[f(x) - ax^2] = \sup_{x \in [-C,C]}[f(x) - ax^2].
    \ee
The conclusion for Condition~\ref{it:sup_conv} now follows from the uniform convergence of $f_n$ to $f$. 
\end{proof}

\begin{lemma} \label{lem:restrict_max}
    Assume that $f_n \to f$ in $\CFP$. Then, with probability $1$, for each $s \in \R$ and each compact set $K \subseteq \R \times \R_{>s}$, there exists $C > 0$ such that, for all $(x,t) \in K$ and $n \ge 1$,
    \[
    h(x,t \mid s, f_n) = \sup_{z \in [-C,C]}[f_n(z) + \Ll(z,s;x,t)],\quad\text{and}\quad h(x,t \mid s, f) = \sup_{z \in [-C,C]}[f(z) + \Ll(z,s;x,t)].
    \]
\end{lemma}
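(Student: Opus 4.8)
The plan is to use the growth bound on the directed landscape (Lemma~\ref{lem:Landscape_global_bound}) together with the uniform-in-$n$ parabolic control on $f_n$ furnished by Lemma~\ref{lem:alt_conv} to show that, on a fixed compact set of endpoints, the supremum defining $h$ is attained in a bounded region, uniformly in $n$. Fix $s \in \R$ and a compact $K \subseteq \R \times \R_{>s}$; let $t_0 = \inf\{t : (x,t) \in K\} > s$ and let $R > 0$ be such that $|x| \le R$ for all $(x,t) \in K$. By Lemma~\ref{lem:Landscape_global_bound}, on a full-probability event there is a random constant $C_0$ such that for $(z,s;x,t)$ with $|x| \le R$, $(x,t) \in K$, and $z$ ranging over $\R$,
\[
\Ll(z,s;x,t) \le -\frac{(z-x)^2}{t-s} + C_0(t-s)^{1/3}\log^{4/3}\!\Bigl(\frac{2(\|v\|+2)}{t-s}\Bigr)\log^{2/3}(\|v\|+2),
\]
where $\|v\|$ is the Euclidean norm of $(z,s;x,t)$; since $t-s \ge t_0 - s > 0$ on $K$, the right-hand side is bounded above by $-\frac{(z-x)^2}{t-s} + C_1(1 + \log^2(|z|+2))$ for a random constant $C_1$ depending on $K, s, R$. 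In particular there is a random $C_2$ with $\Ll(z,s;x,t) \le C_2 - z^2/(2(t_1 - s))$ for all $z$ with $|z|$ large and all $(x,t) \in K$, where $t_1 = \sup\{t : (x,t)\in K\}$.

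Next I would invoke Lemma~\ref{lem:alt_conv}\ref{it:sup_bd}: applied with $a = \tfrac{1}{4(t_1-s)}$, it gives a deterministic $A$ with $f_n(z) \vee f(z) \le A + \tfrac{1}{4(t_1-s)} z^2$ for all $z \in \R$ and $n \ge 1$. Combining with the landscape bound, for all $n \ge 1$ and $(x,t) \in K$,
\[
f_n(z) + \Ll(z,s;x,t) \le A + C_2 - \frac{z^2}{4(t_1-s)} \quad\text{for all $z$ with $|z| \ge C_3$,}
\]
for a suitable random $C_3$. On the other hand, evaluating the supremum at $z = x$ gives $h(x,t\mid s,f_n) \ge f_n(x) + \Ll(x,s;x,t) \ge \inf_{n}\inf_{|x|\le R} f_n(x) + \inf_{(x,t)\in K}\Ll(x,s;x,t) =: -C_4$, where $C_4 < \infty$ because $f_n \to f$ uniformly on $[-R,R]$ (so the $n$-infimum of $f_n$ on $[-R,R]$ is finite) and $\Ll$ is continuous (so its infimum over the compact set $\{(x,s;x,t):(x,t)\in K\}$ is finite). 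Choosing $C$ large enough that $A + C_2 - \tfrac{C^2}{4(t_1-s)} < -C_4$ and $C \ge C_3 \vee R$, we conclude that for every $(x,t) \in K$ and every $n \ge 1$ the contribution of $\{|z| > C\}$ to the supremum is strictly below the lower bound $-C_4$, hence the supremum over $\R$ equals the supremum over $[-C,C]$; the identical argument (replacing $f_n$ by $f$ throughout) handles $h(x,t\mid s,f)$. This $C$ is random but does not depend on $n$ or on $(x,t) \in K$, which is exactly the assertion.

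The only mild subtlety — and the step I expect to require the most care — is organizing the logarithmic correction term in Lemma~\ref{lem:Landscape_global_bound} so that it is genuinely dominated by the $-z^2/(t-s)$ term uniformly over the (compact, hence bounded) set of endpoints $(x,t)$; since $\log^2(|z|+2) = o(z^2)$ this is elementary, but one must be slightly careful that the $\log^{4/3}\bigl(2(\|v\|+2)/(t-s)\bigr)$ factor is controlled using $t - s \ge t_0 - s > 0$ and $|x| \le R$, so that $\|v\|$ grows only like $|z|$. Everything else is a routine comparison, and the fact that the resulting threshold $C$ is a single random constant valid simultaneously for all $n$ and all endpoints in $K$ follows because none of $A$, $C_2$, $C_3$, $C_4$ depends on $n$.
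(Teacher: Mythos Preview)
Your proposal is correct and follows essentially the same approach as the paper: both arguments combine the uniform-in-$n$ parabolic bound from Lemma~\ref{lem:alt_conv}\ref{it:sup_bd} with the growth bound Lemma~\ref{lem:Landscape_global_bound} to control the tail of $z\mapsto f_n(z)+\Ll(z,s;x,t)$, and compare against a lower bound obtained by evaluating at a fixed test point (the paper uses $z=0$, you use $z=x$; either works). The only cosmetic difference is that the paper phrases the choice of parabolic exponent abstractly as any $a<\inf_{(x,t)\in K}\tfrac{1}{t-s}$, whereas you write out explicit constants like $\tfrac{1}{4(t_1-s)}$.
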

\begin{proof}
    By the fact that $f_n(0) \to f(0)$ and the continuity of the directed landscape (Lemma~\ref{lem:Landscape_global_bound}),  we have that 
    \be \label{eq:largeM_Def}
    M := \inf_{(x,t) \in K, n \ge 1} [f_n(0) + \Ll(0,s;x,t)] \wedge \inf_{(x,t) \in K}[f(0) + \Ll(0,s;x,t)]  > -\infty.
    \ee

    By compactness of $K$ we may choose $a \in \bigl(0, \inf\{\f{1}{t-s}: (x,t) \in K\}\bigr)$. By the convergence $f_n \to f$ in $\CFP$ and Lemma~\ref{lem:alt_conv}\ref{it:sup_bd}, there exists $A > 0$ so that 
    \[
    f_n(z) \vee f(z) \le A + az^2,\quad\forall z \in \R, n \ge 1.
    \]
    From Lemma~\ref{lem:Landscape_global_bound}, we have that $\Ll(z,s;x,t) \sim - \f{(z-x)^2}{t - s}$, with an explicit error bound. Since $a < \f{1}{t-s}$ for all $(x,t) \in K$, the error bounds in Lemma~\ref{lem:Landscape_global_bound} imply there exists $C > 0$ such that, for all $n \ge 1$,
    \be \label{eq:lessM}
    \sup_{z \notin [-C,C], (x,t) \in K}[f_n(z) \vee f(z) + \Ll(z,s;x,t)]  < M.
    \ee
  Then, from the definition of $M$ in \eqref{eq:largeM_Def}, for all $n \ge 1$, and $(x,t) \in K$,
 \[
 h(x,t \mid s, f_n) = \sup_{z \in \R}[f_n(z) + \Ll(z,s;x,t)] \ge M \overset{\eqref{eq:lessM}}{\Longrightarrow} h(x,t \mid s, f_n) = \sup_{z \in [-C,C]}[f_n(z) + \Ll(z,s;x,t)],
 \]
 and the same holds if we replace $f_n$ with $f$. 
\end{proof}

\begin{lemma} \label{lem:h-ax2_bd}
    Assume that $f_n \to f$ in $\CFP$. Then, with probability $1$, for each $t > s$ and $a > 0$,  
    \[
    \sup_{n \ge 1,x \in \R}[h(x,t \mid s, f_n) \vee h(x,t \mid s, f) - ax^2] < \infty.
    \]
\end{lemma}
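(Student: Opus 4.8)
\textbf{Proof plan for Lemma~\ref{lem:h-ax2_bd}.} The strategy is to combine the quadratic growth bound on the initial data (provided by the convergence $f_n \to f$ in $\CFP$ and Lemma~\ref{lem:alt_conv}\ref{it:sup_bd}) with the parabolic decay of the directed landscape from Lemma~\ref{lem:Landscape_global_bound}. Fix $t > s$ and $a > 0$. By Lemma~\ref{lem:alt_conv}\ref{it:sup_bd} applied with parameter $\frac{a'}{2}$ for a suitably small $a' > 0$ to be chosen, there is a constant $A < \infty$ with $f_n(z) \vee f(z) \le A + \frac{a'}{2} z^2$ for all $z \in \R$ and all $n \ge 1$. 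On the other hand, Lemma~\ref{lem:Landscape_global_bound} gives, on a full-probability event, a random constant $C_0$ such that $\Ll(z,s;x,t) \le -\frac{(z-x)^2}{t-s} + C_0(t-s)^{1/3}\log^{4/3}\!\bigl(\tfrac{2(\|v\|+2)}{t-s}\bigr)\log^{2/3}(\|v\|+2)$, where $v = (z,s;x,t)$; since $t-s$ is fixed, the error term is bounded by $C_1 + C_1(\log(|z| + |x| + 2))^2$ for a random constant $C_1$, which is $o(z^2 + x^2)$.

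\textbf{Main estimate.} Then for any $x \in \R$, $n \ge 1$,
\[
h(x,t\mid s, f_n) = \sup_{z \in \R}\bigl[f_n(z) + \Ll(z,s;x,t)\bigr] \le \sup_{z \in \R}\Bigl[A + \tfrac{a'}{2}z^2 - \tfrac{(z-x)^2}{t-s} + C_1 + C_1(\log(|z|+|x|+2))^2\Bigr],
\]
and the same bound holds for $h(x,t\mid s,f)$. Choosing $a' < \frac{1}{t-s}$, the coefficient of $z^2$ in $\frac{a'}{2}z^2 - \frac{(z-x)^2}{t-s}$ is strictly negative, so the supremum over $z$ is attained near $z$ of order $|x|$, and a routine completion-of-square computation (absorbing the logarithmic term, which is lower order) shows
\[
h(x,t\mid s, f_n) \vee h(x,t \mid s, f) \le A' + a'' x^2
\]
for some random constants $A' < \infty$ and $a'' < \infty$, uniformly in $n \ge 1$ and $x \in \R$. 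Taking $a'$ small enough at the outset so that $a'' \le a$ (this is possible since $a''$ depends continuously on $a'$ and $a'' \to 0$ as $a' \to 0$), we conclude $\sup_{n \ge 1, x \in \R}[h(x,t\mid s,f_n) \vee h(x,t\mid s,f) - ax^2] \le A' < \infty$, as desired.

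\textbf{Expected obstacle.} The main technical point is making the dependence of the final quadratic coefficient on $a'$ explicit enough to guarantee $a'' \le a$; this is not deep, but requires tracking the completion-of-square carefully, since the logarithmic error term from Lemma~\ref{lem:Landscape_global_bound}, while lower order, must be dominated by a small fraction of the negative $z^2$ coefficient uniformly over all $x$. One clean way to handle it: since the $z^2$ coefficient $\frac{a'}{2} - \frac{1}{t-s}$ is bounded away from $0$, for any $\delta > 0$ one has $C_1 (\log(|z|+|x|+2))^2 \le \delta(z^2 + x^2) + C_\delta$ with $C_\delta$ random but finite, so the logarithmic term can be absorbed into an arbitrarily small perturbation of the quadratic terms; then one optimizes over $z$ with the resulting (still negative-definite in $z$) quadratic, giving a bound of the form $A' + a'' x^2$ where $a'' \to 0$ as $a' \to 0$ and $\delta \to 0$. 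Everything else is bookkeeping with no new ideas beyond those already used in the proof of Lemma~\ref{lem:restrict_max}.
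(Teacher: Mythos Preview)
Your proposal is correct and follows essentially the same approach as the paper: combine the uniform quadratic upper bound on the initial data from Lemma~\ref{lem:alt_conv}\ref{it:sup_bd} with the parabolic decay of $\Ll$ from Lemma~\ref{lem:Landscape_global_bound}, then optimize over $z$ and choose the initial-data growth parameter small enough. The only cosmetic difference is that the paper absorbs the sub-polynomial error in Lemma~\ref{lem:Landscape_global_bound} into linear terms $\ve|x|+\ve|z|$ rather than your quadratic $\delta(x^2+z^2)$, which makes the completion-of-square formula slightly cleaner, but the argument is the same.
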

\begin{proof}
    By Lemma~\ref{lem:alt_conv}\ref{it:sup_bd} and the assumption that $f_n \to f$ in $\CFP$, for each $b > 0$, there exists $C_b > 0$ such that
    \be \label{Cb_bd}
    f_n(x) \vee f(x) \le C_b + b x^2,\quad\text{for all }x \in \R, n \ge 1.
    \ee
    Then, by the growth bounds for $\Ll$ in Lemma~\ref{lem:Landscape_global_bound},  for $s < t$ and $\ve > 0$,  there exists $C_\ve' = C_\ve'(s,t)$ such that
    \[
    \Ll(z,s;x,t)\le  C_\ve' -\f{(x-z)^2}{t-s} + \ve|x| + \ve|z|,\quad\text{for all }x,z \in \R.
    \]
    Then, combined with \eqref{Cb_bd}, we have, if $b < \f{1}{t-s}$, 
    \begin{align*}
        h(x,t \mid s,f_n) \vee h(x,t \mid s,f) &\le C_b + C_\ve' + \sup_{z \in \R}\Bigl[bz^2 -\f{(x-z)^2}{t-s} + \ve|x| + \ve|z|\Bigr] \\
        &\le C_b + C_\ve' + \ve|x| +  \f{1}{4(1 - b(t-s))} (4bx^2 + 4\ve|x| + (t-s)\ve^2).
    \end{align*}
    For $a > 0$, we then obtain the conclusion by choosing $b < \f{1}{(t-s)}$ sufficiently small so that 
    \[
    \f{b}{1-b(t-s)} < a. \qedhere
    \]
\end{proof}

The following gives continuity of the KPZ fixed point from arbitrary intial data. It is known  from \cite[Theorem~4.13]{KPZfixed}, but we provide a proof since we work with a larger state space. 
\begin{proposition} \label{prop:h_pres_CFP}
    Assume $f \in \CFP$. Then, for each $t > s$, $h(\cdot,t \mid s, f) \in \CFP$ with probability $1$.  Moreover, the function
    \[
    (x,t) \mapsto h(x,t \mid s, f)
    \]
    is a continuous function $\R \times \R_{>s} \to \R$. 
\end{proposition}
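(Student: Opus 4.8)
The plan is to reduce both claims---membership in $\CFP$ and joint continuity---to the statements already established in Lemmas~\ref{lem:restrict_max}, \ref{lem:h-ax2_bd}, and \ref{lem:alt_conv}, together with the continuity of $\Ll$ from Lemma~\ref{lem:Landscape_global_bound}. The key observation is that each of those three lemmas was stated for a \emph{convergent} sequence $f_n\to f$ in $\CFP$, but the proofs only use the single function $f$ (take the constant sequence $f_n\equiv f$); so in fact we may invoke them with $f_n\equiv f$ to obtain: (a) for every compact $K\subseteq\R\times\R_{>s}$ there is a random $C>0$ with $h(x,t\mid s,f)=\sup_{z\in[-C,C]}[f(z)+\Ll(z,s;x,t)]$ for all $(x,t)\in K$; and (b) for each $t>s$ and $a>0$, $\sup_{x\in\R}[h(x,t\mid s,f)-ax^2]<\infty$.

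First I would fix $t>s$ and prove $h(\cdot,t\mid s,f)\in\CFP$: this is exactly statement (b) above, since the defining condition~\eqref{eq:CFP} for membership in $\CFP$ is precisely that $\sup_{x}[g(x)-ax^2]<\infty$ for all $a>0$. Here one should note $h(x,t\mid s,f)>-\infty$ pointwise, which follows because $f$ is not identically $-\infty$ (it is continuous, hence real-valued) and $\Ll$ is finite, so the supremum defining $h$ is bounded below by evaluating at a single point $z$. Next I would prove joint continuity of $(x,t)\mapsto h(x,t\mid s,f)$ on $\R\times\R_{>s}$. Fix a point $(x_0,t_0)$ with $t_0>s$ and a compact neighborhood $K\subseteq\R\times\R_{>s}$ of it. By statement (a), on $K$ we may write $h(x,t\mid s,f)=\sup_{z\in[-C,C]}[f(z)+\Ll(z,s;x,t)]$ for a fixed (random) $C$. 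The function $(z,x,t)\mapsto f(z)+\Ll(z,s;x,t)$ is continuous on the compact set $[-C,C]\times K$ (by continuity of $f$ and of $\Ll$), hence uniformly continuous there, and a supremum over the compact index set $[-C,C]$ of a uniformly continuous family is continuous in $(x,t)$. Since $(x_0,t_0)$ was arbitrary, joint continuity on $\R\times\R_{>s}$ follows. Combined with the first paragraph, the restriction $x\mapsto h(x,t\mid s,f)$ is continuous for each fixed $t$, which together with the growth bound shows it lies in $\CFP$, as claimed.

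The only genuinely delicate point---and the one I would flag as the main obstacle---is justifying the reduction in statement (a), i.e.\ that the supremum $h(x,t\mid s,f)=\sup_{z\in\R}[f(z)+\Ll(z,s;x,t)]$ is attained (or at least approached) on a fixed compact $z$-interval $[-C,C]$ uniformly over $(x,t)$ in a compact set. This is where one uses the quadratic growth control: $f(z)\le A+az^2$ for $a$ strictly below $\inf_{(x,t)\in K}(t-s)^{-1}$, while $\Ll(z,s;x,t)\sim -(x-z)^2/(t-s)$ with the explicit error from Lemma~\ref{lem:Landscape_global_bound}, so the sum tends to $-\infty$ as $|z|\to\infty$ at a rate uniform over $(x,t)\in K$; comparing with the finite lower bound $M:=\inf_{(x,t)\in K}[f(0)+\Ll(0,s;x,t)]>-\infty$ rules out large $|z|$. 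This is precisely the content of Lemma~\ref{lem:restrict_max} specialized to the constant sequence, so no new work is needed---one just has to be careful to state that the proof of that lemma goes through verbatim with $f_n\equiv f$, since it nowhere uses convergence of the $f_n$, only the $\CFP$ bound~\eqref{eq:CFP} and continuity. I would phrase the proof so as to cite Lemmas~\ref{lem:restrict_max} and~\ref{lem:h-ax2_bd} directly (applied to the constant sequence), making the argument short.
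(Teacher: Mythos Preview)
Your proposal is correct and is essentially identical to the paper's proof: both apply Lemma~\ref{lem:h-ax2_bd} with the constant sequence $f_n\equiv f$ to get the $\CFP$ growth bound, and Lemma~\ref{lem:restrict_max} (again with the constant sequence) to localize the supremum to a compact $z$-interval uniformly over a compact set of $(x,t)$, after which continuity of $f$ and $\Ll$ finishes the argument. The only cosmetic difference is that the paper phrases continuity via sequences $(x_n,t_n)\to(x,t)$ while you use a compact neighborhood and uniform continuity; these are equivalent.
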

\begin{proof}
     It suffices to show the stated continuity, and that, for each $a > 0$ and $t > s$,
    \be \label{eq:h-ax2}
    \sup_{x \in \R}[h(x,t \mid s, f) - ax^2] < \infty.
    \ee
    The finiteness in \eqref{lem:h-ax2_bd} follows from Lemma~\ref{lem:h-ax2_bd} by taking $f_n = f$, so we turn to the continuity.
    Let $(x,t) \in \R \times \R_{>s}$, and let $x_n \to x$ and $t_n \to t$. By Lemma~\ref{lem:restrict_max}, there exists $C > 0$ so that, for all $n \ge 1$,
    \be \label{eq:h_bd_max}
    h(x_n,t_n \mid s, f) = \sup_{z \in [-C,C]}[f(z) + \Ll(z,s;x_n,t_n)],\quad\text{and}\quad h(x,t \mid s, f) = \sup_{z \in [-C,C]}[f(z) + \Ll(z,s;x,t)].
    \ee
   By continuity of $\Ll$ and $f$, the convergence of the first term in \eqref{eq:h_bd_max} to the last term in \eqref{eq:h_bd_max} follows.
\end{proof}

The next lemma shows continuity of the solution map of the KPZ fixed point in the space $\CFP$. In particular, it implies the Feller property for the Markov semigroup associated to the KPZ fixed point evolution. It was previously shown in \cite{KPZfixed} that the Feller property holds for the KPZ fixed point on the space of upper semi-continuous functions satisfying that are bounded from above by the absolute value of an affine function.  
\begin{proposition} \label{prop:Feller}
    Assume that $f_n \to f$ in $\CFP$. Then, for any $t > s$, $h(\cdot,t \mid s, f_n) \to h(\cdot,t \mid s, f)$ in $\CFP$. 
\end{proposition}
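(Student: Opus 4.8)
The plan is to verify the two conditions in the definition of convergence in $\CFP$ (equivalently, the conditions in Lemma~\ref{lem:alt_conv}), namely uniform convergence on compact sets and the limit of the shifted suprema. Fix $t > s$. The key observation, already isolated in Lemma~\ref{lem:restrict_max}, is that on a full-probability event there is, for every compact $K \subseteq \R \times \R_{>s}$, a single radius $C > 0$ such that
\[
h(x,t\mid s,f_n) = \sup_{z \in [-C,C]}[f_n(z) + \Ll(z,s;x,t)] \quad\text{and}\quad h(x,t\mid s,f) = \sup_{z\in[-C,C]}[f(z)+\Ll(z,s;x,t)]
\]
for all $(x,t) \in K$ and all $n$. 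Since $f_n \to f$ uniformly on the compact set $[-C,C]$ and $\Ll(\cdot,s;\cdot,t)$ is (uniformly) continuous on $[-C,C] \times K$, taking suprema over $z \in [-C,C]$ preserves the uniform closeness, so $h(\cdot,t\mid s,f_n) \to h(\cdot,t\mid s,f)$ uniformly on $K$; specializing $K$ to sets of the form $[-R,R]\times\{t\}$ gives Condition~\ref{it:fntof_unif} of Lemma~\ref{lem:alt_conv}.

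For the second condition, I would invoke Lemma~\ref{lem:h-ax2_bd}: under the hypothesis $f_n \to f$ in $\CFP$, for each $a > 0$ we have $\sup_{n\ge 1,\,x\in\R}[h(x,t\mid s,f_n)\vee h(x,t\mid s,f) - ax^2] < \infty$ almost surely. This is exactly Condition~\ref{it:sup_bd} of Lemma~\ref{lem:alt_conv}. Combining the two conditions, Lemma~\ref{lem:alt_conv} yields $h(\cdot,t\mid s,f_n) \to h(\cdot,t\mid s,f)$ in $\CFP$, which is the claim. (Implicitly we also use Proposition~\ref{prop:h_pres_CFP} to know each $h(\cdot,t\mid s,f_n)$ and the limit lie in $\CFP$ in the first place.)

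The main point to be careful about is the uniformity of the radius $C$ in $n$ — but this is precisely the content of Lemma~\ref{lem:restrict_max}, whose proof already handles it by using the uniform quadratic upper bound $f_n(z)\vee f(z) \le A + az^2$ (valid for all $n$, from Lemma~\ref{lem:alt_conv}\ref{it:sup_bd}) together with the global parabolic decay of $\Ll$ from Lemma~\ref{lem:Landscape_global_bound} to push the supremum defining $h$ onto a fixed compact $z$-interval. So there is essentially no residual obstacle: once Lemmas~\ref{lem:alt_conv}, \ref{lem:restrict_max}, and \ref{lem:h-ax2_bd} are in hand, the proof of Proposition~\ref{prop:Feller} is a short assembly. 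I would write it as: first quote Lemma~\ref{lem:restrict_max} to get the common $C$ on a compact set, deduce local uniform convergence of $h(\cdot,t\mid s,f_n)$ from uniform convergence of $f_n$ and continuity of $\Ll$; then quote Lemma~\ref{lem:h-ax2_bd} for the supremum bound; then conclude via Lemma~\ref{lem:alt_conv}.

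One small point worth flagging in the writeup: Lemma~\ref{lem:alt_conv} characterizes convergence via a \emph{uniform-in-$n$ bound} on $\sup_x[f_n(x)\vee f(x) - ax^2]$ rather than convergence of these suprema, so invoking Lemma~\ref{lem:h-ax2_bd} gives exactly what is needed without any further argument about the limit of the suprema. This is cleaner than trying to prove $\lim_n \sup_x[h(x,t\mid s,f_n) - ax^2] = \sup_x[h(x,t\mid s,f)-ax^2]$ directly, though that would also follow from the local uniform convergence together with the reduction (as in the proof of Lemma~\ref{lem:alt_conv}) of each such supremum to a supremum over a common compact interval.
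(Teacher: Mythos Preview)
Your proposal is correct and follows essentially the same approach as the paper: invoke Lemma~\ref{lem:restrict_max} together with uniform convergence of $f_n$ to get Condition~\ref{it:fntof_unif} of Lemma~\ref{lem:alt_conv}, invoke Lemma~\ref{lem:h-ax2_bd} for Condition~\ref{it:sup_bd}, and conclude. The paper's proof is the same two-line assembly you describe.
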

\begin{proof}
    We use the characterization of convergence given in Lemma~\ref{lem:alt_conv}. Condition~\ref{it:fntof_unif} follows from Lemma~\ref{lem:restrict_max} the uniform convergence of $f_n \to f$. Condition~\ref{it:sup_bd} follows from Lemma~\ref{lem:h-ax2_bd}. 
\end{proof}

We now prove the following.
\begin{lemma} \label{lem:Polish}
    The space $\CFP$ is Polish.
\end{lemma}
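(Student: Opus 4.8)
The statement to prove is that $\CFP$, equipped with the topology described in Section~\ref{sec:state_space} (uniform-on-compacts convergence together with convergence of the truncated suprema $\sup_x[f(x)-ax^2]$ for every $a>0$), is a Polish space. The natural route is to exhibit an explicit complete separable metric inducing this topology; in fact the paper already refers to a metric \eqref{eq:DCFP_metric}, and the plan is to verify that metric does the job. First I would fix a countable exhaustion $K_1\subset K_2\subset\cdots$ of $\R$ by compact intervals $K_m=[-m,m]$, and a sequence $a_m\downarrow 0$ (say $a_m=1/m$). For $f,g\in\CFP$ define
\[
d(f,g)=\sum_{m=1}^\infty 2^{-m}\Bigl(1\wedge \sup_{x\in K_m}|f(x)-g(x)|\Bigr) + \sum_{m=1}^\infty 2^{-m}\Bigl(1\wedge \Bigl|\sup_{x\in\R}[f(x)-a_m x^2]-\sup_{x\in\R}[g(x)-a_m x^2]\Bigr|\Bigr).
\]
One checks this is a metric (symmetry and triangle inequality are immediate; $d(f,g)=0$ forces $f=g$ because the first sum alone already separates points via uniform-on-compacts), and that $d(f_n,f)\to0$ iff conditions (1) and (2) in the definition of the topology hold — the ``only if'' direction is trivial, and the ``if'' direction is exactly the content that each summand goes to zero, which is what convergence in $\CFP$ asserts. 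So $d$ metrizes the topology.

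Next I would prove \textbf{completeness}. Let $(f_n)$ be $d$-Cauchy. Then $(f_n)$ is uniformly Cauchy on each $K_m$, so there is a continuous limit $f\colon\R\to\R$ with $f_n\to f$ uniformly on compacts. The remaining point is that $f\in\CFP$, i.e.\ $\sup_x[f(x)-ax^2]<\infty$ for all $a>0$, and that $d(f_n,f)\to0$. For this the key observation — which I would extract as the crux of the argument — is a uniform tightness/domination statement: a $d$-Cauchy sequence satisfies, for every $a>0$, $\sup_{n}\sup_x[f_n(x)-ax^2]<\infty$. Granting this (see the obstacle paragraph), apply it with $a/2$ in place of $a$ to get a constant $A$ with $f_n(x)\le A+\tfrac a2 x^2$ for all $n,x$; letting $n\to\infty$ pointwise gives $f(x)\le A+\tfrac a2 x^2$, hence $f\in\CFP$. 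Moreover this bound localizes the suprema: there is $C=C(a)$ with $\sup_x[f_n(x)-ax^2]=\sup_{x\in[-C,C]}[f_n(x)-ax^2]$ for all $n$ and likewise for $f$ (this is the same localization argument used in the proof of Lemma~\ref{lem:alt_conv}, using that $f_n(0)$ is bounded below). On the compact set $[-C,C]$ uniform convergence then yields $\sup_x[f_n(x)-ax^2]\to\sup_x[f(x)-ax^2]$, which together with the uniform-on-compacts convergence gives $d(f_n,f)\to0$. That proves completeness.

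Then \textbf{separability}: I would take $\mathcal D$ to be the set of piecewise-linear functions with finitely many breakpoints at rational abscissae, rational values, and which are \emph{eventually equal to $c+\tfrac12 a x^2$-type parabolic arcs} outside a bounded set — more simply, polygonal functions on $[-N,N]$ extended by, say, a downward parabola $x\mapsto (\text{value at }\pm N)- (x\mp N)^2$ for $|x|>N$ — with all data rational and $N\in\N$. This family is countable. Given $f\in\CFP$ and $\eps>0$, choose $m$ with $2^{-m}<\eps$, choose $N$ large (using $f\in\CFP$ with $a=a_{m}$ to ensure the truncated suprema over $K_m$ are attained inside $[-N,N]$), approximate $f$ on $[-N,N]$ uniformly to within $\eps$ by a rational polygonal function (possible since $f$ is uniformly continuous on $[-N,N]$), and note the parabolic tails dominate $f$ minus any $a x^2$ for $N$ large, so the second family of summands in $d$ is also controlled. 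A short estimate gives $d(f,g)<C\eps$ for the approximant $g\in\mathcal D$. Hence $\mathcal D$ is dense.

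\textbf{Main obstacle.} The one genuinely non-formal step is the uniform domination lemma: that a $d$-Cauchy sequence $(f_n)$ satisfies $\sup_n\sup_x[f_n(x)-ax^2]<\infty$ for every $a>0$. The issue is that $d$-Cauchyness directly controls only the \emph{differences} of the truncated suprema $s_{a}(f_n):=\sup_x[f_n(x)-ax^2]$, giving that $(s_{a_m}(f_n))_n$ is Cauchy in $\R$ for each $m$, hence bounded; since every $a>0$ exceeds some $a_m$ and $s_a\le s_{a_m}$ when $a\ge a_m$, this already yields $\sup_n s_a(f_n)<\infty$ for all $a>0$. So in fact the obstacle dissolves once one is careful to include \emph{all} scales $a_m\downarrow0$ in the definition of $d$ and to use monotonicity $a\mapsto s_a(f)$ decreasing — the boundedness at the countably many scales $a_m$ propagates to every $a$. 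I would make sure the chosen sequence $a_m\to0$ so that the argument covers arbitrarily small $a$, and present the localization-to-$[-C,C]$ step by citing the mechanism already spelled out in the proof of Lemma~\ref{lem:alt_conv}. Everything else (metric axioms, that $d$ induces the stated topology, density of polygonals) is routine.
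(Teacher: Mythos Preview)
Your proposal is correct and follows essentially the same approach as the paper: the same metric (with $a_m=1/m$), the same completeness argument via Cauchyness of $(s_{1/m}(f_n))_n$ combined with the monotonicity $a\mapsto s_a(f)$ and the localization mechanism of Lemma~\ref{lem:alt_conv}. The only cosmetic difference is in the dense set for separability --- the paper extends piecewise-linear functions by \emph{constants} outside $[-2^n,2^n]$ rather than your downward parabolas --- but both choices work for the same reason.
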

\begin{proof}
We consider the metric $D_{\CFP}$ on $\CFP$ given by 
\be \label{eq:DCFP_metric}
\begin{aligned}
   D_{\CFP}(f,g) &= \sum_{k = 1}^\infty 2^{-k} \Biggl(1 \wedge \sup_{x \in [-k.k]}|f(x) - g(x)|\Biggr) \\
   &\quad+ \sum_{m = 1}^\infty 2^{-m} \Biggl(1 \wedge \Bigl(\Bigl|\sup_{x \in \R}\Bigl[f(x) - \f{1}{m}x^2\Bigr]- \sup_{x \in \R}\Bigl[g(x) - \f{1}{m}x^2\Bigr]\Bigr|\Bigr)\Biggr).
\end{aligned}
\ee
We first show that convergence in this metric is equivalent to convergence in $\CFP$. It is immediate that convergence in $\CFP$ implies convergence with respect to $D_{\CFP}$. For the other direction,  the only thing that needs to be verified for this is that, if $f_n \to f$ uniformly on compact sets and
\[
\lim_{n \to \infty} \sup_{x \in \R} \Bigl[f_n(x) - \f{1}{m}x^2\Bigr] = \sup_{x \in \R} \Bigl[f(x) - \f{1}{m}x^2\Bigr],\quad\text{for all }m \ge 1,
\]
then the same convergence holds when $\f{1}{m}$ is replace with any $a > 0$. This follows by monotonicity and the alternate characterization of convergence in Lemma~\ref{lem:alt_conv}.

Now, let $(f_n)_n$ be a Cauchy sequence with respect to the metric $D_{\CFP}$. We show that the sequence has a limit in $\CFP$. Since $f_n$ is Cauchy with respect to $D_{\CFP}$, it is Cauchy with respect to the metric of uniform convergence on compact sets, and thus has a limit $f$ such that $f_n \to f$ uniformly. 

 To show that $f \in \CFP$ and $f_n \to f$ with respect to the topology on $\CFP$, by~\ref{lem:alt_conv}, it suffices to show that, for each $a > 0$,
 \be \label{fnf_bd1}
 \sup_{x \in \R, n \ge 1}[f_n(x) \vee f(x) - ax^2] < \infty.
 \ee
Suppose, by way of contradiction, that this fails for some $a > 0$. Pick $m \in \N$ such that $m > a^{-1}$. Since $(f_n)_n$ is a Cauchy sequence with respect to $D_{\CFP}$, there exists $N \in \N$ so that, for all $n \ge N$,
\be \label{eq:leT}
\sup_{x \in \R}[f_n(x) - ax^2] \le \sup_{x \in \R}\Bigl[f_n(x) - \f{1}{m}x^2\Bigr] < \sup_{x \in \R}\Bigl[f_N(x) - \f{1}{m}x^2\Bigr] + \ve =: T < \infty.
\ee
Then, by assumption that \eqref{fnf_bd1} fails, there must exist $w \in \R$ so that $f(w) - aw^2 > T+1$. Then, by the uniform-on-compact convergence of $f_n \to f$, we have $f_n(w) \to f(w)$, and so for all sufficiently large $n$,
\[
\sup_{x \in \R}[f_n(x) - ax^2] \ge f_n(w) - aw^2 \ge f(w) - aw^2 - 1 \ge T,
\]
a contradiction to \eqref{eq:leT}. Thus, $\CFP$ is complete under the metric $D_{\CFP}$. 

We now show that $\CFP$ is separable by showing that the following countable set is dense in $\CFP$:
\begin{align*}
\mathcal D &:= \bigcup_{n \in \N} \mathcal D_n,\quad\text{where} \\
\mathcal D_n &:=
\{f \in \CFP: f \text{ is constant on } (-\infty,-2^n] \text{ and } [2^n,\infty),\\
&\qquad\text{ and for all }j\in \Z, f(j2^{-n}) \in \Q \text{ and }f \text{ is linear on }[j2^{-n},(j+1)2^{-n}] \}.
\end{align*}
A function $f \in \D_n$ is determined by its values $f(j2^{-n})$ for $j \in \{-2^{2n},-2^{2n + 1}\ldots,2^{2n}\}$, so we see that each  $\mathcal D_n$ is in bijection with $\Q^{2^{2n+1} + 1}$ and is therefore countable. 

For $f \in \CFP$, define a sequence $(f_n)_n$ such that, $f_n \in \D_n$ for all $n$, and for all $j \in \{-2^{-2n},-2^{-2n} + 1,\ldots,2^{2n}\}$, we have $|f_n(j2^{-n}) - f(j2^{-n})| \le 2^{-n}$. It is readily seen that $f_n \to f$ uniformly on compact sets. Since we assumed that $f \in \CFP$, to show that $f_n \in \D$ for all $n$ and $f_n \to f$ in $\CFP$, by Lemma~\ref{lem:alt_conv}, it suffices to show that, for each $a > 0$, 
\be \label{fn-ax2}
\sup_{n}\sup_{x \in \R}[f_n(x) - ax^2] < \infty.
\ee
Since $f_n$ is constant outside $[-2^n,2^n]$, we see that
\[
\sup_{n}\sup_{x \in \R}[f_n(x) - ax^2] = \sup_n \sup_{x \in [-2^{n},2^n]}[f_n(x) - ax^2].
\]
Suppose, by way of contradiction, that \eqref{fn-ax2} fails for some $a > 0$. Then, there exist sequences $n_k \to \infty$ and $x_{n_k} \in [-2^{n_k},2^{n_k}]$ such that, for all $k \ge 1$, $
f_{n_k}(x_{n_k}) - a x_{n_k}^2 \ge k$.
For $k \ge 1$, let $j_k \in \Z$ be the unique integer so that $x_{n_k} \in [j_k 2^{-n_k},(j_k + 1)2^{-n_k})$.
Then, for all $k \ge 1$.
\begin{align*}
   k &\le  f_{n_k}(x_{n_k}) - a x_{n_k}^2 \\&\le \max\Bigl(f_{n_k}(j_k 2^{-n_k}),f_{n_k}((j_k+1) 2^{-n_k}) \Bigr) - a x_{n_k}^2  \\
   &\le \max\Bigl(f(j_k 2^{-n_k}) ,f((j_k+1) 2^{-n_k}) \Bigr) +  2^{-n_k} - a 2^{-2n_k}\min\Bigl(j_k^2,(j_k+1)^2\Bigr)  \\
   &\le \max\Bigl(f(j_k 2^{-n_k}) - \f{a}{2}2^{-2n_k}j_k^2 ,f((j_k+1) 2^{-n_k})- \f{a}{2}2^{-2n_k}(j_k+1)^2 \Bigr) +  (a+1)2^{-n_k} \\
   &\le \sup_{x \in \R}\Bigl[f(x) - \f{a}{2}x^2\Bigr] + (a+1)2^{-n_k},
\end{align*}
where in the penultimate step, we used the bound $\min(x^2,(x+1)^2) \ge  \f{\max((x+1)^2,x^2)}{2} -1$. Since $f \in \CFP$, taking $k$ sufficiently large gives a contradiction.
\end{proof}

\subsection{Proof of Proposition~\ref{prop:uniform_upbd}} \label{sec:converge_to_SH}
Before proving Proposition~\ref{prop:uniform_upbd}, we give two intermediate lemmas. The first below is a corollary of \cite[Proposition~10.5]{Directed_Landscape}.  
\begin{lemma} \label{lem:Ldif1}
    Let $\theta \in \R$ and $\ve > 0,\delta > 0$. Then, there exists a random constant $C > 0$ (depending only on $\delta$) such that, for all $x,z \in \R, s < -1$, and $\Bigl|\f{z}{|s|} - \theta\Bigr| < \ve$,
    \[
    \Ll(z,s;x,0) - \Ll(z,s;0,0) \le 2\max\{(\theta - \ve)x,(\theta + \ve)x\} + C|x|^{1/2 + \delta}.
    \]
\end{lemma}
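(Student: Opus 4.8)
\textbf{Proof plan for Lemma~\ref{lem:Ldif1}.}

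The statement is essentially a uniform comparison of the directed landscape along a horizontal line at the top, with the base point $(z,s)$ allowed to drift roughly in the direction $\theta$. The natural tool is a bound on the transversal increments of $\Ll$ in the first spatial variable, which is exactly what \cite[Proposition~10.5]{Directed_Landscape} (the H\"older-type regularity of $\Ll$) provides. First I would write
\[
\Ll(z,s;x,0) - \Ll(z,s;0,0) = \bigl[\Ll(z,s;x,0) + \tfrac{(z-x)^2}{|s|}\bigr] - \bigl[\Ll(z,s;0,0) + \tfrac{z^2}{|s|}\bigr] - \tfrac{(z-x)^2 - z^2}{|s|},
\]
so that the parabolic part contributes exactly $\tfrac{2zx - x^2}{|s|} = \tfrac{2zx}{|s|} - \tfrac{x^2}{|s|}$. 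On the event $\bigl|\tfrac{z}{|s|} - \theta\bigr| < \ve$ and using $|s| > 1$, we get $\tfrac{2zx}{|s|} \le 2\max\{(\theta-\ve)x,(\theta+\ve)x\}$ and we discard the nonpositive $-\tfrac{x^2}{|s|}$ term. So it remains to bound the difference of the two parabolically-centered quantities, i.e.\ the increment of the centered landscape $\Ll(z,s;\cdot,0) + \tfrac{(z-\cdot)^2}{|s|}$ between $0$ and $x$, uniformly over all admissible $(z,s)$.

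The key step is therefore to invoke a modulus-of-continuity estimate for the centered directed landscape in its endpoint spatial variable. The statement of \cite[Proposition~10.5]{Directed_Landscape} gives, on a single almost sure event, a random constant controlling
\[
\bigl|\Ll(z,s;x,0) + \tfrac{(z-x)^2}{|s|} - \Ll(z,s;0,0) - \tfrac{z^2}{|s|}\bigr|
\]
by something like $|x|^{1/2}$ times a polylogarithmic factor in the relevant scales, scaled by $|s|^{\cdots}$; since here $|s| > 1$ and the drift constraint forces $|z| \le (|\theta|+\ve)|s|$, I would absorb all the logarithmic and lower-order factors into the $\delta$-slack, replacing $|x|^{1/2}\log^{\cdots}$ by $|x|^{1/2+\delta}$ for $|x|$ large and by a constant for $|x|$ bounded. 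Concretely, I would fix $\delta > 0$, apply the cited proposition to extract a random $C_0$, and then choose $C = C(\delta)$ large enough (depending on the realization through $C_0$) so that $C_0 |x|^{1/2}(\log\text{-factors}) \le C|x|^{1/2+\delta}$ for all $x \in \R$; this uses that $\log$-factors grow slower than any power and that near $x = 0$ the left side is bounded. One subtlety: the cited regularity statement is usually phrased for fixed (or compactly varying) base points, whereas here $(z,s)$ ranges over an unbounded set. The way around this is that $\Ll(z,s;x,0) + \tfrac{(z-x)^2}{|s|}$, after the parabolic recentering, has a scale-invariant law, and the regularity estimate in \cite{Directed_Landscape} is stated in a scale-covariant form precisely so that it applies uniformly; alternatively one uses the global bound Lemma~\ref{lem:Landscape_global_bound} together with the $1/3$-scaling of Lemma~\ref{lm:landscape_symm}\ref{itm:DL_rescaling} to reduce to unit scale.

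The main obstacle, then, is bookkeeping the dependence of the error on $|s|$ and $|z|$ and confirming that, under the constraint $|z/|s| - \theta| < \ve$, the $|s|$-dependent prefactors either cancel against the parabolic gain or are bounded because $|s| > 1$ — i.e.\ checking that the bound is genuinely uniform over the drifting base points and does not secretly blow up as $|s| \to \infty$. Once that uniformity is in hand, the rest is the elementary estimate absorbing polylogs into $|x|^\delta$ and splitting into $|x| \le 1$ and $|x| > 1$. I would present the argument in this order: (1) algebraic decomposition isolating the parabolic term; (2) bound the parabolic term using the drift constraint and $|s|>1$; (3) apply the centered-landscape regularity (via rescaling to unit scale if needed) to bound the remaining increment by $|x|^{1/2}\cdot(\text{polylog})$ uniformly; (4) absorb polylogs into $|x|^{1/2+\delta}$ by enlarging the constant.
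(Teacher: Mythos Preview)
Your proposal is correct and matches the paper's approach: the same algebraic decomposition isolating the parabolic term $\tfrac{2zx}{|s|}-\tfrac{x^2}{|s|}$, the same use of the drift constraint to bound $\tfrac{2zx}{|s|}$, and the same appeal to \cite[Proposition~10.5]{Directed_Landscape} for the centered increment. For the uniformity over unbounded $(z,s)$ that you flag as the main obstacle, the paper's concrete mechanism is a union bound over boxes $\{|x|,|z|,|s|\le b\}$: since the random constant $C_b$ from that proposition has a stretched-exponential tail with only polynomial dependence on $b$, summing $\Pp(C_b \ge m b^\delta)$ over integer $b$ gives a series that vanishes as $m\to\infty$, which produces the single random $C$ bounding $|x|^{-1/2-\delta}$ times the centered increment uniformly.
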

\begin{proof}
    The result \cite[Proposition~10.5]{Directed_Landscape} implies that,
    for each $b \ge 2$, there exists a constant $C_b$ satisfying $\Pp(C_b> m) \le cb^{10}e^{-dm^{3/2}}$ for universal constants $c,d > 0$ such that, for all $|x|,|z|,|s| \le b$, and $s < -1$, 
    \begin{align*}
        \Biggl|\Ll(z,s;x,0)  - \Ll(z,s;0,0) + \f{(x-z)^2}{|s|} + \f{z^2}{|s|}\Bigr| &\le  C_b|x|^{1/2} \log\Bigl(\f{4b}{|x|}\Bigr).
    \end{align*}
    Therefore, for each $m \ge 0$,
\begin{align*}
    &\Pp\Biggl(\sup_{x \in \R, z \in \R, s < -1, x \neq 0}\f{\Bigl|\Ll(z,s;x,0)  - \Ll(z,s;0,0) + \f{(x-z)^2}{|s|} + \f{z^2}{|s|}\Bigr|}{|x|^{1/2 + \delta}} \ge m\Biggr) \\
    &\le \sum_{b= 1}^\infty\Pp\Biggl(\sup_{|x|,|z|,|s| \le b, s < - 1, x \neq 0}\f{\Bigl|\Ll(z,s;x,0)  - \Ll(z,s;0,0) + \f{(x-z)^2}{|s|} + \f{z^2}{|s|}\Bigr|}{|x|^{1/2} \log(4b/|x|)} \ge m b^{\delta} \log(4)\Biggr) \\
    &\le \sum_{b = 1}^\infty \Pp(C_b \ge m b^{\delta} \log(4)) \le \sum_{b = 1}^\infty cb^{10}e^{-d \log(4)^{3/2} b^{\delta} m^{3/2}},
\end{align*}
and this last term converges to $0$ as $m \to \infty$ to $0$. Hence, there exists a random constant $C = C(\delta) > 0$ such that, for all $x,z \in \R$,  $s < -1$, and $\Bigl|\f{z}{|s|} - \theta\Bigr| < \ve$,
\begin{align*}
 \Ll(z,s;x,0)  - \Ll(z,s;0,0)  &\le - \f{(x-z)^2}{|s|} - \f{z^2}{|s|} + C_\delta|x|^{1/2 + \delta} \\
&= -\f{x^2}{|s|} + \f{2zx}{|s|} + C_\delta |x|^{1/2 + \delta} \\
&\le 2\max\{(\theta - \ve)x,(\theta + \ve)x\} + C|x|^{1/2 + \delta}. \qedhere
\end{align*}
\end{proof}

The next result is a weaker version of Lemma~\ref{lem:Landscape_global_bound}, which will be easier to work with for our purposes.
\begin{lemma} \label{lem:crude_DL_global}
    For each $\ve > 0$, there exists a random constant $C'> 0$ such that, for all $s < 0$ and $z,s \in \R$, 
    \[
\Bigl|\Ll(z,s;x,0) + \f{(z-x)^2}{|s|}\Bigr| \le C'(|s|^{1/2} + |s|^{1/3}) + \ve|z| + \ve |x|.
\]
\end{lemma}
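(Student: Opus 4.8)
The plan is to derive the estimate from Lemma~\ref{lem:Landscape_global_bound} by absorbing its logarithmic factors into the polynomial terms $|s|^{1/2}+|s|^{1/3}$ and the linear terms $\ve|z|+\ve|x|$. Write $v=(z,s;x,0)$ and $R=\|v\|+2$, so that $|s|,|z|,|x|\le\|v\|\le|s|+|z|+|x|$ and $R\ge|s|+2$. Lemma~\ref{lem:Landscape_global_bound} provides a random constant $C$ with $\bigl|\Ll(z,s;x,0)+(z-x)^2/|s|\bigr|\le C\,|s|^{1/3}\log^{4/3}(2R/|s|)\log^{2/3}R$ for all $s<0$ and $z,x\in\R$, so it suffices to bound this right-hand side by $C'(|s|^{1/2}+|s|^{1/3})+\ve|z|+\ve|x|$ for a larger random constant $C'$.

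I would first record three elementary facts: (i) for each $\eta>0$ there is a deterministic $c_\eta$ with $\log w\le c_\eta w^{\eta}$ for $w\ge1$; (ii) for each $\ve>0$ and $p\in(0,1)$ there is $C_{\ve,p}$ with $u^{p}\le\ve u+C_{\ve,p}$ for $u\ge0$; and (iii) for $0<\eta<\tfrac16$ one has $|s|^{1/3+\eta}\le|s|^{1/3}+|s|^{1/2}$ for all $s\ne0$ (the exponent lies strictly between $1/3$ and $1/2$, so the left side is dominated by $|s|^{1/3}$ when $|s|\le1$ and by $|s|^{1/2}$ when $|s|\ge1$). Using $\log(2R/|s|)\le\log2+\log R+|\log|s||$ one obtains $\log^{4/3}(2R/|s|)\le c\,(1+\log^{4/3}R+|\log|s||^{4/3})$, which reduces the task, up to the constant $cC$, to bounding the three terms $|s|^{1/3}\log^{2/3}R$, $|s|^{1/3}\log^{2}R$, and $|s|^{1/3}|\log|s||^{4/3}\log^{2/3}R$.

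For the first two terms, since $|s|^{1/3}\le R^{1/3}$, fact (i) bounds each by $c_\eta R^{1/3+\eta}$ for small $\eta$; then, applied only after combining all factors (splitting earlier would create uncontrollable cross terms such as $|s|^{1/3}|z|^{\eta}$), the inequality $R^{1/3+\eta}\le c\,(1+|s|^{1/3+\eta}+|z|^{1/3+\eta}+|x|^{1/3+\eta})$ together with (ii)--(iii) yields a bound $\le C'(1+|s|^{1/3}+|s|^{1/2})+\ve|z|+\ve|x|$. For the third term, on $\{|s|\ge1\}$ fact (i) replaces $|\log|s||^{4/3}$ by $c_\eta|s|^{\eta}$ and one proceeds as above; on $\{|s|\le1\}$ the factor $|s|^{1/3}|\log|s||^{4/3}$ is bounded by a deterministic constant, leaving $c\log^{2/3}R\le cR^{\eta}$, again absorbed via (ii). Collecting the pieces gives $\bigl|\Ll(z,s;x,0)+(z-x)^2/|s|\bigr|\le C'(1+|s|^{1/3}+|s|^{1/2})+\ve|z|+\ve|x|$.

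The main obstacle is the regime of small $|s|$ with $z,x$ bounded: there $R$ is bounded but the factor $|s|^{1/3}\log^{4/3}(1/|s|)$ survives, and since $\log^{4/3}(1/|s|)\to\infty$ it cannot be absorbed into $|s|^{1/3}+|s|^{1/2}$ alone, so the estimate is naturally of the form $C'(1+|s|^{1/2}+|s|^{1/3})+\ve|z|+\ve|x|$. The additive constant is immaterial for the application of this lemma in the proof of Proposition~\ref{prop:uniform_upbd}, where $|s|\to\infty$ and $|s|^{1/2}+|s|^{1/3}$ itself dominates any fixed constant (equivalently, one restricts to $s\le-1$ throughout); away from $s=0$ the remaining work is the routine bookkeeping of tracking, in each regime determined by the relative sizes of $|s|$, $|z|$, $|x|$, which of the four terms on the right-hand side does the absorbing.
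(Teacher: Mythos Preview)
Your approach is correct and reaches the right conclusion, though by a different route from the paper's. The paper's strategy is more compact: it divides the bound from Lemma~\ref{lem:Landscape_global_bound} by $|s|^{1/3}$ and argues that the resulting ratio
\[
\frac{C\log^{4/3}\bigl((2\|v\|+4)/|s|\bigr)\log^{2/3}(\|v\|+2)}{|s|^{1/6}+1+\ve|x|/|s|^{1/3}+\ve|z|/|s|^{1/3}}
\]
is bounded, treating $|s|<1$ and $|s|>1$ separately; in the latter case it substitutes $u=|x|/|s|^{1/2}$, $w=|z|/|s|^{1/2}$ and reads off boundedness directly. You instead decompose $\log^{4/3}(2R/|s|)$ into three pieces and absorb each using your elementary facts (i)--(iii). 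Both routes are straightforward; the paper's is shorter, while yours makes explicit which of the four absorbing terms on the right-hand side does the work in each regime.

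You are also right to flag the regime $|s|\to0$ with $x,z$ bounded: there Lemma~\ref{lem:Landscape_global_bound} gives only a bound of order $|s|^{1/3}|\log|s||^{4/3}$, which is \emph{not} dominated by $C'(|s|^{1/2}+|s|^{1/3})$ for any finite $C'$. The paper's assertion that the ratio is ``clearly bounded for $|s|<1$'' overlooks this. As you note, the lemma is only invoked in the proof of Proposition~\ref{prop:uniform_upbd} with $|s|\to\infty$, so an additive constant (equivalently, restricting to $|s|\ge1$) is harmless there.
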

\begin{proof}
    From Lemma~\ref{lem:Landscape_global_bound}, we need to show that the following quantity is bounded over $z,x \in \R$, and $s< 0$. 
    \begin{align*}
        \f{C \log^{4/3}\Bigl(\f{2\sqrt{x^2 + z^2 + s^2}  +4}{|s|}\Bigr)\log^{2/3} \Bigl(\sqrt{x^2 + z^2 + s^2} + 2\Bigr)}{|s|^{1/6} + 1 + \f{\ve |x|}{|s|^{1/3}} +  \f{\ve|z|}{|s|^{1/3}}}.
    \end{align*}
    Clearly, this quantity is bounded for $|s| < 1$. When $|s| > 1$, we rewrite it as
    \begin{align*}
&\f{C \log^{4/3}\Bigl(2\sqrt{\f{x^2}{|s|^2} + \f{z^2}{|s|^2} + 1}  +\f{4}{|s|}\Bigr)\log^{2/3} \Bigl(|s|\sqrt{\f{x^2}{|s|^2} + \f{z^2}{|s|^2} + 1} + 2\Bigr)}{1 + |s|^{1/6}\Bigl( \f{\ve |x|}{|s|^{1/2}} +  \f{\ve|z|}{|s|^{1/2}}\Bigr)} \\
&\le \f{C \log^{4/3}\Bigl(2\sqrt{(\f{|x|}{|s|^{1/2}})^2 + (\f{|z|}{|s|^{1/2}})^2 + 1}  +4\Bigr)\log^{2/3} \Bigl(|s|\sqrt{(\f{|x|}{|s|^{1/2}})^2 + (\f{|z|}{|s|^{1/2}})^2 + 1} + 2\Bigr)}{1 + |s|^{1/6}\Bigl( \f{\ve |x|}{|s|^{1/2}} +  \f{\ve|z|}{|s|^{1/2}}\Bigr)},
    \end{align*}
    and the boundedness readily follows from here. 
\end{proof}

\begin{proof}[Proof of Proposition~\ref{prop:uniform_upbd}]

We prove \eqref{eq:linear_upbd}. The convergence to $W^\theta(0,t;x,t)$ in the topology on $\CFP$ then follows by the characterization of convergence in Lemma~\ref{lem:alt_conv}. Specifically, Condition~\ref{it:fntof_unif} is met by the attractiveness in Proposition~\ref{prop:invariance_of_SH}, and Condition~\ref{it:sup_bd} is met by \eqref{eq:linear_upbd}.

By the time stationarity in Lemma~\ref{lm:landscape_symm}, it suffices to take $t = 0$. We will also take $\theta > 0$; the other cases are handled similarly. By the assumption on the asymptotic slope \eqref{eqn:drift_assumptions}, we may  choose $\ve > 0$ and $\gamma <  0$ satisfying the conditions
\be \label{ve_bounds}
0 < \theta  - 2\sqrt{\ve \theta} < \theta -\ve < \theta + \ve < \theta  + 2\sqrt{\ve \theta},
\ee
\be \label{vetheta_bds}
\theta^2 - 3\ve \theta + 2\ve \sqrt{\ve \theta} < \Bigl(\theta - \f{\ve}{2}\Bigr)^2,
\ee
 and
\be \label{eq:gammabd}
-2\theta + 2\ve < 2\gamma < \liminf_{x \to -\infty} \f{f(x)}{x}.
\ee
The assumption
\[
\lim_{x \to +\infty} \f{f(x)}{x} = 2\theta,
\]
along with \eqref{eq:gammabd} implies that there exist random constants $C = C(\ve,\gamma) > 0$ and $R = R(\ve) > 0$ such that 
\be \label{eq:f_up_bd}
\begin{aligned}
f(x) &\le C + \Bigl(2\theta + \f{\ve}{2}\Bigr)x,\quad\text{for }x \ge 0, \\
f(x) &\le C + \Bigl(2\gamma - \f{\ve}{2}\Bigr)x,\quad\text{for }x \le 0, \\
f(x) &\ge \Bigl(2\theta - \f{\ve}{2}\Bigr)x,\quad\text{for }x \ge R.
\end{aligned}
\ee
Note that we only get the lower bound for sufficiently large $x$ because of the weaker assumption that $f$ is upper-semi-continous, instead of continuous. We now consider two cases

\medskip \noindent \textbf{Case 1: $|x| > \f{\ve |s|}{2}$.}
Combining \eqref{eq:f_up_bd} with Lemma~\ref{lem:crude_DL_global} (applied with $\ve$ replaced by $\ve/2$), we get the following bound:
\be \label{eq:hx_bd1}
\begin{aligned}
&\quad \, h(x,0 \mid s,f)  \\
&\le C +\f{\ve}{2}|x| + C'(|s|^{1/2} + |s|^{1/3}) +  \sup_{z \ge 0}\Bigl[ (2\theta +\ve)z - \f{(x-z)^2}{|s|}   \Bigr] \vee \sup_{z \le 0}\Bigl[(2\gamma - \ve)z - \f{(x-z)^2}{|s|}   \Bigr]  \\
&\le C +\f{\ve}{2}|x| + C'(|s|^{1/2} + |s|^{1/3}) + \max\Bigl((2\theta + \ve)x + \Bigl(\theta +\f{\ve}{2}\Bigr)^2|s|, (2\gamma - \ve) x + \Bigl(\gamma - \f{\ve}{2}\Bigr)^2|s|  \Bigr).
\end{aligned}
\ee
On the other hand, since $\theta > 0$,  Lemma~\ref{lem:unq} implies that, for all sufficiently large $|s|$ such that $(\theta - \ve)|s| \ge R$,
\be \label{eq:h0_lb}
\begin{aligned}
h(0,0 \mid s,f) &= \sup_{z \ge R} [f(z) + \Ll(z,s;0,0)]  \\
&\ge C'(|s|^{1/2} + |s|^{1/3}) + \ve|x| +  \sup_{z \ge R}\Bigl[(2\theta - \ve)z - \f{z^2}{|s|}\Bigr] \\
&= C'(|s|^{1/2} + |s|^{1/3}) + \ve|x| + (2\theta - \ve)x + \Bigl(\theta - \f{\ve}{2}\Bigr)^2|s|.
\end{aligned}
\ee
Comparing~\eqref{eq:hx_bd1} and~\eqref{eq:h0_lb}, we see that there exist random constants $A',B',D' > 0$ so that, for all $s < 0$ and $x \in \R$,
\be \label{eq:hs_crude_bd}
h(x,0 \mid s,f) - h(0,0 \mid s,f) \le A' + B'|x| + D'|s|.
\ee
Then, by the assumption $|x| > \f{\ve|s|}{2}$, we have 
\[
h(x,0 \mid s,f) - h(0,0 \mid s,f) \le A' + (B' + 2D'\ve^{-1})|x|,
\]
giving us a sufficient bound in this case.

\medskip \noindent \textbf{Case 2: $|x| \le \f{\ve |s|}{2}$.} Define the interval 
\[
I_s = \Bigl((\theta - 2\sqrt{\ve \theta})|s|,(\theta + 2\sqrt{\ve \theta})|s| \Bigr).
\]
Now, by \eqref{ve_bounds}, we may choose $|s|$ sufficiently large so that $R \ge (\theta - 2\sqrt{\ve \theta})|s|$ \eqref{eq:f_up_bd}. Then, by \eqref{eq:f_up_bd} and Lemma~\ref{lem:crude_DL_global} (applied with $\ve$ replaced by $\ve/2$), we have the bound
\be \label{eq:zinI}
\begin{aligned}
&\quad\, \sup_{z \in I_s} [f(z) + \Ll(z,0;x,0)] \\
&\ge -C-\f{\ve}{2}|x| - C(|s|^{1/3} + |s|^{1/2})   + \sup_{z \in I_s}\Bigl[(2\theta - \ve)z - \f{(z-x)^2}{|s|}\Bigr] \\
&= -C -\f{\ve}{2}|x| - C(|s|^{1/3} + |s|^{1/2}) + (2\theta - \ve)x + \Bigl(\theta - \f{\ve}{2}\Bigr)^2|s|,
\end{aligned}
\ee
where the last equality holds by a direct computation. Specifically, the function inside the supremum over $z \in \R$ is maximized at $z = x + (\theta - \f{\ve}{2})|s|$, which lies in the set $I_s$ by the assumption $|x| \le \f{\ve|s|}{2}$ and \eqref{ve_bounds}.

By the same computation as in \eqref{eq:hx_bd1}, we obtain
\be \label{eq:zle0bd}
\begin{aligned}
    \sup_{z \le 0}[f(z) + \Ll(z,0;x,0)] \le C + \f{\ve}{2}|x| + C'(|s|^{1/2} + |s|^{1/3}) + (2\gamma -\ve)x + \Bigl(\gamma - \f{\ve}{2}\Bigr)^2|s|.
\end{aligned}
\ee
Lastly, we follow a similar procedure to obtain the bound
\begin{align}
    &\sup_{z \ge 0, z \notin I_s}[f(z) + \Ll(z,s;x,0)] \label{eq:zpos_notinI}\\
    &\le  C + \f{\ve}{2}|x| + 
    C'(|s|^{1/2} + |s|^{1/3}) +\sup_{z \notin I_s}\Bigl[(2\theta + \ve)z - \f{(z-x)^2}{|s|}   \Bigr] \nonumber \\
    &= C+ \f{\ve}{2}|x| + 
    C'(|s|^{1/2} + |s|^{1/3}) \nonumber   \\
    &+ \max \Bigl((2\theta + \ve)(\theta - 2\sqrt{\ve \theta})|s| -\f{((\theta - 2\sqrt{\ve \theta})|s| - x)^2}{|s|} ,(2\theta + \ve)(\theta + 2\sqrt{\ve \theta})|s| -\f{((\theta + 2\sqrt{\ve \theta})|s| - x)^2}{|s|}\Bigr)\nonumber  \\
    &= C + \f{\ve}{2}|x| + C'(|s|^{1/2} + |s|^{1/3}) - \f{x^2}{|s|} \nonumber   \\
    &+ \max\Bigl((\theta^2 - 3\ve \theta - 2\ve \sqrt{\ve \theta})|s| + 2(\theta - 2\sqrt{\ve\theta})x ,(\theta^2 - 3\ve \theta + 2\ve \sqrt{\ve \theta})|s| +  2(\theta + 2\sqrt{\ve\theta})x  \Bigr). \nonumber 
\end{align}
The first equality below holds because the maximizer of the quadratic function inside the supremum on the second line over $z \in \R$ is $x + (\theta + \f{\ve}{2})$. Our assumption $|x| \le \f{\ve|s|}{2}$ along with \eqref{ve_bounds} implies this maximizer does not lie in the set $I_s$; hence the maximum occurs at one of the endpoints.  

Now, by the condition \eqref{vetheta_bds}, we have \[
\theta^2 - 3\ve \theta - 2\ve\sqrt{\ve \theta} <\theta^2 - 3\ve \theta + 2\ve\sqrt{\ve \theta} < \Bigl(\theta - \f{\ve}{2}\Bigr)^2.
\]
Furthermore, \eqref{eq:gammabd} gives $\gamma > -\theta + \ve$, so combined with the fact that $\gamma < 0$, we have 
\[
\Bigl(\gamma - \f{\ve}{2}\Bigr)^2 < \Bigl(-\theta + \f{\ve}{2}\Bigr)^2 = \Bigl(\theta - \f{\ve}{2}\Bigr)^2.
\]
Then, comparing \eqref{eq:zinI},\eqref{eq:zle0bd}, and \eqref{eq:zpos_notinI}, followed by an application of Lemma~\ref{lem:Ldif1}, there exist random constants $A,B > 0$ (changing from the first inequality to the second line below) such that, for all sufficiently large $|s|$ and all $|x| \le \f{\ve|s|}{2}$,
\begin{align*}
h(x,0 \mid s,f) &\le A + B|x| +  \sup_{z \in I_s}[f(z) + \Ll(z,s;x,0)] \\
&\le A + B|x| + \sup_{z \in I_s}[f(z) + \Ll(z,s;0,0)] \\
&\le A + B|x| + h(0,0 \mid s,f),
\end{align*}
completing the proof. As stated above, the other cases $\theta < 0$ and $\theta = 0$ are handled similarly. The $\theta < 0$ case is symmetric, while for $\theta = 0$, the assumptions \eqref{eqn:drift_assumptions} imply
\[
f(x) \le C + \f{\ve}{2}|x|,\quad\text{ for all }x \in \R.
\]
Along with the assumption that $f(x) > -\infty$ for some $x \in \R$, we can confine all maximizers to the interval 
\[
\Bigl(- 2\sqrt{\ve \theta}|s|, 2\sqrt{\ve \theta}|s| \Bigr)
\]
for all sufficiently large $|s|$.
\end{proof}

\section{Bessel processes and Brownian meander} \label{sec:Bessel}

We will need the following two key results from \cite{Rogers-Pitman-81}
on the relationship between $\BM(\theta)$ and $\BES^{3}(\theta)$.
The first is a version with drift of Pitman's famous ``$2M-X$''
theorem \cite{Pitman1975}, but for Brownian motion with drift.
\begin{proposition}\cite[Theorem~1, Corollary 2(ii)]{Rogers-Pitman-81}
\label{prop:2M-X}Let $\theta>0$, $B\sim\mathrm{BM}(\theta)$ and
$Z\sim\mathrm{BES}^{3}(\theta)$.
\begin{enumerate} [label=\rm(\roman{*}), ref=\rm(\roman{*})]  \itemsep=3pt
\item \label{enu:2M-X-fwd}Let $M(x)=\max\limits_{y\in[0,x]}B(x)$ and $Y(x)=2M(x)-B(x)$.
Then $(Y(x))_{x\ge0}\sim(Z(x))_{x\ge0}$.
\item \label{enu:2M-x-backward}Let $F(x)=\inf\limits_{y\in[x,\infty)}Z(x)$
and $W(x)=2F(x)-Z(x)$. Then $(W(x))_{x\ge0}\sim(B(x))_{x\ge0}$.
\end{enumerate}
\end{proposition}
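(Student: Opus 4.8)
The statement is the drift-analogue of Pitman's ``$2M-X$'' theorem, and (as the citation indicates) it is due to Rogers and Pitman; if I were to reprove it, the plan would be to reduce part~\ref{enu:2M-X-fwd} to the classical driftless case via a Girsanov tilt, and then deduce part~\ref{enu:2M-x-backward} from part~\ref{enu:2M-X-fwd} by a pathwise inversion. For part~\ref{enu:2M-X-fwd} I would use two classical facts about a standard Brownian motion $B_0$: writing $M_0(x)=\max_{0\le y\le x}B_0(y)$ and $Y_0(x)=2M_0(x)-B_0(x)$, one has $(Y_0(x))_{x\ge0}\sim\BES^3(0)$ (Pitman's theorem), and, more precisely, for each fixed $T$ the conditional law of $M_0(T)$ given $\sigma(Y_0(s):s\le T)$ is $\Uniform([0,Y_0(T)])$ (the Rogers--Pitman filtering identity).

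Next I would fix $T>0$ and invoke Girsanov: the law of $(B(s))_{s\le T}$ for $B\sim\BM(\theta)$ is absolutely continuous with respect to that of $B_0$ with density $\exp(\theta B_0(T)-\theta^2T/2)$. Since the pushforward of $\BM(\theta)$ under $b\mapsto 2\max_{[0,\cdot]}b-b$ is what we want to identify, I would substitute $B_0(T)=2M_0(T)-Y_0(T)$, condition on $\sigma(Y_0(s):s\le T)$, and integrate out $M_0(T)$ using the uniform law, obtaining that the law of $(Y_0(s))_{s\le T}$ under the tilted measure is absolutely continuous with respect to $\BES^3(0)$ on $[0,T]$ with density
\[
\frac{1}{Y_0(T)}\int_0^{Y_0(T)}e^{2\theta m}\,\dif m\cdot e^{-\theta Y_0(T)-\theta^2T/2}=\frac{\sinh(\theta Y_0(T))}{\theta\,Y_0(T)}\,e^{-\theta^2T/2}.
\]
This density depends on the path only through its endpoint, which is precisely the form of a Doob $h$-transform: the image of $\BM(\theta)$ under $b\mapsto 2\max_{[0,\cdot]}b-b$ is the $h$-transform of $\BES^3(0)$ by $h(x)=\sinh(\theta x)/(\theta x)$. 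To close part~\ref{enu:2M-X-fwd} I would then check, by a direct computation, that $\tfrac12h''(x)+\tfrac1xh'(x)=\tfrac{\theta^2}{2}h(x)$, so that $h$ is an eigenfunction of the $\BES^3(0)$ generator with eigenvalue $\theta^2/2$ (consistent with the $e^{-\theta^2T/2}$ above); consequently the $h$-transformed diffusion has generator $\tfrac12\partial_x^2+\bigl(\tfrac1x+(\log h)'(x)\bigr)\partial_x=\tfrac12\partial_x^2+\theta\coth(\theta x)\partial_x$, which by the SDE/generator description recalled in Section~\ref{sec:prelim} is exactly $\BES^3(\theta)$; since $h(0^+)=\theta<\infty$, the transform is well defined from the starting point $0$ and the initial conditions match. (An alternative that handles all $\theta$ uniformly would be to apply the Rogers--Pitman ``Markov functions'' criterion directly to the Markov process $(B,M)$, with $\phi(b,m)=2m-b$ and intertwining kernel $\Lambda(y,\cdot)=$ law of $(2V-y,V)$ for $V\sim\Uniform([0,y])$; the content is then the verification of the intertwining $\Lambda P_t=Q_t\Lambda$ using the reflection-principle density of $(B_t,M_t)$ and the $h$-transform density of $\BES^3(\theta)$.)

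For part~\ref{enu:2M-x-backward} the key observation I would establish is the pathwise identity $M(x)=\inf_{y\ge x}Y(y)$ (valid $\BM(\theta)$-a.s.): ``$\ge$'' is immediate from $Y(y)=2M(y)-B(y)\ge M(y)\ge M(x)$ for $y\ge x$, and for ``$\le$'' I would take $y_0$ to be the smallest ``record time'' of $B$ in $[x,\infty)$ (i.e.\ the smallest $y\ge x$ with $B(y)=M(y)$; the record set is closed and unbounded above, so $y_0$ exists), note that $M$ is constant on $[x,y_0]$, and use $Y(y_0)=M(y_0)=M(x)$. Hence $B(x)=2\inf_{y\ge x}Y(y)-Y(x)$, so the map $z\mapsto 2\inf_{[\cdot,\infty)}z-z$ inverts $b\mapsto 2\max_{[0,\cdot]}b-b$ along $\BM(\theta)$-a.e.\ path. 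Since by part~\ref{enu:2M-X-fwd} the law $\BES^3(\theta)$ is the pushforward of $\BM(\theta)$ under the latter map, applying the former map to $Z\sim\BES^3(\theta)$ recovers $\BM(\theta)$, which is part~\ref{enu:2M-x-backward}. (A time-reversal argument for $\BES^3(\theta)$ gives an alternative route.)

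The step I expect to be the main obstacle is matching the two absolute-continuity relations in part~\ref{enu:2M-X-fwd}---equivalently, in the direct approach, verifying the intertwining $\Lambda P_t=Q_t\Lambda$---since this is exactly where the drift enters nontrivially and requires the explicit transition densities; by contrast the eigenfunction identity $\tfrac12h''+\tfrac1xh'=\tfrac{\theta^2}{2}h$ and the pathwise inversion used for part~\ref{enu:2M-x-backward} are routine.
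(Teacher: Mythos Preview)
The paper does not prove this proposition; it simply records it with a citation to \cite{Rogers-Pitman-81}, so there is no in-paper proof to compare against. Your sketch is a correct and standard route---Girsanov reduction to the driftless Pitman theorem, identification of the resulting density as the $h$-transform of $\BES^3(0)$ by $h(x)=\sinh(\theta x)/(\theta x)$, and then pathwise inversion for part~\ref{enu:2M-x-backward}---and is close in spirit to the original Rogers--Pitman intertwining argument you mention as an alternative. One harmless slip: $h(0^+)=\lim_{x\downarrow0}\sinh(\theta x)/(\theta x)=1$, not $\theta$; the argument only needs $h(0^+)$ finite and positive, so this changes nothing.
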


The second says that a Brownian motion with drift, considered until
its first-passage time at a given level, is identical in law to the
horizontal and vertical reflection of a Bessel-$3$ process with the
same drift, considered until its last-passage time at the same level.
\begin{proposition}\cite[Corollary 1(i,iv)]{Rogers-Pitman-81}
\label{prop:besselrev}Let $B\sim\mathrm{BM}(\theta)$, $Z\sim\mathrm{BES}^{3}(\theta)$,
and $\zeta>0$. Let 
\begin{equation}
\tau\coloneqq\inf\{x>0\st B(x)=\zeta\}\qquad\text{and}\qquad\sigma\coloneqq\sup\{x>0\st Z(x)=\zeta\}.\label{eq:tau-sigma-def}
\end{equation}
Then we have
\[
(\sigma,(Z_{t})_{0\le t\le\sigma})\overset{\mathrm{law}}{=}(\tau,(\zeta-B_{\tau-s})_{0\le s\le\tau}).
\]
\end{proposition}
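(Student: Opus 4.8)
The plan is to deduce Proposition~\ref{prop:besselrev} from the $h$-transform description of $\BES^{3}(\theta)$ together with the time-reversal theorem for transient one-dimensional diffusions. The first step is to record that $\BES^{3}(\theta)$ is the Doob $h$-transform of $\BM(\theta)$ killed at $0$, with harmonic function $h(x)=1-e^{-2\theta x}=2e^{-\theta x}\sinh(\theta x)$, a constant multiple of the scale function of $\BM(\theta)$. This is a one-line check: the $h$-conjugated generator is $\tfrac12\partial_{xx}+\bigl(\theta+h'(x)/h(x)\bigr)\partial_{x}$, and $\theta+\frac{2\theta e^{-2\theta x}}{1-e^{-2\theta x}}=\theta\,\frac{e^{2\theta x}+1}{e^{2\theta x}-1}=\theta\coth(\theta x)$, which is the $\BES^{3}(\theta)$ drift. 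In particular $\BES^{3}(\theta)$ is $\BM(\theta)$ conditioned never to hit $0$; it is transient with $Z(x)\to\infty$, the level $\zeta$ is a.s.\ visited, the last-exit time $\sigma$ in \eqref{eq:tau-sigma-def} is a.s.\ finite, and the transition density of $Z$ with respect to its speed measure $m_{Z}(\mathrm{d}y)=2\sinh^{2}(\theta y)\,\mathrm{d}y$ is symmetric, $p^{Z}_{t}(x,y)=p^{Z}_{t}(y,x)$.

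The second step is to apply Williams' time-reversal theorem in its diffusion form (equivalently, Nagasawa's formula for the transition function of a reversed Markov process): for a transient diffusion started from an entrance boundary and stopped at the last-exit time $\sigma$ from a regular point $\zeta$, the reversed path $(Z_{\sigma-s})_{0\le s\le\sigma}$ is again Markov, started from $\zeta$ and absorbed when it first reaches the level from which the original process started, namely $0$, with transition density $\widehat p_{t}(x,y)=\frac{G^{Z}(\zeta,y)}{G^{Z}(\zeta,x)}\,p^{Z}_{t}(x,y)$ relative to $m_{Z}$, where $G^{Z}$ is the Green's function of $Z$. Combining this with the $m_{Z}$-symmetry of $p^{Z}$ and the identification of $\BES^{3}(\theta)$ as the $h$-transform of $\BM(\theta)$ killed at $0$, the factors $h$ and $G^{Z}$ cancel up to the expected drift-reversal bookkeeping, and one identifies the reversed process as $\BM(-\theta)$ started at $\zeta$ and run until it first hits $0$. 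Write this reversed process as $(\widetilde B_{s})_{0\le s\le\widetilde\tau}$, with $\widetilde B_{0}=\zeta$ and $\widetilde\tau$ its hitting time of $0$, so that $(\sigma,(Z_{t})_{0\le t\le\sigma})\overset{\mathrm d}{=}(\widetilde\tau,(\widetilde B_{\widetilde\tau-t})_{0\le t\le\widetilde\tau})$. Finally, the spatial reflection $C_{s}:=\zeta-\widetilde B_{s}$ turns $\BM(-\theta)$ started at $\zeta$ into $\BM(\theta)$ started at $0$, with $\widetilde\tau$ becoming the first hitting time of $\zeta$ by $C$; substituting $\widetilde B_{\widetilde\tau-t}=\zeta-C_{\widetilde\tau-t}$ yields exactly $(\sigma,(Z_{t})_{0\le t\le\sigma})\overset{\mathrm d}{=}(\tau,(\zeta-B_{\tau-s})_{0\le s\le\tau})$. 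As a byproduct this gives $\sigma\overset{\mathrm d}{=}\tau$, which one can check independently: $\tau$ has the inverse Gaussian density $u\mapsto\frac{\zeta}{\sqrt{2\pi u^{3}}}\exp\!\bigl(-\frac{(\zeta-\theta u)^{2}}{2u}\bigr)$, and the last-exit time of $\BES^{3}(\theta)$ from $\zeta$ is classically known to have the same law.

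The main obstacle will be making the time-reversal step fully rigorous: one must handle the entrance boundary of $\BES^{3}(\theta)$ at $0$ so that ``started from $0$'' is meaningful, verify the a.s.\ finiteness and the correct endpoint structure of the last-exit time $\sigma$, and carry out carefully the Green's-function cancellations in the formula for $\widehat p_{t}$. An alternative, more computational route avoids the abstract reversal theorem by conditioning on the duration. Since $\tau$ is inverse Gaussian and, by Girsanov, the drift contributes only the endpoint-dependent factor $e^{\theta\zeta-\theta^{2}\tau/2}$ (because $\int_{0}^{\tau}\mathrm{d}B=\zeta$ and $\int_{0}^{\tau}\mathrm{d}t=\tau$), the law of $(B_{t})_{0\le t\le u}$ given $\tau=u$ is free of the drift; one then reduces to the classical driftless Williams identity between Brownian motion run to a first-passage time and $\BES^{3}(0)$ run to a last-exit time, and integrates back over $u$ using that $\tau$ and $\sigma$ share the inverse Gaussian law. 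This trades the general theory for an explicit bridge computation but still requires care in matching the time-reversal of a conditioned bridge.
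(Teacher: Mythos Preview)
The paper does not prove this proposition at all; it simply quotes it from \cite[Corollary~1(i,iv)]{Rogers-Pitman-81} and uses it as a black box. Your proposal is therefore not competing with any argument in the present paper. What you have sketched is in fact an outline of the original Rogers--Pitman argument: identify $\BES^{3}(\theta)$ as the $h$-transform of $\BM(\theta)$ killed at $0$ with $h(x)=1-e^{-2\theta x}$, then invoke the Williams/Nagasawa time-reversal for transient diffusions at a last-exit time. The generator computation you give is correct, and the spatial reflection at the end is the right bookkeeping.

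Your own caveats are accurate: the nontrivial content is all in the time-reversal step, and your sketch defers that to ``Williams' theorem'' without pinning down a precise statement that covers the entrance-boundary case at $0$. If you were writing this up rather than citing it, you would need to either invoke a version of the reversal theorem that explicitly handles entrance boundaries (as in Sharpe's or Revuz--Yor's treatment), or carry out the Green's-function computation for $\BES^{3}(\theta)$ directly. Your alternative route via conditioning on $\tau$ and reducing to the driftless case is also sound in principle, but note that it still presupposes the $\theta=0$ Williams identity, which is itself a time-reversal result of the same depth. Either way, for the purposes of this paper the citation is the intended ``proof,'' and your sketch is a reasonable summary of what lies behind it.
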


 We say that a random variable $J$
has the arcsine distribution on $[0,L]$, and write $J\sim\Arcsine(0,L)$,
if
\[
\mathbb{P}(J\le t)=\frac{2}{\pi}\arcsin(\sqrt{t/L}).
\]
Furthermore, let $\Bm(\theta;L)$ denote the law of a Brownian meander on $[0,L]$
with drift $\theta$. Informally, this is a Brownian motion $B$ on
$[0,L]$ with drift $\theta$ such that $B(0)=0$, conditioned to be
nonnegative on $[0,L]$. It can be obtained from the $\Bm(0;L)$, the law of the standard
Brownian meander on $[0,L]$, by a Girsanov change of measure:
\begin{equation}
\frac{\dif\Bm(\theta;L)}{\dif\Bm(0;L)}(B)\propto\e^{\theta B(L)}.\label{eq:meander-girsanov}
\end{equation}
We note that the law $\Bm(0;L)$ of standard Brownian mander on $[0,L]$
can be obtained from the law of the standard Brownian meander on $[0,1]$ (as constructed in \cite{Durrett-Iglehart-Miller-1977})
by Brownian scaling.

We will use the following facts about the minimum value of a
Brownian motion on an interval, as well as the Brownian motion seen
from its minimum on the interval.
\begin{proposition}
\label{prop:BM_min}Let $B\sim\BM$ and fix $L>0$. Let $A=\argmin\limits_{x\in[0,L]}B(x)$.
Then
\begin{enumerate} [label=\rm(\roman{*}), ref=\rm(\roman{*})]  \itemsep=3pt
\item \label{enu:arcsine}$A\sim\Arcsine(0,L)$.
\item \label{enu:denisov-conditional} Conditional on $A$, the processes
\[
B_{-}(x)\coloneqq W(A-y)-W(A)\qquad\text{and}\qquad B_{+}(x)\coloneqq W(A+y)-W(A)
\]
are independent Brownian meanders of lengths $A$ and $L-A$, respectively.
\end{enumerate}
\end{proposition}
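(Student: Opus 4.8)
\textbf{Proof proposal for Proposition~\ref{prop:BM_min}.} Both statements are classical facts about Brownian motion, so the plan is essentially to cite them, but I will indicate the route in case a self-contained argument is wanted. Part~\ref{enu:arcsine} is the Lévy arcsine law for the location of the minimum of Brownian motion on a fixed interval. One standard derivation: by the reflection/shift argument, for a standard Brownian motion $B$ on $[0,L]$ the law of $A = \argmin_{x\in[0,L]} B(x)$ coincides with the law of the last zero of $B$ before time $L$ after an appropriate transformation, and the density $\tfrac{1}{\pi\sqrt{t(L-t)}}$ on $[0,L]$ is obtained by a direct computation using the joint density of $(B(t),\min_{[0,t]}B,\min_{[t,L]}B)$ and integrating out. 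Alternatively, one invokes Lévy's theorem identifying $\min_{[0,t]}B$ with a reflected Brownian motion and uses that the time of the last visit to $0$ of a Brownian motion before time $L$ is arcsine distributed; scaling reduces $L$ to $1$. I would simply cite this, e.g.\ \cite{Durrett-Iglehart-Miller-1977} or a standard reference on Brownian motion.

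For part~\ref{enu:denisov-conditional}, the key input is Denisov's path decomposition: conditionally on the location $A$ of the minimum of $B$ on $[0,L]$, the two processes $x \mapsto B(A-x)-B(A)$ for $x\in[0,A]$ and $x\mapsto B(A+x)-B(A)$ for $x\in[0,L-A]$ are independent, and each is a Brownian meander (of length $A$ and $L-A$ respectively). The cleanest path to this is: first condition on $A = a$; then the pre-minimum piece reversed, $x\mapsto B(a-x)-B(a)$, and the post-minimum piece $x\mapsto B(a+x)-B(a)$ are both nonnegative paths started at $0$ that are strictly positive on the open interval and hit $0$ only at the left endpoint; the fact that, conditioned on the value and location of the minimum, these two excursion-like pieces are conditionally independent is a consequence of the Markov property of $B$ applied at the (random) time $A$, together with time-reversal of Brownian motion. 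Identifying each conditioned piece as a Brownian meander of the appropriate length is then Denisov's theorem \cite{Durrett-Iglehart-Miller-1977}. I would phrase the proof as: "Part~\ref{enu:arcsine} is the classical arcsine law; part~\ref{enu:denisov-conditional} is Denisov's decomposition \cite{Durrett-Iglehart-Miller-1977}," possibly with a one-line reminder of why independence holds (Markov property at $A$ plus time reversal).

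The only mild subtlety — and the place I would be most careful — is making the conditioning on the measure-zero event $\{A = a\}$ rigorous, i.e.\ ensuring the regular conditional distribution statement is the intended one and that "Brownian meander of length $\ell$" is unambiguous (it is the $h$-transform of Brownian motion killed at $0$, equivalently the weak limit of $B$ on $[0,\ell]$ conditioned to stay positive, and it can be built from the length-$1$ meander by Brownian scaling, exactly as the excerpt already notes when it defines $\Bm(0;L)$). Since the paper only uses these facts as black boxes (to feed into the proofs of Theorem~\ref{thm:open_converge}), a short citation-based proof is appropriate and I would not reproduce the full analytic derivation; the main obstacle is purely expository, namely stating the conditional-independence claim in a form that is literally a theorem in the cited literature rather than something one has to reprove.
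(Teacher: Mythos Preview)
Your proposal is correct and matches the paper's own treatment: the paper gives no independent proof but simply cites the arcsine law (referring to M\"orters--Peres) and attributes part~\ref{enu:denisov-conditional} to Denisov's original paper. Your citation-based approach is the same, and your extra discussion of the Markov-plus-time-reversal heuristic and the regular-conditional-distribution subtlety is accurate but goes beyond what the paper includes.
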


Part~\ref{enu:arcsine} is one of the celebrated ``arcsine laws''
for Brownian motion; see e.g.~\cite[Theorem~5.26(b)]{morters_peres_2010}.
Part~\ref{enu:denisov-conditional} is known as \emph{Denisov's path
decomposition} and was proved in \cite{Denisov-1983-russian}.
\begin{proposition}
\label{prop:meander-to-bessel}Let $\theta>0$. As $L\to\infty$,
$\Bm(\theta;L)$ converges to the restriction of $\BES^{3}(\theta)$ to $x \in [0,\infty)$, weakly with respect
to the topology of uniform convergence on compact sets.
\end{proposition}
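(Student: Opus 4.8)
It suffices to fix $K>0$ and prove that the law of $\Bm(\theta;L)$ restricted to $[0,K]$ converges, as $L\to\infty$, to the law of $\BES^{3}(\theta)$ restricted to $[0,K]$; since $K$ is arbitrary, the restriction maps $C([0,\infty))\to C([0,K])$ are continuous, and weak convergence of the restrictions for every $K$ is equivalent to weak convergence in the topology of uniform convergence on compact sets, this gives the statement. My approach is to write down the Radon--Nikodym derivative $\dif\Bm(\theta;L)/\dif\BES^{3}(\theta)$ restricted to $[0,K]$ and show it converges to $1$ in $L^{1}(\BES^{3}(\theta))$; this in fact yields total-variation convergence on $[0,K]$.

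First I would assemble the derivative on the full interval $[0,L]$ from a chain of three absolute-continuity relations. By \eqref{eq:meander-girsanov}, $\dif\Bm(\theta;L)/\dif\Bm(0;L)=e^{\theta B(L)}/Z_{L}$ with $Z_{L}=\Ee^{\Bm(0;L)}[e^{\theta B(L)}]<\infty$. Next, the classical density factorization between the Brownian meander of length $L$ and the three-dimensional Bessel process (obtainable from the reflection principle together with a standard limiting argument) gives $\dif\Bm(0;L)/\dif\BES^{3}(0)=c_{L}/R(L)$ with the explicit deterministic constant $c_{L}=\sqrt{\pi L/2}$, where $R$ denotes the canonical path. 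Finally, Girsanov's theorem — equivalently, the Doob $h$-transform with the space--time harmonic function $h_{\theta}(x)=\sinh(\theta x)/(\theta x)$, which one verifies is the right choice by a one-line computation with the Rogers--Pitman generator $\tfrac12\partial^{2}+\theta\coth(\theta x)\partial$ of $\BES^{3}(\theta)$ (cf.\ \cite{Rogers-Pitman-81}) — gives $\dif\BES^{3}(0)/\dif\BES^{3}(\theta)=\theta R(L)\sinh(\theta R(L))^{-1}e^{\theta^{2}L/2}$ on $[0,L]$. Multiplying the three and using $e^{\theta r}/\sinh(\theta r)=2/(1-e^{-2\theta r})$, the path-dependent factors collapse and one gets
\[
\frac{\dif\Bm(\theta;L)}{\dif\BES^{3}(\theta)}\bigg|_{[0,L]}=\kappa_{L}\cdot\frac{2}{1-e^{-2\theta R(L)}},\qquad \kappa_{L}\coloneqq\frac{c_{L}\,\theta\, e^{\theta^{2}L/2}}{Z_{L}}\in(0,\infty).
\]

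Next I would identify $\kappa_{L}$ and then project to $[0,K]$. Integrating the displayed density against $\BES^{3}(\theta)$ gives $1$, so $\kappa_{L}\,\Ee^{\BES^{3}(\theta)}[2/(1-e^{-2\theta R(L)})]=1$. Since $\BES^{3}(\theta)$ is transient (its drift dominates $\theta>0$), $R(L)\to\infty$ almost surely, and the integrand is dominated uniformly in $L$ by $2+\theta^{-1}R(L)^{-1}$, while $\Ee^{\BES^{3}(\theta)}_{0}[1/R(L)]$ stays bounded in $L$ (directly, from the explicit $\BES^{3}(\theta)$ transition density, which is a bounded tilt of the $\BES^{3}(0)$ one near the origin); dominated convergence then forces $\kappa_{L}\to\tfrac12$. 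To restrict to $[0,K]$, take conditional expectation under $\BES^{3}(\theta)$ and use its (time-homogeneous) Markov property:
\[
\frac{\dif\Bm(\theta;L)}{\dif\BES^{3}(\theta)}\bigg|_{[0,K]}=2\kappa_{L}\,G_{L-K}\!\bigl(R(K)\bigr),\qquad G_{s}(x)\coloneqq\Ee^{\BES^{3}(\theta)}_{x}\!\left[\frac{1}{1-e^{-2\theta R(s)}}\right].
\]
By transience, for each fixed $x>0$ one has $G_{s}(x)\to1$ as $s\to\infty$, with domination via $1/(1-e^{-2\theta r})\le 1+(2\theta r)^{-1}$ and $\Ee^{\BES^{3}(\theta)}_{x}[1/R(s)]\le 1/x$ — the latter because $u\coth u>1$ for $u>0$ makes the generator of $x\mapsto 1/x$ under $\BES^{3}(\theta)$ nonpositive, so $1/R(t)$ is a nonnegative supermartingale. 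Since $R(K)>0$ almost surely and $\kappa_{L}\to\tfrac12$, the displayed density converges to $1$ almost surely and is dominated by the integrable variable $\sup_{L}2\kappa_{L}\,(1+(2\theta R(K))^{-1})$, hence converges in $L^{1}(\BES^{3}(\theta))$. This yields the desired total-variation (hence weak) convergence on $[0,K]$, completing the argument.

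The main obstacle is bookkeeping rather than conceptual: the three absolute-continuity relations naturally live on $[0,L]$, so one must carefully push the density down to the fixed interval $[0,K]$ via the Markov property and, in the process, pin down the deterministic normalization $\kappa_{L}$ using uniform-in-$L$ integrability estimates for $\BES^{3}(\theta)$ started from the origin; the observation that $1/R(t)$ is a supermartingale for $\BES^{3}(\theta)$ is the convenient tool that makes all the required domination go through.
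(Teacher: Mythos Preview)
Your argument is essentially correct and takes a route genuinely different from the paper's. The paper works from the description of both $\Bm(\theta;L)$ and $\BES^3(\theta)$ as $\varepsilon\downarrow 0$ limits of $\BM(\theta)$ conditioned to stay above $-\varepsilon$ (on $[0,L]$ and on $[0,\infty)$ respectively), uses the Markov property at time $L$ to relate the two conditionings through the factor $q(B(L))=\Pp_{B(L)}(\BM(\theta)\text{ stays positive})=1-e^{-2\theta B(L)}$, and then observes that $q(B(L))\to 1$ in $\Bm(\theta;L)$-probability. Your approach instead assembles the Radon--Nikodym derivative on $[0,L]$ by chaining Imhof's relation with the Doob $h$-transform, arriving (up to the constant $2\kappa_L$) at the reciprocal $1/(1-e^{-2\theta R(L)})$ of the paper's factor, and then projects to $[0,K]$ and argues $L^1$ convergence of the density to $1$. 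Your route is more computational but yields the stronger total-variation conclusion on each compact; the paper's is more self-contained, avoiding Imhof's theorem and the explicit $h$-transform identification.

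One point needs tightening. To conclude $\kappa_L\to\tfrac12$ (and likewise $G_{L-K}(x)\to 1$) you invoke ``dominated convergence'' with the bound $2/(1-e^{-2\theta r})\le 2+(\theta r)^{-1}$, but you only assert that $\Ee_0^{\BES^3(\theta)}[1/R(L)]$ is \emph{bounded} in $L$; boundedness alone is not enough, since the bounding variable itself depends on $L$. What is actually needed is $\Ee_0^{\BES^3(\theta)}[1/R(L)]\to 0$. This is true---from the modulus representation $R(L)=|W(L)+\theta L\,\mathrm{e}_1|$ and Anderson's inequality one gets $\Ee_0[1/R(L)]\le\sqrt{2/(\pi L)}$, and similarly $\Ee_x[1/R(s)]\le\sqrt{2/(\pi s)}$ for any starting point---so once this is noted your argument goes through. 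Incidentally, since your projected density on $[0,K]$ is a probability density converging almost surely to $1$, Scheff\'e's lemma gives the $L^1$ convergence directly once $\kappa_L\to\tfrac12$ is established, so the final domination step can be replaced by that observation.
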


\begin{proof}
Let $K<\infty$ and let $F\in\mathcal{C}_b(\mathcal{C}([0,K]))$. As explained in \cite[page 578]{Rogers-Pitman-81}, we may represent $\BES^3(\theta)$ as the $\ve \downarrow 0$ limit of $\BM(\theta)$ conditioned to stay above $-\ve$ on $[0,\infty)$. Thus, we have
\begin{align}
\mathbb{E}_{B\sim\BES^{3}(\theta)}[F(B)] & =\lim_{\ve\downarrow0}\mathbb{E}_{B\sim\BM(\theta)}[F(B)\mid B(x)> - \ve\text{ for all }x\in[0,\infty)]\nonumber \\
 & =\lim_{\ve\downarrow0}\frac{\mathbb{E}_{B\sim\BM(\theta)}[F(B);B(x)> - \ve\text{ for all }x\in[0,\infty)]}{\mathbb{P}_{B\sim\BM(\theta)}[B(x)> - \ve\text{ for all }x\in[0,\infty)]},\label{eq:Bessel3-weak}
\end{align}
and, for $L\ge K$,  by \cite[Theorem (2.1)]{Durrett-Iglehart-Miller-1977} and the change of measure in \eqref{eq:meander-girsanov}, we have
\begin{equation}
\mathbb{E}_{B\sim\Bm(\theta;L)}[F(B)]=\lim_{\ve\downarrow0}\mathbb{E}_{B\sim\BM(\theta)}[F(B)\mid B(x)> - \ve\text{ for all }x\in[0,L]].\label{eq:BmL-weak}
\end{equation}
Now we can write
\be \label{eq:FBe1}
\begin{aligned}
\mathbb{E}_{B\sim\BM(\theta)} & [F(B);B(x)> - \ve\text{ for all }x\in[0,\infty)]\\
 & =\mathbb{E}_{B\sim\BM(\theta)}[F(B);B(x)> - \ve\text{ for all }x\in[0,L];B(x)> - \ve\text{ for all }x\in[L,\infty)]\\
 & =\mathbb{E}_{B\sim\BM(\theta)}[F(B)q_\ve(B(L));B(x)> - \ve \text{ for all }x\in[0,L]].
\end{aligned}
\ee
where $q_\ve(x)$ is the probability that a Brownian motion with drift
$\theta$, started at $x$, remains above $-\ve$ for all time. For $\ve =0$ we use the shorthand $q(x) = q_0(x)$. Combining \eqref{eq:FBe1} with \eqref{eq:Bessel3-weak}, we get
\be \label{eq:BESlim1}
\mathbb{E}_{B\sim\BES^{3}(\theta)}[F(B)]  =\lim_{\ve\downarrow0}\frac{\mathbb{E}_{B\sim\BM(\theta)}[F(B)q_\ve(B(L));B(x)> - \ve\text{ for all }x\in[0,L]]}{\mathbb{E}_{B\sim\BM(\theta)}[B(x)> - \ve\text{ for all }x\in[0,L]]}.
\ee
For each fixed $x$, the function $\ve \mapsto q_\ve(x)$ is nondecreasing in $\ve$. Applying this in \eqref{eq:BESlim1}, for any $\delta > 0$, we have 
\begin{align*}
&\liminf_{\ve \downarrow 0}\frac{\mathbb{E}_{B\sim\BM(\theta)}[F(B)q(B(L));B(x)> - \ve\text{ for all }x\in[0,L]]}{\mathbb{E}_{B\sim\BM(\theta)}[B(x)> - \ve\text{ for all }x\in[0,L]]} \\
&\le   \mathbb{E}_{B\sim\BES^{3}(\theta)}[F(B)]\\
&\le \limsup_{\ve \downarrow 0}\frac{\mathbb{E}_{B\sim\BM(\theta)}[F(B)q_\delta(B(L));B(x)> - \ve\text{ for all }x\in[0,L]]}{\mathbb{E}_{B\sim\BM(\theta)}[B(x)> - \ve\text{ for all }x\in[0,L]]}.
\end{align*}
Then, by two applications of \eqref{eq:BmL-weak}  with $F$
replaced by $B\mapsto F(B)q(B(L))$ and $B \mapsto F(B)q_\delta(B(L))$, we have, for any $\delta > 0$ that 
\[
\mathbb{E}_{B\sim\Bm(\theta;L)}[F(B)q(B(L))] \le \mathbb{E}_{B\sim\BES^{3}(\theta)}[F(B)] \le \mathbb{E}_{B\sim\Bm(\theta;L)}[F(B)q_\delta(B(L))].
\]
Then, since $\lim_{\delta \downarrow 0} q_\delta(x) = q(x)$ for all $x > 0$, the bounded convergence theorem implies that
\be \label{eq:FBFqB}
\mathbb{E}_{B\sim\BES^{3}(\theta)}[F(B)] = \mathbb{E}_{B\sim\Bm(\theta;L)}[F(B)q(B(L))].
\ee
Now, for any $t<\infty$, we
have $\lim\limits_{L\to\infty}\mathbb{P}_{B\sim\Bm(\theta;L)}(B(L)\le t)=0$
(as may be seen from the scaling properties of the Brownian meander),
and this implies that, for any $p<1$, we have 
\[
\lim\limits_{L\to\infty}\mathbb{P}_{B\sim\Bm(\theta;L)}(q(B(L))\le p)=0
\]
 as well. Using this along with \eqref{eq:FBFqB}, we see that
\[
\mathbb{E}_{B\sim\BES^{3}(\theta)}[F(B)]=\lim_{L\to\infty}\mathbb{E}_{B\sim\Bm(\theta;L)}[F(B)q(B(L))]=\lim_{L\to\infty}\mathbb{E}_{B\sim\Bm(\theta;L)}[F(B)],
\]
completing the proof. In fact, the first identity above is true even without taking the limit
in $L$.
\end{proof}

\begin{lemma} \label{lem:Bess_lin_bd}
Let $\theta > 0$, and let $Y\sim \BES^{3}(2\theta)$. Then, as $t \to \infty$,
\begin{equation} \label{eq:YtoNor}
\f{Y(\theta t) - 2\theta^2 t}{t^{1/2}} \Longrightarrow \mathcal N(0,\theta).
\end{equation}
Furthermore, for every $\eta,\delta > 0$, there exists a constant $C' <\infty$ so that 
\begin{equation} \label{eq:Ybd}
\liminf_{t \to \infty}\Pp\Biggl(t^{-1/3}\bigl( Y(\theta t +t^{2/3} y) - Y(\theta t) - 2\theta t^{2/3} y\bigr) \le C' + \delta|y| \text{ for all } y \ge -\theta t^{1/3}\Biggr)  \ge 1 - \eta.
\end{equation}
\end{lemma}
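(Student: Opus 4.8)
The plan is to realize the Bessel process via the drift version of Pitman's theorem, Proposition~\ref{prop:2M-X}\ref{enu:2M-X-fwd}: write $Y \deq 2M-B$, where $B\sim\BM(2\theta)$, $M(x)=\max_{y\in[0,x]}B(y)$, and let $W(x):=B(x)-2\theta x$ denote the driftless part of $B$, a standard Brownian motion. All estimates below are then standard Brownian computations.

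For \eqref{eq:YtoNor}, decompose $Y(\theta t)-2\theta^2 t \deq W(\theta t)+2\bigl(M(\theta t)-B(\theta t)\bigr)$. Dividing the first term by $t^{1/2}$ gives exactly $\Nor(0,\theta)$ for every $t$ by Brownian scaling. For the second term, time reversal on $[0,\theta t]$ shows $M(\theta t)-B(\theta t)=\max_{u\in[0,\theta t]}\bigl(B(u)-B(\theta t)\bigr)$ has the law of $-\min_{r\in[0,\theta t]}\widehat B(r)$ for a Brownian motion $\widehat B\sim\BM(2\theta)$; this is stochastically dominated by $-\min_{r\ge 0}\widehat B(r)$, an almost surely finite random variable whose law does not depend on $t$. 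Hence $t^{-1/2}\bigl(M(\theta t)-B(\theta t)\bigr)\to 0$ in probability, and \eqref{eq:YtoNor} follows from Slutsky's theorem.

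For \eqref{eq:Ybd}, write $s=\theta t+t^{2/3}y$, so that $y\ge-\theta t^{1/3}$ corresponds exactly to $s\ge 0$, and note that the quantity to be bounded equals $\Delta(y):=Y(s)-Y(\theta t)-2\theta(s-\theta t)$. Split into the cases $y\ge 0$ and $-\theta t^{1/3}\le y<0$. When $y\ge 0$, set $v=s-\theta t\ge 0$; since $M$ is nondecreasing and $M(\theta t)\ge B(\theta t)$, one gets $M(s)-M(\theta t)\le\max_{u\in[0,v]}\beta(u)$ with $\beta(u):=B(\theta t+u)-B(\theta t)$, whence $\Delta(y)\le 2\max_{u\in[0,v]}w(u)-w(v)\le 3\sup_{u\in[0,v]}|w(u)|$, where $w$ is the driftless part of $\beta$, a standard Brownian motion. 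When $-\theta t^{1/3}\le y<0$, monotonicity of $M$ gives $\Delta(y)\le W(\theta t)-W(s)$, an increment of the standard Brownian motion $W$ over an interval of length $\theta t-s=t^{2/3}|y|$. In both cases the bounding quantity has the form $\sup\{\Xi(L)\st 0\le L\le t^{2/3}|y|\}$ for a Brownian-type process $\Xi$ with $\Xi(0)=0$; using $\sup_{0\le L\le L_0}\Xi(L)\le \eps L_0+C_\eps$ with $C_\eps:=\sup_{L\ge 0}(\Xi(L)-\eps L)$ almost surely finite, and taking $\eps$ to be a fixed multiple of $\delta t^{-1/3}$, the linear part becomes $\le\delta t^{1/3}|y|$ while Brownian scaling gives $C_\eps\deq\eps^{-1}C_1$ for a fixed almost surely finite random variable $C_1$. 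Thus $\Delta(y)\le t^{1/3}\bigl(\mathrm{const}\cdot\delta^{-1}C_1+\delta|y|\bigr)$ uniformly in $y$, and choosing $C'$ large forces the failure probability to be at most $\eta$ uniformly in $t$, which proves \eqref{eq:Ybd}.

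The main obstacle is the bookkeeping in this last step: the cutoff slope $\eps$ must be chosen to shrink like $t^{-1/3}$, precisely so that after rescaling the residual additive constant has a $t$-independent law while its size remains of order $t^{1/3}$, matching the normalization in the statement. Everything else is a routine combination of the $2M-B$ representation with Brownian scaling and the reflection bound $M(s)-M(\theta t)\le\max_{[0,\,s-\theta t]}\beta$.
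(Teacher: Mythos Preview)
Your argument for \eqref{eq:YtoNor} is correct and essentially the same as the paper's. Your treatment of the $y\ge 0$ case of \eqref{eq:Ybd} is also fine.

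The $y<0$ case, however, has a real error. Write $R(x):=M(x)-B(x)\ge 0$, so that $Y=B+2R$ and a direct computation gives
\[
\Delta(y)=\bigl(W(s)-W(\theta t)\bigr)+2\bigl(R(s)-R(\theta t)\bigr),
\]
valid for all $y$. Equivalently, in your variables,
\[
\Delta(y)=2\bigl(M(s)-M(\theta t)\bigr)+\bigl(W(\theta t)-W(s)\bigr)+4\theta(\theta t - s).
\]
If you now drop the nonpositive term $2(M(s)-M(\theta t))$ using monotonicity of $M$, the remaining upper bound is $W(\theta t)-W(s)+4\theta(\theta t-s)$, not $W(\theta t)-W(s)$ as you claim. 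The extra drift term $4\theta(\theta t-s)=4\theta t^{2/3}|y|$ becomes $4\theta t^{1/3}|y|$ after dividing by $t^{1/3}$, which cannot be absorbed into $\delta|y|$. Concretely, at $y=-\theta t^{1/3}$ (so $s=0$) one has $\Delta(y)=-W(\theta t)-2R(\theta t)$, while your asserted bound reads $\Delta(y)\le W(\theta t)$; since $W(\theta t)$ is of order $t^{1/2}$ with a symmetric sign and $R(\theta t)=O(1)$, the inequality fails with probability close to $\tfrac12$.

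The repair is exactly to keep the cancellation you discarded: $M(s)-M(\theta t)$ is not close to $0$ but rather close to $B(s)-B(\theta t)$, because $R=M-B$ is a reflected Brownian motion with negative drift $-2\theta$ and hence $\sup_{0\le s\le \theta t}R(s)=O(\log t)=o(t^{1/3})$. Using the identity $\Delta(y)=W(s)-W(\theta t)+2(R(s)-R(\theta t))\le W(s)-W(\theta t)+2\sup_{s'\le \theta t}R(s')$, your Brownian-scaling endgame then goes through with $\Xi(L)=W(\theta t-L)-W(\theta t)$ (note the sign), plus an $o(t^{1/3})$ additive correction from $R$. The paper sidesteps this issue by rescaling space by $t^{2/3}$ and the process by $t^{1/3}$ at the outset, reducing everything to a single uniform linear bound $|B(x)|\le C+\varepsilon|x|$ on one standard Brownian motion; this packages the drift cancellation automatically.
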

\begin{proof}
 Using Proposition~\ref{prop:2M-X}, we represent $Y$ as 
 \[
 Y(x) = 2\sup_{0 \le y \le x}[B(y) + 2\theta y] - [B(x) + 2\theta x],
 \]
 where $B \sim \BM$. Then, we have 
 \begin{align*}
 Y(\theta t) - 2\theta^2 t = B(\theta t) + 2\sup_{0 \le y \le \theta t}[B(y) - B(\theta t) + 2\theta(y-\theta t)]. 
 \end{align*}
 As $t^{-1/2}B(\theta t) \sim \Nor(0,\theta)$, to prove \eqref{eq:YtoNor}, it suffices to show that 
 \[
 2t^{-1/2} \sup_{0 \le y \le \theta t}[B(y) - B(\theta t) + 2\theta(y-\theta t)] \to 0 \quad\text{in probability}.
 \]
 This follows from Brownian rescaling as follows:
 \begin{align*}
 &\quad \, 2t^{-1/2} \sup_{0 \le y \le \theta t}[B(y) - B(\theta t) + 2\theta(y-\theta t)] \\
 &\deq 2t^{-1/2} \sup_{0 \le y \le \theta t}[B(y) - 2\theta y] = 2t^{-1/2} \sup_{0 \le y \le \theta}[B(yt) - 2\theta yt] \\
 &\deq 2\sup_{0 \le y \le \theta} [B(y) - 2\theta yt^{1/2}],
\end{align*}
and this last quantity converges to $0$ a.s. as $t \to \infty$. 

We turn to the proof of \eqref{eq:Ybd}: for $y \ge -\theta t^{1/3}$, using Brownian scaling and reflection invariance in the distributional equality below, we write
\begin{align*}
&\quad \, Y(\theta t +t^{2/3} y) - Y(\theta t) - 2\theta t^{2/3}y \\
 &= 2 \Bigl(\sup_{0 \le z \le \theta t +
 t^{2/3} y}[B(z) - B(\theta t) + 2\theta (z - \theta t)] - \sup_{0 \le z \le \theta t}[B(z) - B(\theta t) + 2\theta (z-\theta t)]\Bigr) \\
 &\qquad + (B(\theta t) -B(\theta t + t^{2/3} y)) - 4\theta t^{2/3} y \\
&\deq t^{1/3}A(t,y),
\end{align*}
where we define
\begin{align*}
A(t,y) :=2 \Bigl(\sup_{-y \le w \le \theta t^{1/3}}[B(w) - 2\theta wt^{1/3}] - \sup_{0 \le w \le \theta t^{1/3}}[B(w) - 2\theta wt^{1/3}]\Bigr) - B(-y) - 4\theta t^{1/3} y,
\end{align*}
and the distributional equality holds as a process in $y \ge -\theta t^{1/3}$. Given $\eta,\ve > 0$, choose $C > 0$ so that $\Pp(|B(x)| \le C + \ve |x|\;\; \forall x \in \R) \ge 1 - \eta$. We claim that, on the event $|B(x)| \le C + \ve |x|$, we have $|A(t,y)| \le 5C + 3\ve |y|$ for all $t \ge (\ve/(2\theta))^3$. We handle this in two cases.

\medskip \noindent \textbf{Case 1: $y \ge 0$.}
In this case, using the fact that $a \vee b - b \le a$ when $a \wedge b \ge 0$,
\begin{align*}
A(t,y)  &\le 2 \sup_{-y \le w \le 0}[B(w) - 2\theta wt^{1/3}] - B(-y) - 4\theta t^{1/3}y \\
&\le 2\sup_{-y \le w \le 0}[C  - \ve w - 2\theta w t^{1/3}] + C + \ve y - 4\theta t^{1/3}y =  3C + 3\ve y.
\end{align*}
To get the lower bound, note that when $t \ge (\ve/(2\theta))^3$,
\begin{align*}
    A(t,y) &\ge 2\sup_{-y \le w \le 0}[-C + \ve w - 2\theta w t^{1/3}] - 2\sup_{0\le w \le \theta t^{1/3}}[C + \ve w - 2\theta wt^{1/3}] -C - \ve y - 4\theta t^{1/3} y \\
    &= -5C - 3\ve y.
\end{align*}

\medskip \noindent \textbf{Case 2: $y < 0$.}
Then, for $t > (\ve/(2\theta))^3$,
\begin{align*}
&-5C + 3\ve y =2\sup_{-y \le w \le \theta t^{1/3}}[-C - \ve w - 2\theta wt^{1/3}] - 2 \sup_{0 \le w \le \theta t^{1/3}}[C + \ve w - 2\theta wt^{1/3}] - C +\ve y - 4\theta t^{1/3} y  \\
&\le A(t,y)   \\
&\le 2\sup_{-y \le w \le \theta t^{1/3}}[C + \ve w - 2\theta wt^{1/3}] - 2 \sup_{0 \le w \le \theta t^{1/3}}[-C - \ve w - 2\theta wt^{1/3}] + C - \ve y - 4\theta t^{1/3} y = 5C - 3\ve y.
\end{align*}
The proof of \eqref{eq:Ybd} is now complete, setting $\delta = 3\ve$ and $C' = 5C$. 
\end{proof}

\bibliographystyle{alpha}
\bibliography{refs} 
\end{document}